\DeclareFontFamily{OMS}{rsfs}{\skewchar\font'60}
\DeclareFontShape{OMS}{rsfs}{m}{n}{<-5>rsfs5 <5-7>rsfs7 <7->rsfs10 }{}
\DeclareSymbolFont{rsfs}{OMS}{rsfs}{m}{n}
\DeclareSymbolFontAlphabet{\scr}{rsfs}
\newcommand{\sF}{\scr{F}}
\newcommand{\sG}{\scr{G}}
\newcommand{\sI}{\scr{I}}
\newcommand{\sM}{\scr{M}}
\newcommand{\sO}{\scr{O}}
\newcommand{\bC}{\mathbb{C}}
\newcommand{\bP}{\mathbb{P}}
\newcommand{\bQ}{\mathbb{Q}}
\newcommand{\bS}{\mathbb{S}}
\newcommand{\bZ}{\mathbb{Z}}
\newcommand{\h}{h}
\newcommand{\LB}{\mathcal{L}}
\newcommand{\IH}{IH}
\newcommand{\Homom}{\text{Hom}}
\newcommand{\sHom}{\scr{H}\negmedspace om}
\newcommand{\sExt}{\scr{E}\negmedspace xt}
\newcommand{\sm}{\text{sm}}
\newcommand{\reso}[1]{\tilde{#1}}
\newcommand{\red}{\text{red}}
\newcommand{\reg}{\text{reg}}
\newcommand{\sing}{\text{sing}}
\newcommand{\tor}{\text{tor}}
\newcommand{\LieDer}{\mathcal{L}}
\newcommand{\refl}{\text{refl}}
\newcommand{\an}{\text{an}}
\newcommand{\pr}{\text{pr}}
\newcommand{\id}{\text{id}}
\newcommand{\res}{\text{res}}
\newcommand{\singu}{\text{sing}}
\newcommand{\sg}[2]{\ensuremath{{#1}_{#2}}}
\DeclareMathOperator{\Hom}{Hom}
\theoremstyle{plain}    \newtheorem{thm}{Theorem}[section]
\newtheorem{defn}[thm]{Definition}
\numberwithin{equation}{thm}
\numberwithin{figure}{section}
\theoremstyle{plain}    
\newtheorem{cor}[thm]{Corollary}
\newtheorem{lem}[thm]{Lemma}
\newtheorem{fact}[thm]{Fact}
\newtheorem{plainclaim}[thm]{Claim}
\newtheorem{plainass}[thm]{Assumption}
\theoremstyle{plain}    
\newtheorem{prop}[thm]{Proposition}
\theoremstyle{remark}
\newtheorem{rem}[thm]{Remark}
\newtheorem{claim}[equation]{Claim} 
\newtheorem{notation}[thm]{Notation}
\newtheorem{example}[thm]{Example}
\newtheorem{const}[thm]{Construction}
\newtheorem{add-ass}[equation]{Additional Assumption}
\newtheorem{obs}[equation]{Observation}
\definecolor{tomato}{RGB}{180,62,39}
\definecolor{forrest}{RGB}{81,133,49}
\definecolor{lighttomato}{RGB}{253,65,65}
\definecolor{lightforrest}{RGB}{145,237,87}
\definecolor{mygreen}{RGB}{40,104,69}
\definecolor{mygreen2}{RGB}{3,149,39}
\definecolor{darkolivegreen}{RGB}{102,118,75}
\definecolor{cranegreen}{RGB}{102,118,75}
\definecolor{mydarkblue}{RGB}{10,92,153}
\definecolor{myblue}{RGB}{57,222,186}
\definecolor{pinkish}{RGB}{213,83,222}
\definecolor{colD}{RGB}{213,83,222}
\definecolor{defb}{RGB}{213,83,222}
\definecolor{goldenrod}{RGB}{225,115,69}
\definecolor{mauve}{RGB}{224, 176, 255}
\definecolor{fuchsia}{RGB}{255, 0, 255}
\definecolor{lavender}{RGB}{230, 230, 250}
\definecolor{gold}{RGB}{255, 215, 0}
\definecolor{orange}{RGB}{255, 127, 0}
\definecolor{maroon}{RGB}{123, 17, 19}
\definecolor{brightmaroon}{RGB}{195, 33, 72}
\definecolor{richmaroon}{RGB}{176, 48, 96}
\definecolor{green}{RGB}{3,149,39}
\date{\text{January 29, 2014}}
\author{Clemens J\"order}
\address{Clemens J\"order, Mathematisches Institut, Albert-Ludwigs-Universit\"at
  Freiburg, Eckerstraße 1, 79104 Freiburg im Breisgau, Germany}
\email{\href{mailto:c.joerder@web.de}{c.joerder@web.de}}
\thanks{The author was supported in part by the DFG-Forschergruppe 790
  ``Classification of Algebraic Surfaces and Compact Complex Manifolds''.}
\keywords{differential forms, singularities of the minimal model program, vanishing theorems, Hodge theory, intersection cohomology}
\subjclass[2010]{Primary: 14F10; Secondary: 14F17, 14E30, 32S05, 32S20,  55N33}
\title{On the Poincar\'e Lemma for reflexive differential forms}
\begin{document}

\begin{abstract}
Let $X$ be a normal complex space and let $\Omega^{[i]}_{X,p}:=\bigl(\Omega^i_X\bigr)^{**}_p$ be the stalk of the sheaf of reflexive differential forms at $p\in X$. First, we show that the de Rham complex of reflexive differential forms $\,\cdots\xrightarrow{\text{d}}\Omega^{[i]}_{X,p}\xrightarrow{\text{d}}\Omega^{[i+1]}_{X,p}\xrightarrow{\text{d}}\cdots\,$ is exact in degree $i=1$ under suitable topological conditions, but that exactness in general depends on the complex structure. Second, we show exactness in high degrees for holomorphically contractible $X$ under mild assumptions on the nature of singularities of $X$, e.g. klt singularities.

Subsequently, the exactness of the de Rham complex of reflexive differential forms is related to the Lipman-Zariski conjecture and the failure of vanishing theorems of Kodaira-Akizuki-Nakano type on singular spaces.
\end{abstract}

\maketitle
\tableofcontents
\section{Introduction}\label{sec-intro}
The classical Poincar\'e Lemma states that the de Rham complex of sheaves of holomorphic differential forms on a complex manifold $M$ of dimension $n$ is a resolution
\[0\to\bC_M\to \sO_M\xrightarrow{\text{d}}\Omega^1_M\xrightarrow{\text{d}}\cdots\xrightarrow{\text{d}}\Omega^n_M\to 0 \]
of the sheaf $\bC_M$ of locally constant complex-valued functions. Via the Fr\"olicher spectral sequence it relates the complex singular cohomology of $M$ with the cohomology groups of the coherent sheaves $\Omega^i_M$. In this way the Poincar\'e Lemma can be regarded as a cornerstone of the Hodge theory of projective complex manifolds.

In the presence of singularities, the above picture breaks down completely. Indeed, none of the coherent sheaves of differential forms known in the literature satisfies the Poincar\'e Lemma. In this paper, we discuss the case of \emph{reflexive differential forms} on a normal complex space $X$, i.e., the sheaves $\Omega^{[i]}_X=\bigl(\Omega^i_X\bigr)^{**}$. More precisely, for any $p\in X$, we ask: What is the meaning of the cohomology groups of the complex
\[(\star)\quad\quad\quad 0\to\bC\to\sO_{X,p}\xrightarrow{\text{d}}\Omega^{[1]}_{X,p}\xrightarrow{\text{d}}\cdots\xrightarrow{\text{d}}\Omega^{[i]}_{X,p}\xrightarrow{\text{d}}\cdots\xrightarrow{\text{d}}\Omega^{[n]}_{X,p}\to 0? \]
We will see that exactness of $(\star)$ is closely related to various notions and results concerning the complex space $X$: the local topology, holomorphic contractibility, vanishing theorems and the Lipman-Zariski conjecture.

\subsection*{Low degrees and the topology of $X$.} The exactness of $(\star)$ depends a priori on the complex structure overlying the topological space $X$. In this spirit the complex structure is taken into account by any result so far obtained in the literature. Indeed, exactness in degree $i$ has been proven for
\begin{itemize}
 \item \emph{isolated rational singularities} if $i=1,2$, \cite[Prop.~2.5]{CF02}, 
 \item locally algebraic \emph{klt base spaces} if $i=1$, \cite[Thm.~5.4]{GKP12},
 \item \emph{toroidal singularities} and arbitrary $1\leq i\leq n$, \cite[Prop.~3.14]{Dan78}, and
 \item \emph{isolated complete intersection singularities} if $i\leq n-2$, \cite[Sect.~4]{G75}.
\end{itemize}
Although our first main Theorem~\ref{thm-top-poincare} does not require a deep proof, it clarifies the situation in degree $i=1$ by giving a sufficient, purely topological criterion for exactness, which covers all results mentioned above. Far better, its formulation only involves the \emph{first rational local intersection cohomology} $\IH^1_{\text{loc}}(p\in X,\bQ)$, see Definition~\ref{defn-local-intersection-coho}.

\begin{thm}[Topological Poincar\'e Lemma in degree one, Section~\ref{ssec-top-poincare}]\label{thm-top-poincare}
Let $p\in X$ be a normal, locally algebraic complex space singularity. Then
\[ \IH^1_{\text{loc}}(p\in X,\bQ)=0\quad\implies\quad (\star)\, \text{ is exact in degree } i=1.\]
\end{thm}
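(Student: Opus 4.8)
The plan is to reduce the statement to a comparison between two cohomology theories. Exactness of $(\star)$ in degree $i=1$ at $p$ means: every reflexive $1$-form $\omega\in\Omega^{[1]}_{X,p}$ with $\mathrm{d}\omega=0$ is of the form $\mathrm{d}f$ for some $f\in\sO_{X,p}$. So I would fix a sufficiently small Stein representative $U$ of the germ $p\in X$ (contractible onto $p$, so that $H^k(U,\bC)=H^k(\{p\},\bC)$) and work with closed reflexive $1$-forms on $U$.

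First I would pass to the smooth locus. Write $j\colon U_{\mathrm{reg}}\hookrightarrow U$ for the inclusion of the regular part and $n=\dim X$. Since $X$ is normal, $\codim(U\setminus U_{\mathrm{reg}})\ge 2$, so $\Omega^{[1]}_U=j_*\Omega^1_{U_{\mathrm{reg}}}$ and $\sO_U=j_*\sO_{U_{\mathrm{reg}}}$; hence a closed reflexive $1$-form on $U$ is the same as a closed holomorphic $1$-form on $U_{\mathrm{reg}}$, and it is $\mathrm{d}$ of something in $\sO_{X,p}$ iff the holomorphic primitive that the classical Poincaré Lemma produces on (a possibly non-simply-connected) $U_{\mathrm{reg}}$ extends across the singular locus — which, by normality again, it does automatically once it exists globally on $U_{\mathrm{reg}}$. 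So the real content is: every closed holomorphic $1$-form on $U_{\mathrm{reg}}$ is exact, i.e. $H^1_{\mathrm{dR}}(U_{\mathrm{reg}})$, computed by the \emph{holomorphic} de Rham complex, vanishes in the relevant degree.

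Next I would bring in the topological hypothesis through intersection cohomology. The key input is that for a normal space the first intersection cohomology agrees with ordinary cohomology of the smooth locus in low degree: there is a natural map $\IH^1_{\mathrm{loc}}(p\in X,\bQ)\to H^1(U_{\mathrm{reg}},\bQ)$, and because the complement has real codimension $\ge 2$ (so $\pi_1(U_{\mathrm{reg}})\twoheadrightarrow\pi_1(U)=1$ and a Mayer–Vietoris/perversity argument controls $H^1$), this map is an isomorphism; thus $\IH^1_{\mathrm{loc}}(p\in X,\bQ)=0$ forces $H^1(U_{\mathrm{reg}},\bC)=0$. Now I would run the Frölicher/holomorphic-de-Rham argument on the manifold $U_{\mathrm{reg}}$: the holomorphic de Rham complex computes $H^\bullet(U_{\mathrm{reg}},\bC)$, and in degree $1$ the obstruction to writing a closed holomorphic $1$-form as $\mathrm{d}f$ with $f$ holomorphic is precisely a class in $H^1(U_{\mathrm{reg}},\bC)$ (the $H^0(\Omega^{\ge 1})$-to-$H^1(\mathbb C)$ edge — there is no $H^{0,1}$ contribution to fight because a closed holomorphic form already has a $C^\infty$ primitive locally, and patching it is a Čech $1$-cocycle with values in $\bC$). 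Vanishing of that group yields the holomorphic primitive on $U_{\mathrm{reg}}$, which extends to $\sO_{X,p}$ by the first step. Exactness at $\bC$ and at $\sO_{X,p}$ (i.e. $\ker(\mathrm{d}\colon\sO\to\Omega^{[1]})=\bC$) is immediate from $U$ being connected. This gives exactness of $(\star)$ in degree $i=1$.

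The main obstacle I expect is the identification $\IH^1_{\mathrm{loc}}(p\in X,\bQ)\cong H^1(U_{\mathrm{reg}},\bQ)$, i.e. checking that middle-perversity intersection cohomology in degree $1$ of the (cone-like) germ really coincides with the ordinary first cohomology of the punctured-off-singularities neighborhood. This is where the "locally algebraic" hypothesis and a careful choice of a good neighborhood (a cone structure, or a resolution $\pi\colon\widetilde U\to U$ with the Leray spectral sequence and the fact that $R^1\pi_*\bC$ is supported on the exceptional locus) must be used; everything else is either classical Poincaré Lemma on the manifold $U_{\mathrm{reg}}$ or the standard codimension-$\ge 2$ extension property for reflexive sheaves on normal spaces.
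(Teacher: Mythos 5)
Your overall route is the same as the paper's: choose a good neighborhood $U$ of $p$, use the hypothesis to get $H^1(U_\reg,\bQ)=0$, produce a holomorphic primitive of the given closed reflexive $1$-form on $U_\reg$, and extend it across the singular set by normality. Your de Rham/\v{C}ech obstruction argument on $U_\reg$ is a correct variant of what the paper does more directly (it integrates $\lambda$ along paths and only needs that $H^1(U_\reg,\bZ)$ is torsion), and both the reduction of closed reflexive forms to closed forms on the regular locus and the final extension step are exactly as in the paper.

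The step carrying the real content, however, is the identification $\IH^1_{\text{loc}}(p\in X,\bQ)\cong H^1(U_\reg,\bQ)$, and the parenthetical justification you offer for it does not work. For a normal space the singular set has complex codimension $\geq 2$, and the surjection $\pi_1(U_\reg)\twoheadrightarrow\pi_1(U)=1$ gives no control of $H^1(U_\reg)$; if a codimension/Mayer--Vietoris argument of the kind you indicate were available, it would prove $H^1(U_\reg,\bQ)=0$ for \emph{every} normal germ, which is false (for the cone over a smooth curve of genus $g$, $U$ is contractible but $H^1(U_\reg,\bQ)\cong\bQ^{2g}$) --- the point being that $U$ is not a manifold near $p$, so removing $X_\sing$ can change $H^1$ drastically. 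The paper's Lemma~\ref{lem-local-int-coho-criterion} obtains the identification from Durfee's theorem \cite[Lem.~1]{D95} that $\IH^1(Y,\bQ)\cong H^1(Y_\reg,\bQ)$ for normal complex varieties, applied to a neighborhood homeomorphic to the open cone over the link, and this is exactly where the locally algebraic hypothesis enters, as you guessed. Since you explicitly flagged this identification as the main obstacle rather than claiming it, your proposal is a correct outline of the paper's argument, but its key lemma is left unproved and the route you sketch toward it would fail.
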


Does this topological approach admit a generalization to higher degrees? The following proposition gives a twofold negative answer if $n=2$: First, the vanishing of rational intersection cohomology is no longer a sufficient criterion. Second, exactness of $(\star)$ \emph{does} effectively depend on the complex structure. 

\begin{prop}[Dependancy on the complex structure, Section~\ref{ssec-dependancy-hol-structure}]\label{intro-ex-top-counterexample}
There exist two minimally elliptic normal surface singularities $p_1\in X_1$ and $p_2\in X_2$ such that
\begin{enumerate}
 \item\label{it-top-counterexample-homeo} $X^\text{top}_1\cong X^\text{top}_2$, i.e., the underlying topological spaces are homeomorphic,
 \item\label{it-top-counterexample-int-coho} $\IH^k_{\text{loc}}(p_i\in X_i,\bQ)=0$ for $k>0$ and $i=1,2$, and
 \item\label{it-top-counterexample-poincare} the complex $(\star)$ is exact for $X=X_1$, but not for $X=X_2$.
\end{enumerate}
\end{prop}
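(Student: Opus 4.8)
The plan is to exhibit $X_1$ and $X_2$ as the two ``ends'' of the one-dimensional modulus family of the exceptional unimodal (minimally elliptic, non--log canonical) hypersurface surface singularity $E_{12}$. Explicitly, set
\[ X_1=\{x^2+y^3+z^7=0\}\subset\bC^3\qquad\text{and}\qquad X_2=\{x^2+y^3+z^7+t\,yz^5=0\}\subset\bC^3 \]
for a generic small parameter $t\neq 0$; the monomial $yz^5$ has quasi-homogeneous weight $\tfrac{22}{21}>1$, so $X_2$ is a $\mu$-constant, non--quasi-homogeneous deformation of $X_1$. Both are normal Gorenstein surface singularities with $p_g=1$, and the minimal resolution graph $\Gamma$ of either is the star-shaped tree of smooth rational curves attached to $(2,3,7)$.

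For properties \ref{it-top-counterexample-homeo} and \ref{it-top-counterexample-int-coho} one argues purely on the level of $\Gamma$. A normal surface-singularity germ is homeomorphic to the open cone over its link, and the link of a plumbing along a tree of $\bP^1$'s is determined up to orientation-preserving homeomorphism by $\Gamma$; since the family $x^2+y^3+z^7+t\,yz^5$ is $\mu$-constant it is equisingular with $\Gamma$ constant (classical for the unimodal singularities), so the two links both equal the Brieskorn integral homology sphere $\Sigma(2,3,7)$, giving \ref{it-top-counterexample-homeo}. As $\Gamma$ is a tree of $\bP^1$'s its intersection form is negative definite, hence $H_1$ of the link is rationally trivial, and the cone formula for intersection cohomology yields $\IH^k_{\text{loc}}(p_i\in X_i,\bQ)=H^k(\Sigma(2,3,7),\bQ)=0$ for $k=1$ and $0$ for $k\geq 2$ as well, which is \ref{it-top-counterexample-int-coho}. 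By Theorem~\ref{thm-top-poincare} the complex $(\star)$ is then exact in degrees $\leq 1$ for both $X_i$; since $\dim X_i=2$, the only remaining place exactness can fail is at the last term, i.e.\ $(\star)$ is exact iff the defect $N_i:=\Omega^{[2]}_{X_i,p}\big/\,d\,\Omega^{[1]}_{X_i,p}=\omega_{X_i,p}\big/\,d\,\Omega^{[1]}_{X_i,p}$ vanishes. So everything reduces to $N_1=0$ and $N_2\neq 0$.

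To get $N_1=0$ I would use the Euler vector field $\theta=\tfrac12 x\partial_x+\tfrac13 y\partial_y+\tfrac17 z\partial_z$, which is tangent to $X_1$ and hence a global section of the reflexive tangent sheaf. On a surface every reflexive $2$-form $\alpha$ is closed, so Cartan's formula degenerates to $\LieDer_\theta\alpha=d(\iota_\theta\alpha)$ with $\iota_\theta\alpha$ a reflexive $1$-form. Decomposing $\alpha$ into its (convergent) weight-homogeneous parts $\alpha=\sum_w\alpha_w$, and observing that the generator $\eta=\res\frac{dx\wedge dy\wedge dz}{x^2+y^3+z^7}$ of $\omega_{X_1,p}$ has weight $-\tfrac1{42}$ while every nonzero weight occurring in $\mathcal{O}_{X_1,p}$ lies in the numerical semigroup generated by $\tfrac6{42},\tfrac{14}{42},\tfrac{21}{42}$ and therefore differs from $\tfrac1{42}$, one finds that every $\alpha_w$ has nonzero weight, so $\alpha_w=d\bigl((\mathrm{wt}\,\alpha_w)^{-1}\iota_\theta\alpha_w\bigr)$; since the coefficients $(\mathrm{wt}\,\alpha_w)^{-1}$ are bounded, the sum is again a reflexive $1$-form and $\alpha\in d\,\Omega^{[1]}_{X_1,p}$. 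Hence $N_1=0$ and $(\star)$ is exact for $X_1$.

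The hard part is $N_2\neq 0$: for $X_2$ the Euler field is unavailable, and one must show the defect is genuinely nonzero. My approach would be to pass to a log resolution $\pi\colon\widetilde X_2\to X_2$ with exceptional curve $E$ and identify $N_2$ with holomorphic data along $E$ --- relating $\omega_{X,p}\big/\,d\,\Omega^{[1]}_{X,p}$ to the cokernel of $d\colon H^0(\widetilde X,\Omega^1_{\widetilde X}(\log E))\to H^0(\widetilde X,\Omega^2_{\widetilde X}(\log E))$ --- and then, via the Brieskorn lattice of the isolated hypersurface singularity and Steenbrink's mixed Hodge structure on the Milnor fibre, to the position of the Hodge filtration inside its classifying space, which is exactly the modulus of $E_{12}$. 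For the distinguished (quasi-homogeneous) parameter this position forces the defect to vanish, consistently with the paragraph above; the content is that for $t\neq 0$ the class of the generating $2$-form $\eta$ in the relevant de Rham-type quotient becomes nonzero --- note that $X_2$ has Tjurina number $\tau=\mu-1<\mu$ by K.~Saito's criterion, a genuinely analytic invariant distinguishing it from $X_1$. I expect this jump of $N$ --- as opposed to a mere upper bound on it --- to be the crux; concretely I would establish it either by a direct Gauss--Manin / Brieskorn-lattice computation for the pencil $x^2+y^3+z^7+t\,yz^5$, or by explicitly producing a reflexive $2$-form on $X_2$ whose pullback to $\widetilde X_2$ acquires at worst a logarithmic pole along $E$ but which has no reflexive $1$-form primitive on $X_2$. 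Once $N_2\neq 0$, properties \ref{it-top-counterexample-homeo}--\ref{it-top-counterexample-poincare} all hold --- identical topology, opposite behaviour of $(\star)$ --- which in particular shows that exactness of $(\star)$ effectively depends on the complex structure.
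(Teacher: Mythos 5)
Your construction (the quasihomogeneous $E_{12}$ germ $X_1=\{x^2+y^3+z^7=0\}$ and a generic member $X_2$ of its $\mu$-constant stratum) is a legitimate alternative to the paper's examples, which are instead non-taut star-shaped singularities of type $(S)$ distinguished via Laufer's tautness results; and your handling of the soft parts is essentially sound: homeomorphism of the germs and $\IH^k_{\text{loc}}=0$ follow once the resolution graph (equivalently the link $\Sigma(2,3,7)$) is constant in the family -- which you assert rather than prove, but which can be justified by Newton-nondegeneracy of the common principal part -- and exactness of $(\star)$ for $X_1$ follows from Theorem~\ref{thm-top-poincare} in degree one together with your Euler-field/weight argument in degree two (no weight-zero reflexive $2$-forms since $1/42$ is not in the semigroup generated by $21/42,14/42,6/42$), which is a correct, standard argument modulo routine convergence details. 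Minimal ellipticity and normality of $X_2$ ($p_g=1$, Gorenstein, Laufer's criterion \cite{Lau77}) are likewise only asserted, but are fixable.

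The genuine gap is exactly the step you flag as "the hard part": you never prove $N_2=\Omega^{[2]}_{X_2,p}/\mathrm{d}\,\Omega^{[1]}_{X_2,p}\neq 0$. The proposal offers two possible strategies (a Gauss--Manin/Brieskorn-lattice computation, or exhibiting an explicit reflexive $2$-form with no primitive) and notes $\tau<\mu$, but neither strategy is carried out, and no result is invoked that converts non-quasihomogeneity (equivalently $\tau<\mu$) into non-vanishing of $h^2(\star)$. This is precisely the input the paper imports: Proposition~\ref{intro-prop-gorenstein-surface}, whose proof rests on the theorem of \cite{DY10} that for a Gorenstein normal surface singularity the cohomologies of $(\star)$ in degrees one and two have equal dimension \emph{if and only if} the singularity is quasihomogeneous; combined with Theorem~\ref{thm-top-poincare} (which gives $h^1(\star)=0$ because $\IH^1_{\text{loc}}=0$) this forces $h^2(\star)\neq 0$ for any non-quasihomogeneous Gorenstein member, and in particular would finish your example as well. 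Without citing such a characterization or actually performing the Brieskorn-lattice computation for the pencil $x^2+y^3+z^7+t\,yz^5$, item~(\ref{it-top-counterexample-poincare}) for $X_2$ -- the whole point of the proposition, namely that exactness of $(\star)$ depends on the complex structure -- remains unproven.
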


Fortunately, the proof of Proposition~\ref{intro-ex-top-counterexample} contains an elucidating \emph{geometric} explanation for the phenomenon: the complex space germ $p_1\in X_1$ is quasihomogeneous while $p_2\in X_2$ is not. The notion of quasihomogeneity is recalled in Definition~\ref{defn-quasihom-sing}. This observation leads us to our second topic.\newline

\subsection*{High degrees and holomorphic contractibility.} By a result of Gilmartin~\cite{Gil64} any complex space is locally topologically contractible. In contrast, the existence of a \emph{holomorphic} contraction map as in Definition~\ref{defn-contraction} below is a strong condition on the complex structure of $X$. For holomorphically contractible complex spaces, the Poincar\'e Lemma has been settled for
\begin{itemize}
 \item \emph{K\"ahler differential forms},~\cite{Rei67}, and
 \item \emph{K\"ahler differential form modulo torsion},~\cite{Ferr70}.
\end{itemize}
In the setup of reflexive differential forms, holomorphic contractibility does \emph{not} imply exactness of $(\star)$ as has been observed in~\cite[Rem.~5.4.2]{GKP12}.

However, we establish at least partial results in this direction. Our approach consists of two steps: First, we turn away from reflexive differential forms and examine another important class of differential forms, namely the sheaves $\Omega^i_\h|_X$ of \emph{$\h$-differential forms}. The letter $\h$ refers to the $\h$-Grothendieck topology on the category of schemes introduced by Voevodsky in~\cite[Def. 3.1.1]{Voe96} in his study of the homology of schemes, see Section~\ref{ssec-h-diff-forms} for details.

The class of $\h$-differential forms is the closest to reflexive differential forms that still admits a pull-back map by holomorphic maps. It is exactly this technical advantage that enables us to establish an analog of Reiffen's and Ferrari's results. The easiest version can be formulated as follows.

\begin{thm}[Poincar\'e Lemma for $\h$-differential forms on holomorphically contractible spaces, Section~\ref{ssec-poincare-contractible}]\label{thm-intro-contractions-h-forms}
Let $p\in X$ be a point on a reduced complex space of dimension $n$. If the space germ $\sg{X}{p}$ is holomorphically contractible to a subspace $Y\subset X$ of dimension $m$, then the sequence
\[\Omega^{m}_\h|_{X,p}\xrightarrow{\text{d}}\cdots\xrightarrow{\text{d}}\Omega^{n}_\h|_{X,p}\to 0\]
of stalks of sheaves of $\h$-differential forms is exact.
\end{thm}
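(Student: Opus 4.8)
The plan is to mimic the classical homotopy-operator proof of the Poincaré Lemma, but carried out at the level of $\h$-differential forms, where a pull-back along arbitrary holomorphic maps is available. First I would set up the holomorphic contraction $H\colon \sg{X}{p}\times\Delta\to \sg{X}{p}$ with $H_0=\id$ and $H_1$ a retraction onto the subspace $Y$, as in Definition~\ref{defn-contraction}. The key point is that since $\Omega^\bullet_\h|_X$ is a complex of sheaves equipped with functorial pull-back, one obtains pull-back maps $H^*\colon \Omega^i_\h|_{X,p}\to \Omega^i_\h|_{X\times\Delta,\,(p,t)}$ for all $i$. One then constructs a homotopy operator $h\colon \Omega^i_\h|_{X,p}\to \Omega^{i-1}_\h|_{X,p}$ by pulling back along $H$, decomposing $H^*\omega$ into a part not involving $dt$ and a part of the form $dt\wedge(\cdots)$, and integrating the latter in $t$ over $[0,1]$; this requires knowing that $\h$-forms on $X\times\Delta$ decompose in the expected way with respect to the $\Delta$-direction and that integration along $[0,1]$ is well-defined on them. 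The standard Cartan-type identity $dh+hd = H_1^*-H_0^* = H_1^*-\id$ then follows formally from $H^*d=dH^*$ together with Stokes on the interval.

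Granting this homotopy formula, I would finish as follows. For a closed form $\omega\in\Omega^i_\h|_{X,p}$ with $i>m$, the identity gives $\omega = H_1^*\omega + d(h\omega)$, so it suffices to show $H_1^*\omega = 0$. But $H_1$ factors through the subspace $Y$ of dimension $m$, i.e. $H_1 = \iota\circ r$ with $r\colon \sg{X}{p}\to \sg{Y}{p}$ and $\iota\colon Y\hookrightarrow X$; hence $H_1^*\omega = r^*(\iota^*\omega)$, and $\iota^*\omega$ is an $\h$-differential form of degree $i>m=\dim Y$ on $Y$. Since $\h$-differential forms of degree exceeding the dimension vanish — this is one of the basic structural properties of $\Omega^\bullet_\h$, recalled in Section~\ref{ssec-h-diff-forms} — we get $\iota^*\omega=0$, hence $H_1^*\omega=0$, hence $\omega = d(h\omega)$ is exact. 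This is precisely exactness of the truncated complex $\Omega^m_\h|_{X,p}\xrightarrow{\text{d}}\cdots\xrightarrow{\text{d}}\Omega^n_\h|_{X,p}\to 0$; the surjectivity at the right end ($n\to 0$) is the same statement in top degree.

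The main obstacle I anticipate is the rigorous construction of the homotopy operator $h$ in the $\h$-category. Unlike smooth or Kähler differential forms, $\h$-forms are defined via a sheafification over the $\h$-topology, so "$\omega = \alpha + dt\wedge\beta$" and "integrate $\beta$ in $t$" are not literal operations on sections but must be justified — for instance by resolving $X$ and $X\times\Delta$ by smooth spaces compatibly with $H$ (using that $\h$-forms pull back from, and are computed on, resolutions), performing the classical integration there, and checking the result descends. One must also verify that the contraction $H$ itself, though only holomorphic on the germ, can be arranged (after shrinking) to be a genuine morphism $X\times\Delta\to X$ of complex spaces so that $H^*$ is defined; the compatibility $H_1=\iota\circ r$ should be read off directly from Definition~\ref{defn-contraction}. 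A secondary technical point is convergence/shrinking: the integration and all identities are germ-level statements, so one works on a suitable Stein or contractible representative and does not worry about global behavior. Once these functoriality and descent issues are handled, the algebra of the homotopy formula and the dimension vanishing finish the proof immediately.
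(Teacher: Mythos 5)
Your proposal is correct and takes essentially the same route as the paper: the paper builds exactly this homotopy operator, using the product decomposition of $\h$-forms on $X\times T$ (Proposition~\ref{prop-h-product}) and line integrals of sections of coherent sheaves (Proposition~\ref{fact-int-diff-coh-sheaves}) to prove that $j_1^*\alpha-j_0^*\alpha$ is exact for closed $\alpha$ (Proposition~\ref{prop-contractions-key}), and then kills the contribution from $Y$ because $\h$-forms of degree exceeding $\dim Y$ vanish (Proposition~\ref{prop-h-properties}). The only cosmetic difference is that the paper proves the homotopy formula only for closed forms (via Cartan's formula and $\int_\gamma\frac{d}{dt}s\,\text{dt}=s_{\gamma(b)}-s_{\gamma(a)}$) rather than the full identity $dh+hd=H_1^*-H_0^*$, which is all that is needed.
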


Second, we show that for some types of singularities, the sheaves of $\h$-differential forms and reflexive differential forms agree. This yields the following version of Theorem~\ref{thm-intro-contractions-h-forms}. Locally algebraic klt base spaces are introduced in Definition~\ref{defn-klt-base-space}.

\begin{cor}[Poincar\'e Lemma on holomorphically contractible spaces, Section~\ref{ssec-poincare-contractible}]\label{cor-poincare-klt-isol-rat}
Suppose that, in addition to the assumptions in Theorem~\ref{thm-intro-contractions-h-forms}, $X$ is a locally algebraic klt base space, or that $p\in X$ is an isolated rational singularity.

Then the reflexive de Rham complex $(\star)$ is exact in degrees $i>m$.
\end{cor}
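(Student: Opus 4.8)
The plan is to reduce the statement to Theorem~\ref{thm-intro-contractions-h-forms} by comparing reflexive differential forms with $\h$-differential forms in the relevant range of degrees. A locally algebraic klt base space and a normal complex space carrying an isolated rational singularity are in particular reduced, so Theorem~\ref{thm-intro-contractions-h-forms} applies verbatim and shows that the complex
\[(\star\star)\qquad\Omega^{m}_\h|_{X,p}\xrightarrow{\text{d}}\cdots\xrightarrow{\text{d}}\Omega^{n}_\h|_{X,p}\to 0\]
is exact, i.e.\ has vanishing cohomology at $\Omega^i_\h|_{X,p}$ for every $i>m$. Now $\Omega^{[\bullet]}_X$ and $\Omega^{\bullet}_\h|_X$ are both subcomplexes of $j_*\Omega^\bullet_{X_\reg}$, where $j\colon X_\reg\hookrightarrow X$ is the inclusion of the regular locus, and on both the differential is the $\text{d}$ inherited from $X_\reg$; hence there is a canonical morphism of complexes $\Omega^{\bullet}_\h|_{X,p}\to\Omega^{[\bullet]}_{X,p}$, automatically compatible with the differentials. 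Once we know that this morphism is an isomorphism of stalks in every degree $i\ge m$, the cohomology of $(\star)$ at $\Omega^{[i]}_{X,p}$ coincides with the cohomology of $(\star\star)$ at $\Omega^i_\h|_{X,p}$ for every $i>m$, and the corollary follows.

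The substance of the proof is thus the identification $\Omega^i_\h|_{X,p}\cong\Omega^{[i]}_{X,p}$ for $i\ge m$, which must be established under each of the two hypotheses. When $X$ is a locally algebraic klt base space, the germ $\sg{X}{p}$ admits a local algebraic model which is a klt variety; one may therefore work in the $\h$-topology of schemes, and the identification follows from the extension theorem for differential forms over klt singularities of Greb--Kebekus--Kov\'acs--Peternell, combined with the basic comparison properties of $\h$-differential forms recorded in Section~\ref{ssec-h-diff-forms} --- with a modest amount of care needed to transport the statement between the analytic germ and its algebraic model. When $p\in X$ is an isolated rational singularity, the identification is again reduced to a statement on a resolution $\pi\colon\reso{X}\to X$: in low degrees it amounts to the classical extension of reflexive differential forms across a singular locus of codimension $n$, while the remaining degrees use the rationality of $p\in X$ --- for instance in top degree $\pi_*\Omega^{n}_{\reso{X}}=\pi_*\omega_{\reso{X}}=\omega_X=\Omega^{[n]}_X$, which matches $\Omega^{n}_\h|_X$.

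The main obstacle is precisely this identification, and the two regimes raise different difficulties: the klt case relies on the (deep) Greb--Kebekus--Kov\'acs--Peternell extension theorem and on the bookkeeping hidden behind the words ``locally algebraic'', whereas the isolated rational case is more elementary but genuinely needs the rationality hypothesis in the top degrees, where a codimension count by itself does not suffice. By contrast, the passage from the exactness of $(\star\star)$ to the exactness of $(\star)$ in degrees $i>m$ is, given the degreewise isomorphism of complexes, purely formal.
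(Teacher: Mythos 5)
Your overall strategy --- exactness of the $\h$-de Rham complex from Theorem~\ref{thm-intro-contractions-h-forms}, followed by a degreewise identification $\Omega^i_\h|_{X,p}\cong\Omega^{[i]}_{X,p}$ --- is exactly the paper's route (Theorem~\ref{thm-h-poincare-contraction} combined with Propositions~\ref{prop-h-refl-klt} and~\ref{prop-h-refl-isolated-rational}), and your sketch of the klt case follows the paper's argument via the algebraic model, the $\h$-sheafification and the extension theorem. The reduction of the corollary to the degreewise isomorphism in degrees $\geq m$ is also correct.

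In the isolated rational case, however, there is a genuine gap. You only address the identification $\Omega^{[i]}_X\cong\pi_*\Omega^i_{\tilde{X}}$ (van Straten--Steenbrink in degrees $\leq n-2$, rationality in the top degrees) and then assert that this ``matches'' $\Omega^i_\h|_X$. But $\h$-forms are \emph{not} all of $\pi_*\Omega^i_{\tilde{X}}$: by the resolution description (Lemma~\ref{lem-easy-descr}, Proposition~\ref{prop-h-isolated}) a form on $\tilde{X}$ defines an $\h$-form only if its two pull-backs to the products $E_j\times E_k$ of exceptional components agree, i.e.\ only if it lies in $\sI_E\cdot\Omega^i_{\tilde{X}}(\log E)=\ker\bigl(\Omega^i_{\tilde{X}}\to\Omega^i_E/\tor\bigr)$. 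So even after proving $\Omega^{[i]}_X\cong\pi_*\Omega^i_{\tilde{X}}$ you still have to show $\pi_*\bigl(\sI_E\cdot\Omega^i_{\tilde{X}}(\log E)\bigr)=\pi_*\Omega^i_{\tilde{X}}$, equivalently $H^0(E,\Omega^i_E/\tor)=0$. This is false for general normal isolated singularities --- for the cone over an elliptic curve, a nonzero $1$-form on $E$ yields a form on $\tilde{X}$ whose associated reflexive form on $X$ is not an $\h$-form --- so rationality must be invoked a second time precisely here; the paper does this via Namikawa's vanishing \cite[Lem.~1.2]{Nam01} in the proof of Proposition~\ref{prop-h-refl-isolated-rational}. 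Only in top degree is this step automatic, because $\sI_E\cdot\Omega^n_{\tilde{X}}(\log E)=\omega_{\tilde{X}}$; in the intermediate degrees $m<i<n$, which is where the corollary has content, your argument as written stalls at this point.
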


Theorem~\ref{thm-top-poincare} and Corollary~\ref{cor-poincare-klt-isol-rat} are far from covering all reasons for the exactness of the de Rham complex of reflexive differential forms. However, for Gorenstein normal surface singularities, there is a complete characterization in terms of holomorphic contractibility and topological properties:

\begin{prop}[Complete characterization for Gorenstein surfaces, Section~\ref{ssec-gorenstein-surface}]\label{intro-prop-gorenstein-surface}
Let $p\in X$ be a Gorenstein normal surface singularity . Then
\[{\textstyle (\star)\text{ is exact}\Leftrightarrow p\in X\text{ is quasihomogeneous and } \IH^1_{\text{loc}}(p\in X,\bQ)=0.} \]
\end{prop}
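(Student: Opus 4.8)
The plan is to reduce the equivalence to two statements, from which the proposition follows formally: (a) if $(\star)$ is exact, then $\sg{X}{p}$ is quasihomogeneous; and (b) if $\sg{X}{p}$ is quasihomogeneous, then $(\star)$ is exact if and only if $\IH^1_{\text{loc}}(p\in X,\bQ)=0$. (Indeed: if $(\star)$ is exact, (a) gives quasihomogeneity and then the ``only if'' in (b) gives $\IH^1_{\text{loc}}=0$; conversely quasihomogeneity together with $\IH^1_{\text{loc}}=0$ gives exactness by the ``if'' in (b).) Write $\mathcal{H}^1,\mathcal{H}^2$ for the cohomology of $(\star)$ in degrees one and two; degree zero is automatic since $X$ is normal, and $\Omega^{[3]}_X=0$ because a reflexive sheaf on a normal space is determined on the smooth locus, where $\Omega^3$ vanishes. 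Fix the minimal resolution $\pi\colon\reso{X}\to X$, with reduced exceptional divisor $E=\bigcup_i E_i$ and dual graph $\Gamma$. Two auxiliary facts will be used: on a small Stein representative with smooth punctured part $U^\ast$ (link $L$), the holomorphic Poincar\'e lemma identifies $H^\bullet(L,\bC)$ with the hypercohomology on $U^\ast$ of the holomorphic de Rham complex, whose spectral sequence has bottom row $\sO_{X,p}\xrightarrow{\text{d}}\Omega^{[1]}_{X,p}\xrightarrow{\text{d}}\Omega^{[2]}_{X,p}$ (here normality enters, $H^0(U^\ast,\Omega^{[i]}_X)=\Omega^{[i]}_{X,p}$); reading off the edge map and using the mixed Hodge structure on $H^1(L)$ (Durfee, Steenbrink) gives $\mathcal{H}^1=\ker\!\bigl(H^1(L,\bC)\to H^1(U^\ast,\sO_X)\bigr)=\bigoplus_i H^0(E_i,\omega_{E_i})$. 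Finally $\IH^1_{\text{loc}}(p\in X,\bQ)\cong H^1(L,\bQ)$ by the cone formula (Definition~\ref{defn-local-intersection-coho}), and $b_1(L)=2\sum_i g(E_i)+b_1(\Gamma)$.

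For statement (b), assume $\sg{X}{p}$ is quasihomogeneous, so $\Gamma$ is a tree and $H^1(L,\bQ)=\bigoplus_i H^1(E_i,\bQ)$; then $\mathcal{H}^1=0\Leftrightarrow$ all $E_i$ are rational $\Leftrightarrow\IH^1_{\text{loc}}(p\in X,\bQ)=0$ (the implication $\IH^1_{\text{loc}}=0\Rightarrow\mathcal{H}^1=0$ is already Theorem~\ref{thm-top-poincare}). For $\mathcal{H}^2$, let $\theta$ be the Euler vector field of the $\bC^\ast$-action of Definition~\ref{defn-quasihom-sing}; it is a reflexive vector field with $\mathcal{L}_\theta\varpi=(\deg\varpi)\,\varpi$ on homogeneous reflexive forms. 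Since $\Omega^{[3]}_X=0$ every $\varpi\in\Omega^{[2]}_{X,p}$ is closed, so Cartan's formula gives $(\deg\varpi)\,\varpi=\text{d}(\iota_\theta\varpi)$ on homogeneous forms: reflexive $2$-forms of nonzero weight are exact. Conversely, a weight-zero exact reflexive $2$-form vanishes: choosing a weight-zero primitive $\alpha$ one has $\iota_\theta\varpi=\iota_\theta\text{d}\alpha=\mathcal{L}_\theta\alpha-\text{d}(\iota_\theta\alpha)=-\text{d}(\iota_\theta\alpha)=0$ since $\iota_\theta\alpha$ is a weight-zero function, hence constant, and $\iota_\theta\varpi=0$ forces $\varpi=0$ on a surface. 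Thus $\mathcal{H}^2$ equals the weight-zero graded piece of $\omega_{X,p}$ — here the Gorenstein hypothesis enters, $\omega_X$ being invertible, so this piece is $(\sO_{X,p})_{-\deg\eta}$. By the structure theory of quasihomogeneous normal surface singularities (Pinkham, Wagreich) this graded piece is $H^0$ of a line bundle on the central curve $E_0$ of the star-shaped graph which has negative degree exactly when $E_0\cong\bP^1$; and $E_0\cong\bP^1\Leftrightarrow H^1(L,\bQ)=0\Leftrightarrow\IH^1_{\text{loc}}(p\in X,\bQ)=0$. Hence for quasihomogeneous $\sg{X}{p}$, $(\star)$ exact $\Leftrightarrow\mathcal{H}^1=\mathcal{H}^2=0\Leftrightarrow\IH^1_{\text{loc}}(p\in X,\bQ)=0$, which is (b). (For rational double points this case of (b) is also contained in Corollary~\ref{cor-poincare-klt-isol-rat}.)

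For statement (a), assume $(\star)$ is exact, so $\mathcal{H}^2=0$ and the canonical generator $\eta$ of the invertible sheaf $\omega_{X,p}$ is exact: $\eta=\text{d}\alpha$ with $\alpha\in\Omega^{[1]}_{X,p}$. Since $\eta$ trivialises $\omega_X$, the contraction $\iota_{(-)}\eta\colon\mathcal{T}_X^{\refl}\to\Omega^{[1]}_X$ is an isomorphism, so there is a derivation $\theta$ of $\sO_{X,p}$ with $\iota_\theta\eta=\alpha$, whence $\mathcal{L}_\theta\eta=\text{d}\iota_\theta\eta+\iota_\theta\text{d}\eta=\text{d}\alpha=\eta$. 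I would then argue — using in addition $\mathcal{H}^1=0$ and the Gorenstein hypothesis — that $\theta$ may be chosen with semisimple linearisation at $p$ having positive rational eigenvalues; by the Scheja--Wiebe / K.~Saito criterion this means precisely that $\sg{X}{p}$ is quasihomogeneous.

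This last step is the main obstacle: it is a Saito-type characterisation of quasihomogeneity, and the delicacy is that exactness of the reflexive de Rham complex provides only a derivation $\theta$ with $\mathcal{L}_\theta\eta=\eta$, whereas quasihomogeneity demands a derivation diagonalisable at $p$ with positive weights. Bridging this gap is exactly where the Gorenstein hypothesis and the vanishing $\mathcal{H}^1=0$ must be combined — I expect this to require an argument of $\mu=\tau$ type, comparing a Tjurina-like invariant of $\sg{X}{p}$ with that of its putative quasihomogeneous model, or equivalently showing that the cohomological obstruction to integrating $\theta$ to a $\bC^\ast$-action vanishes. Two further points are lighter but still need care: the Hodge-theoretic identification of $\mathcal{H}^1$ with $\bigoplus_i H^0(E_i,\omega_{E_i})$ (implicit already in the proof of Theorem~\ref{thm-top-poincare}), and the passage, via the resolution data, from $\IH^1_{\text{loc}}(p\in X,\bQ)=0$ to the vanishing of the weight-zero part of $\omega_{X,p}$ for a quasihomogeneous surface singularity.
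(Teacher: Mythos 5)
Your reduction of the statement to (a) ``$(\star)$ exact $\Rightarrow$ quasihomogeneous'' and (b) ``for quasihomogeneous germs, $(\star)$ exact $\Leftrightarrow \IH^1_{\text{loc}}(p\in X,\bQ)=0$'' matches the logical skeleton of the paper, and your treatment of (b) is a plausible (if still sketchy) alternative: the paper obtains the degree-one equivalence from Lemma~\ref{lem-quasihom-surface} by pulling back a nonzero $1$-form from the positive-genus central curve along the Bia{\l}ynicki-Birula map, and it does \emph{not} compute $h^2(\star)$ directly in the quasihomogeneous case at all, whereas you analyse $h^2(\star)$ via the Euler vector field and the weight decomposition of $\omega_{X,p}$. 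That Cartan-formula argument for killing nonzero-weight classes is sound and close in spirit to the paper's Proposition~\ref{prop-contractions-key}; the remaining assertion that the weight-zero piece of $\omega_{X,p}$ vanishes exactly when $\IH^1_{\text{loc}}=0$ is stated too loosely (``negative degree exactly when $E_0\cong\bP^1$'' is not literally what you need) but is repairable with Pinkham--Wagreich.

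The genuine gap is step (a), and you have flagged it yourself: from exactness you extract a derivation $\theta$ with $\LieDer_\theta\eta=\eta$, but you have no argument that $\theta$ can be replaced by a derivation that is diagonalisable at $p$ with positive weights, which is what quasihomogeneity requires. This is not a routine integration step -- it is precisely the content of the main theorem of~\cite{DY10}, which for a Gorenstein normal surface singularity asserts that $\dim_\bC h^1(\star)=\dim_\bC h^2(\star)$ holds \emph{if and only if} $\sg{X}{p}$ is quasihomogeneous; the paper's entire proof consists of combining this black box (applied with $h^1=h^2=0$) with Theorem~\ref{thm-top-poincare} and Lemma~\ref{lem-quasihom-surface}. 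Your ``$\mu=\tau$-type'' expectation is the right intuition (this is the surface analogue of K.~Saito's hypersurface criterion), but as written the implication ``$(\star)$ exact $\Rightarrow$ quasihomogeneous'' is unproven, so the characterization is only established in one direction. To close the gap you must either prove the Du--Yau-type statement or cite it; nothing lighter will do, since this is where the Gorenstein hypothesis does its real work.
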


Quasihomogeneous complex space germs $\sg{X}{p}$ are holomorphically contractible to $\{p\}\subset X$ by Example~\ref{ex-action-contraction}.

\subsection*{Relation to other topics of interest} In the following we exhibit three examples illustrating the close relation between the exactness of $(\star)$ and many other questions of interest.

\subsubsection*{Global topology of $X$.} In this section let us \emph{assume} that the Poincar\'e Lemma holds for reflexive differential forms on $X$, i.e., the complex $(\star)$ is exact for any $p\in X$. Recall that the assumption holds if $X$ is a surface with rational singularities by \cite[Prop.~2.5]{CF02}.

The Frölicher spectral sequence known from the theory of complex manifolds has an analog
\[(\star\star)\quad\quad\quad E^{i,j}_1=H^j(X,\Omega^{[i]}_X)\implies H^{i+j}(X,\bC)\]
relating the cohomology of the sheaves of reflexive differential forms and the global topology of $X$. Since degeneration of $(\star\star)$ at $E_1$ has been proved in~\cite[Thm.~12.5]{Dan78} for normal toric projective varieties, it seems natural to hope for degeneration in the new cases established e.g. in Corollary~\ref{cor-poincare-klt-isol-rat}. The following proposition dashes these hopes starting from dimension three, even under very strong assumptions on the local nature of the singularities of $X$.

\begin{prop}[Degeneration of $(\star\star)$, Section~\ref{sec-degeneration}]\label{intro-ex-degeneration} The spectral sequence satisfies the following:
\begin{enumerate}[label={(\alph*)}] 
 \item\label{it-intro-ex-degeneration-surface} If $X$ is a projective surface with rational singularities, then $(\star\star)$ degenerates at $E_1$.
 \item\label{it-intro-ex-degeneration-threefold} There exists a projective three-dimensional complex space with only one quasihomogeneous terminal hypersurface singularity such that $(\star\star)$ does not degenerate at $E_1$.
\end{enumerate}
\end{prop}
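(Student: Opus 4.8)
The plan is to treat the two parts separately: part~\ref{it-intro-ex-degeneration-surface} is essentially a dimension count, while part~\ref{it-intro-ex-degeneration-threefold} requires producing a genuinely asymmetric threefold.

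For \ref{it-intro-ex-degeneration-surface}: by \cite[Prop.~2.5]{CF02} the reflexive Poincar\'e Lemma holds on a normal projective surface with rational singularities, so $\Omega^{[\bullet]}_X$ is a resolution of $\bC_X$, the sequence $(\star\star)$ is its Hodge-to-de~Rham spectral sequence, and degeneration at $E_1$ is equivalent to the numerical identities $b_k(X)=\sum_{i+j=k}h^j(X,\Omega^{[i]}_X)$. I would check these by comparison with a resolution $\pi\colon\tilde X\to X$. On the topological side the exceptional divisor of $\pi$ is a tree of $\bP^1$'s, so Mayer--Vietoris gives $b_1(X)=b_1(\tilde X)$, $b_2(X)=b_2(\tilde X)-N$ and $b_3(X)=b_3(\tilde X)$, with $N$ the number of exceptional components. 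On the sheaf side $\Omega^{[0]}_X=\sO_X$ and $\Omega^{[2]}_X=\omega_X$ have the cohomology of $\sO_{\tilde X}$ and $\omega_{\tilde X}$ by the definition of rationality and by Grauert--Riemenschneider vanishing, while $\Omega^{[1]}_X=\pi_*\Omega^1_{\tilde X}$; its cohomology I would read off from the Leray spectral sequence of $\pi$, using that $h^0(X,\Omega^{[1]}_X)=h^0(\tilde X,\Omega^1_{\tilde X})$ (extension of holomorphic forms over $\pi$), $h^2(X,\Omega^{[1]}_X)=h^0(\tilde X,\Omega^1_{\tilde X})$ (Serre duality on the Cohen--Macaulay surface $X$, via $\sHom(\Omega^1,\omega)\cong\Omega^1$ in dimension two), and the Euler characteristic to fix $h^1$. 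After the bookkeeping, everything reduces to the single identity $\dim_\bC R^1\pi_*\Omega^1_{\tilde X}=N$, which I regard as the one non-formal input; it is a standard property of rational surface singularities, and is the part of \ref{it-intro-ex-degeneration-surface} carrying real content.

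For \ref{it-intro-ex-degeneration-threefold}: a quasihomogeneous terminal hypersurface point on a threefold is isolated, rational, and holomorphically contractible to the point, so by Corollary~\ref{cor-poincare-klt-isol-rat} (the ``isolated rational'' clause) the Poincar\'e Lemma $(\star)$ holds everywhere on such an $X$; hence $(\star\star)$ genuinely abuts to $H^{i+j}(X,\bC)$ and non-degeneration at $E_1$ means $\sum_{i+j=k}h^j(X,\Omega^{[i]}_X)>b_k(X)$ for some $k$. Since terminal singularities are Du Bois, the Hodge-to-de~Rham spectral sequence of the Du~Bois complex of $X$ degenerates at $E_1$ --- equivalently $\sum_{i+j=k}\dim\bH^j(X,\underline\Omega^i_X)=b_k(X)$ --- so it suffices to produce an $X$ with one quasihomogeneous terminal hypersurface point at which the reflexive differentials carry, in some total degree, strictly more cohomology than the Du~Bois ones. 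Concretely I would realise such an $X$ --- e.g.\ as a general member of an appropriate linear system on a smooth projective toric fourfold, or as a suitable cyclic cover of $\bP^3$ branched along a surface with one Du~Val point --- so that it acquires a single compound Du~Val point of a carefully chosen type, and then compute both sides through a resolution $\pi\colon\hat X\to X$: since $X$ is klt one has $\Omega^{[i]}_X=\pi_*\Omega^i_{\hat X}$, so the Leray spectral sequence reduces $h^j(X,\Omega^{[i]}_X)$ to $h^j(\hat X,\Omega^i_{\hat X})$ together with the skyscrapers $R^{>0}\pi_*\Omega^i_{\hat X}$, which vanish for $i=0,3$ by rationality and Grauert--Riemenschneider and which for $i=1,2$ are computed from the infinitesimal neighbourhoods of the exceptional divisor by the theorem on formal functions; the topological side $b_k(X)$ follows from Mayer--Vietoris for $\pi$ (here Poincar\'e duality for $X$ fails, the link of the singular point not being a rational homology sphere).

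The step I expect to be the genuine obstacle is to arrange the singularity and the global model so precisely that the edge homomorphisms out of $H^j(\hat X,\Omega^i_{\hat X})$ fail to surject onto these local contributions, leaving a class in $E_1^{i,j}$ that is forced to support a nonzero differential $d_1$; concretely I would aim for the strict inequality in degree $k=3$. That this is delicate is not an accident: most singular threefolds one writes down at first --- all toric ones by \cite[Thm.~12.5]{Dan78}, and (as a direct check through the blow-up of the point shows) even the ordinary double point --- do satisfy $E_1$-degeneration, because there the reflexive and the Du~Bois differentials coincide and the local correction terms are completely absorbed by the restriction maps on the exceptional divisor. So the content of \ref{it-intro-ex-degeneration-threefold} lies exactly in exhibiting a quasihomogeneous terminal hypersurface singularity whose reflexive differential forms are cohomologically strictly larger than the Hodge-theoretically correct (Du~Bois) ones, and in propagating this local excess into a global failure of $E_1$-degeneration.
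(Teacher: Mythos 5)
Your part~\ref{it-intro-ex-degeneration-surface} is a legitimate alternative route (a full Hodge/Betti dimension count against the resolution), but as written it hangs on an input you neither prove nor cite: the identity $\dim_\bC R^1\pi_*\Omega^1_{\tilde X}=N$ for an \emph{arbitrary} rational surface singularity. This is not formal bookkeeping; via the residue sequence it amounts to $R^1\pi_*\Omega^1_{\tilde X}(\log F)=0$ together with injectivity of the connecting map $\bigoplus_i H^0(F_i,\sO_{F_i})\to R^1\pi_*\Omega^1_{\tilde X}$, i.e.\ exactly the degree-$(1,1)$ information that the paper's proof of Proposition~\ref{prop-degeneration-rat-surface} is engineered to avoid. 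The paper only establishes the two cheap comparisons — $h^j(X,\Omega^{[i]}_X)=h^j(\tilde X,\Omega^i_{\tilde X})$ for $(i,j)\neq(1,1)$ and $b_k(X)=b_k(\tilde X)$ for $k\neq 2$ — and then uses the elementary observation that a nonzero differential in the spectral sequence forces a strict drop of $\sum_{a+b=k}\dim E_1^{a,b}$ in \emph{two consecutive} total degrees, at least one of which is $\neq 2$; comparing with the degenerate Fr\"olicher sequence of $\tilde X$ gives the contradiction. So either supply a proof or a precise reference for $\dim R^1\pi_*\Omega^1_{\tilde X}=N$ (and note that your $b_2(X)=b_2(\tilde X)-N$, $b_3(X)=b_3(\tilde X)$ also need the surjectivity of $H^2(\tilde X,\bC)\to H^2(F,\bC)$, i.e.\ negative definiteness, and that $h^j(X,\omega_X)=h^j(\tilde X,\omega_{\tilde X})$ needs $\pi_*\omega_{\tilde X}=\omega_X$, not Grauert--Riemenschneider alone), or adopt the two-degree trick, which needs none of this.

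The genuine gap is part~\ref{it-intro-ex-degeneration-threefold}: it is an existence statement, and you produce no example and verify nothing — you explicitly defer the decisive step (forcing a class to support a nonzero differential) as ``the genuine obstacle''. Your reduction ``terminal $\Rightarrow$ Du Bois $\Rightarrow$ Du Bois Hodge--de Rham degenerates, so find reflexive cohomology strictly exceeding Du Bois cohomology'' is a sensible criterion, but checking it for a concrete compound Du Val point together with control of the global edge maps is precisely the content of the proposition, and your proposal stops there. The paper does this concretely: Construction~\ref{const-counterexample-degeneration} takes $\{x^2+y^2+z^2+w^{2k}=0\}\subset\bC^4$ with $k\geq 2$ (terminal via its small resolution, quasihomogeneous for the weights $(k,k,k,1)$), closes it up in $\bP^4$ and resolves away the other singularities; then Proposition~\ref{prop-degeneration-counterexample} proves non-degeneration with \emph{no} Hodge- or Betti-number computation at all. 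Instead, degeneration at $E_1$ would force the $\bC^*$-action to be trivial on every $H^j(X,\Omega^{[i]}_X)$ (Observation~\ref{obs-invariants}), while the five-term sequence of the local-to-global Ext spectral sequence for $\text{Ext}^\bullet_{\sO_X}(\Omega^{[1]}_X,\Omega^{[3]}_X)$ — using $\sHom_{\sO_X}(\Omega^{[1]}_X,\Omega^{[3]}_X)\cong\Omega^{[2]}_X$, Cohen--Macaulayness and Serre duality — equivariantly sandwiches the local term $\text{Ext}^1_{\sO_{X,p}}(\Omega^{[1]}_{X,p},\Omega^{[3]}_{X,p})$ between spaces with trivial action, and Lemma~\ref{claim-ext-weights} computes that this local term carries exactly the nonzero weights $-k+1,\dots,k-1$, a contradiction. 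To complete your route you would have to fix one such singularity and actually carry out a computation of this strength; until then part~\ref{it-intro-ex-degeneration-threefold} is unproved.
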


\subsubsection*{On the Lipman-Zariski conjecture.} Let $V$ be an algebraic variety over a field of characteristic zero and suppose that the tangent sheaf $T_V$ is locally free in a neighborhood of some point $p\in V$. \emph{Is $p\in V$ a smooth point?} Motivated by his advisor Zariski, Lipman was the first to approach this question in~\cite{Lip65}. Since then a positive answer has been found in numerous cases. In our context the most noteworthy is the case of a quasihomogeneous singularity $p\in V$ settled by Hochster in~\cite{Hoch77}.

The proof of the following result illustrates how exactness properties of the complex $(\star)$ can be used to expand Hochster's result to slightly more general local $\bC^*$-actions.

\begin{cor}[Lipman-Zariski conjecture on contractible spaces, Section~\ref{sec-lz}]\label{cor-lz}
Let $X$ be a normal complex space and $p\in X$. Suppose that the space germ $\sg{X}{p}$ admits a holomorphic $\bC^*$-action with only non-negative weights such that the fixed point locus $X^{\bC^*}$ is a curve not contained in $X_\sing$.

Then the Lipman-Zariski conjecture holds at $p\in X$.
\end{cor}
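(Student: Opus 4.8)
The plan is to assume $T_{X,p}$ is locally free — otherwise there is nothing to prove — and to manufacture holomorphic coordinates at $p$ out of the action. Since the $\bC^\ast$-action is on the germ, $p$ lies in the fixed locus $C:=X^{\bC^\ast}$, which by hypothesis is a curve with $C\not\subseteq X_\sing$; non-negativity of the weights makes $\sO_{X,p}$ a non-negatively graded ring with degree-zero part $\sO_{C,p}$ and provides a holomorphic retraction $\pi\colon\sg{X}{p}\to\sg{C}{p}$ restricting to the identity on $C$ (Example~\ref{ex-action-contraction}, Definition~\ref{defn-contraction}). Let $\xi\in T_{X,p}$ be the fundamental vector field of the action: it is homogeneous of weight $0$, acts on homogeneous forms by $\LieDer_\xi=(\text{weight})$, and — the one piece of geometry we really use — it vanishes along the fixed locus, so $\xi\in\mathfrak{m}_p\,T_{X,p}$. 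Because $T_{X,p}$ is free, so are $\Omega^{[1]}_{X,p}=(T_{X,p})^\ast$ and all $\Omega^{[j]}_{X,p}=\bigwedge^j\Omega^{[1]}_{X,p}$, and the contraction $\iota_\xi$ carries $\Omega^{[j+1]}_{X,p}$ (and its Kähler analogue) into $\mathfrak{m}_p\,\Omega^{[j]}_{X,p}$.

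The engine of the proof is the Cartan homotopy $\LieDer_\xi=d\,\iota_\xi+\iota_\xi\,d$. For homogeneous $\omega$ of weight $k\neq 0$ it gives $\omega=\tfrac1k(d\,\iota_\xi\omega+\iota_\xi d\omega)$; in particular closed homogeneous forms of nonzero weight are exact, which combined with the weight-zero analysis (where $\Omega^{[j]}_{X,p}$ is essentially concentrated on the curve $C$) recovers the exactness of $(\star)$ in degrees $>1$ asserted by Corollary~\ref{cor-poincare-klt-isol-rat} directly, freeness of $\Omega^{[\bullet]}_{X,p}$ standing in for the klt/rational hypothesis. What I need is the refinement: for a homogeneous $1$-form $\omega$ of positive weight $k$ one has $\omega\equiv\tfrac1k\,d(\iota_\xi\omega)$ in $\Omega^{[1]}_{X,p}/\mathfrak{m}_p\Omega^{[1]}_{X,p}$ with $\iota_\xi\omega\in\mathfrak{m}_p$ (a weight-$k$ function with $k>0$ vanishes at the fixed point $p$). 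So the classes of the exact forms $df$, $f\in\mathfrak{m}_p$ homogeneous of positive weight, span the positive-weight part of $\Omega^{[1]}_{X,p}\otimes_{\sO_{X,p}}\bC$.

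Next I would check that $C$ is smooth at $p$: the degree-zero summand of the graded free module $\Omega^{[1]}_{X,p}$ is free over $\sO_{C,p}$ of rank equal to the number of weight-$0$ generators, and evaluating this at a general point of $C$ — which lies in $X_\reg$ by hypothesis, where the action linearizes with exactly one weight-$0$ coordinate direction — shows the rank is $1$; together with the splitting coming from $\pi$ and the section $C\hookrightarrow X$ this forces $\Omega^{[1]}_{C,p}$ to be free of rank one, i.e.\ $C$ smooth at $p$ (the delicate point being to run the comparison correctly if $C$ is not normal). Choosing a uniformizer $f_1$ of $\sg{C}{p}$, the form $df_1$ generates the weight-$0$ summand; combined with the previous paragraph one picks homogeneous $f_2,\dots,f_n\in\mathfrak{m}_p$ of positive weights so that $df_1,\dots,df_n$ span $\Omega^{[1]}_{X,p}\otimes\bC\cong\bC^n$. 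By Nakayama they generate $\Omega^{[1]}_{X,p}$, hence form a free basis. (Alternatively, once $C$ is smooth at $p$, one may argue via the $\bC^\ast$-equivariant flat fibration $\pi\colon X\to C$, whose fibres are quasihomogeneous singularities (Definition~\ref{defn-quasihom-sing}) with — as one checks — locally free tangent sheaves, and apply Hochster's theorem fibrewise; this is the sense in which the corollary \emph{expands} Hochster's result.)

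Finally, smoothness: the map $\varphi=(f_1,\dots,f_n)\colon\sg{X}{p}\to\sg{\bC^n}{0}$ is dominant and generically finite, as $df_1,\dots,df_n$ are $\sO_{X,p}$-linearly independent, and it is étale on $X_\reg$ near $p$, where $df_1,\dots,df_n$ is a frame of $\Omega^1$; since $X$ is normal, purity of the branch locus forces $\varphi$ to be étale, so $\varphi^\ast\colon\sO_{\bC^n,0}\to\sO_{X,p}$ is an isomorphism and $X$ is smooth at $p$. I expect this last step to be the main obstacle: passing from ``$df_1,\dots,df_n$ generate the \emph{reflexive} module $\Omega^{[1]}_{X,p}$'' to ``$X$ is smooth at $p$'' requires controlling the torsion of $\Omega^1_{X,p}$ — equivalently, making $\varphi$ genuinely finite — and the cleanest remedies are purity of the branch locus for $\varphi$, or re-running the Cartan homotopy on the Kähler differentials $\Omega^1_{X,p}$ to see that $df_1,\dots,df_n$ already generate $\Omega^1_{X,p}$ itself, so that $\operatorname{embdim}_p X\le n$. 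Treating a non-normal fixed curve $C$ in the previous paragraph is the other point needing care; the grading, the vanishing of $\xi$ on the fixed locus and the Cartan formula are all formal.
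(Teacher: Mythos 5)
The positive-weight half of your plan is sound: the Cartan homotopy $\LieDer_\xi=d\iota_\xi+\iota_\xi d$ for the Euler field, together with $\xi\in m_p\,T_{X,p}$ (which does hold, but needs the little argument that $\xi$ lies in $\sI_C\cdot T_X$ on $X_\reg$ near general points of $C$ and that its coefficients in a free basis therefore vanish at $p$ by continuity — not just ``$\xi$ vanishes on the fixed locus''), is a legitimate substitute for the paper's use of Theorem~\ref{thm-refl-poincare-contraction} and Remark~\ref{rem-refl-poincare-contraction}, and it shows the positive-weight part of the fiber $\Omega^{[1]}_{X,p}\otimes\bC_p$ is spanned by classes of exact forms. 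The proof breaks, however, exactly at the weight-zero direction, which is where the whole difficulty of the corollary sits. Your assertion that ``$df_1$ generates the weight-zero summand'' is, by pairing with the dual free module $T_{X,p}$, equivalent to the existence of a vector field $W\in T_{X,p}$ with $W(f_1)(p)\neq 0$, i.e.\ to non-degeneracy of the evaluation map $\varphi:T_{X,p}\otimes\bC_p\to T_pX$ in the invariant direction; nothing in your outline proves this. Writing $d\pi^*f_1=h\cdot e$ with $e$ a generator of the weight-zero part, your restriction-to-a-general-point-of-$C$ comparison only shows $h$ is not identically zero along $C$, not that $h(p)\neq 0$. This is precisely the point the paper has to work for: when no $df$ has nonzero class in the fiber it derives the contradiction $0=\sum_{i=1}^{n}(-1)^i\binom{n}{i}=-1$ from exactness of both $(\star)$ and its $m_p$-twisted version plus freeness, and in general it splits off a polydisc via the flows of vector fields dual to a basis of $\text{im}(\varphi^*)$ and reduces to Hochster. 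Your proposal contains no substitute for this counting/splitting step; the subsidiary claims (smoothness of $C$, one-dimensionality of the weight-zero part of the fiber) are also only sketched, although smoothness of $C$ at least comes for free from Remark~\ref{rem-contr-normal}.

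The final step is a second genuine gap, which you flag yourself. Even granting a free basis $df_1,\dots,df_n$ of $\Omega^{[1]}_{X,p}$, purity of the branch locus requires the map $(f_1,\dots,f_n):\sg{X}{p}\to\sg{\bC^n}{0}$ to be finite (or at least quasi-finite and open); a priori its fiber through $p$ may have positive-dimensional components contained in $X_\sing$, and you give no argument excluding this. Your fallback — re-running the Cartan homotopy on K\"ahler differentials to show the $df_i$ generate $\Omega^1_{X,p}$ — is vacuous: for a homogeneous K\"ahler $1$-form $\omega$ of weight $k>0$ it yields $\omega\equiv\tfrac1k\,d(\iota_\xi\omega)$ modulo $m_p\Omega^1_{X,p}$, which merely restates the tautology that $m_p/m_p^2$ is spanned by classes of exact forms; it gives no bound on the embedding dimension and no relation to the chosen $f_i$. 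So as written the argument does not close: the paper's Euler-characteristic contradiction and its flow-splitting $\Psi:\prod_iB_i\times Y\to X$ with reduction to Hochster do real work at exactly the two places where your outline substitutes assertions.
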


The definition of a $\bC^*$-action on a space germ and its weights is given in Section~\ref{ssec-local-c-star-action}. Notice that in Corollary~\ref{cor-lz} the complex space $X$ is not assumed to be locally algebraic, whereas quasihomogeneous singularities as considered by Hochster are automatically algebraic, see Fact~\ref{fact-quasihom-algebraizity}.

\subsubsection*{On Kodaira-Akizuki-Nakano type vanishing.} The classical Kodaira-Akizuki-Nakano vanishing theorem states that if $\LB$ is an ample line bundle on a projective complex manifold $X$, then $H^j(X,\Omega^i_X\otimes\LB^{-1})=0$ for $i+j<\dim(X)$. Seeking to generalize these results, we ask whether the groups
\[H^j(X,\Omega^{[i]}_X\otimes\LB^{-1}) \stackbin[]{?}{=} 0\]
vanish if $i+j<\dim(X)$ and $X$ is a normal projective complex space. Vanishing has been proven in~\cite[Prop.~4.3]{GKP12} for $j\leq 1$ if $X$ has mild singularities. However, the same authors construct in~\cite[Prop.~4.8]{GKP12} a projective complex space $X$ with only one four-dimensional isolated rational singularity and $H^2(X,\Omega^{[1]}_X\otimes\LB^{-1})\neq 0$.

The following result is the easiest version of Theorem~\ref{thm-kan-poincare} relating these considerations to our topic.

\begin{thm}[Section~\ref{sec-kan}]\label{thm-poincare-vs-kan}
Let $X$ be a projective complex space of dimension $\geq 4$ with only one isolated rational singularity $p\in X$, and let $\LB$ be an ample line bundle on $X$. Then
\[{\textstyle \dim_\bC\,H^2(X,\Omega^{[1]}_X\otimes\LB^{-1})\,\geq \, \dim_\bQ\text{WDiv}_\bQ(\sg{X}{p})/\hspace{-0.2em}\sim_\bQ\, + \, \, \dim_\bC\,h^3(\star),} \]
where $\text{WDiv}_\bQ(\sg{X}{p})/\hspace{-0.4em}\sim_\bQ$ denotes the group of local analytic Weil divisors with rational coefficients on arbitrarily small neighborhoods of $p\in X$ modulo $\bQ$-linear equivalence.
\end{thm}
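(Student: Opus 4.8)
The plan is to localise the left-hand side at the singular point and then to read off the two summands on the right from the Hodge--de Rham spectral sequence of the punctured germ $\sg{X}{p}\setminus\{p\}$. Throughout I use that an isolated rational singularity is Cohen--Macaulay and, by \cite[Prop.~2.5]{CF02}, satisfies $h^1(\star)=h^2(\star)=0$; write $n=\dim X\ge 4$, $U=X\setminus\{p\}$, $\sF=\Omega^{[1]}_X\otimes\LB^{-1}$, and fix a log resolution $\pi\colon\reso X\to X$ that is an isomorphism over $U$ with simple normal crossing exceptional divisor $E=\pi^{-1}(p)_{\mathrm{red}}$. Since $\LB$ is ample and $\sg{X}{p}$ is mild, $H^1(X,\sF)=0$ by \cite[Prop.~4.3]{GKP12}, so the local cohomology sequence
\[0\to H^1(U,\sF|_U)\to H^2_{\{p\}}(X,\sF)\to H^2(X,\sF)\to H^2(U,\sF|_U)\to\cdots\]
gives $\dim_\bC H^2(X,\sF)\ge\dim_\bC H^2_{\{p\}}(X,\sF)-\dim_\bC H^1(U,\sF|_U)$, and $H^2_{\{p\}}(X,\sF)\cong H^2_{\{p\}}(X,\Omega^{[1]}_X)$ because $\LB$ is locally trivial near $p$; this last group is a purely local invariant of $\sg{X}{p}$.

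To analyse it, pass to the analytic category, pick a small Stein neighbourhood $V\ni p$ and set $U_V=V\setminus\{p\}$; by Cartan's Theorem~B one has $H^2_{\{p\}}(X,\Omega^{[1]}_X)\cong H^1(U_V,\Omega^1_{U_V})$. On the smooth manifold $U_V$ the Poincaré lemma provides $0\to\bC\to\sO\to\Omega^1_{\mathrm{cl}}\to0$ and $0\to\Omega^k_{\mathrm{cl}}\to\Omega^k\xrightarrow{d}\Omega^{k+1}_{\mathrm{cl}}\to0$, while the Cohen--Macaulay property forces $H^j(U_V,\sO)=H^{j+1}_{\{p\}}(V,\sO)=0$ for $1\le j\le n-2$. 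Running the Hodge--de Rham spectral sequence $E_1^{a,b}=H^b(U_V,\Omega^a_{U_V})\Rightarrow H^{a+b}(L,\bC)$ of the link $L$ of $p$, the vanishings $H^1(U_V,\sO)=0$ ($n\ge3$), $H^2(U_V,\sO)=0$ ($n\ge4$) and $h^2(\star)=0$ force $E_\infty^{2,0}=E_\infty^{0,2}=0$, hence $E_\infty^{1,1}=H^2(L,\bC)$, and they identify $E_2^{3,0}$ with $h^3(\star)$ and $E_3^{3,0}$ with $E_\infty^{3,0}=\mathrm{gr}_F^3 H^3(L,\bC)$. Tracing the differentials $d_1,d_2$ then expresses $\dim_\bC H^1(U_V,\Omega^1_{U_V})$ as $\dim_\bC h^3(\star)+\dim_\bC H^2(L,\bC)$ plus an explicit correction built from $\operatorname{im}\bigl(d_1\colon H^1(U_V,\Omega^1)\to H^1(U_V,\Omega^2)\bigr)$ and $\mathrm{gr}_F^3 H^3(L,\bC)$.

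Finally, the class group enters through topology: the exponential sequence on $U_V$ together with $H^1(U_V,\sO_{U_V})=H^2_{\{p\}}(V,\sO)=0$ (valid for $n\ge3$) shows that the first Chern class $\mathrm{Cl}(\sg{X}{p})=\Pic(U_V)\hookrightarrow H^2(U_V,\bZ)=H^2(L,\bZ)$ is injective, so $\dim_\bQ\mathrm{WDiv}_\bQ(\sg{X}{p})/\hspace{-0.2em}\sim_\bQ\,=\dim_\bQ\mathrm{Cl}(\sg{X}{p})\otimes_\bZ\bQ\le\dim_\bC H^2(L,\bC)$; comparing with $\reso X$, where $\Pic(\reso V)\otimes_\bZ\bQ\cong H^2(E,\bQ)$ because $R^i\pi_*\sO_{\reso X}=0$ for $i>0$, this subspace is exactly $H^2(E,\bQ)/\langle[E_j]\rangle_j$, realised inside $H^2(L,\bQ)$ via the Gysin sequence of $E\subset\reso V$. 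Granting that the correction from the second paragraph and the global term $H^1(U,\sF|_U)$ together do not exceed the gap $\dim_\bC H^2(L,\bC)-\dim_\bQ\mathrm{Cl}(\sg{X}{p})\otimes_\bZ\bQ$, one assembles
\[\dim_\bC H^2(X,\sF)\ \ge\ \dim_\bC H^2_{\{p\}}(X,\Omega^{[1]}_X)-\dim_\bC H^1(U,\sF|_U)\ \ge\ \dim_\bQ\mathrm{WDiv}_\bQ(\sg{X}{p})/\hspace{-0.2em}\sim_\bQ\,+\,\dim_\bC h^3(\star).\]

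The main obstacle is precisely this bookkeeping of correction terms. Controlling $H^1(U,\sF|_U)=H^1(\reso X\setminus E,\Omega^1_{\reso X}\otimes\pi^*\LB^{-1})$ is where the ampleness of $\LB$ must be used in an essential way—presumably through a logarithmic Akizuki--Nakano-type vanishing on $\reso X$ for the nef and big bundle $\pi^*\LB$ in the range $i+j\le2<n$—and one must simultaneously show that neither this term nor the ``transcendental'' contribution $\mathrm{gr}_F^3 H^3(L,\bC)$ eats into the part of $H^2_{\{p\}}(X,\Omega^{[1]}_X)$ spanned by the $d\log$-classes of Weil divisors and by the image of $h^3(\star)$. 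For $n\ge5$ one expects $\mathrm{gr}_F^3 H^3(L,\bC)=0$ from Hodge symmetry on $\mathrm{gr}^W_3 H^3(L)$ together with the depth vanishing $H^3(U_V,\sO)=0$, so the case $n=4$ (where $H^3(U_V,\sO)$ is large) is the delicate one; handling it, and verifying the compatibility of the two extracted subspaces, is what I expect to occupy the bulk of the proof of Theorem~\ref{thm-kan-poincare}.
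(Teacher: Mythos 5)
Your argument, as written, does not close: the final inequality is obtained only ``granting that the correction from the second paragraph and the global term $H^1(U,\sF|_U)$ together do not exceed the gap'', and that clause is precisely the mathematical content of the theorem. Concretely, two terms are left uncontrolled. First, your localisation only gives $\dim H^2(X,\sF)\ge\dim H^2_{\{p\}}(X,\Omega^{[1]}_X)-\dim H^1(U,\sF|_U)$, and you have no argument bounding $H^1(U,\sF|_U)=H^1(\tilde{X}\setminus E,\Omega^1_{\tilde{X}}\otimes\pi^*\LB^{-1})$; the ``logarithmic Akizuki--Nakano-type vanishing'' you gesture at is not a statement about cohomology of the open complement, and no standard vanishing theorem applies to it directly. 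Second, in your Fr\"olicher-type bookkeeping on the punctured germ the exact count is $\dim H^1(U_V,\Omega^1)=\dim H^2(L,\bC)+\dim h^3(\star)+\dim\operatorname{im}(d_1)-\dim\mathrm{gr}_F^3H^3(L,\bC)$, so a nonzero $\mathrm{gr}_F^3H^3(L,\bC)$ genuinely eats into the bound; you yourself flag that you can only ``expect'' this to vanish for $n\ge5$ and that $n=4$ is delicate -- but $n=4$ is included in the statement, so the proposal proves nothing in that case. (Your appeal to \cite[Prop.~4.3]{GKP12} for $H^1(X,\sF)=0$ is also not justified for a merely rational singularity, though this is inessential since the local cohomology sequence gives the same inequality without it.)

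The paper's proof of Theorem~\ref{thm-kan-poincare} avoids both problems by a structurally different route. Instead of subtracting correction terms, Lemma~\ref{lem-kan-local-interpretation} uses Steenbrink's vanishing theorem \cite[Thm.~2a')]{Steen85} for $\Omega^1_{\tilde{X}}(\log E)\otimes\pi^*\LB^{-1}$ on the resolution, together with $\Omega^{[1]}_X=\pi_*\Omega^1_{\tilde{X}}(\log E)$ from \cite{SvS85}, to get an exact identification $H^2(X,\Omega^{[1]}_X\otimes\LB^{-1})\cong\bigl(R^1\pi_*\Omega^1_{\tilde{X}}(\log E)\bigr)_p$ -- this is where ampleness enters, and there is no term like $H^1(U,\sF|_U)$ to control. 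The local analysis is then carried out not on the link but with the spectral sequence of the filtration b\^ete for $R\pi_*\Omega^\bullet_{\tilde{X}}$, converging to $H^*(E,\bC)$; the vanishings $R^i\pi_*\sO_{\tilde{X}}=0$, $H^i(E,\sO_E)=0$ and $H^0(E,\Omega^i_E/\tor)=0$ for $1\le i\le3$ (Lemma~\ref{lem-key-du-bois-vanishing}) force the clean splitting $\ker(d)\cong P^3_\refl(\sg{X}{p})\oplus H^2(E,\bC)$ of Lemma~\ref{lem-key-kan}, the residue sequence removes the classes $\langle E_r|_E\rangle$, and Lemma~\ref{lem-kan-weil-divisors} identifies $H^2(E,\bC)/\sum_r\langle E_r|_E\rangle$ with $\text{WDiv}_\bQ(\sg{X}{p})/\hspace{-0.2em}\sim_\bQ$ via the exponential sequence. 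Working with $H^*(E,\bC)$ rather than $H^*(L,\bC)$ is what eliminates the $\mathrm{gr}_F^3H^3(L,\bC)$ obstruction you could not handle at $n=4$. To salvage your approach you would have to supply exactly these two inputs, at which point you would essentially be reproducing the paper's argument.
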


The counterexample given in~\cite{GKP12} only explores the first contribution on the right hand side. In fact, the singularity $p\in X$ in \emph{loc. cit.} is rational and quasihomogeneous so that the complex $(\star)$ is exact by Corollary~\ref{cor-poincare-klt-isol-rat}.

\subsection{Outline of the paper.} The paper is structured as follows: The current Section~\ref{sec-intro} contains the introduction. 

In Section~\ref{sec-diff-forms} we set up notation and recall classical notions of differential forms on reduced complex spaces. Then we include a construction of $\h$-differential forms on singular complex spaces and prove elementary properties that are used in the sequel.

The core results of the paper are contained in Sections~\ref{sec-top} and~\ref{sec-contr}. Section~\ref{sec-top} is concerned with results based on topological properties of $X$ as well as a closer look at the surface case. Section~\ref{sec-contr} contains a unified proof of Reiffen's and Ferrari's results and Theorem~\ref{thm-intro-contractions-h-forms}.

Section~\ref{sec-degeneration} contains a discussion of the degeneration properties of the reflexive analog of the Fr\"olicher spectral sequence. The relation to the Lipman-Zariski conjecture is exposed in Section~\ref{sec-lz}. Finally, Section~\ref{sec-kan} contains the proofs of the results on Kodaira-Akizuki-Nakano type cohomology groups mentioned in the introduction.

\subsection{Acknowledgments.} The results of this paper constitute the author's PhD thesis. He would like to thank his advisor Stefan Kebekus and his co-advisor Daniel Greb for stimulating discussions leading to the questions treated in this paper and for fruitful advice during the research. He would also like to thank Annette Huber-Klawitter, Patrick Graf, Tian Shun-Feng, Alex K\"uronya, Thomas Peternell, Hubert Flenner and Wolfgang Soergel for interesting discussions.

\section{Complex spaces and differential forms}\label{sec-diff-forms}

\subsection{Notation}The base field is the field of complex numbers $\bC$.

\subsubsection{Schemes and complex spaces}
We will switch frequently between the algebraic and the analytic setting. A scheme is a scheme of finite type over $\text{Spec}(\bC)$ and is usually denoted by $V$ or $W$. A variety is a separated irreducible reduced scheme. Complex spaces are usually denoted by $X,Y,Z$. A complex manifold $M$ is a smooth complex space. The dimension of a complex space or a scheme is considered as a function with values in $\bZ_{\geq 0}$. Likewise the codimension of a complex subspace or a subscheme is a function defined on the subspace.

Let $p\in X$ be a point on a complex space. Then the complex space germ is denoted by $\sg{X}{p}$.

For any algebraic morphism $f:V\to W$ between schemes the corresponding holomorphic map between associated complex spaces is denoted by $f^\an:V^\an\to W^\an$. 

\begin{defn}\label{defn-locally-algebraic}
We say that a complex space $X$ is \emph{locally algebraic} if there exists a covering $X=\bigcup_{i\in I}X_i$ by open subsets $X_i\subset X$ and schemes $V_i$ together with an open embedding $X_i\subset V^\an_i$ for any $i\in I$.
\end{defn}

\subsubsection{Sheaves}
For any morphism $\phi:\sF\to \sG$ between coherent sheaves of $\sO_V$-modules on a scheme $V$, the associated morphism between analytically coherent sheaves of $\sO_{V^\an}$-modules is denoted by $\phi^\an:\sF^\an\to\sG^\an$.

If $\sF$ is a coherent sheaf on a reduced complex space or a scheme, then we denote by $\sF_\tor\subset\sF$ the torsion subsheaf. Recall from~\cite[Anhang~§4.4]{GR71} that $\sF_\tor=\ker(\sF\to\sF^{**})$ is a coherent sheaf, whose local sections are exactly the local sections of $\sF$ with nowhere dense support. The quotient will be denoted by $\sF/\tor:=\sF/\sF_\tor$. Recall that $(\sF_\tor)^\an=(\sF^\an)_\tor=:\sF^\an_\tor$ for a coherent sheaf $\sF$ on a reduced scheme.

\subsubsection{Singularities, resolutions of singularities}
The reduced complex space associated with $X$ is denoted by $X_\red$. The smooth and the singular locus of a reduced complex space $X$ are denoted by $X_{\text{sm}}\subset X$ and $X_\singu\subset X$, respectively. 

A resolution of singularities of a reduced complex space $X$ is a proper surjective morphism $\pi:\tilde{X}\to X$ such that $\tilde{X}$ is smooth and there exists a nowhere dense analytic subset $A\subset X$ such that $\pi^{-1}(A)\subset\tilde{X}$ is nowhere dense and  $\pi:\pi^{-1}(X\backslash A)\xrightarrow{\sim}X\backslash A$ is an isomorphism. The morphism $\pi$ is called a small resolution if $A$ can be chosen such that $\pi^{-1}(A)\subset \tilde{X}$ has codimension $\geq 2$. 
The morphism $\pi$ is called a strong resolution if we can choose $A=X_\singu$ and the reduced preimage $\pi^{-1}(A)_\red\subset \tilde{X}$ is a divisor with simple normal crossings. By a functorial resolution we mean a strong resolution $\pi:\tilde{X}\to X$ such that for any open set $U\subset X$ and any vector field $V\in T_X(U)$ there exists a vector field $\tilde{V}\in T_{\tilde{X}}(\tilde{U})$ on the preimage $\tilde{U}=\pi^{-1}(U)$ that is $\pi$-related to $V$. Recall from~\cite[Thm.~3.36]{Koll07} that functorial resolutions exist for any reduced complex space.

With 'complex space' replaced by 'scheme' the previous definitions apply verbatim to the algebraic setting.

We will need several times the following fact. It is a corollary of a result by Lojasiewicz~\cite[Thm.~2,\,3]{Loj64}, as explained in the proof of~\cite[Lem.~14.4]{GKP12}.

\begin{fact}[Topology of resolutions]\label{fact-top-reso}
Let $\pi:\tilde{X}\to X$ be a resolution of a reduced complex space and let $F:=\pi^{-1}(\{p\})_\red$ be the reduced fiber over some point $p\in X$. Then there exist arbitrarily small contractible neighborhoods $U\subset X$ of $p$ such that the inclusion $F\to \pi^{-1}(U)$ is a homotopy-equivalence.
\end{fact}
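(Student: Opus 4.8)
The plan is to reduce the statement to a purely combinatorial fact about simplicial complexes by means of a simultaneous triangulation of $\pi$, and then invoke the elementary observation that the open star of a full subcomplex deformation retracts onto it.

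First I would localize. Since the assertion concerns arbitrarily small neighbourhoods of $p$, after shrinking $X$ I may assume that $X$ is a closed analytic subset of an open subset of some $\bC^N$ with $p=0$; the map $\pi$ is proper by hypothesis, so the reduced fibre $F=\pi^{-1}(\{p\})_\red$ is compact. Next I would apply Łojasiewicz's triangulation theorem for (sub)analytic sets and proper analytic maps, in the form extracted in the proof of \cite[Lem.~14.4]{GKP12} from \cite[Thm.~2,\,3]{Loj64}: there are locally finite triangulations $h_X\colon|L|\xrightarrow{\ \sim\ }X$ and $h_{\tilde X}\colon|K|\xrightarrow{\ \sim\ }\pi^{-1}(X)$ such that $p$ is the image of a vertex $v\in L$ and such that $\pi$ becomes a simplicial map $K\to L$. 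Passing to iterated barycentric subdivisions, the open stars $\mathrm{st}_L(v)$ form a neighbourhood basis of $p$, so it suffices to treat the neighbourhoods $U:=h_X\bigl(\mathrm{st}_L(v)\bigr)$, which are open and contractible because the open star of a vertex deformation retracts onto that vertex.

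The combinatorial core is then as follows. Let $N:=\pi^{-1}(v)\subset K$, a subcomplex whose underlying space is $F$. A straightforward check, using that $\pi$ is simplicial, shows $\pi^{-1}\bigl(\mathrm{st}_L(v)\bigr)=\mathrm{st}_K(N)$, the open star of $N$; hence $\pi^{-1}(U)=h_{\tilde X}\bigl(\mathrm{st}_K(N)\bigr)$. Moreover $N$ is a \emph{full} subcomplex of $K$: if every vertex of a simplex $\sigma\in K$ lies in $N$, then $\pi(\sigma)=\{v\}$ and so $\sigma\in N$. By the standard fact that the open star of a full subcomplex deformation retracts onto it (push, on each simplex meeting $N$, along barycentric coordinates towards the face $\sigma\cap N$), the inclusion $F\hookrightarrow\pi^{-1}(U)$ is a deformation retract, in particular a homotopy equivalence, as claimed.

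The one non-formal ingredient is the triangulation step: producing triangulations of $X$ and $\tilde X$ that are simultaneously adapted to the proper analytic map $\pi$ \emph{and} to the point $p$. This is precisely what Łojasiewicz's results supply and is carried out in the proof of \cite[Lem.~14.4]{GKP12}; everything else is elementary point-set and simplicial topology. (Alternatively, one may bypass triangulations entirely: choose a Whitney stratification of $\tilde X$ for which $\rho:=\|\,\cdot\,\|^{2}\circ\pi$ restricts to a submersion on each stratum away from $F=\rho^{-1}(0)$, and apply the Thom--Mather first isotopy lemma to conclude that $\rho^{-1}\bigl([0,\varepsilon)\bigr)=\pi^{-1}(U_\varepsilon)$ deformation retracts onto $F$ for the contractible neighbourhoods $U_\varepsilon:=\{\|\,\cdot\,\|<\sqrt{\varepsilon}\}\cap X$.)
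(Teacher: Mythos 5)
The paper itself gives no proof of this Fact: it is stated as a corollary of \L{}ojasiewicz's triangulation results \cite{Loj64}, ``as explained in the proof of \cite[Lem.~14.4]{GKP12}'', so your attempt has to be measured against that cited argument. Your main route has a genuine gap exactly at the step you yourself flag as the one non-formal ingredient: the simultaneous triangulation of the \emph{map} $\pi$. \L{}ojasiewicz's theorems triangulate a semianalytic \emph{space} compatibly with a locally finite family of semianalytic \emph{subsets}; they do not produce triangulations of source and target making a proper analytic map simplicial, and neither does the proof of the cited lemma. Triangulability of maps is a genuinely deeper problem (Thom's problem on triangulation of maps): the known positive results (Shiota) require a Thom (``sans \'eclatement'') stratification, and a resolution is the archetypal map \emph{with} blowing up --- already for the blow-up of a point of $\bC^2$, the stratification by the exceptional curve violates the $a_f$ condition. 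So the assertion that one may take $\pi$ simplicial is unjustified in this generality, and the (otherwise correct) open-star/full-subcomplex argument built on it does not get off the ground.

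By contrast, your parenthetical alternative is essentially the intended proof, and with small additions it is complete: since $\pi$ is proper, $\rho=\|\cdot\|^2\circ\pi$ is proper near $F$ with $\rho^{-1}(0)=F$ compact; by curve selection/generic triviality there is $\varepsilon_0>0$ such that $\rho$ has no ``critical values'' on the strata in $(0,\varepsilon_0)$ (one cannot ask for a submersion on strata globally, only for small values of $\rho$); Thom--Mather, or equivalently Durfee's theorem on neighborhoods of subanalytic sets defined by a rug function, then gives that $\pi^{-1}(U_\varepsilon)=\rho^{-1}\bigl([0,\varepsilon)\bigr)$ deformation retracts onto $F$ for $0<\varepsilon<\varepsilon_0$ (the isotopy lemma gives triviality over $(0,\varepsilon)$; pushing towards $F$ along this trivialization and using properness is the standard extra step). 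Contractibility of $U_\varepsilon$ is the local cone structure of $X$ at $p$, and this is where \L{}ojasiewicz's triangulation of the \emph{space} $X$ (with $p$ a vertex) legitimately enters. If you promote that parenthesis to the actual argument and drop the triangulation-of-the-map route (or support it by a reference that really triangulates proper analytic maps, which \cite{Loj64} does not), your proof is correct and coincides in substance with the argument the paper cites.
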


For the definition of pairs with Kawamata log terminal singularities and rational singularities we refer to \cite{KM98}. We will also need the following definition.

\begin{defn}[Klt base spaces, see {\cite[Def.~5.1]{Keb12}}]\label{defn-klt-base-space}
Let $V$ be a normal variety. We call $V$ a \emph{klt base space} if there exists a $\bQ$-divisor $D$ such that the pair $(X,D)$ has Kawamata log terminal singularities.

A complex space $X$ is called a \emph{locally algebraic klt complex base space} if there exists a cover $X=\bigcup_{i\in I}X_i$ by open subsets and, for any $i\in I$, an algebraic klt base space $V_i$ together with an open embedding $X_i\subset V^\an_i$.
\end{defn}

\begin{example}
Let $V$ be a normal toric variety. Then $V^\an$ is a locally algebraic klt base space by \cite[Ex.~11.4.26]{CLS11}.
\end{example}

We will not need the definition of Du Bois singularities but the following fact.

\begin{fact}[Isolated Du Bois singularities, {\cite[Sect.~0]{Kov99}}]\label{fact-isolated-du-bois}
Let $p\in X$ be a normal isolated singularity of a complex space $X$ together with a strong resolution $\pi:\tilde{X}\to X$, $E=\pi^{-1}(\{p\})_\red$. Then the following are equivalent.
\begin{enumerate}
\item The singularity $p\in X$ is Du Bois.
\item For any $i>0$, the map $R^i\pi_*\sO_{\tilde{X}}\xrightarrow{\sim}H^i(E,\sO_E)$ is an isomorphism.
\end{enumerate}
\end{fact}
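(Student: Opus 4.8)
The plan is to derive both implications from the standard descent triangle for the Du~Bois complex $\underline{\Omega}^\bullet_X$. Recall that in the complex analytic setting $\underline{\Omega}^\bullet_X$ is constructed from a cubical hyperresolution, that by definition $p\in X$ is Du~Bois if the natural morphism $\sO_X\to\underline{\Omega}^0_X$ is a quasi-isomorphism near $p$, and that $\sH^0(\underline{\Omega}^0_X)$ is the structure sheaf of the seminormalization of $X$. A normal complex space is seminormal, so $\sH^0(\underline{\Omega}^0_X)=\sO_X$ automatically; hence $p\in X$ is Du~Bois if and only if $\sH^i(\underline{\Omega}^0_X)=0$ for all $i\geq 1$, and the whole problem reduces to translating this vanishing into statement~(2).

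For this I would use the distinguished triangle of complexes of sheaves on $X$ attached to the abstract blow-up square with vertices $E\hookrightarrow\tilde{X}$, $\{p\}\hookrightarrow X$, $\pi$. Since $\tilde{X}$ is smooth, $\{p\}$ is a reduced point, and $E$ — being a simple normal crossing divisor — is a Du~Bois space, the relevant Du~Bois complexes reduce to structure sheaves and the triangle takes the form
\[
\underline{\Omega}^0_X\longrightarrow R\pi_*\sO_{\tilde{X}}\oplus\bC_p\longrightarrow R\pi_*\sO_E\xrightarrow{+1},
\]
where $\bC_p$ is the skyscraper sheaf at $p$ and the triangle holds by cohomological descent for cubical hyperresolutions (equivalently, by cdh descent for $\underline{\Omega}^\bullet$). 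Moreover, by coherence of proper direct images and base change along the closed point $\{p\}\hookrightarrow X$, the sheaf $R^i\pi_*\sO_E$ is the skyscraper at $p$ with stalk $H^i(E,\sO_E)$, while $R^i\pi_*\sO_{\tilde X}$ is supported at $p$ for $i>0$; finally $\pi_*\sO_{\tilde X}=\sO_X$ by normality and $H^0(E,\sO_E)=\bC$ because $E$ is connected (Zariski's connectedness theorem, $X$ being normal).

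Feeding these identifications into the long exact sequence of cohomology sheaves, the piece in degrees $0$ and $1$ reads
\[
0\to\sO_X\to\sO_X\oplus\bC_p\to\bC_p\to\sH^1(\underline{\Omega}^0_X)\to R^1\pi_*\sO_{\tilde X}\to H^1(E,\sO_E)\to\cdots,
\]
where the map $\sO_X\oplus\bC_p\to\bC_p$, being $(f,c)\mapsto f(p)-c$, is surjective; so the connecting map into $\sH^1(\underline{\Omega}^0_X)$ vanishes and, for each $i\geq1$, the sequence breaks into
\[
0\to\sH^i(\underline{\Omega}^0_X)\to R^i\pi_*\sO_{\tilde X}\xrightarrow{r_i}H^i(E,\sO_E)\to\sH^{i+1}(\underline{\Omega}^0_X)\to R^{i+1}\pi_*\sO_{\tilde X}\to\cdots,
\]
with $r_i$ the restriction map of~(2). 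The conclusion is then a short diagram chase: if $\sH^i(\underline{\Omega}^0_X)=0$ for all $i\geq1$, each $r_i$ sits in $0\to R^i\pi_*\sO_{\tilde X}\xrightarrow{r_i}H^i(E,\sO_E)\to0$ and is an isomorphism; conversely, if every $r_i$ with $i\geq1$ is an isomorphism, then $\sH^1(\underline{\Omega}^0_X)=\ker r_1=0$, surjectivity of $r_1$ forces the connecting map into $\sH^2(\underline{\Omega}^0_X)$ to vanish, hence $\sH^2(\underline{\Omega}^0_X)=\ker r_2=0$, and induction on $i$ finishes the argument.

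I expect the only delicate step to be the input triangle: one must invoke the correct analytic form of the theory of cubical hyperresolutions, the acyclicity of an abstract blow-up square for $\underline{\Omega}^\bullet$, and then check — less automatic than it appears — that the third vertex genuinely computes $H^\bullet(E,\sO_E)$ and that the induced edge map on $\sH^i$ coincides with the naive restriction $R^i\pi_*\sO_{\tilde X}\to H^i(E,\sO_E)$. Everything downstream of that is the elementary cohomology manipulation above.
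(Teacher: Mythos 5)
The paper offers no proof of this statement: it is quoted as a Fact from \cite[Sect.~0]{Kov99} (the criterion goes back to Steenbrink), so there is nothing in-paper to compare against beyond the citation. Your argument is precisely the standard proof of that criterion and it checks out: the descent triangle $\underline{\Omega}^0_X\to R\pi_*\sO_{\tilde X}\oplus\bC_p\to R\pi_*\sO_E\xrightarrow{+1}$ for the blow-up square, the identifications $\pi_*\sO_{\tilde X}=\sO_X$ and $H^0(E,\sO_E)=\bC$ from normality and connectedness of $E$, the surjectivity of $(f,c)\mapsto f(p)-c$ killing the connecting map into $\sH^1(\underline{\Omega}^0_X)$, and the ensuing chase through the long exact sequence are all correct. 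The inputs you flag as delicate (analytic cubical hyperresolutions, acyclicity of the abstract blow-up square for $\underline{\Omega}^0$, snc divisors being Du Bois, and $\sH^0(\underline{\Omega}^0_X)=\sO_X$ in the seminormal case) are exactly the content carried by the cited literature, so modulo invoking those references your proposal is a faithful reconstruction of the proof the paper outsources.
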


\subsection{Holomorphic $\bC^*$-actions on space germs}\label{ssec-local-c-star-action}

\begin{defn}[Holomorphic $\bC^*$-action on a space germ]\label{defn-loc-hol-c-star-action}
Let $p\in X$ be a point on a complex space. A \emph{holomorphic $\bC^*$-action on the space germ $\sg{X}{p}$} consists of a holomorphic map $\bC^*\times X\supset U\to X,(t,x)\mapsto t\cdot x$ defined on an open neighborhood $U$ of $\bC^*\times\{p\}$ such that $t\cdot p=p$ for all $t\in\bC^*$, $1\cdot x=x$ and $t\cdot(s\cdot x)=(st)\cdot x$ for all $x\in X$, $s,t\in\bC^*$, whenever this makes sense.
\end{defn}

\begin{example}\label{ex-linear-c-star-action}
Let $\bC^*$ act on $\bC^r$ by $t\cdot (x_1,\cdots,x_r)=(t^{z_1}x_1,\cdots, t^{z_r}x_r)$, where $z_1,\cdots,z_r\in\bZ$ are integers. Let $0\in X\subset\bC^r$ be a locally closed analytic subset such that the vector field $z_1\cdot x_1\cdot \frac{\partial}{\partial x_1}+\cdots+z_n\cdot x_n\cdot \frac{\partial}{\partial x_n}$ is tangent to $X$. Then the space germ $\sg{X}{0}$ inherits a holomorphic $\bC^*$-action.
\end{example}

Let $p\in X$ be as in Definition~\ref{defn-loc-hol-c-star-action}. By compactness of $\bS^1$ there exists an open neighborhood $U\subset X$ of $p$ on which $\bS^1$ acts by biholomorphic automorphisms. In particular, the cotangent space is a direct sum  of eigenspaces $(m_p/m_p^2)_z=\{v\in m_p/m_p^2:\, t\cdot v = t^zv\,\forall t\in \bS^1\}$. The \emph{weights} of the action at $p\in X$ are the integers $z\in\bZ$ such that $(m_p/m_p^2)_z\neq \{0\}$.

\begin{lem}[Linearization of $\bC^*$-actions]\label{lem-c-star-linearization}
Any holomorphic $\bC^*$-action on a space germ is isomorphic to a holomorphic $\bC^*$-action as in Example~\ref{ex-linear-c-star-action}.
\end{lem}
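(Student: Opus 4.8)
The plan is to linearize the $\bC^*$-action by averaging over the compact subtorus $\bS^1\subset\bC^*$ and then extending algebraically along the remaining real one-parameter subgroup. First I would choose, using compactness of $\bS^1$, a small $\bS^1$-invariant neighborhood $U$ of $p$ together with an $\bS^1$-equivariant embedding $U\hookrightarrow \bC^r$; concretely, pick any local embedding of $\sg{X}{p}$ into some $\bC^r$ via coordinate functions spanning $m_p/m_p^2$, then replace these functions by their $\bS^1$-averages $\tilde x_i(x):=\int_{\bS^1}t^{-z_i}\,(t\cdot x)_i^{*}\,\mathrm{d}t$, where $z_i$ runs over the weights so that each $\tilde x_i$ becomes a $t^{z_i}$-eigenvector for the $\bS^1$-action; the images still generate $m_p/m_p^2$, so after shrinking $U$ we obtain an $\bS^1$-equivariant closed embedding $\sg{X}{p}\hookrightarrow \sg{(\bC^r,0)}{}$ on which $\bS^1$ acts by the diagonal linear action $t\cdot(x_1,\dots,x_r)=(t^{z_1}x_1,\dots,t^{z_r}x_r)$.

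Next I would upgrade the $\bS^1$-equivariance to $\bC^*$-equivariance. The point is that both the given $\bC^*$-action on the germ and the candidate linear action agree on the subgroup $\bS^1$ after the averaging step, and they agree infinitesimally along the generator of the complementary subgroup $\bR_{>0}$ up to first order because the weights were computed from $m_p/m_p^2$; but a cleaner route is to observe that a holomorphic action of the abelian group $\bC^*$ extending a fixed holomorphic $\bS^1$-action is determined by the single holomorphic vector field $\xi$ generating it, and both vector fields (the one from the abstract action, pushed forward to $\bC^r$, and the Euler-type field $\sum z_i x_i \partial_{x_i}$) are holomorphic, $\bS^1$-invariant, vanish at $0$, and have the same linear part. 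Then I would invoke the standard linearization principle for holomorphic vector fields with a fixed point whose linearization is diagonalizable with integer (hence non-resonant-to-the-required-order, or rather: semisimple) eigenvalues: one constructs a local biholomorphism of $\sg{(\bC^r,0)}{}$, equivariant for the torus, conjugating $\xi$ to its linear part. Because the eigenvalues are integers coming from a torus action the obstruction cohomology at each order is killed by the torus-weight decomposition, so no small-divisor analysis is needed. Restricting this biholomorphism to the invariant subgerm $\sg{X}{p}$ — which it preserves because the conjugating map is built equivariantly and $X$ is cut out by equations whose leading torus-weight components define a cone — yields the desired isomorphism with an action of the form in Example~\ref{ex-linear-c-star-action}, the tangency of the linear Euler field to $X$ being exactly the condition that $X$ is carried into a $\bC^*$-stable subset.

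The main obstacle I expect is the second step: passing from the compact-torus equivariance produced by averaging to genuine $\bC^*$-equivariance, i.e. controlling the formal/convergent conjugation of the holomorphic vector field while keeping everything compatible with the subspace $\sg{X}{p}$ simultaneously. The formal linearization order-by-order is routine once one uses the $\bZ$-grading by torus weights — each homogeneous component of the obstruction lies in a nonzero weight space and is therefore removable — but one must check convergence of the resulting change of coordinates and, crucially, that at every stage the conjugating map restricts to an automorphism of the germ $\sg{X}{p}$ rather than merely of the ambient $\sg{(\bC^r,0)}{}$. This is handled by working throughout in the category of $\bS^1$-equivariant germs and noting that the averaging in step one already placed the ideal $I_X\subset\sO_{(\bC^r,0)}$ in $\bS^1$-stable position, so its homogeneous pieces under the weight grading are coherent and the equivariant conjugation constructed on $\bC^r$ automatically preserves $I_X$; alternatively one may quote the equivariant version of the formal-to-analytic linearization for reductive group actions on germs. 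I would present the argument by first doing the ambient linearization cleanly and then remarking that equivariance of each step forces preservation of the invariant subgerm.
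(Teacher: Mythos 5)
Your first step---averaging a local embedding over $\bS^1$ against the characters $t^{z_i}$ so as to obtain an $\bS^1$-equivariant embedding of $\sg{X}{p}$ into $\bC^r\cong T_pX$ carrying the diagonal circle action---is exactly the paper's construction. The second step is where the argument breaks. The justification you give for linearizing the generating vector field $\xi$ is backwards: integer weights are not a non-resonance condition but the opposite. Grading vector-field monomials $x^\alpha\partial_{x_j}$ by the weight $\sum_i\alpha_i z_i-z_j$, the homological operator $\operatorname{ad}$ of the Euler field $\sum_i z_ix_i\partial_{x_i}$ kills precisely the weight-zero terms, so only the \emph{nonzero}-weight components of an obstruction can be removed at each order. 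But $\xi$ generates an abelian action containing $\bS^1$, hence is $\bS^1$-invariant, hence \emph{all} of its Taylor components lie in the weight-zero (resonant) part; your claim that ``each homogeneous component of the obstruction lies in a nonzero weight space and is therefore removable'' is exactly false for the field you need to linearize, and the Poincar\'e--Dulac-type induction you sketch removes nothing (for weights $z_1=1$, $z_2=2$ the term $x_1^2\partial_{x_2}$ is invariant and resonant). The fallback appeal to ``the equivariant version of formal-to-analytic linearization for reductive group actions'' is too vague to carry the step, and the extra worry about the conjugation preserving $I_X$ is a symptom of taking an ambient detour that is not needed.

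The repair is much simpler and is what the paper does: once the averaged embedding $\overline{\iota}$ from your first step is $\bS^1$-equivariant, no vector-field argument is required at all. For $x$ near $p$, both $g\mapsto\overline{\iota}(g\cdot x)$ and $g\mapsto g\cdot\overline{\iota}(x)$ are holomorphic on a neighbourhood of $\bC^*$ (the first because the action map and $\overline{\iota}$ are holomorphic, the second because the linear action $g\mapsto\operatorname{diag}(g^{z_1},\dots,g^{z_r})$ is), and they agree for $g\in\bS^1$; by the identity theorem they agree wherever defined. Thus the embedding is automatically $\bC^*$-equivariant, its image is an invariant locally closed subgerm, and the tangency of the Euler field required in Example~\ref{ex-linear-c-star-action} is immediate. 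So your proposal contains the correct first half but replaces the one-line analytic-continuation finish by a normal-form argument whose stated justification fails.
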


\begin{proof}
Let $\iota:\sg{X}{p}\to T_pX$ be an arbitrary local embedding of the space germ $\sg{X}{p}$ into the tangent space at $p$ such that the induced map $d_p\iota:T_pX\to T_0T_pX$ is the canonical map. Observe that $\bS^1\subset\bC^*$ acts on the tangent space so that we can define
\[{\textstyle \overline{\iota}:\sg{X}{p}\to T_pX,\quad \overline{\iota}(x)=\frac{1}{2\pi}\int_{s=0}^{2\pi}e^{-s\cdot i}\cdot\iota(e^{s\cdot i}\cdot x)\text{d}s.}\]
It is easy to check that $\overline{\iota}(g\cdot x)=g\cdot \overline{\iota}(x)$ for all $(g,x)$ in a neighbourhood of $\bS^1\times\{p\}$ in $\bS^1\times X$. Since $\overline{\iota}$ is holomorphic, this even holds for all $(g,x)$ in a neighbourhood of $\bC^*\times\{p\}$ in $\bC^*\times X$. This shows that $\overline{\iota}$ is the desired equivariant local embedding, since $d_p\overline{\iota}:T_pX\to T_0T_pX$ is again the canonical map.
\end{proof}

\begin{defn}[Quasihomogeneous singularities]\label{defn-quasihom-sing}
A point $p\in X$ on a complex space is said to be a \emph{quasihomogeneous} singularity, if the space germ $\sg{X}{p}$ admits a holomorphic $\bC^*$-action with only positive weights.
\end{defn}

\begin{fact}[Algebraizity of quasihomogeneous singularities,~{\cite[Sect.~9.B]{Loo84}}]\label{fact-quasihom-algebraizity}
Let $p\in X$ be a quasihomogeneous singularity on a complex space. Then there exists an affine complex scheme $V$ together with an algebraic action of $\bC^*$ with fixed point $p\in V$ and only positive weights on $T_pX$, together with an equivariant isomorphism $\sg{X}{p}\cong\sg{V}{p}^\an$ of complex space germs.
\end{fact}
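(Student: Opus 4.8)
The plan is to reduce the assertion to the classical statement that a $\bC^*$-invariant ideal of a convergent power series ring, with respect to a positive grading, is generated by weighted-homogeneous polynomials; the algebraic model $V$ will then be literally the affine scheme cut out by those polynomials.

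First I would linearize. By Lemma~\ref{lem-c-star-linearization} there is a $\bC^*$-equivariant embedding of germs $\iota\colon\sg{X}{p}\hookrightarrow(\bC^r,0)$ with $r=\dim_\bC T_pX$, where $\bC^r=T_pX$, $\iota$ induces the canonical map on tangent spaces, and $\bC^*$ acts on $\bC^r$ diagonally as in Example~\ref{ex-linear-c-star-action} by $t\cdot(x_1,\dots,x_r)=(t^{z_1}x_1,\dots,t^{z_r}x_r)$; by Definition~\ref{defn-quasihom-sing} the weights $z_1,\dots,z_r$ on $T_pX$ are all positive. Put $R=\bC\{x_1,\dots,x_r\}$, graded by $\deg x_i=z_i$, and let $I\subset R$ be the ideal of the germ $\iota(\sg{X}{p})$. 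Since every $z_i>0$, each homogeneous piece $R_d$ is a finite-dimensional space of polynomials, and every $f\in R$ is the $\mathfrak m$-adically convergent sum $f=\sum_{d\ge 0}f_d$ of its homogeneous components.

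Next I would show that $I$ is homogeneous. Invariance of the germ under the $\bC^*$-action means that $I$ is stable under the substitution $f\mapsto f(t^{z_1}x_1,\dots,t^{z_r}x_r)=\sum_d t^d f_d$ for all $t\in\bC^*$ near $1$; a Vandermonde argument inside each finite-dimensional quotient $R/\mathfrak m^k$, together with Krull's intersection theorem (which makes $I$ closed in the $\mathfrak m$-adic topology), then forces $f_d\in I$ for all $d$. Hence the ideal $J\subseteq I$ generated by the weighted-homogeneous elements of $I$ — which are polynomials — actually equals $I$: it is finitely generated over the Noetherian ring $R$, say $J=(f_1,\dots,f_m)R$ with the $f_i$ weighted-homogeneous polynomials, and since $J$ is $\mathfrak m$-adically closed and each $f\in I$ is the convergent sum of its parts $f_d\in J$, one gets $f\in J$. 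I would then set $V:=\Spec\bigl(\bC[x_1,\dots,x_r]/(f_1,\dots,f_m)\bigr)$, an affine complex scheme carrying the algebraic diagonal $\bC^*$-action with weights $z_i$; the origin is a fixed point, and since $\iota$ induces the canonical map on tangent spaces we have $I\subseteq\mathfrak m^2$, so no $f_i$ has a linear term and $T_0V=\bC^r=T_pX$ carries exactly the weights $z_1,\dots,z_r$, all positive. Finally, passing to the analytification identifies $\sO_{V^{\an},0}=R/(f_1,\dots,f_m)R=R/I=\sO_{X,p}$, and this identification is $\bC^*$-equivariant by construction, which is the asserted equivariant isomorphism $\sg{X}{p}\cong\sg{V}{p}^{\an}$.

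The only real obstacle is the middle step: producing a polynomial, weighted-homogeneous description of the ideal of the germ inside $R$, a ring that is neither graded nor complete. Positivity of the weights is what makes the graded pieces of $R$ finite-dimensional and the homogeneous decompositions converge, and the $\mathfrak m$-adic closedness of ideals in the Noetherian local ring $R$ is what lets one move homogeneous components freely in and out of $I$ and $J$.
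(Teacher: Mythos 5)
Your argument is correct, but there is nothing in the paper to compare it with: the statement is quoted as a Fact with a pointer to [Loo84, Sect.~9.B], and the paper gives no proof of its own. What you write out is essentially the standard argument underlying that citation. After the equivariant linearization of Lemma~\ref{lem-c-star-linearization}, positivity of the weights makes each graded piece of $R=\bC\{x_1,\dots,x_r\}$ a finite-dimensional space of polynomials; your Vandermonde argument in $R/\mathfrak{m}^k$ (using that every individual $t\in\bC^*$ induces an automorphism of the germ, hence of $\sO_{X,p}$, hence preserves $I$) combined with Krull's intersection theorem correctly shows that $I$ is stable under taking weighted-homogeneous components, and $\mathfrak{m}$-adic closedness of ideals in the Noetherian local ring $R$ then yields $I=(f_1,\dots,f_m)R$ with the $f_i$ weighted-homogeneous polynomials. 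Taking $V=\Spec\bigl(\bC[x_1,\dots,x_r]/(f_1,\dots,f_m)\bigr)$ with the diagonal action, the identification $\sO_{V^{\an},0}\cong R/I\cong\sO_{X,p}$ is equivariant because both germs are cut out of $(\bC^r,0)$ by the same ideal with the same linear action, and $I\subset\mathfrak{m}^2$ (minimality of the embedding) gives $T_0V\cong T_pX$ with the same positive weights. So your proposal supplies a complete, self-contained proof of a result the paper deliberately outsources to the literature; the only stylistic remark is that the Vandermonde step can be replaced by the simpler observation that a polynomial map $t\mapsto\sum_d t^d\bar f_d$ into a finite-dimensional space which lands in a fixed linear subspace for infinitely many $t$ has all its coefficients in that subspace.
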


\subsection{Classical differential forms}\label{ssec-class-diff-forms}
For a reduced complex space $X$ we consider the following sheaves of differential forms. The notations apply verbatim to the case of a reduced scheme $V$. 
\begin{itemize}
\item $\Omega^i_X$ - the sheaf of K\"ahler differential forms of degree $i\geq 0$ on $X$.
\item $\Omega^i_X/\tor$ - the sheaf of K\"ahler differential forms modulo torsion of degree $i\geq 0$ on $X$.
\item If $X$ is normal in addition, we consider the sheaf $\Omega^{[i]}_X$ of reflexive differential forms. It satisfies $\Omega^{[i]}_X=j_*\Omega^i_{X_\sm}=(\Omega^i_X)^{**}$, where $j:X_\sm\subset X$ is the inclusion of the smooth locus.
\end{itemize}

The sheaves $\Omega^i_X$ and $\Omega^i_X/\tor$ are constructed as quotients of $\Omega^i_{\bC^m}$ for some local embedding $X\subset\bC^m$. From this point of view they seem rather inconvenient for purposes of birational geometry. In fact, given a resolution of singularities $\tilde{X}\to X$, it seems difficult to determine which differential forms on $\tilde{X}$ are the pull-back of a K\"ahler differential form on $X$.

On the other hand, the sheaves $\Omega^{[i]}_X$ play an essential role in the classification of singularities arising in the minimal model program in birational geometry, see~\cite{KM98} for a thorough discussion. For log canonical singularities, they admit an easy description in terms of differential forms on a resolution, see~\cite[Thm.~2.12]{GKP12}.

\subsubsection*{Pull-back properties of classical differential forms}\label{ssec-pull-back-properties-classical}
K\"ahler differential forms come up with a pull-back map associated with any holomorphic map. This property still holds for the sheaves of K\"ahler differential forms modulo torsion by a result of Ferrari. The proof of the algebraic version is due to Kebekus.

\begin{fact}[{\cite[Prop.~1.1]{Ferr70},~\cite[Sect.~2.2]{Keb12}}]
Let $f:X\to Y$ be a holomorphic map between reduced complex spaces (or a morphism between reduced schemes). Then the pull-back by $f^*$ maps torsion K\"ahler differential forms on $Y$ to torsion K\"ahler differential forms on $X$ and thus induces a pull-back map $f^*:\Omega^i_Y/\tor\to f_*(\Omega^i_X/\tor)$ that fits into a commutative diagram
\[\begin{array}{l}\xymatrix{\Omega^i_Y/\tor \ar[r]^{f^*} & f_*\Omega^i_X/\tor \\ \Omega^i_Y \ar[r]^{f^*} \ar[u]^{\text{quotient}} & f_*\Omega^i_X \ar[u]_{\text{quotient}}. }\end{array}\]
\end{fact}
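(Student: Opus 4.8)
The plan is to reduce the statement to an explicit local computation using a local embedding, exactly as the sheaves $\Omega^i_X$ and $\Omega^i_X/\tor$ are constructed. First I would fix a holomorphic map $f:X\to Y$ (the scheme case is identical) and choose, locally around a point and its image, closed embeddings $X\subset\bC^m$ and $Y\subset\bC^n$ together with a holomorphic map $F:\bC^m\to\bC^n$ extending $f$; this exists because the coordinate functions on $\bC^n$ pull back to holomorphic functions on $X$, which extend to a neighborhood in $\bC^m$. The ordinary K\"ahler pull-back $F^*:\Omega^i_{\bC^n}\to\Omega^i_{\bC^m}$ is compatible with the surjections $\Omega^i_{\bC^n}|_Y\twoheadrightarrow\Omega^i_Y$ and $\Omega^i_{\bC^m}|_X\twoheadrightarrow\Omega^i_X$, so it descends to the familiar pull-back $f^*:\Omega^i_Y\to f_*\Omega^i_X$; this is the lower row of the claimed diagram and is classical.

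The heart of the matter is to show this map sends the torsion subsheaf $(\Omega^i_Y)_\tor$ into $f_*(\Omega^i_X)_\tor$, so that it descends to the top row. Recall from the excerpt that $(\Omega^i_Y)_\tor=\ker\bigl(\Omega^i_Y\to(\Omega^i_Y)^{**}\bigr)$, i.e.\ its local sections are exactly those K\"ahler forms whose support is nowhere dense; equivalently, a form is torsion if and only if its restriction to the smooth locus $Y_\sm$ vanishes. So I would argue as follows: let $\omega$ be a local section of $\Omega^i_Y$ that is torsion, hence vanishes on $Y_\sm$. I want $f^*\omega$ to vanish on $X_\sm$. The subtlety is that $f$ need not map $X_\sm$ into $Y_\sm$, so one cannot simply restrict and invoke functoriality of the smooth pull-back. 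Instead, set $X':=f^{-1}(Y_\sm)\cap X_\sm$, which is a dense open subset of $X_\sm$ (its complement is contained in $f^{-1}(Y_\sing)$, a proper analytic subset since $X$ is reduced and $f$ is, locally, not mapping a component of $X$ into $Y_\sing$ — and if some component of $X$ does map into $Y_\sing$, one handles it separately, see below). On $X'$ the form $f^*\omega$ equals the pull-back along the restricted smooth map $X'\to Y_\sm$ of $\omega|_{Y_\sm}=0$, hence $f^*\omega|_{X'}=0$; since $X'$ is dense in the reduced space $X_\sm$ and $\Omega^i_{X_\sm}$ is locally free, $f^*\omega$ vanishes on all of $X_\sm$, i.e.\ $f^*\omega$ is torsion.

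The main obstacle is precisely the case of components of $X$ whose image lies inside $Y_\sing$, where the density argument above breaks down. Here I would use that $\Omega^i_X$ is built from $\Omega^i_{\bC^m}$: pick such a component $X_0\subset X$ and note $f|_{X_0}$ factors through $Y_\sing$, which itself is a reduced complex space of smaller dimension; one then applies the previous argument (or an easy induction on $\dim Y$, with the base case $\dim Y=0$ being trivial since then $\Omega^i_Y=0$ for $i>0$ and there is nothing to check) to $f|_{X_0}:X_0\to Y_\sing$, using that a torsion form on $Y$ restricts to a torsion form on $Y_\sing$ (again because a form supported on a nowhere dense subset of $Y$ is, after restriction to $Y_\sing$, still supported on a nowhere dense subset of $Y_\sing$, as $Y$ is reduced and equidimensional-by-components). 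Finally, once the top horizontal arrow $f^*:\Omega^i_Y/\tor\to f_*(\Omega^i_X/\tor)$ is constructed, the commutativity of the square is automatic, since all four maps are induced from the single map $F^*$ on the ambient $\bC^m,\bC^n$, and independence of the choices of embedding and of $F$ follows from the fact that the bottom arrow is the classical, canonical pull-back and the vertical arrows are the canonical quotients.
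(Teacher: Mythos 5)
The paper states this as a Fact with references (\cite[Prop.~1.1]{Ferr70}, \cite[Sect.~2.2]{Keb12}) and gives no proof of its own, so your argument has to stand on its own — and it has a genuine gap exactly at the point where the real difficulty of Ferrari's result sits. Your ``easy case'' (components of $X$ not mapping into $Y_\sing$) is fine: there the density of $X'=f^{-1}(Y_\sm)\cap X_\sm$ and local freeness of $\Omega^i_{X_\sm}$ do the job. But for a component $X_0$ with $f(X_0)\subset Y_\sing$ your induction on $\dim Y$ needs the input that the pull-back of $\omega$ along the closed embedding $\iota:Y_\sing\hookrightarrow Y$ is again torsion, and that claim is itself an instance of the Fact for a map \emph{into the same $Y$}, so the inductive hypothesis (which only covers targets of smaller dimension) does not apply to it. The justification you offer instead — ``a form supported on a nowhere dense subset of $Y$ is, after restriction to $Y_\sing$, still supported on a nowhere dense subset of $Y_\sing$'' — is a non sequitur: a torsion form typically has support equal to a closed subset of $Y_\sing$ that is dense (possibly all of) $Y_\sing$, and the support of $\iota^*\omega$ is in any case not the intersection of $\mathrm{supp}(\omega)$ with $Y_\sing$; deciding whether $\iota^*\omega$ vanishes generically on $Y_\sing$ is precisely the content of the statement being proved. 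So the induction never closes, and the case $f(X_0)\subset Y_\sing$ — the only nontrivial case — remains unproved.

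The standard way to close this gap (Ferrari, Kebekus, and also the toolkit this paper builds for $\h$-forms) is different in spirit: reduce to showing that for \emph{any} holomorphic map $h:M\to Y$ from a complex manifold one has $h^*\omega=0$ when $\omega$ is torsion. For this, fix a resolution $\pi:\tilde Y\to Y$; then $\pi^*\omega=0$ because it vanishes on the dense open set over which $\pi$ is an isomorphism and $\Omega^i_{\tilde Y}$ is torsion free. Given $h$, resolve an irreducible component of $(M\times_Y\tilde Y)_\red$ dominating $M$ to obtain a proper surjective map $p:M'\to M$ of manifolds together with $M'\to\tilde Y$ over $Y$; then $p^*h^*\omega$ factors through $\pi^*\omega=0$, and injectivity of $p^*$ for proper surjective maps of manifolds (this is exactly Lemma~\ref{lem-prop-surj}(\ref{it-prop-surj-section}) of the paper) gives $h^*\omega=0$. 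Applying this to $M=\tilde X\to X\to Y$ for a resolution of $X$ and using that $\tilde X\to X$ is an isomorphism over a dense open set shows $f^*\omega$ has nowhere dense support, i.e.\ is torsion. Some such lifting-through-a-resolution argument (or Kebekus' algebraic analogue) is the missing ingredient; without it, your reduction to $Y_\sing$ assumes what is to be proved. The construction of the lower row and the commutativity of the square, as you say, are then immediate.
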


The following fact in the algebraic setup is the main result of~\cite{Keb12} and states that for morphisms between klt base spaces there exists a meaningful pull-back map for reflexive differential forms. The result has been proven independently in~\cite[Thm.~2]{HJ13}.

\begin{fact}[{\cite{Keb12},~\cite[Thm.~2]{HJ13}}]\label{fact-kebekus-pullback}
There exists a naturally defined transitive pull-back map \[f^*:\Omega^{[i]}_W(W)\to\Omega^{[i]}_V(V)\] for any morphism $f:V\to W$ between algebraic klt base spaces such that the canonical maps
\[\Omega^i(V)\to\Omega^i_V/\tor(V)\to\Omega^{[i]}_V(V),\quad\quad V\text{ klt base space}\]
define transformations of contravariant functors from the category of klt base spaces to the category of differential graded commutative $\bC$-algebras.
\end{fact}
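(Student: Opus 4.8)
The plan is to build $f^*$ out of the ordinary pull-back of Kähler forms on resolutions, using as the essential input the extension theorem for reflexive differential forms on klt spaces: for a resolution $\pi_V\colon\widetilde V\to V$ of a klt base space one has $\pi_{V,*}\Omega^i_{\widetilde V}=\Omega^{[i]}_V$, i.e.\ every reflexive $i$-form on $V$ is the direct image of a genuine regular $i$-form on $\widetilde V$ (compare \cite[Thm.~2.12]{GKP12}). Granting this, the construction of a pull-back ``downstairs'' reduces to the elementary pull-back ``upstairs''.

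First I would arrange compatible resolutions. Given $f\colon V\to W$, choose a resolution $\pi_W\colon\widetilde W\to W$ and then a resolution $\pi_V\colon\widetilde V\to V$ of the dominant component of the fibre product $V\times_W\widetilde W$; this produces a holomorphic lift $\widetilde f\colon\widetilde V\to\widetilde W$ of $f$ fitting into a commutative square. The key point is that such ``resolution squares'' form a cofinal system: any two are dominated by a third, obtained by resolving further.

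Now I would \emph{define} the pull-back. For $\sigma\in\Omega^{[i]}_W(W)$ the extension theorem yields a unique $\widetilde\sigma\in\Omega^i_{\widetilde W}(\widetilde W)$ agreeing with $\sigma$ on the locus where $\pi_W$ is an isomorphism; apply the classical Kähler pull-back to get $\widetilde f^{\,*}\widetilde\sigma\in\Omega^i_{\widetilde V}(\widetilde V)$, take $\pi_{V,*}$, and invoke the extension theorem for $V$ to regard the result as an element of $\pi_{V,*}\Omega^i_{\widetilde V}(\widetilde V)=\Omega^{[i]}_V(V)$. This is by definition $f^*\sigma$. Independence of the chosen square follows because $\widetilde\sigma$ is determined on a dense open set and the Kähler pull-back is compatible with further blow-downs; after passing to a common dominating square all candidate forms on $V$ restrict to the naive holomorphic pull-back on the dense open set $\{x\in V_{\sm}\colon f(x)\in W_{\sm}\}$, and reflexive forms are determined there. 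The same density/uniqueness principle shows that $f^*$ agrees with the classical pull-back whenever the latter makes sense, and that $f^*$ commutes with $d$ and with $\wedge$.

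For transitivity one chooses, given $V\xrightarrow{f}W\xrightarrow{g}Z$, a single tower of resolutions $\widetilde V\to\widetilde W\to\widetilde Z$ compatible with both maps, so that $(g\circ f)^*=f^*\circ g^*$ is inherited from transitivity of the Kähler pull-back upstairs together with uniqueness of extensions; the functoriality of $\Omega^\bullet\to\Omega^\bullet/\tor\to\Omega^{[\bullet]}$ as transformations into dg-algebras is checked the same way, first upstairs and on the common smooth locus, then extended by density. The hard part is the extension theorem itself — that a single resolution already suffices to make a reflexive form regular on a klt space — which is the genuinely deep, Hodge-theoretic ingredient of the cited papers; a subsidiary but real issue is the bookkeeping needed to verify that ``resolve the fibre product'' furnishes a cofinal system of compatible resolution squares, which is what licenses checking well-definedness and transitivity after a common refinement. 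An alternative that sidesteps the hand-built pull-back, and is the route of \cite{HJ13}, is to identify $\Omega^{[i]}_V$ on klt $V$ with the restriction of Voevodsky's $\h$-sheaf $\Omega^i_\h$; functoriality is then automatic from the $\h$-topology formalism, at the cost of setting that formalism up.
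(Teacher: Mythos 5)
This statement is imported by the paper as a Fact from \cite{Keb12} and \cite[Thm.~2]{HJ13}; the paper gives no proof of it (the closest it comes is the analytic analogue in Remark~\ref{rem-kebekus-pull-back}, deduced from the identification $\Omega^i_\h|_X=\Omega^{[i]}_X$ of Proposition~\ref{prop-h-refl-klt}). So your sketch has to be measured against the cited arguments, and there it has a genuine gap: your construction presupposes a ``resolution square'', i.e.\ a resolution $\pi_V:\widetilde V\to V$ together with a lift $\widetilde f:\widetilde V\to\widetilde W$ of $f$, produced by resolving the component of $V\times_W\widetilde W$ dominating $V$. Precisely in the hard case — when $f(V)$ lies in the locus over which $\pi_W$ is not an isomorphism, e.g.\ $f(V)\subseteq W_{\sing}$ — every component of $V\times_W\widetilde W$ dominating $V$ has positive-dimensional fibres over $V$, so its resolution is \emph{not} a resolution of $V$; and if one instead allows arbitrary lifts of $f\circ\pi_V$ into the fibre $\pi_W^{-1}(f(V))$, these are wildly non-unique and a priori give different answers. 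Your well-definedness argument cannot repair this, because it rests on density of the set $\{x\in V_{\sm}:f(x)\in W_{\sm}\}$, which in this case is empty.

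What is actually needed, beyond the extension theorem $\pi_{W,*}\Omega^i_{\widetilde W}=\Omega^{[i]}_W$ (which is indeed deep, but is an input from GKKP/\cite{GKP12}, not the new content here), is that the extended form $\widetilde\sigma$ satisfies \emph{descent} along the proper surjective map $\pi_W$: its two pull-backs to a resolution of $\widetilde W\times_W\widetilde W$ agree, equivalently $\widetilde\sigma$ restricts compatibly to the fibres of $\pi_W$. Already for a constant map $f$ with value $w_0\in W_{\sing}$, transitivity (via factoring through $\{w_0\}\hookrightarrow W$) forces $\widetilde\sigma|_{\pi_W^{-1}(w_0)}=0$, and this is not a formal consequence of the extension theorem or of any identity-principle argument on dense opens. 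This descent property is exactly the assertion $\Omega^{[i]}_W\cong\Omega^i_\h|_W$ for klt base spaces (compare Lemma~\ref{lem-easy-descr} and Remark~\ref{rem-easy-descr}), and proving it is the real work of \cite{Keb12} and \cite{HJ13}. In other words, the h-topology alternative you mention only in your closing sentence is not a convenience that ``sidesteps'' the construction — it (or an equivalent descent statement) is the missing core step, and it is also the route this paper takes for its analytic version via Proposition~\ref{prop-h-refl-klt} and Remark~\ref{rem-kebekus-pull-back}.
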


It seems feasible but exhausting to rewrite the whole paper \cite{Keb12} in the analytic setup which would yield an analytic analog of Fact~\ref{fact-kebekus-pullback} for holomorphic morphisms between locally algebraic klt complex base spaces. We will include a short proof of the analytic version in Remark~\ref{rem-kebekus-pull-back}.

\subsection{$\h$-differential forms}\label{ssec-h-diff-forms} The upshot of the preceding Section~\ref{ssec-class-diff-forms} is that if we pick up one of the classical sheaves of differential forms, then \emph{either} it does not admit a pull-back map by any holomorphic map between singular complex spaces \emph{or} it is difficult to handle in terms of a resolution of singularities, i.e., from the viewpoint of birational geometry. The sheaves $\Omega^i_\h|_X$ of \emph{$\h$-differential forms} try to solve this problem. 
\begin{center}
{ \renewcommand{\arraystretch}{1.5}
\begin{tabular}{|rl||c|c|}\hline
  & & $\exists$ pull-back & birational geometry \\ \hline\hline
  $\Omega^\bullet_X$ & K\"ahler diff. forms & $\bigoplus$ & $\bigominus$ \\ \hline
  $\Omega^\bullet_X/\tor$ &K\"ahler mod torsion & $\bigoplus$  & $\bigominus$ \\ \hline
  $\Omega^\bullet_\h|_X$ & $\h$-diff. forms & $\bigoplus$ & $\bigoplus$ \\ \hline
  $\Omega^{[\bullet]}_X$ & reflexive diff. forms & $\bigominus$  & $\bigoplus$\\ \hline
\end{tabular}}\end{center}
In fact, $\h$-differential forms turn out to admit a pull-back map by definition and Lemma~\ref{lem-easy-descr} provides a satisfying description in terms of resolutions of singularities.

\subsubsection{Definition of $\h$-differential forms.}
The letter $\h$ refers to the $\h$-Grothendieck topology on the category of schemes of finite type over a field of characteristic zero introduced by Voevodsky in~\cite[Def.~3.1.2]{Voe96}. In the algebraic setting, $\h$-differential forms are constructed as the sheafification of K\"ahler differential forms with respect to the $\h$-topology. That said, establishing elementary properties requires considerable technical efforts, see~\cite{HJ13}.

In contrast, in the analytic setting we are really interested in, many technical obstacles disappear. This renders possible a less involved approach pursued in the sequel.

\begin{defn}[$\h$-differential forms]\label{defn-h-diff}
Let $X$ be a reduced complex space and $i\geq 0$. An \emph{$\h$-differential form $\alpha$} of degree $i$ consists of the following data: 

For any  holomorphic map $f:M\to X$ from a complex manifold $M$ to $X$ we are given a holomorphic differential form $\alpha_f\in\Omega^i_M(M)$ of degree $i$ on $M$. These differential forms are required to satisfy $g^*\alpha_f=\alpha_{f\circ g}$ whenever $g:M'\to M$ is a holomorphic map from another complex manifold $M'$ to $M$.

We equip the \emph{set}
\[\Omega^i_\h(X) \cong \left\{ \alpha=\bigl(\alpha_f\in\Omega^i_M(M)\bigr)_f\Big|\quad \begin{array}{l}\xymatrix{M' \ar[rd]^{f\circ g} \ar[d]_g & \\ M \ar[r]_f & X }\end{array}\Rightarrow \phi^*\alpha_{f}=\alpha_{f\circ g}\right\}\]
of $\h$-differential forms of degree $i$ on $X$ with the obvious structure of an $\sO_X(X)$-module. We further define

\begin{itemize}
\item the \emph{wedge product} of $\h$-differential forms
\[ \wedge:\Omega^i_\h(X)\times\Omega^j_\h(X)\to\Omega^{i+j}_\h(X),\quad (\alpha\wedge\beta)_f=\alpha_f\wedge\beta_f;\]
\item the \emph{exterior derivative} of $\h$-differential forms
\[ d:\Omega^i_\h(X)\to\Omega^{i+1}_\h(X),\quad (d\alpha)_f=d\alpha_f; \]
\item the \emph{pull-back of $\h$-differential forms} by a holomorphic map  $\phi:X\to Y$
\[\phi^*:\Omega^i_\h(Y)\to\Omega^i_\h(X),\quad \phi^*(\alpha)_f=\alpha_{\phi\circ f};\]
\item the \emph{sheaf $\Omega^i_\h|_X$ of $\h$-differential forms}
\[\Omega^i_\h|_X(U):=\Omega^i_\h(U),\quad U\subset X\text{ open.}\]
\end{itemize}
\end{defn}

Let us justify the implicit claim that $\Omega^i_{\h}(X)$ is a \emph{set}: Fix some proper surjective map $\pi:X'\to X$ from a complex manifold $X'$ to $X$, e.g. a resolution of singularities. Given any holomorphic map $f:M\to X$ from a complex manifold $M$ to $X$, an arbitrary resolution $M'\to (M\times_XX')_\red$ yields a commutative square
\[\begin{array}{l}\xymatrix{M' \ar[d]^{p}_{\text{proper,}\atop \text{surj.}} \ar[r] & X' \ar[d]^\pi \\ M \ar[r]_f & X. }\end{array}\]
Since $p$ is proper and surjective, the pull-back of differential forms $p^*$ is injective by Lemma \ref{lem-prop-surj}(\ref{it-prop-surj-section}). In particular, a family $\alpha=(\alpha_f)_f$ as in Definition \ref{defn-h-diff} is determined by its value on $\pi$.

\subsubsection{Properties of $\h$-differential forms.} The relevant properties of the sheaves of $\h$-differential forms are summarized in the following proposition, which will be proved in Section~\ref{sssec-proof-properties-h} on page~\pageref{sssec-proof-properties-h}.

\begin{prop}[Properties of $\h$-differential forms]\label{prop-h-properties} Let $X$ be a reduced complex space and $i\geq 0$. Then
 \begin{enumerate}
 \item\label{it-prop-coh} $\Omega^i_\h|_X$ is a torsion-free coherent sheaf of $\sO_X$-modules;
 \item\label{it-prop-high-deg} $\Omega^i_\h|_X=0$ whenever $i>\dim(X)$;
 \item\label{it-prop-normal} The natural map $\sO_X\to\Omega^0_\h|_X$ is an isomorphism if $X$ is normal;
 \item\label{it-prop-smooth} The natural map $\Omega^i_X\to\Omega^i_\h|_X$ is an isomorphism on the smooth locus $X_{sm}$;
\end{enumerate}
Moreover, for any vector field  $V\in T_X(X)$ on $X$: 
\begin{enumerate}[resume]
 \item\label{it-prop-contraction} The contraction of K\"ahler differential forms along $V$ can be extended uniquely to the sheaf of $\h$-differential forms, i.e., there exists a unique commutative diagram
\[\begin{array}{l}\xymatrix{\Omega^i_X \ar[d]_{\iota_V}\ar[r] & \Omega^i_\h|_X \ar[d]^{\iota_V} \\ \Omega^{i-1}_X \ar[r] & \Omega^{i-1}_\h|_X }\end{array}\]
of morphisms of $\sO_X$-modules. 
\item\label{it-prop-lie-der} The Lie derivative of K\"ahler differential forms along $V$ can be extended uniquely to the sheaf of $\h$-differential forms, i.e., there exists a unique commutative diagram
\[\begin{array}{l}\xymatrix{\Omega^i_X \ar[d]_{\LieDer_V}\ar[r] & \Omega^i_\h|_X \ar[d]^{\LieDer_V} \\ \Omega^{i}_X \ar[r] & \Omega^{i}_\h|_X }\end{array}\]
of morphisms of sheaves of complex vector spaces. 
\end{enumerate}
\end{prop}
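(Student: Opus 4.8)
The plan is to establish the seven assertions in order, reducing everything to the key structural fact that an $\h$-differential form on $X$ is determined by — and can be recovered from — a single resolution $\pi\colon \tilde X\to X$, together with the injectivity of $p^*$ for proper surjective $p$ from Lemma~\ref{lem-prop-surj}. Concretely, for a fixed resolution $\pi\colon\tilde X\to X$ I would first prove the descent statement: a holomorphic form $\tilde\alpha\in\Omega^i_{\tilde X}(\tilde X)$ arises as $\alpha_\pi$ for some $\h$-differential form $\alpha$ if and only if $\tilde\alpha$ ``descends'', i.e. its two pullbacks to (a resolution of) $\tilde X\times_X\tilde X$ agree. This is the analytic analogue of the $\h$-sheaf description and, combined with the commutative-square argument already sketched after Definition~\ref{defn-h-diff}, it identifies $\Omega^i_\h|_X$ with a subsheaf of $\pi_*\Omega^i_{\tilde X}$; coherence of $\Omega^i_\h|_X$ then follows from coherence of $\pi_*\Omega^i_{\tilde X}$ (Grauert) once the descent condition is shown to cut out a coherent subsheaf, and torsion-freeness is immediate since $\pi_*\Omega^i_{\tilde X}$ is torsion-free. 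This disposes of \eqref{it-prop-coh}; \eqref{it-prop-high-deg} is then clear because $\Omega^i_{\tilde X}=0$ once $i>\dim\tilde X=\dim X$.

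For \eqref{it-prop-normal} and \eqref{it-prop-smooth}, on $X_{\sm}$ the identity map is a resolution, so the descent description gives $\Omega^i_\h|_{X_{\sm}}=\Omega^i_{X_{\sm}}$ at once, proving \eqref{it-prop-smooth}. For \eqref{it-prop-normal} with $X$ normal and $i=0$: an $\h$-differential function is, via descent, a holomorphic function on $\tilde X$ constant on the fibers of $\pi$ over the (nowhere dense) exceptional locus in the sense dictated by the descent relation, hence it factors through $\pi$ as a holomorphic function on $X_{\sm}$ which extends to $X$ by normality (Hartogs/Riemann extension); conversely every element of $\sO_X(X)$ obviously defines an $\h$-differential function, and the two constructions are mutually inverse. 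One must check the map $\sO_X\to\Omega^0_\h|_X$ is the natural one, but that is formal.

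For \eqref{it-prop-contraction} and \eqref{it-prop-lie-der}, the plan is to use the functorial resolution from \cite[Thm.~3.36]{Koll07}: given $V\in T_X(X)$ there is a $\pi$-related lift $\tilde V\in T_{\tilde X}(\tilde X)$. I would define $\iota_V\alpha$ (resp.\ $\LieDer_V\alpha$) by the rule $(\iota_V\alpha)_f := \iota_{V_f}\alpha_f$ where $V_f$ is, fiberwise, the pullback vector field — using that on a complex manifold $M$ with a holomorphic map $f\colon M\to X$ one can pull back $V$ through the functorial-resolution machinery (applied to $M$, which is already smooth, so $V_f$ is literally $f^*V$ when $f$ is a submersion, and in general is obtained by the $\pi$-relatedness over $M\times_X\tilde X$). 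The descent condition is preserved because $\iota$ and $\LieDer$ commute with pullback by holomorphic maps between manifolds (Cartan calculus), so the result is again an $\h$-differential form; compatibility with the classical $\iota_V,\LieDer_V$ on $\Omega^i_X$ via the map of \eqref{it-prop-smooth} and uniqueness both follow from the fact that $\Omega^i_\h|_X$ is torsion-free and agrees with $\Omega^i_X$ on the dense open set $X_{\sm}$, where the two operations already coincide. The Lie derivative case uses Cartan's formula $\LieDer_V=\mathrm{d}\iota_V+\iota_V\mathrm{d}$, noting $\mathrm{d}$ is already defined on $\Omega^\bullet_\h|_X$.

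The main obstacle I expect is making the descent description \eqref{it-prop-coh} genuinely rigorous in the analytic category: one has to handle the fiber product $\tilde X\times_X\tilde X$ (which may be singular and non-reduced), pass to a resolution of it, and verify that the ``two pullbacks agree'' locus is a coherent subsheaf of $\pi_*\Omega^i_{\tilde X}$ — this needs Grauert's coherence theorem for $\pi_*$ together with a careful check that the descent relation is $\sO_X$-linear and closed, and that it does not depend on the choice of resolution of the fiber product (which is where the injectivity of $p^*$ from Lemma~\ref{lem-prop-surj} does the real work). A secondary subtlety is the vector-field lift for \emph{arbitrary} holomorphic $f\colon M\to X$, not just for the fixed resolution $\pi$; the clean way around this is to only require the lift over $\pi$ and then propagate it via descent, but one must confirm this yields a well-defined contraction independent of all auxiliary choices.
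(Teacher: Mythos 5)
Your overall route---identifying $\Omega^i_\h|_X$ with the kernel of $\lambda\colon\pi_*\Omega^i_{X'}\to q_*\Omega^i_{X''}$ for a resolution $\pi\colon X'\to X$ and a resolution $X''$ of $(X'\times_XX')_\red$, deducing (1)--(4) from coherence of direct images, restriction to $X_\sm$ and normality, and getting (6) from (5) via Cartan's formula with uniqueness from torsion-freeness---is essentially the paper's (Lemma~\ref{lem-easy-descr}, Remark~\ref{rem-easy-descr}). However, there is a genuine gap in your treatment of item (5). The primary definition $(\iota_V\alpha)_f:=\iota_{V_f}\alpha_f$ cannot be made: a vector field on $X$ has no $f$-related lift along a general holomorphic map $f\colon M\to X$ (already for $M$ a point mapping to a point where $V\neq 0$, and more generally for any non-submersive $f$), and even where lifts exist they are not canonical, so the family $(\iota_{V_f}\alpha_f)_f$ cannot be chosen compatibly. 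Relatedly, your claim that contraction ``commutes with pullback by holomorphic maps between manifolds'' is false as stated: $g^*(\iota_W\beta)=\iota_{W'}(g^*\beta)$ only holds when $W'$ is $g$-related to $W$. Your fallback (lift only over $\pi$ and propagate via descent) is the correct idea, but the step you leave open is exactly the substance of the paper's argument: one must produce a vector field $V''$ on $X''$ that is related to $V'$ through \emph{both} projections, which the paper obtains by a second use of functorial resolutions---the field $pr_1^*(V')+pr_2^*(V')$ on $X'\times X'$ preserves $(X'\times_XX')_\red$, restricts there, and lifts to $X''$---and only with such a $V''$ does contraction commute with $\lambda$ and hence preserve $\ker\lambda=\Omega^i_\h|_X$. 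Without this construction the well-definedness you flag is not established.

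A secondary point: you locate ``the real work'' of the descent description in the injectivity statement of Lemma~\ref{lem-prop-surj}, but injectivity only shows that evaluation at $\pi$ is injective. The substantive direction is surjectivity, i.e.\ reconstructing the entire compatible family $(\alpha_f)_f$ from a single descended form on $X'$: for each $f\colon M\to X$ one resolves $(M\times_XX')_\red$ and must descend the pulled-back form along a proper surjective map of manifolds, which is the paper's Lemma~\ref{lem-descent-mfd}; its proof needs not only local sections off a codimension-one set but also the extension of the descended form across the branch locus via the local $(t,v)\mapsto(t^k,v)$ structure of Lemma~\ref{lem-prop-surj}(\ref{it-prop-surj-finite}). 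Your plan should make explicit that this is where the analytic effort lies; Grauert coherence and $\sO_X$-linearity of the descent relation are the easy parts.
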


\subsubsection{Preparation for the proof of Proposition~\ref{prop-h-properties}} For lack of an adequate reference we include a proof of the following fact, which is well-known to experts.

\begin{lem}\label{lem-prop-surj}
Let $f:Y\to X$ be a proper, surjective, holomorphic map between complex manifolds.
\begin{enumerate}
 \item\label{it-prop-surj-section} There exists a closed analytic subset $A\subset X$ of codimension $\geq 1$ such that $$f:f^{-1}(X\backslash A)\to X\backslash A$$ admits local sections.
 \item\label{it-prop-surj-finite} There exists a closed analytic subset $B\subset X$ of codimension $\geq 2$ such that any $p\in X\backslash B$ admits a neighborhood in product form $B_1(0)\times V\subset X$, where $B_1(0):=\{t\in\bC:\,|t|<1\}$,  together with a commutative diagram
$$\begin{array}{l}\xymatrix{B_1(0)\times V \ar[d]_{\phi} \ar[r] & Y\ar[d] \\ B_1(0)\times V \ar[r]^-{\subset} & X, }\end{array}$$
where $\phi(t,v)=(t^k,v)$ for some $k>0$.
\end{enumerate}
\end{lem}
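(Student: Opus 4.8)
The strategy is to reduce both statements to a normal-form result for proper holomorphic maps, exploiting that properness plus surjectivity of $f:Y\to X$ between manifolds forces $f$ to be generically finite (since $\dim Y=\dim X$ by the open mapping / dimension theory for analytic maps: a proper surjection onto a manifold cannot drop dimension on a dense open set). For \ref{it-prop-surj-section}, first I would let $A_0\subset X$ be the non-normal / branch locus together with the image of the critical set and of the components of $Y$ mapping to proper subvarieties; standard properness arguments (Remmert's proper mapping theorem) show $f(Y_{\mathrm{crit}})$ and the degeneracy loci are closed analytic of positive codimension. Over $X\setminus A_0$ the map $f$ is a finite covering (possibly after throwing away extra components of $Y$, which is harmless since we only need \emph{some} local section), hence a local biholomorphism onto its image on each sheet, so local holomorphic sections exist. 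The only subtlety is making sure $A_0$ has codimension $\geq 1$, not just nowhere dense — but this is automatic since all the sets involved are proper analytic subsets of the irreducible-at-each-point manifold $X$.

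For \ref{it-prop-surj-finite} the real content is the local normal form $\phi(t,v)=(t^k,v)$. Here I would argue as follows. Let $B\subset X$ be a closed analytic subset of codimension $\geq 2$ containing the singular locus of the branch divisor $\Delta=f(Y_{\mathrm{crit}})\subset X$ together with enough extra strata so that away from $B$ the branch locus $\Delta$ is either empty or a smooth hypersurface. Fix $p\in X\setminus B$. If $p\notin\Delta$, then $f$ is an unramified covering near $p$ and we may take $k=1$ with $\phi$ the identity on one sheet. If $p\in\Delta$, choose local coordinates $(t,v)=(t,v_1,\dots,v_{n-1})$ on a polydisc neighborhood $B_1(0)\times V$ of $p$ in which $\Delta=\{t=0\}$. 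The preimage of a small punctured disc $B_1(0)^*\times\{v\}$ is a covering of $B_1(0)^*$; picking one connected component that surjects and taking its normalization gives a branched cover of the disc, which by the classification of one-variable branched covers is $t\mapsto t^k$. Carrying this out uniformly in the parameter $v$ — i.e. checking that the ramification index $k$ is locally constant on $V$ and that the normalizing coordinate can be chosen holomorphically in $v$ — is what produces the stated commutative square.

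The main obstacle is precisely this last uniformity statement: passing from a fiberwise branched-cover description to a genuine holomorphic product-form trivialization $\phi(t,v)=(t^k,v)$ over the whole polydisc. I expect to handle it by shrinking $B_1(0)\times V$, using that the local monodromy representation of $\pi_1$ of the punctured disc (with parameters) is locally constant when the branch locus is a smooth hypersurface, and then applying the Riemann existence / Grauert-Remmert extension theorem to extend the covering across $\{t=0\}$ with the explicit local model $t\mapsto t^k$; enlarging $B$ by a further codimension-$\geq 2$ set if necessary to discard the loci where the monodromy or the number of sheets jumps. Everything else — Remmert's proper mapping theorem, the dimension count, and the elementary structure of one-dimensional branched covers — is standard and I would only cite it.
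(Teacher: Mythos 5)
There is a genuine gap, and it sits at your very first step: the claim that properness plus surjectivity between manifolds forces $\dim Y=\dim X$, so that $f$ is generically finite, is false. The projection $X\times\bP^1\to X$ is proper, surjective and holomorphic with one-dimensional fibers everywhere. The lemma is stated without any finiteness hypothesis precisely because the paper applies it to maps with positive-dimensional fibers (for instance the maps $p:M'\to M$ obtained by resolving $(M\times_XX')_\red$, whose fibers over points of $M$ landing in $X_\sing$ are typically positive-dimensional). Part~(\ref{it-prop-surj-section}) survives this mistake, because the argument one actually needs does not use finiteness: take $A=f(D)$ where $D$ is the locus where $d_yf$ fails to be surjective ($A$ is closed analytic by Remmert and of codimension $\geq 1$ by Sard), and over $X\setminus A$ the map is a proper submersion, so local sections exist by the inverse function theorem --- this is exactly the paper's proof; your ``finite covering, local biholomorphism on each sheet'' framing is simply wrong in general, though harmless here.

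Part~(\ref{it-prop-surj-finite}) is where the gap really bites. Your whole strategy --- preimages of punctured discs are coverings, a local monodromy representation that is locally constant in $v$, Riemann existence/Grauert--Remmert to extend a finite cover across $\{t=0\}$ with model $t\mapsto t^k$ --- presupposes that $f$ is (generically) finite; when fibers are positive-dimensional, the preimage of $B_1(0)^*\times\{v\}$ is not a covering of the punctured disc, there is no finite branched cover to normalize, and none of this machinery applies. No reduction to the finite case (e.g.\ via Stein factorization of a generically finite map, or by slicing $Y$) is provided or obvious. The paper argues differently: after discarding the components of $Y$ lying inside $f^{-1}(A)$, it uses principalization to arrange that $f^{-1}(A)_\red$ is snc with $f^*\sI_A=\sO_Y(-\sum_jk_j\cdot E_j)$, puts into $B$ the set $A_\sing$ together with the images of the loci where $E_j\to A$ and $E_{j_1}\cap E_{j_2}\to A$ drop rank by two, shows that for $p\in A\setminus B$ the fiber meets some $E_i$ at a point where $E_i\to A$ is submersive and no other component passes through, and there writes $t\circ f=\epsilon\cdot s^{k_i}$ with $\epsilon$ a unit; an explicit computation in local coordinates then produces the required multisection with $k=k_i$. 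To salvage your route you would have to first prove a reduction to the finite case --- a substantial missing step --- and separately supply the parameter-uniformity you yourself identify as the main obstacle; as written, the proposal does not prove the statement in the generality in which the paper needs and uses it.
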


\begin{proof}
We equip the closed analytic subset
\[D:=\{y\in Y:\, d_yf:T_yY\to T_{f(y)}X\text{ is not surjective}\}\subset Y \]
with its reduced structure. Observe that by definition its image
\[A:=f(D)\subset Y\]
is an analytic subset of codimension $\geq 1$. The Inverse Mapping Theorem establishes Item~(\ref{it-prop-surj-section}).

Let $Y^0\subset Y$ be the union of the connected components of $Y$ contained in $f^{-1}(A)$. The image $f(Y^0)\subset X$ is nowhere dense so that $Y\backslash Y^0\to X$ is still surjective and proper. In particular, after replacing $Y$ by $Y\backslash Y^0$, we may assume the following.

\begin{add-ass}\label{add-ass-D-nowhere-dense}
The set $D\subset Y$ is nowhere dense.
\end{add-ass}

Using Additional Assumption~\ref{add-ass-D-nowhere-dense} and the Principalization Theorem~\cite[Thm.~3.26]{Koll07} we may even obtain by further blowing up $Y$ the following assumption.

\begin{add-ass}\label{add-ass-D-snc}
The reduced inverse image $f^{-1}(A)_\red\subset Y$ is an snc divisor with components $E_j$, $j\in J$. There exists a subset $I\subset J$ such that $D=\bigcup_{i\in I}E_i$. Moreover there exist positive integers $k_j>0$ such that 
\[{\textstyle \text{img}(f^*\sI_A\to\sO_Y)=\sO_Y(-\sum_{j\in J}k_j\cdot E_j)\subset\sO_Y.}\]
\end{add-ass}

We write $E_{j_1,j_2}:=E_{j_1}\cap E_{j_2}$ and define analytic subsets
\[
\begin{array}{lll}
S_j & := & \{s\in E_j:\, \text{rk}\bigl(d_sf:T_sE_j\to T_{f(s)}X\bigr)\leq\dim_{f(s)}X-2\}\subset E_j \\
S_{j_1,j_2} & := & \{s\in E_{j_1,j_2}:\, \text{rk}\bigl(d_sf:T_s(E_{j_1,j_2})\to T_{f(s)}X\bigr)\leq\dim_{f(s)}X-2\}\subset E_{j_1,j_2} \\
B & := & A_\singu\cup\bigcup_jf(S_j)\cup\bigcup_{j_1,j_2}f(S_{j_1,j_2})\subset X
\end{array}
\]
Observe that $B\subset X$ is a closed analytic subset of codimension $\geq 2$. 

\begin{claim}\label{claim-surj-fiber}
For any $p\in A\backslash B$ the fiber $F:=f^{-1}(\{p\})_\red$ has non-empty intersection with $f^{-1}(A)_{\red,\reg}$.
\end{claim}

\begin{proof}[Proof of the claim]
Let $y\in F_\reg\cap E_{j_1,j_2}$ be a point. Since $p\not\in f(S_{j_1,j_2})\cup f(S_{j_1})$, the maps $E_{j_1,j_2}\to A$ and $E_{j_1}\to A$ are both submersive at $y$. In particular, 
\[dim_y(F\cap E_{j_1})=dim(E_{j_1})-dim_x(A)>dim(E_{j_2,j_2})-dim_x(A)=dim_y(F\cap E_{j_1,j_2}),\]
which shows the claim.
\end{proof}

Let us now prove Item~(\ref{it-prop-surj-finite}). We may assume that $p\in A\backslash B$. By Claim~\ref{claim-surj-fiber} there exists a point $y\in f^{-1}(\{p\})\cap f^{-1}(A)_{\red,\reg}$. If $y\not\in D$, then $f$ is submersive at $y$ and the claim is obviously true for $k=1$ Otherwise $y\in E_i\cap D_\reg$ for some $i\in I$ and
\begin{enumerate}[label={(\alph*)}] 
 \item the map $E_i\to A$ is submersive at $y$ since $p\not\in f(S_i)$, and
 \item if, locally around $p\in X$, the subset $A$ is defined by an equation $t$, then $t\circ f=\epsilon\cdot s^{k_i}$ in a neighborhood of $y$, where $\epsilon$ is a unit and $s$ is a local equation describing $E_i\subset Y$, by Additional Assumption~\ref{add-ass-D-snc}.
\end{enumerate}
A short calculation using local coordinates and (a), (b) establishes the existence of a diagram as claimed in Lemma~\ref{lem-prop-surj}(\ref{it-prop-surj-finite}), with $k=k_i$.
\end{proof}

The algebraic analog of the following lemma can be found in~\cite[Prop.~4.2]{L09}, see also~\cite[Thm.~3.6]{HJ13}.

\begin{lem}\label{lem-descent-mfd}
Let $M,M',M''$ be complex manifolds, $M,M'$ connected, together with proper and surjective holomorphic maps $f:M'\to M$ and $g:M''\to M'\times_MM'$, and let $i\geq 0$. Then the pull-back by $f$ induces a bijection
\[\Omega^i_M(M)\,\cong\,\{\alpha\in\Omega^i_{M'}(M'):\, (pr_1\circ g)^*\alpha=(pr_2\circ g)^*\alpha\in\Omega^i_{M''}(M'')\}.\]
In other words, the pull-back maps fit into an equalizer diagram
\[
\begin{array}{l}\xymatrix@C=50pt@R=50pt{
 \Omega^i_M(M) \ar[r]^{f^*} & \Omega^i_{M'}(M')\ar@<-0.4ex>[r]^(.43){\parbox[b][12pt][t]{20pt}{\scriptsize $\scriptsize{(\pr_1\circ g)^*}$}} \ar@<0.4ex>[r]_(.43){\parbox[t][10pt][b]{20pt}{\scriptsize $\scriptsize{(\pr_2\circ g)^*}$}} & \Omega^i_{M''}(M''). 
}\end{array}
\]
\end{lem}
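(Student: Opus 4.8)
The plan is to verify the two inclusions that constitute the claimed bijection. The inclusion ``$\subseteq$'', i.e.\ that every form of the shape $f^*\beta$ lies in the equalizer, is immediate from functoriality of pull-back: $(\pr_1\circ g)^*f^*\beta = (f\circ\pr_1\circ g)^*\beta = (f\circ\pr_2\circ g)^*\beta = (\pr_2\circ g)^*\beta$, since $f\circ\pr_1 = f\circ\pr_2$ on $M'\times_M M'$ by definition of the fibre product. It remains to prove that $f^*$ is injective and that every $\alpha\in\Omega^i_{M'}(M')$ satisfying the equalizer condition descends to some $\beta\in\Omega^i_M(M)$.

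Injectivity of $f^*$ follows from Lemma~\ref{lem-prop-surj}(\ref{it-prop-surj-section}): there is a closed analytic $A\subset M$ of codimension $\geq 1$ so that $f$ admits local sections over $M\setminus A$; if $f^*\beta=0$ then $\beta$ vanishes on $M\setminus A$, hence everywhere since $M$ is a connected manifold and $\Omega^i_M$ is torsion-free. For surjectivity onto the equalizer, first use Lemma~\ref{lem-prop-surj}(\ref{it-prop-surj-section}) again to produce $\beta$ on $M\setminus A$: cover $M\setminus A$ by opens $U_\lambda$ over which a section $\sigma_\lambda:U_\lambda\to M'$ of $f$ exists, set $\beta_\lambda:=\sigma_\lambda^*\alpha$, and observe that on overlaps $U_\lambda\cap U_\mu$ the two sections $(\sigma_\lambda,\sigma_\mu)$ factor through $M'\times_M M'$, so the equalizer condition (pulled back further along a resolution dominating $M''$, using injectivity of pull-back by proper surjective maps, Lemma~\ref{lem-prop-surj}(\ref{it-prop-surj-section}) applied to $g$) forces $\beta_\lambda=\beta_\mu$; this glues the $\beta_\lambda$ to a global $\beta\in\Omega^i_{M\setminus A}(M\setminus A)$ with $f^*\beta=\alpha$ over $f^{-1}(M\setminus A)$.

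The main obstacle is then to extend $\beta$ across $A$ as a \emph{holomorphic} form on all of $M$, and to check $f^*\beta=\alpha$ globally. For the extension I would invoke the second, finer statement Lemma~\ref{lem-prop-surj}(\ref{it-prop-surj-finite}): off a closed analytic subset $B\subset M$ of codimension $\geq 2$, $f$ looks locally like the map $(t,v)\mapsto(t^k,v)$ on $B_1(0)\times V$. Pulling $\beta$ through this explicit normal form, the condition that $\alpha$ be a \emph{holomorphic} (single-valued) form upstairs translates, via the substitution $t\mapsto t^k$, precisely into the statement that $\beta$ has no pole and is invariant under $t\mapsto\zeta t$ for $k$-th roots of unity $\zeta$; hence $\beta$ is holomorphic across $A\setminus B$, i.e.\ $\beta$ extends holomorphically to $M\setminus B$. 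Since $\codim_M B\geq 2$ and $M$ is a manifold, Hartogs' extension theorem (applied componentwise to the coefficients of $\beta$ in local coordinates) extends $\beta$ to a holomorphic $i$-form on all of $M$. Finally, $f^*\beta$ and $\alpha$ are two holomorphic forms on $M'$ agreeing on the dense open $f^{-1}(M\setminus A)$, so they coincide by torsion-freeness of $\Omega^i_{M'}$ on the connected manifold $M'$. This completes the proof; the delicate point throughout is bookkeeping the roots-of-unity/ramification behaviour in Lemma~\ref{lem-prop-surj}(\ref{it-prop-surj-finite}) to see that no fractional powers of $t$ appear in $\beta$.
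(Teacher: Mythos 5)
Your overall route is the same as the paper's: injectivity and construction of $\beta$ off a codimension-one set $A$ via local sections (Lemma~\ref{lem-prop-surj}(\ref{it-prop-surj-section})), well-definedness by factoring pairs of sections through $M'\times_MM'$ and pulling the equalizer identity back through a proper surjective cover dominating $M''$, extension across $A$ using the normal form $(t,v)\mapsto(t^k,v)$ of Lemma~\ref{lem-prop-surj}(\ref{it-prop-surj-finite}) plus Hartogs across the codimension-two set $B$, and finally the identity theorem. However, there is one genuine gap: the assertion that the glued form satisfies $f^*\beta=\alpha$ over all of $f^{-1}(M\setminus A)$ does not follow from the gluing. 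What the gluing gives is only $\sigma_\lambda^*\alpha=\beta|_{U_\lambda}$, i.e.\ agreement of $\alpha$ with $f^*\beta$ \emph{along the images of the sections}; to get agreement on the whole preimage $f^{-1}(U_\lambda)$ you must use the equalizer hypothesis once more, this time fiberwise -- otherwise $\alpha$ could still vary in fiber directions and would not be a pull-back. The paper isolates exactly this point as Claim~\ref{claim-pullback-agrees}, proved by connectedness of $f^{-1}(M\setminus A)$ together with a local computation on a product chart $U\times V$ (where $f$ is the projection), using $(U\times V)\times_U(U\times V)\cong U\times V\times V$ and the equalizer condition; alternatively one can pull the equalizer identity back along $y\mapsto(y,\sigma_\lambda(f(y)))\in M'\times_MM'$ through a resolution of its fiber product with $M''$. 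Note that this identity is also what your extension step silently uses when you identify $\phi^*\beta$ over $B^*\times V$ with the (single-valued, holomorphic) chart pull-back of $\alpha$, so it cannot simply be postponed to the last line; compare the paper's Claims~\ref{claim-pullback-agrees} and~\ref{claim-extension}.

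A minor further imprecision: Lemma~\ref{lem-prop-surj} is stated for maps between complex \emph{manifolds}, so it cannot be ``applied to $g$'' directly, since $M'\times_MM'$ may be singular. In the overlap argument one instead applies the injectivity statement to the proper surjective map from a resolution of $g^{-1}\bigl(\sigma_\lambda(U)\times_U\sigma_\mu(U)\bigr)_{\red}$ down to the smooth base $U_\lambda\cap U_\mu$, exactly as in the paper's Claim~\ref{claim-local-section}. With these two repairs your argument coincides with the paper's proof.
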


\begin{proof}
The pull-back map is injective by Lemma \ref{lem-prop-surj}(\ref{it-prop-surj-section}). To show surjectivity, let $\alpha$ be an element in the set on the right hand side. 

\begin{claim}\label{claim-local-section}
Let $\alpha$ be as above. Then for any two local sections $s,t:M\supset U\rightrightarrows M'$ of $f$ the pull-backs $s^*(\alpha)=t^*(\alpha)\in\Omega^i_M(U)$ of differential forms coincide.
\end{claim}

\begin{proof}[Proof of the claim]
The reduced preimage $S''=g^{-1}(s(U)\times_U t(U))_{\red}\subset M''$ is a locally closed analytic subset. A resolution $\reso{S}\to S''$ gives rise to a commutative diagram
\[
\begin{array}{l}\xymatrix@C=50pt@R=50pt{
\reso{S} \ar[d]^{q}_{\text{proper}\atop\text{surjective}} \ar[r] & M'' \ar[d]_g \ar@<-0.4ex>[rd]^-{\parbox[b][8pt][t]{18pt}{\scriptsize $\scriptsize{\pr_1\circ g}$}} \ar@<0.4ex>[rd]_-{\parbox[t][8pt][b]{20pt}{\scriptsize $\scriptsize{\pr_2\circ g}$}} & \\
U  \ar[r]_-{(s,t)} & M'\times_MM'\ar@<-0.4ex>[r]^-{\parbox[b][8pt][t]{8pt}{\scriptsize $\pr_1$}} \ar@<0.4ex>[r]_-{\parbox[t][8pt][b]{8pt}{\scriptsize $\pr_2$}} & M'. 
}\end{array}\]
The defining property of $\alpha$ implies that $q^*(s^*\alpha-t^*\alpha)=0$ and Lemma \ref{lem-prop-surj}(\ref{it-prop-surj-section}) applied to $q$ establishes Claim~\ref{claim-local-section}.
\end{proof}
 
Let $M_1:=M\backslash A$ where $A\subset M$ is as in Lemma~\ref{lem-prop-surj}(\ref{it-prop-surj-section}), with $Y\to X$ replaced by $f:M'\to M$. Claim \ref{claim-local-section} yields an element $\beta_1\in\Omega^i_M(M_1)$ such that $\beta|_U=s^*(\alpha)$ for any local section $s:M_1\supset U\to M'$ of $f$.

\begin{claim}\label{claim-pullback-agrees}
$\alpha|_{f^{-1}(M_1)}=f|_{f^{-1}(M_1)}^*(\beta_1)\in\Omega^i_M(M_1)$.
\end{claim}

\begin{proof}[Proof of the claim]
The set $f^{-1}(M_1)$ is connected since $M'$ is so by assumption and $M_1$ is the complement of an analytic subset of codimension $\geq 1$ by Lemma~\ref{lem-prop-surj}(\ref{it-prop-surj-section}). In particular, by the identity theorem, it suffices to verify the claimed equality on some non-empty open subset of $f^{-1}(M_1)$. There exist certainly non-empty open subsets $U\subset M$ and $U\times V\subset M'$ such that $f$ restricted to $U\times V$ is given by the first projection $U\times V\to U$. A short calculation in local coordinates using $(U\times V)\times_U(U\times V)\cong U\times V\times V$ finishes the proof of Claim~\ref{claim-pullback-agrees}.
\end{proof}

\begin{claim}\label{claim-extension}\label{claim-extension}
The differential form $\beta_1$ can be extended to $M$, i.e., there exists a differential form $\beta\in\Omega^i_M(M)$ such that $\beta|_{M_1}=\beta_1$.
\end{claim}

\begin{proof}[Proof of the claim]
Let $B_1(0)\times V\subset M$, $\phi$ and $k>0$ as in Lemma \ref{lem-prop-surj}(\ref{it-prop-surj-finite}), with $Y\to X$ replaced by $f:M'\to M$. Let $B^*= B_1(0)\backslash\{0\}$. Observe that $B^*\times V\subset M_1$. Claim~\ref{claim-pullback-agrees} implies that the pull-back $\phi|_{B^*\times V}^*(\beta_1)\in\Gamma(B^*\times V)$ coincides with the pull-back of $\alpha$ by $B^*\times V\to M'$. In particular, $\phi|_{B^*\times V}^*(\beta_1)$ can be extended to a differential form on $B_1(0)\times V$. A short calculation in local coordinates shows that this already implies that $\beta_1$ extends to $B_1(0)\times V\subset M$.

By what has been said so far and Lemma~\ref{lem-prop-surj}(\ref{it-prop-surj-finite}), the differential form $\beta_1$ can be extended to a differential form defined on the complement of an analytic subset of codimension $\geq 2$. This already implies that it can be extended to a differential form $\beta$ on $M$. The proof of Claim~\ref{claim-extension} is complete.
\end{proof}

Claims~\ref{claim-pullback-agrees} and~\ref{claim-extension} together finish the proof of Lemma~\ref{lem-descent-mfd}, since $f^{-1}(M_1)\subset M'$ is dense by connectedness of $M'$.
\end{proof}

\begin{lem}\label{lem-easy-descr}
Let $X$ be a reduced complex space, $i\geq 0$, and let $X'$, $X''$ be complex manifolds together with proper surjective holomorphic maps $\pi:X'\to X$ and $\phi:X''\to X'\times_XX'$. Then evaluation on $\pi$ yields a bijection
\[\Omega^i_\h(X)\,\cong\,\{\alpha\in\Omega^i_{X'}(X'):\, (pr_1\circ \phi)^*\alpha=(pr_2\circ \phi)^*\alpha\in\Omega^i_{X''}(X'')\}.\] 
\end{lem}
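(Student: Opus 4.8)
The plan is to reduce the statement to Lemma~\ref{lem-descent-mfd} by exploiting the fact, recorded after Definition~\ref{defn-h-diff}, that an $\h$-differential form $\alpha=(\alpha_f)_f$ is already determined by its single value $\alpha_\pi\in\Omega^i_{X'}(X')$. Thus the evaluation map $\Omega^i_\h(X)\to\Omega^i_{X'}(X')$, $\alpha\mapsto\alpha_\pi$, is injective; the content is to identify its image with the equalizer on the right-hand side.

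First I would check that the image is contained in the asserted set. Given $\alpha\in\Omega^i_\h(X)$, the two composites $pr_1\circ\phi$ and $pr_2\circ\phi$ are holomorphic maps $X''\to X'\xrightarrow{\pi} X$, and by the defining compatibility of $\h$-differential forms (applied to the maps $X''\to X'$ over $\pi$) we get $(pr_j\circ\phi)^*\alpha_\pi=\alpha_{\pi\circ pr_j\circ\phi}$ for $j=1,2$; since $\pi\circ pr_1\circ\phi=\pi\circ pr_2\circ\phi$ as maps $X''\to X$, the two pull-backs agree. Hence $\alpha_\pi$ lies in the equalizer.

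The substantive step is surjectivity: given $\alpha'\in\Omega^i_{X'}(X')$ with $(pr_1\circ\phi)^*\alpha'=(pr_2\circ\phi)^*\alpha'$, I must produce, for every holomorphic $f:M\to X$ from a complex manifold $M$, a form $\alpha_f\in\Omega^i_M(M)$ compatible with all further pull-backs, such that $\alpha_\pi=\alpha'$. The recipe is the one sketched after Definition~\ref{defn-h-diff}: choose a resolution $M'\to(M\times_XX')_\red$, giving a commutative square with a proper surjective map $p:M'\to M$ and a map $M'\to X'$; set $\beta'\in\Omega^i_{M'}(M')$ to be the pull-back of $\alpha'$ along $M'\to X'$, and then apply Lemma~\ref{lem-descent-mfd} to $M'\to M$ (with a further resolution $M''\to M'\times_M M'$ as the third manifold) to descend $\beta'$ to a form $\alpha_f\in\Omega^i_M(M)$. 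To invoke Lemma~\ref{lem-descent-mfd} one must verify the equalizer condition $(pr_1)^*\beta'=(pr_2)^*\beta'$ on $M''$: the two composites $M''\to M'\times_M M'\to M'\to X'$ both factor through $X'\times_X X'$, and pulling the hypothesis on $\alpha'$ back along the resolution $M''\to(\text{something mapping to }X''\to X'\times_X X')$ — here one needs to further resolve the fiber product so that the map to $X'\times_X X'$ lifts through $\phi:X''\to X'\times_X X'$, then use injectivity of pull-back under the proper surjective resolution map (Lemma~\ref{lem-prop-surj}(\ref{it-prop-surj-section})) — gives the desired equality.

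The main obstacle I anticipate is the bookkeeping of well-definedness and functoriality of $f\mapsto\alpha_f$: the form $\alpha_f$ must be shown independent of the auxiliary resolution $M'\to(M\times_XX')_\red$ chosen, and the compatibility $g^*\alpha_f=\alpha_{f\circ g}$ for $g:M_1\to M$ must be verified. Both follow the same pattern — two competing resolutions are dominated by a common one, and one compares the descended forms there using injectivity of pull-back under proper surjective maps together with Lemma~\ref{lem-descent-mfd} — so the argument is routine but needs care to keep the diagrams straight. Finally $\alpha_\pi=\alpha'$ holds because $\pi:X'\to X$ itself is one of the allowed maps $f$, and $X'$ resolves the (trivially split) diagonal situation, so the descent procedure returns $\alpha'$ unchanged.
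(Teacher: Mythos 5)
Your proposal is correct and follows essentially the same route as the paper: descend the given form via Lemma~\ref{lem-descent-mfd} applied to a resolution of $(M\times_XX')_\red\to M$, verify the equalizer hypothesis by a further resolution mapping through $\phi:X''\to X'\times_XX'$ together with injectivity of pull-back under proper surjective maps (Lemma~\ref{lem-prop-surj}), and settle well-definedness and functoriality by dominating competing resolutions by a common one. The only cosmetic difference is that the paper resolves $X''\times_{X'\times_XX'}(M'\times_MM')$ in a single step rather than resolving $M'\times_MM'$ first and then passing through $X''$, which changes nothing of substance.
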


\begin{rem}\label{rem-easy-descr}
In the situation of Lemma~\ref{lem-easy-descr} let us write $q:=\pi\circ\pr_1\circ\phi:X''\to X$. Then Lemma~\ref{lem-easy-descr} establishes an isomorphism
\[\Omega^i_\h|_X\cong \text{ker}\bigl(\lambda:\,\pi_*\Omega^i_{X'}\to q_*\Omega^i_{X''}\bigr) \]
of sheaves on $X$, where $\lambda(\alpha)=\phi^*(pr_1^*\alpha-pr_2^*\alpha)$.
\end{rem}

\begin{proof}
The evaluation map is injective by what has been said following Definition \ref{defn-h-diff}. To see surjectivity let $\alpha$ be an element of the right hand side set. 

\begin{claim}\label{claim-easy-descr-1}
Let $M,M'$ be complex manifolds together with holomorphic maps $f:M\to X$, $f':M'\to X'$ and $p:M'\to M$ such that $p$ is proper and surjective and the diagram 
\[ {\textstyle \begin{array}{l}\xymatrix{X' \ar[r]^\pi & X \\ M' \ar[u]^{f'} \ar[r]_p & M\ar[u]_f }\end{array}}\]
is commutative. Then there exists a unique differential form $\alpha_{f,f',p}\in\Gamma(M,\Omega^i_M)$ such that $p^*(\alpha_{f,f',p})=f'^*\alpha\in\Gamma(M',\Omega^i_{M'})$.
\end{claim}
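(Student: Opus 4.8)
The plan is to reduce Claim~\ref{claim-easy-descr-1} to the descent statement of Lemma~\ref{lem-descent-mfd}, applied to the proper surjective map $p\colon M'\to M$. Uniqueness of $\alpha_{f,f',p}$ is immediate: if $\omega\in\Gamma(M,\Omega^i_M)$ satisfies $p^{*}\omega=0$, then pulling $\omega$ back through a local section of $p$ --- which exists over the complement of an analytic set of codimension $\geq 1$ by Lemma~\ref{lem-prop-surj}(\ref{it-prop-surj-section}) --- shows that $\omega$ vanishes on a dense open subset, hence everywhere. For the existence of $\alpha_{f,f',p}$, after replacing $M$ by a connected component and carrying out a routine reduction (gluing by means of the cocycle relation) we may assume $M$ and $M'$ connected. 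Lemma~\ref{lem-descent-mfd} then reduces the problem to the following: produce a complex manifold $M''$ together with a proper surjective holomorphic map $g\colon M''\to M'\times_M M'$ such that $\beta:=f'^{*}\alpha\in\Omega^i_{M'}(M')$ satisfies $(\pr_1\circ g)^{*}\beta=(\pr_2\circ g)^{*}\beta$ in $\Omega^i_{M''}(M'')$.

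The key point is to choose $M''$ so that it maps not only to $M'\times_M M'$ but also to $X''$. The commutativity $\pi\circ f'=f\circ p$ means that the pair $(f'\circ\pr_1,\,f'\circ\pr_2)$ takes values in the fibre product $X'\times_X X'$, defining a holomorphic map $\Psi\colon M'\times_M M'\to X'\times_X X'$ with $\pr_j\circ\Psi=f'\circ\pr_j$ for $j=1,2$ (here $\pr_j$ denotes the projection of the respective fibre product). I would therefore let $M''$ be a resolution of singularities of $\bigl((M'\times_M M')\times_{X'\times_X X'}X''\bigr)_{\red}$, and take $g\colon M''\to M'\times_M M'$ and $r\colon M''\to X''$ to be the composites of the resolution and reduction maps with the two projections, so that $\phi\circ r=\Psi\circ g$ as maps to $X'\times_X X'$. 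Since $\phi\colon X''\to X'\times_X X'$ is proper and surjective, so is its base change along $\Psi$, and hence so is $g$.

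It then remains only to read off the cocycle condition. Using $\pr_j\circ\Psi=f'\circ\pr_j$ and $\Psi\circ g=\phi\circ r$, for $j=1,2$ we obtain
\[(\pr_j\circ g)^{*}\beta=(f'\circ\pr_j\circ g)^{*}\alpha=(\pr_j\circ\phi\circ r)^{*}\alpha=r^{*}\bigl((\pr_j\circ\phi)^{*}\alpha\bigr),\]
and since $\alpha$ was assumed to satisfy $(\pr_1\circ\phi)^{*}\alpha=(\pr_2\circ\phi)^{*}\alpha$, the right-hand sides for $j=1$ and $j=2$ coincide. Feeding this back into Lemma~\ref{lem-descent-mfd} produces the desired $\alpha_{f,f',p}\in\Gamma(M,\Omega^i_M)$ with $p^{*}(\alpha_{f,f',p})=f'^{*}\alpha$.

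The only step that requires genuine thought is the construction in the second paragraph: one must realise that the cocycle condition demanded by Lemma~\ref{lem-descent-mfd} is, in this situation, \emph{forced} by the hypothesis on $\alpha$, provided the manifold $M''$ lying over $M'\times_M M'$ is chosen to lie over $X''$ as well. The remaining ingredients --- the reduction to connected $M,M'$ and the extension of the descended form across loci of codimension one and two --- involve no new ideas and are already packaged inside Lemmas~\ref{lem-prop-surj} and~\ref{lem-descent-mfd}.
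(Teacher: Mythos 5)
Your proposal is correct and takes essentially the same route as the paper: uniqueness via the injectivity of $p^*$ from Lemma~\ref{lem-prop-surj}(\ref{it-prop-surj-section}), and existence by resolving the reduced fibre product $X''\times_{X'\times_XX'}(M'\times_MM')$, reading off the cocycle condition for $f'^*\alpha$ from the compatibility of $\alpha$ with $\phi$, and then applying Lemma~\ref{lem-descent-mfd} to the induced proper surjective map onto $M'\times_MM'$.
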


\begin{proof}[Proof of Claim~\ref{claim-easy-descr-1}]
Uniqueness holds by Lemma~\ref{lem-prop-surj}(\ref{it-prop-surj-section}) since $p$ is proper and surjective. To prove existence we may assume that $M$ and $M'$ are connected. Let $M''$ be a resolution of $X''\times_{X'\times_X X'}(M'\times_MM')$. This gives rise to a commutative diagram
\[\begin{array}{l}
\xymatrix@1@C=60pt@R=40pt{
X'' \ar@<-0.4ex>[r]^-{\parbox[b][10pt][t]{16pt}{\scriptsize $\scriptsize{\pr_1\circ\phi}$}} \ar@<0.4ex>[r]_-{\parbox[t][8pt][b]{16pt}{\scriptsize $\scriptsize{\pr_2\circ\phi}$}} & X'  \ar[r]^\pi & X \\ 
M'' \ar[u]^{f''} \ar@<-0.4ex>[r]^-{\parbox[b][10pt][t]{16pt}{\scriptsize $\scriptsize{\pr_1\circ\varphi}$}} \ar@<0.4ex>[r]_-{\parbox[t][8pt][b]{16pt}{\scriptsize $\scriptsize{\pr_1\circ\varphi}$}} &  M' \ar[u]^{f'} \ar[r]_p & M,\ar[u]_f 
}\end{array}\]
where $\varphi$ is the induced map $M''\to M'\times_MM'$. The commutativity implies that
\[(\pr_1\circ\varphi)^*(f'^*\alpha)=f''^*(\pr_1\circ\phi)^*\alpha=f''^*(\pr_2\circ\phi)^*\alpha=(\pr_2\circ\varphi)^*(f'^*\alpha) .\]
In this situation, Lemma \ref{lem-descent-mfd} asserts the existence of a differential form $\alpha_{f,f',p}\in\Omega^i_M(M)$ satisfying $p^*(\alpha_{f,f',p})=f'^*\alpha$.
\end{proof}

\begin{claim}\label{claim-easy-descr-2}
Let $M$ be a complex manifold and $f:M\to X$ be a holomorphic map. Then there exists a differential form $\alpha_f\in\Gamma(M,\Omega^i_M)$ such that $\alpha_f=\alpha_{f,f',p}$ for any triple $(f,f',p)$ as in Claim~\ref{claim-easy-descr-1}.
\end{claim}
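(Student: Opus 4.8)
The plan is to \emph{define} $\alpha_f:=\alpha_{f,f',p}$ for an arbitrary triple $(f,f',p)$ as in Claim~\ref{claim-easy-descr-1}, and then establish two points: that at least one such triple exists, so that the definition is not vacuous, and that the resulting form on $M$ is independent of the chosen triple. Existence is immediate: given $f:M\to X$, let $p:M'\to M$ be a resolution of $(M\times_X X')_\red$ followed by the first projection. Since $\pi$ is proper and surjective, so is $M\times_X X'\to M$ by base change, hence so is $p$; composing the resolution with the second projection gives $f':M'\to X'$, and the square with $\pi$ commutes by construction. Thus $(f,f',p)$ is a triple of the required form.

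For independence, I would take two triples $(f,f_1',p_1)$ and $(f,f_2',p_2)$ with $p_j:M_j'\to M$, recalling that $\alpha_{f,f_j',p_j}$ is characterised by $p_j^*\alpha_{f,f_j',p_j}=(f_j')^*\alpha$. I would pass to a common refinement: let $q:N\to M$ be a resolution of $(M_1'\times_M M_2')_\red$ followed by the projection to $M$. Then $q$ is proper and surjective (both $p_j$ are surjective, so $M_1'\times_M M_2'\to M$ is too), and it comes with holomorphic maps $r_j:N\to M_j'$ satisfying $p_j\circ r_j=q$. Pulling the two defining equations back along $r_j$ gives $q^*\alpha_{f,f_j',p_j}=(f_j'\circ r_j)^*\alpha$ for $j=1,2$. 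Since $q$ is proper and surjective, $q^*$ is injective by Lemma~\ref{lem-prop-surj}(\ref{it-prop-surj-section}), so it remains only to verify the identity $(f_1'\circ r_1)^*\alpha=(f_2'\circ r_2)^*\alpha\in\Omega^i_N(N)$.

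This identity is the crux, and it is here that the double fiber product $X''$ is genuinely needed: the maps $f_1'\circ r_1$ and $f_2'\circ r_2$ from $N$ to $X'$ need not coincide, but they agree after composition with $\pi$, hence combine into a single map $h:N\to X'\times_X X'$. Using the proper surjective map $\phi:X''\to X'\times_X X'$ from the hypothesis of Lemma~\ref{lem-easy-descr}, I would let $s:N''\to N$ be a resolution of $(X''\times_{X'\times_X X'}N)_\red$ followed by the projection to $N$; it is proper and surjective and carries a map $f'':N''\to X''$ with $\phi\circ f''=h\circ s$, so that $(\pr_k\circ\phi)\circ f''=(f_k'\circ r_k)\circ s$ for $k=1,2$. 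Pulling $\alpha$ back and invoking the cocycle condition $(\pr_1\circ\phi)^*\alpha=(\pr_2\circ\phi)^*\alpha$ that $\alpha$ satisfies, one gets $s^*(f_1'\circ r_1)^*\alpha=s^*(f_2'\circ r_2)^*\alpha$; injectivity of $s^*$, again by Lemma~\ref{lem-prop-surj}(\ref{it-prop-surj-section}), then yields $(f_1'\circ r_1)^*\alpha=(f_2'\circ r_2)^*\alpha$, completing the argument.

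I expect the only real subtlety to be the bookkeeping of the various fiber products and the routine verification that all the auxiliary maps ($p$, $q$, $s$, and their companions) are proper and surjective so that Lemma~\ref{lem-prop-surj}(\ref{it-prop-surj-section}) applies; the conceptual point — that one must climb up to $X''$ to compare the two pull-backs of $\alpha$ — is forced, since $\alpha$ lives on $X'$ and $f$ only determines the \emph{pair} of $X'$-valued lifts, not each lift individually.
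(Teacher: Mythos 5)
Your proof is correct, and it differs from the paper's in one interesting way: how the independence of the triple is established. The paper singles out a canonical triple coming from a resolution $M_1\to Y:=(M\times_XX')_\red$ and compares an arbitrary triple $(f,f',p')$ with it via a resolution of $(M_1\times_Y M')_\red$ — since $M'$, being reduced, maps into $Y$, the two compositions to $X'$ literally coincide there, so the pull-backs of $\alpha$ agree for free and the cocycle condition is not invoked again (it enters only through Lemma~\ref{lem-descent-mfd} inside Claim~\ref{claim-easy-descr-1}). You instead compare two arbitrary triples symmetrically over the fiber product $(M_1'\times_M M_2')_\red$; since the two lifts to $X'$ then agree only after composing with $\pi$, you must climb to $X''$ and use the relation $(\pr_1\circ\phi)^*\alpha=(\pr_2\circ\phi)^*\alpha$ a second time, before descending with the injectivity statement of Lemma~\ref{lem-prop-surj}(\ref{it-prop-surj-section}). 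Both routes are valid and all your properness/surjectivity checks go through; your version is more symmetric and self-contained, the paper's is slightly leaner. Your closing remark that the climb to $X''$ is ``forced'' is the one inaccuracy: the paper's device of fibering over $Y$ shows it can be avoided at this step, though this does not affect the correctness of your argument.
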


\begin{proof}[Proof of Claim~\ref{claim-easy-descr-2}]
We abbreviate 
\[Y:=(M\times_XX')_\red.\]
Let $(f,f_1:M_1\to M,p_1:M_1\to X')$ be the triple obtained from a resolution $M_1\to Y$. We claim that $\alpha_{f,f',p'}=\alpha_{f,f_1,p_1}$ for any triple $(f,f',p')$ as in Claim~\ref{claim-easy-descr-1}.

To see this, let $M_1'\to (M_1\times_YM')_\red$ be a resolution and observe that the holomorphic map $M_1'\to M'$ is proper and surjective. The maps so far constructed fit into the following commutative diagram
\[\begin{array}{l}
\xymatrix@1@R=35pt@C=35pt{
  & X'  \ar[r]^\pi & X \\ 
 M_1 \ar@/^0.7pc/[ru]^{f_1} \ar[r] \ar@/_-0.7pc/[rr]^(.35){p_1}|(.49)\hole|(.505)\hole|(.52)\hole|(.58)\hole|(.595)\hole &  Y \ar[u] \ar[r]|(.15)\hole|(.18)\hole & M.\ar[u]_f \\
 M'_1 \ar[u] \ar[r]^{\text{proper}}_{\text{surjective}} & M' \ar[u] \ar@/_0.7pc/[ru]_{p'} \ar@/_0.7pc/[uu]_(.25){f'} &
}\end{array}\]
Commutativity shows that the pull-backs of $\alpha_{f,f_1,p_1}$ and $\alpha_{f,f',p'}$ to $M_1'$ agree. This implies that  $\alpha_{f,f_1,p_1}=\alpha_{f,f',p'}$, since both $M_1'\to M$ is proper and surjective, see also Lemma~\ref{lem-prop-surj}.
\end{proof}

The proof is finished if we show that the differential forms $\alpha_f$ defined in Claim~\ref{claim-easy-descr-2} yield a $\h$-differential form lifting $\alpha$. First, by applying Claim~\ref{claim-easy-descr-2} to the triple $(\pi,\id_\pi,\id_\pi)$, we see that $\alpha_\pi=\alpha$. Second, assume that we have maps $M_1\xrightarrow{g} M_2\xrightarrow{f} X$. By taking resolutions of reduced fiber products we obtain a commutative diagram
\[\begin{array}{l}
\xymatrix@1@C=40pt{
 M'_2 \ar[r] \ar[d]_{\text{proper}\atop\text{surjective}} & M'_1  \ar[r] \ar[d]_{\text{proper}\atop\text{surjective}}& X' \ar[d] \\ 
 M_2 \ar[r]_{g} & M_1  \ar[r]_{f} & X,
}\end{array}\]
which implies that $g^*\alpha_f=\alpha_{f\circ g}$.
\end{proof}

\subsubsection{Proof of Proposition~\ref{prop-h-properties}}\label{sssec-proof-properties-h}

Let $\pi:X'\to X$ and $\phi:X''\to (X'\times_XX')_\red$ be functorial resolutions. Then the assumptions in Lemma~\ref{lem-easy-descr} are satisfied.

Item~(\ref{it-prop-coh}) follows immediately from Remark~\ref{rem-easy-descr}, since $\Omega^i_\h|_X$ is the kernel of a morphism between torsion-free coherent sheaves on $X$.

Item~(\ref{it-prop-smooth}) holds true since over the smooth locus $X_{sm}$ we have $X\cong X'\cong X''$ and $\lambda=0$. Item~(\ref{it-prop-high-deg}) follows from Items~(\ref{it-prop-coh}) and~(\ref{it-prop-smooth}). Item~(\ref{it-prop-normal}) is a consequence of $j_*\sO_{X_{sm}}=\sO_X\to\Omega^0_\h|_X\subset j_*\Omega^0_{X_{sm}}=j_*\sO_{X_{sm}}$ where $j:X_{sm}\subset X$ is the inclusion of the smooth locus.

In order to see Item~(\ref{it-prop-contraction}), observe that there exists a vector field $V'\in T_{X'}(X')$ that is $\pi$-related to $V$. This implies that the vector field $pr_1^*(V')+pr_2^*(V')\in T_{X'\times X'}(X'\times X')$ preserves the subset $(X'\times_XX')_{\red}\subset X'\times X'$ and thus restricts to a vector field on $(X'\times_XX')_\red$, to which a unique vector field $V''\in T_{X''}(X'')$ is $\phi$-related. Existence in Item~(\ref{it-prop-contraction}) is a consequence of the commutative diagram
\[\begin{array}{l}\xymatrix{
\Omega^i_X\ar[r]\ar[d]_{\iota_V} & \Omega^i_\h|_X \ar[r] & \pi_*\Omega^i_{X'} \ar[r]^\lambda\ar[d]_{\iota_{V'}} & q_*\Omega^i_{X''}\ar[d]_{\iota_{V''}} \\
\Omega^{i-1}_X\ar[r] & \Omega^{i-1}_\h|_X \ar[r] & \pi_*\Omega^{i-1}_{X'} \ar[r]^\lambda & q_*\Omega^{i-1}_{X''}. 
}\end{array}\]
Uniqueness in Item~(\ref{it-prop-contraction}) follows from Items~(\ref{it-prop-coh}) and~(\ref{it-prop-smooth}). Existence in Item~(\ref{it-prop-lie-der}) is due to the formula $\LieDer_V=d\circ\iota_V+\iota_V\circ d$ and uniqueness follows again from Items~(\ref{it-prop-coh}) and~(\ref{it-prop-smooth}).\qed

\subsection{$\h$-differential forms in special cases}\label{ssec-h-mild-sing}

\begin{prop}[$\h$-differential forms on products]\label{prop-h-product}
Let $X$ be a reduced complex space and let $M$ be a complex manifold. Let further $\pr_X:X\times M\to X$ and $\pr_M:X\times M\to M$ denote the projection maps, respectively. Then the wedge product of $\h$-differential forms induces an isomorphism
\[ \bigoplus_{l=0}^i \pr_X^*(\Omega^l_\h|_X)\otimes {\pr}^*_M(\Omega^{i-l}_M)\cong \Omega^i_\h|_{X\times M}. \]
\end{prop}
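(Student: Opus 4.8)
The plan is to reduce the statement to the already-established description of $\h$-differential forms via resolutions, Lemma~\ref{lem-easy-descr} and Remark~\ref{rem-easy-descr}. Choose a functorial resolution $\pi:X'\to X$ and $\phi:X''\to(X'\times_XX')_\red$ as in the proof of Proposition~\ref{prop-h-properties}. Since $M$ is smooth, $\pi\times\id_M:X'\times M\to X\times M$ is a resolution of $X\times M$, and $(X'\times M)\times_{X\times M}(X'\times M)=(X'\times_XX')\times M$, so $\phi\times\id_M:X''\times M\to\bigl((X'\times M)\times_{X\times M}(X'\times M)\bigr)_\red$ serves as the second map. Hence, writing $\lambda_X$ for the map from Remark~\ref{rem-easy-descr} on $X$, we get $\Omega^i_\h|_{X\times M}\cong\ker\bigl(\lambda_{X\times M}:(\pi\times\id)_*\Omega^i_{X'\times M}\to q_*\Omega^i_{X''\times M}\bigr)$.

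Next I would use the classical product decomposition of K\"ahler forms on the smooth space $X'\times M$, namely $\Omega^i_{X'\times M}\cong\bigoplus_{l=0}^i \pr_{X'}^*\Omega^l_{X'}\otimes\pr_M^*\Omega^{i-l}_M$, and the analogous decomposition on $X''\times M$. Pushing forward along the product projections (the statement is local on $X$ and on $M$, so one may assume everything is a product of Stein/coordinate pieces and invoke a K\"unneth-type identity, or simply argue stalkwise), the map $\lambda_{X\times M}$ becomes the direct sum over $l$ of $\lambda_X\otimes\id_{\Omega^{i-l}_M}$ on $\bigoplus_l \pi_*\Omega^l_{X'}\otimes\pr_M^*\Omega^{i-l}_M$, because $\phi\times\id_M$ acts as $\phi$ on the first factor and identically on $M$, and pull-back respects the wedge decomposition. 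Taking kernels degree by degree, and using that $\Omega^{i-l}_M$ is locally free (hence tensoring with it is exact and commutes with kernels), yields
\[ \Omega^i_\h|_{X\times M}\cong\bigoplus_{l=0}^i\ker(\lambda_X)\otimes\pr_M^*\Omega^{i-l}_M\cong\bigoplus_{l=0}^i\pr_X^*\bigl(\Omega^l_\h|_X\bigr)\otimes\pr_M^*\Omega^{i-l}_M, \]
and one checks this isomorphism is exactly the one induced by the wedge product, since on $X'\times M$ the identification is the classical wedge decomposition and $\Omega^\bullet_\h$ is a subsheaf compatible with $\wedge$ by Definition~\ref{defn-h-diff}.

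The main obstacle is the bookkeeping needed to justify that kernels commute with the relevant tensor/pushforward operations and that $\lambda_{X\times M}$ genuinely splits as $\bigoplus_l\lambda_X\otimes\id$. This requires care because $\pi_*$ and $q_*$ do not in general commute with tensoring by an arbitrary coherent sheaf, but they do commute with tensoring by a locally free sheaf pulled back from $M$, and the product structure $X''\times M$, $(\phi\times\id_M)$ guarantees the pull-back maps never mix the $X$- and $M$-directions; one also needs $R^{>0}$-vanishing or flatness to pass the $M$-factor through the kernel, which again holds since $\Omega^{i-l}_M$ is locally free. An alternative, cleaner route that avoids sheaf-cohomological subtleties is to argue entirely on sections over a product $U\times W$ with $U\subset X$ Stein and $W\subset M$ a polydisc: a section of $\Omega^i_\h|_{X\times M}$ over $U\times W$ is by Lemma~\ref{lem-easy-descr} a $\lambda_{X\times M}$-closed form on $\pi^{-1}(U)\times W$, the classical decomposition writes it uniquely as $\sum_l\sum_I \alpha_{l,I}\wedge dw_I$ with $\alpha_{l,I}\in\Omega^l_{X'}(\pi^{-1}(U)\times W)$ depending holomorphically on $w\in W$, the descent condition is equivalent to each coefficient being $\lambda_X$-closed for every fixed $w$, hence to each $\alpha_{l,I}$ lying in $\Omega^l_\h|_X(U)\,\hat\otimes\,\sO(W)$, and reassembling gives exactly the asserted wedge-product isomorphism. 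I would present the latter argument, as it makes the compatibility with $\wedge$ transparent and sidesteps the derived-functor technicalities.
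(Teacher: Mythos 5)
Your proposal follows essentially the same route as the paper: the paper's proof is precisely your first paragraph --- identify $\Omega^i_\h|_{X\times M}$ with the kernel from Remark~\ref{rem-easy-descr} using $\pi\times\id_M$ and $\phi\times\id_M$, apply the smooth product decomposition on $X'\times M$ and $X''\times M$, and pull the locally free factor $\pr_M^*\Omega^{i-l}_M$ and the projection $\pr_X^*$ through the kernel --- stated as a short chain of isomorphisms without dwelling on the flat-base-change/projection-formula point you flag. Your alternative section-level argument over $U\times W$ is just a hands-on unwinding of those same identifications, so there is no substantive difference in method.
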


\begin{proof}
The statement is known in the case when $X$ is smooth. Let us choose resolutions $\pi:X'\to X$ and $\phi:X''\to (X'\times_XX')_\red$ and write $q:X''\to X$. Using Remark~\ref{rem-easy-descr} we calculate
\begin{align*}
\Omega^i_\h|_{X\times M} &\,\cong \, \text{ker}\bigl((\pi\times\id_M)_*\Omega^i_{X'\times M}\to (q\times\id_M)_*\Omega^i_{X''\times M}\bigr)   \\
& \,\cong \,\bigoplus_{l=0}^i\text{ker}\Bigl((\pi\times\id_M)_*\bigl(\pr_{X'}^*\Omega^l_{X'}\otimes\pr_M^*\Omega^{i-l}_M\bigr)\to (q\times\id_M)_*\bigl(\pr_{X''}^*\Omega^l_{X''}\otimes\pr_M^*\Omega^{i-l}_M\bigr)\Bigr)   \\
&   \,\cong \,\bigoplus_{l=0}^i\pr_X^*\text{ker}\Bigl(\pi_*\pr_{X'}^*\Omega^l_{X'}\to q_*\pr_{X''}^*\Omega^l_{X''}\Bigr)\otimes\pr_M^*\Omega^{i-l}_M   \\
& \,\cong\, \bigoplus_{l=0}^i\pr_X^*(\Omega^l_\h|_X)\otimes\pr_M^*(\Omega^{i-l}_M),
\end{align*}
which finishes the proof.
\end{proof}

\begin{prop}\label{prop-h-isolated}
Let $X$ be a reduced complex space with only one isolated singularity $x\in X$ together with a strong resolution $\pi:\tilde{X}\to X$, $E=\pi^{-1}(\{x\})_\red$. 
\begin{enumerate}
\item\label{it-h-isolated-1} For any $i>0$ the pull-back by $\pi$ induces an isomorphism
\[\Omega^i_\h|_X\xrightarrow{\sim} \pi_*\sI_E\cdot\Omega^i_{\tilde{X}}(\log\, E). \]
\item\label{it-h-isolated-2} If in addition the complex space germ $\sg{X}{p}$ is irreducible, then
\[\Omega^0_\h|_X\xrightarrow{\sim}\pi_*\sO_{\tilde{X}}.\]
\end{enumerate}
\end{prop}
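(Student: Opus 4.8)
The plan is to exploit the description of $\h$-differential forms by descent from a resolution given in Lemma~\ref{lem-easy-descr} and Remark~\ref{rem-easy-descr}, taking $X':=\tilde{X}$ and choosing an unorthodox but convenient auxiliary manifold $X''$ resolving $(\tilde{X}\times_X\tilde{X})_{\red}$. Since $x$ is the only singular point and $\pi$ restricts to an isomorphism over $X_{\sm}=X\setminus\{x\}$, any point of $\tilde{X}\times_X\tilde{X}$ lying over $X_{\sm}$ lies on the diagonal, so set-theoretically $(\tilde{X}\times_X\tilde{X})_{\red}=\Delta_{\tilde{X}}\cup(E\times E)$. As $\pi$ is a strong resolution, $E$ is an snc divisor, so each irreducible component $E_j$ ($j\in J$) — and hence each product $E_j\times E_k$ — is a smooth compact complex manifold, and
\[\phi\colon\ X'':=\tilde{X}\ \sqcup\ \bigsqcup_{j,k\in J}(E_j\times E_k)\ \longrightarrow\ \tilde{X}\times_X\tilde{X},\]
the diagonal embedding on the first summand and the evident closed immersion on each $E_j\times E_k$, is a proper surjective holomorphic map from a complex manifold (that $X''$ is disconnected and not equidimensional does no harm in Lemma~\ref{lem-easy-descr}). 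By Remark~\ref{rem-easy-descr}, pull-back by $\pi$ identifies $\Omega^i_\h|_X$ with $\ker\bigl(\lambda\colon\pi_*\Omega^i_{\tilde{X}}\to q_*\Omega^i_{X''}\bigr)$, $\lambda(\alpha)=\phi^*(\pr_1^*\alpha-\pr_2^*\alpha)$, and $\lambda$ can be read off summand by summand: it vanishes on the $\tilde{X}$-summand (both projections obtained from the diagonal are $\id_{\tilde X}$), while on the summand $E_j\times E_k$ it is $\pr_{E_j}^*(\iota_{E_j}^*\alpha)-\pr_{E_k}^*(\iota_{E_k}^*\alpha)$, with $\iota_{E_j}\colon E_j\hookrightarrow\tilde{X}$ the inclusion.

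For part~(\ref{it-h-isolated-1}), i.e.\ $i>0$: for $i\geq 1$ a holomorphic $i$-form on $E_j\times E_k$ that is at once a pull-back along $\pr_{E_j}$ and along $\pr_{E_k}$ must be zero — restrict to a slice $\{a\}\times E_k$, resp.\ $E_j\times\{b\}$ (and the case $\dim E_j=0$ or $\dim E_k=0$ is vacuous). Taking $j=k$, this shows $\alpha\in\pi_*\Omega^i_{\tilde{X}}$ lies in $\Omega^i_\h|_X$ exactly when $\iota_{E_j}^*\alpha=0$ for all $j$, i.e.\ $\Omega^i_\h|_X=\pi_*\ker\bigl(\Omega^i_{\tilde{X}}\to\bigoplus_{j\in J}(\iota_{E_j})_*\Omega^i_{E_j}\bigr)$. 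A short computation in local coordinates adapted to the snc divisor $E$ identifies this kernel with $\sI_E\cdot\Omega^i_{\tilde{X}}(\log E)$, which is the claim.

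For part~(\ref{it-h-isolated-2}), i.e.\ $i=0$: the same formula for $\lambda$ shows $f\in\pi_*\sO_{\tilde{X}}$ lies in $\Omega^0_\h|_X$ iff $\pr_{E_j}^*(f|_{E_j})=\pr_{E_k}^*(f|_{E_k})$ on every $E_j\times E_k$; since one side depends only on the $E_j$-coordinate and the other only on the $E_k$-coordinate, this means each $f|_{E_j}$ is a constant $c_j$ and $c_j=c_k$ for all $j,k$, i.e.\ $f|_E$ is constant. Now every $E_j$ is compact, connected and reduced, so each $f|_{E_j}$ is automatically constant, and the $c_j$ automatically agree once $E$ is connected. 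For the latter I would use the irreducibility hypothesis: an irreducible germ $\sg{X}{x}$ is unibranch, so the normalization $X^\nu\to X$ is bijective over $x$; as $\pi$ factors through $X^\nu$, the set $E$ is the fiber of the resolution $\tilde{X}\to X^\nu$ of the \emph{normal} complex space $X^\nu$ over a single point, hence connected by Zariski's connectedness theorem. Thus $\Omega^0_\h|_X=\pi_*\sO_{\tilde{X}}$.

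Granting Lemma~\ref{lem-easy-descr}, the argument is essentially formal. The points demanding care are: the explicit identification of $(\tilde{X}\times_X\tilde{X})_{\red}$ together with the verification that the non-standard resolution $X''$ is admissible and makes $\lambda$ fully explicit; the elementary but slightly fiddly local identity $\sI_E\cdot\Omega^i_{\tilde{X}}(\log E)=\ker\bigl(\Omega^i_{\tilde{X}}\to\bigoplus_j(\iota_{E_j})_*\Omega^i_{E_j}\bigr)$ on an snc divisor; and, in degree zero, the connectedness of $E$, where it is essential that ``irreducible'' be read as analytically irreducible (unibranch), precisely as in the statement.
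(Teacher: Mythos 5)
Your proof is correct and takes essentially the same route as the paper: the paper applies Lemma~\ref{lem-easy-descr} with exactly the same auxiliary manifold $\tilde{X}\sqcup\bigsqcup_{j,k}E_j\times E_k\to\tilde{X}\times_X\tilde{X}$ (diagonal plus inclusions), reduces the descent condition to $\iota_{E_j}^*\alpha=0$ for $i>0$ and to constancy of $f$ on $E$ for $i=0$, and identifies the resulting kernel with $\sI_E\cdot\Omega^i_{\tilde{X}}(\log E)$. Your explicit justification of the connectedness of $E$ via factoring through the normalization only spells out what the paper leaves implicit in the phrase ``the condition on the right hand side is automatically satisfied.''
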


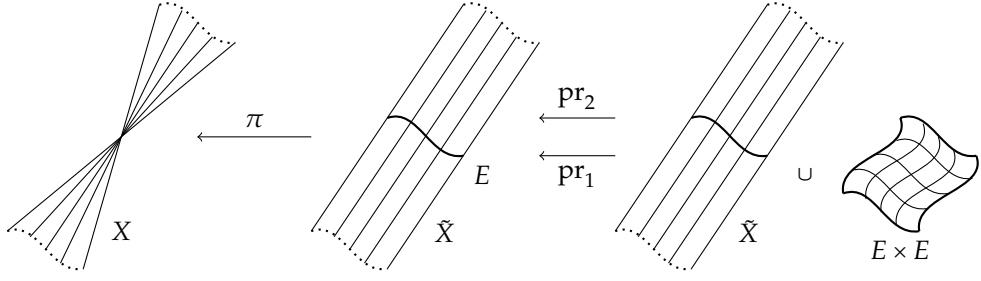
\begin{figure}
\begin{tikzpicture}
  [scale=.5,auto=left]

\tikzset{curve/.style={circle,draw = black,fill=blue!20, minimum size=0.5cm, inner sep = 0cm}}
\tikzset{contrcurve/.style={very thick,circle,draw = black,fill=blue!20, minimum size=0.5cm, inner sep = 0cm}}

  \node (L1) at (4,2) {$X$};
  \node (L2) at (12.5,2) {$\tilde{X}$};
  \node (L3) at (13.5,3.5) {$E$};
  \node (L4) at (20.5,2) {$\tilde{X}$};
  \node (L5) at (24.5,1.5) {$E\times E$};
  \node (L6) at (22,3.5) {$\cup$};

  \draw[thick,dotted] (1,2) to [out=20,in=190] (3,1);
  \draw[thick,dotted] (5,8) to [out=20,in=190] (7,7);
  \draw[-] (1,2) -- (7,7);
  \draw[-] (3,1) -- (5,8);
  \draw[-] (2,1.5) -- (6,7.5);
  \draw[-] (1.6,1.85) -- (6.4,7.15);
  \draw[-] (2.4,1.15) -- (5.6,7.85);

  \draw[thick,dotted] (9,2) to [out=20,in=190] (11,1);
  \draw[thick,dotted] (13,8) to [out=20,in=190] (15,7);
  \draw[thick] (11,5) to [out=20,in=190] (13,4);
  \draw[-] (9,2) -- (13,8);
  \draw[-] (11,1) -- (15,7);
  \draw[-] (10,1.5) -- (14,7.5);
  \draw[-] (9.6,1.85) -- (13.6,7.85);
  \draw[-] (10.4,1.15) -- (14.4,7.15);
  
  \draw[->] (9,4.5) -- node[above] {$\pi$}(6,4.5);

  \draw[thick,dotted] (9+8,2) to [out=20,in=190] (11+8,1);
  \draw[thick,dotted] (13+8,8) to [out=20,in=190] (15+8,7);
  \draw[thick] (11+8,5) to [out=20,in=190] (13+8,4);
  \draw[-] (9+8,2) -- (13+8,8);
  \draw[-] (11+8,1) -- (15+8,7);
  \draw[-] (10+8,1.5) -- (14+8,7.5);
  \draw[-] (9.6+8,1.85) -- (13.6+8,7.85);
  \draw[-] (10.4+8,1.15) -- (14.4+8,7.15);

  \draw[->] (17,5) -- node[above] {$\text{pr}_2$} (15,5);
  \draw[->] (17,4) -- node[below] {$\text{pr}_1$} (15,4);
 
  \draw[thick] (23,3) to [out=20,in=190] (25,2);
  \draw[thick] (24.5,5) to [out=20,in=190] (26.5,4);
  \draw (23.75,4) to [out=20,in=190] (25.75,3);
  \draw (23.2,3.6) to [out=20,in=190] (25.2,2.6);
  \draw (24.4,4.4) to [out=20,in=190] (26.4,3.4);

  \draw[thick] (23,3) to [out=100,in=290] (24.5,5);
  \draw[thick] (25,2) to [out=100,in=290] (26.5,4);
  \draw (24,2.5) to [out=100,in=290] (25.5,4.5);
  \draw (23.6,2.85) to [out=100,in=290] (25.1,4.85);
  \draw (24.4,2.15) to [out=100,in=290] (25.9,4.15);

\end{tikzpicture}
\setlength{\captionmargin}{7pt}
\caption{Proof of Proposition~\ref{prop-h-isolated}, if $X$ is a cone.}
\end{figure}

\begin{proof}
Let $E_j$ be the irreducible components of $E$. Then the maps $\pi:\tilde{X}\to X$ and $\bigsqcup_{j,k}E_j\times E_k\sqcup \tilde{X}\xrightarrow{(\text{incl},\,\text{diag})} \tilde{X}\times_X\tilde{X}$ satisfy the assumptions of Lemma~\ref{lem-easy-descr} so that for any $i\geq 0$ there is an exact sequence
\[0\to\Omega^i_\h(X)\xrightarrow{\pi^*}\Omega^i_{\tilde{X}}(\tilde{X})\xrightarrow{(0,\,\text{pr}^*_1-\text{pr}^*_2)}\Omega^i_{\tilde{X}}(\tilde{X})\oplus\bigoplus_{j,k}\Omega^i_{E_j\times E_k}(E_j\times E_k).\]
Item (\ref{it-h-isolated-1}) follows since for $i>0$ this implies that the pull-back map by $\pi$ yields a bijective map
\[{\textstyle \pi^*:\Omega^i_\h(X)\xrightarrow{\sim}\text{ker}\Bigl(\Omega^i_{\tilde{X}}(\tilde{X})\to\bigoplus_j\Omega^i_{E_j}(E_j)\Bigr)\cong H^0(\tilde{X},\sI_E\cdot\Omega^i_{\tilde{X}}(log\, E)).} \]
In the case $i=0$ we see that
\[\pi^*:\Omega^0_\h|_X(X)\xrightarrow{\sim}\left\{f\in\sO_{\tilde{X}}(\tilde{X}):\quad \forall j,k.\, f(E_j)=f(E_k)\subset\bC\right\}. \]
Moreover, if $\sg{X}{p}$ is irreducible, then the condition on the right hand side is automatically satisfied so that Item (\ref{it-h-isolated-2}) holds.
\end{proof}

\begin{lem}\label{lem-h-an-alg}
Let $V$ be a reduced separated scheme of finite type over $\bC$. Then there exists a natural isomorphism
\[\Omega^i_\h|_{V^\an}\cong\bigl(\Omega^i_\h|_{V}\bigr)^\an\]
of coherent sheaves on $V^\an$ for any $i\geq 0$, where $\Omega^i_\h|_V$ is the sheaf of algebraic $\h$-differential forms on $V$ introduced in~\cite{HJ13}.
\end{lem}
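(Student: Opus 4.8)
The plan is to express both sheaves through the resolution description of Remark~\ref{rem-easy-descr} and its algebraic analog, and then to pass between the algebraic and the analytic picture by relative GAGA. So first I would fix an algebraic resolution of singularities $\pi\colon V'\to V$ of the reduced scheme $V$, together with a resolution $\phi\colon V''\to (V'\times_V V')_\red$; these exist by resolution of singularities \cite{Koll07}, the morphisms $\pi$ and $\phi$ (hence also $q:=\pi\circ\pr_1\circ\phi$) are proper and surjective, and $V'$, $V''$ are smooth. The algebraic analog of Lemma~\ref{lem-easy-descr}, deduced as in the text from the descent theorem~\cite[Thm.~3.6]{HJ13}, then identifies
\[\Omega^i_\h|_V\;\cong\;\ker\bigl(\lambda\colon \pi_*\Omega^i_{V'}\to q_*\Omega^i_{V''}\bigr),\qquad\lambda(\alpha)=\phi^*(\pr_1^*\alpha-\pr_2^*\alpha).\]

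Next I would analytify this identification. For the smooth schemes $V'$, $V''$ one has $(\Omega^i_{V'})^\an\cong\Omega^i_{(V')^\an}$ and $(\Omega^i_{V''})^\an\cong\Omega^i_{(V'')^\an}$; since $\pi$ and $q$ are proper, relative GAGA provides canonical isomorphisms $(\pi_*\Omega^i_{V'})^\an\cong\pi^\an_*\Omega^i_{(V')^\an}$ and $(q_*\Omega^i_{V''})^\an\cong q^\an_*\Omega^i_{(V'')^\an}$ carrying $\lambda^\an$ to the analogous analytic map; and analytification of coherent sheaves is exact, so it commutes with kernels. Hence $(\Omega^i_\h|_V)^\an\cong\ker(\lambda^\an\colon\pi^\an_*\Omega^i_{(V')^\an}\to q^\an_*\Omega^i_{(V'')^\an})$. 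On the other hand, analytification commutes with fibre products and with reduction, so $\pi^\an\colon(V')^\an\to V^\an$ is a resolution of the reduced complex space $V^\an$, the map $\phi^\an\colon(V'')^\an\to\bigl((V')^\an\times_{V^\an}(V')^\an\bigr)_\red$ is a proper surjective morphism from a complex manifold, and $q^\an=\pi^\an\circ\pr_1\circ\phi^\an$; thus Remark~\ref{rem-easy-descr} identifies the above kernel with $\Omega^i_\h|_{V^\an}$. Composing yields the asserted isomorphism. It is independent of the choices of $\pi$ and $\phi$, because by the two descent statements neither side depends on them, and a routine check shows that it is compatible with morphisms of reduced schemes, so it is natural.

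The step requiring most care is the compatibility of the various comparison maps: one has to make sure that the relative-GAGA isomorphisms, the analytification of fibre products and reductions, and the identifications $(\Omega^i_{V'})^\an\cong\Omega^i_{(V')^\an}$ fit together so that $\ker\lambda$ is carried precisely onto $\ker(\lambda^\an)$, which is a diagram chase using the functoriality of analytification. No geometric input is needed beyond resolution of singularities and relative GAGA.
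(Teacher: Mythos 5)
Your proposal is correct and follows essentially the same route as the paper: identify $\Omega^i_\h|_V$ with $\ker(\lambda:\pi_*\Omega^i_{V'}\to q_*\Omega^i_{V''})$ via the algebraic descent statement of~\cite{HJ13} (the paper quotes this directly as~\cite[Rem.~3.7]{HJ13}), analytify using exactness of $(\cdot)^\an$ and relative GAGA for the proper maps $\pi$ and $q$, and then recognize the resulting kernel as $\Omega^i_\h|_{V^\an}$ by Remark~\ref{rem-easy-descr}. The compatibility checks you flag at the end are exactly the routine functoriality points the paper leaves implicit, so there is nothing to add.
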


\begin{proof}
Let $\pi:V'\to V$ and $\phi:V''\to (V'\times_V V')_{red}$ be resolutions of singularities and write $q:V''\to V$. By \cite[Rem.~3.7]{HJ13} the pull-back of algebraic $\h$-differential forms induces an isomorphism
\[\Omega^i_\h|_V\cong\text{ker}\bigl(\lambda:\,\pi_*\Omega^i_{V'}\to q_*\Omega^i_{V''}\bigr)\]
where $\lambda(\alpha)=\phi^*(pr_1^*\alpha-pr_2^*\alpha)$. Analytifying this isomorphism yields
\begin{align*}
\bigl(\Omega^i_\h|_V\bigr)^{\an} &\,\cong \, \Bigl(\text{ker}\bigl(\lambda:\,\pi_*\Omega^i_{V'}\to q_*\Omega^i_{V''}\bigr)\Bigr)^\an  &&  \\
& \,\cong \,\text{ker}\bigl(\lambda^\an:\,(\pi_*\Omega^i_{V'})^\an\to (q_*\Omega^i_{V''})^\an\bigr)   &&\text{by exactness of }(\cdot)^\an\\
& \,\cong \,\text{ker}\bigl(\pi^\an_*\Omega^i_{V'^\an}\to q^\an_*\Omega^i_{V''^\an}\bigr)   &&\text{by~\cite[Ch.~XII, Thm.~4.2]{SGA1}}\\
& \,\cong\, \Omega^i_\h|_{V^\an} &&\text{by Remark }\ref{rem-easy-descr}
\end{align*}
and thus finishes the proof.
\end{proof}

\begin{prop}\label{prop-h-refl-klt}
Let $X$ be a locally algebraic klt base space. Then the sheaves of $\h$-differential forms and reflexive differential forms agree, i.e., for any $i\geq 0$,
\[\Omega^i_\h|_X=\Omega^{[i]}_X.\]
\end{prop}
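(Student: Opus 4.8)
The plan is to reduce the statement to the corresponding algebraic result of Huber--Jörder, for which the agreement of $\h$-differential forms with reflexive differential forms on klt base spaces is already known. Since the question is local on $X$, I may assume $X\subset V^\an$ is an open subset of the analytification of an algebraic klt base space $V$. By \cite[Thm.~1]{HJ13} (or the combination of the main results of \cite{Keb12} with the descent description of $\h$-forms), one has $\Omega^i_\h|_V=\Omega^{[i]}_V$ as coherent sheaves on $V$. Applying the analytification functor and using Lemma~\ref{lem-h-an-alg}, we get
\[
\Omega^i_\h|_{V^\an}\cong\bigl(\Omega^i_\h|_V\bigr)^\an\cong\bigl(\Omega^{[i]}_V\bigr)^\an,
\]
and it remains to identify the right-hand side with $\Omega^{[i]}_{V^\an}$, the sheaf of reflexive differential forms on the normal complex space $V^\an$.

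First I would check that reflexive hulls commute with analytification for a normal variety. Writing $j:V_\sm\hookrightarrow V$ for the inclusion of the smooth locus, one has $\Omega^{[i]}_V=j_*\Omega^i_{V_\sm}=(\Omega^i_V)^{**}$, and analytification is exact and compatible with the reflexive-hull construction on coherent sheaves (it commutes with $\sHom$ into $\sO$ since these are finitely presented statements that can be checked on stalks, using faithful flatness of $\sO_{V^\an,p}$ over $\sO_{V,p}$). Hence $\bigl(\Omega^{[i]}_V\bigr)^\an\cong\bigl((\Omega^i_V)^{**}\bigr)^\an\cong\bigl((\Omega^i_V)^\an\bigr)^{**}=\bigl(\Omega^i_{V^\an}\bigr)^{**}=\Omega^{[i]}_{V^\an}$, where I also use that $(\Omega^i_V)^\an\cong\Omega^i_{V^\an}$. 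Restricting everything to the open subset $X\subset V^\an$ and combining with the chain of isomorphisms above gives $\Omega^i_\h|_X=\Omega^{[i]}_X$, and this isomorphism is natural, being assembled from natural maps.

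An alternative, more self-contained route avoids quoting the full algebraic comparison: use the description from Remark~\ref{rem-easy-descr}, $\Omega^i_\h|_X\cong\ker\bigl(\lambda:\pi_*\Omega^i_{X'}\to q_*\Omega^i_{X''}\bigr)$ for resolutions $\pi:X'\to X$, $\phi:X''\to(X'\times_XX')_\red$, together with the characterization of reflexive forms on klt spaces in terms of a resolution \cite[Thm.~2.12]{GKP12}: a form on $X'$ descends to a reflexive form on $X$ exactly when it extends over the discrepancy divisor with the log-canonical bound, and on klt pairs every form on $X'$ defined over $X_\sm$ already satisfies this. One then argues that the $\lambda$-closed forms on $X'$ are precisely the $\pi$-pullbacks of reflexive forms, using the pull-back functoriality of reflexive forms on klt base spaces (the analytic analog of Fact~\ref{fact-kebekus-pullback}, promised in Remark~\ref{rem-kebekus-pull-back}) to descend along $\pr_1\circ\phi$ and $\pr_2\circ\phi$.

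The main obstacle is the bookkeeping around passing between the algebraic and analytic categories cleanly: one must know that $\Omega^i_\h|_V=\Omega^{[i]}_V$ holds Zariski-locally on an arbitrary algebraic klt base space (not merely a variety with isolated singularities), that this isomorphism is compatible with the cover $X=\bigcup X_i$ chosen in Definition~\ref{defn-klt-base-space}, and that the two constructions glue to a well-defined isomorphism of sheaves on $X$ independent of the chosen local algebraizations. Everything else — exactness of analytification, GAGA-type comparison of pushforwards under proper maps \cite[Ch.~XII, Thm.~4.2]{SGA1}, and commutation of $(-)^\an$ with kernels and reflexive hulls — is routine once the right statements are lined up.
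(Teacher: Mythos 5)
Your first route is exactly the paper's proof: localize to $X=V^\an$ for an algebraic klt base space $V$, apply Lemma~\ref{lem-h-an-alg}, quote the algebraic comparison $\Omega^i_\h|_V\cong\Omega^{[i]}_V$ from \cite{HJ13} (the paper cites Prop.~5.2 there), and identify $\bigl(\Omega^{[i]}_V\bigr)^\an\cong\Omega^{[i]}_{V^\an}$ via flatness of analytification, which the paper handles by citing the proof of \cite[Lem.~2.16]{GKP12}. The gluing worries you raise are harmless, since all the isomorphisms involved are natural, so the proposal is correct and takes essentially the same approach.
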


\begin{rem}\label{rem-kebekus-pull-back}
Proposition~\ref{prop-h-refl-klt} implies that there exists a pull-back map $f^*:\Omega^{[i]}_Y\to f_*\Omega^{[i]}_X$ associated with any holomorphic map $f:X\to Y$ between locally algebraic klt base spaces.
\end{rem}

\begin{proof}[Proof of Proposition~\ref{prop-h-refl-klt}]
The question is local on $X$ and we thus may assume that $X$ is the complex space associated with an algebraic klt base space $V$. Then
\[\Omega^i_\h|_X\overset{\ref{lem-h-an-alg}}{\cong}\bigl(\Omega^i_\h|_V\bigr)^\an\overset{}{\cong}\bigl(\Omega^{[i]}_V\bigr)^\an \cong \Omega^{[i]}_X\]
where the middle isomorphism is \cite[Prop.~5.2]{HJ13} and the last isomorphism is shown in the proof of~\cite[Lem.~2.16]{GKP12}.
\end{proof}

\begin{prop}\label{prop-h-refl-isolated-rational}
Let $X$ be a normal complex space with isolated rational singularities. Then the sheaves of $\h$-differential forms and reflexive differential forms agree, i.e., for any $i\geq 0$,
\[\Omega^i_\h|_X=\Omega^{[i]}_X.\]
\end{prop}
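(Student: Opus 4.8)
The plan is to reduce, as in the proof of Proposition~\ref{prop-h-refl-klt}, to the comparison of two coherent sheaves both of which agree with $\Omega^i_{X_\sm}$ on the smooth locus, and to check equality by showing that the natural map between them is an isomorphism in codimension $\geq 2$ and that both sheaves are sufficiently close to being reflexive. Concretely, since the statement is local and the singularity is isolated, I may assume $X$ has a single isolated rational singularity $x\in X$ with a strong resolution $\pi:\tilde{X}\to X$, $E=\pi^{-1}(\{x\})_\red$. By Proposition~\ref{prop-h-isolated}(\ref{it-h-isolated-1}), for $i>0$ we have $\Omega^i_\h|_X\cong\pi_*\bigl(\sI_E\cdot\Omega^i_{\tilde{X}}(\log E)\bigr)$, and by Proposition~\ref{prop-h-isolated}(\ref{it-h-isolated-2}) together with normality, $\Omega^0_\h|_X\cong\pi_*\sO_{\tilde{X}}=\sO_X=\Omega^{[0]}_X$ (the last equality using rationality, hence normality of $X$). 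So the case $i=0$ is immediate and the content is the case $1\leq i\leq n$.

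First I would record that there is always a natural injection $\Omega^i_\h|_X\hookrightarrow\Omega^{[i]}_X$: by Proposition~\ref{prop-h-properties}(\ref{it-prop-coh}) the sheaf $\Omega^i_\h|_X$ is torsion-free and by (\ref{it-prop-smooth}) it agrees with $\Omega^i_X$ on $X_\sm$, hence it embeds into its own reflexive hull, which is $j_*\Omega^i_{X_\sm}=\Omega^{[i]}_X$ where $j:X_\sm\hookrightarrow X$; concretely an $\h$-form restricts to a holomorphic form on $X_\sm$ and thus defines a reflexive form. It therefore suffices to prove surjectivity, i.e. that every reflexive $i$-form on (a neighborhood of) $x$ is an $\h$-form. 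Via Proposition~\ref{prop-h-isolated}(\ref{it-h-isolated-1}) this is exactly the assertion that
\[\pi_*\bigl(\sI_E\cdot\Omega^i_{\tilde{X}}(\log E)\bigr)\;=\;\pi_*\Omega^i_{\tilde{X}}\;=\;\Omega^{[i]}_X,\]
the last equality being the standard fact that reflexive forms on a space with rational (hence canonical, hence at least rational Gorenstein-in-codimension-one... more precisely: rational singularities) pull back to honest forms on a resolution — here one should cite the extension theorem for differential forms over rational singularities, e.g. via \cite[Thm.~2.12]{GKP12} / \cite{GKKP11}, noting $\pi_*\Omega^i_{\tilde{X}}=\Omega^{[i]}_X$ for $X$ with rational singularities. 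So the real work is the first equality: that a log form with a pole along $E$ that lies in $\sI_E\cdot\Omega^i_{\tilde{X}}(\log E)$ is no improvement over an honest form once pushed forward, and conversely that every honest form pushed forward lies in that subsheaf.

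The key step, and the main obstacle, is to show $\pi_*\Omega^i_{\tilde X}\subset\pi_*\bigl(\sI_E\cdot\Omega^i_{\tilde X}(\log E)\bigr)$, equivalently that a global holomorphic $i$-form on $\tilde X$ (near the fiber $E$) automatically has its restriction to every component $E_j$ equal to zero, i.e. it lies in $\ker\bigl(\Omega^i_{\tilde X}\to\bigoplus_j\Omega^i_{E_j}\bigr)=\sI_E\cdot\Omega^i_{\tilde X}(\log E)$ — this identification of the kernel I would isolate as a small local lemma on snc divisors. The vanishing of the restriction $\alpha|_{E_j}$ uses rationality crucially: by Fact~\ref{fact-isolated-du-bois} (rational $\Rightarrow$ Du Bois for isolated singularities) one has $R^i\pi_*\sO_{\tilde X}\xrightarrow{\sim}H^i(E,\sO_E)$, and since the singularity is rational $R^i\pi_*\sO_{\tilde X}=0$ for $i>0$, hence $H^i(E,\sO_E)=0$ for $i>0$; combined with the Hodge-theoretic fact that for a proper snc-type scheme $E$ the map $H^i(E,\bC)\to H^i(E,\sO_E)$ is surjective (degeneration / the mixed Hodge structure on $E$), and with the contractibility statement Fact~\ref{fact-top-reso} making $H^i(E,\bC)$ carry the local cohomology, one deduces that a holomorphic form on $\tilde X$ restricted to $E$ is cohomologically trivial and in fact vanishes. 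The cleanest route is probably: a holomorphic $i$-form $\alpha$ on $\tilde X$ restricts to a closed holomorphic form on the proper snc scheme $E$ (forms on $\tilde X$ restrict to $\Omega^i$ on each smooth stratum compatibly, giving a class in $H^0(E,\Omega^i_E)$ in the Du Bois sense); by the $E_1$-degeneration for the Du Bois complex of the proper variety $E$ this class would give a nonzero piece of $\mathrm{Gr}^i_F H^i(E,\bC)$ unless it vanishes, but $H^i(E,\bC)$ for $i>0$ injects appropriately and the $\mathrm{Gr}^i_F$ piece is controlled by $H^{i-i}(E,\Omega^i_E)$-type terms which vanish because $H^{>0}(E,\sO_E)=0$ propagates; so $\alpha|_E=0$. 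I expect that making this last Hodge-theoretic vanishing airtight — identifying exactly which cohomology group $\alpha|_E$ lives in and why it is zero — is where the proof in the paper will spend its effort, quite possibly by instead quoting a clean statement such as: for $X$ with isolated rational singularities, $\pi_*\Omega^i_{\tilde X}=\pi_*\bigl(\sI_E\Omega^i_{\tilde X}(\log E)\bigr)$, which is essentially in the literature on differential forms over rational/Du Bois singularities.
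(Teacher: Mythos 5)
Your overall architecture coincides with the paper's: the case $i=0$ via normality, the identification $\Omega^i_\h|_X\cong\pi_*\bigl(\sI_E\cdot\Omega^i_{\tilde X}(\log E)\bigr)$ from Proposition~\ref{prop-h-isolated}, and then the two equalities $\pi_*\bigl(\sI_E\cdot\Omega^i_{\tilde X}(\log E)\bigr)=\pi_*\Omega^i_{\tilde X}$ and $\pi_*\Omega^i_{\tilde X}=\Omega^{[i]}_X$. For the first of these you also correctly anticipate that one quotes a clean statement: the paper pushes forward the residue sequence $0\to\sI_E\cdot\Omega^i_{\tilde X}(\log E)\to\Omega^i_{\tilde X}\to\Omega^i_E/\tor\to 0$ (from [Keb12, Prop.~3.9]) and invokes Namikawa's vanishing $H^0(E,\Omega^i_E/\tor)=0$ [Nam01, Lem.~1.2], which is exactly your ``$\alpha|_{E_j}=0$'' step. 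Your own Hodge-theoretic sketch of that vanishing is not airtight as written: $H^{>0}(E,\sO_E)=0$ directly kills only the $\mathrm{Gr}^0_F$-piece of $H^i(E,\bC)$, and one needs Hodge symmetry on the weight-graded pieces of the mixed Hodge structure (weights $\leq i$ on the proper snc variety $E$) to transfer this to $\mathrm{Gr}^i_F$, i.e.\ to global forms; but since you propose to cite the clean statement anyway, this is not the real issue.

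The genuine gap is the other equality, $\pi_*\Omega^i_{\tilde X}=\Omega^{[i]}_X$, which you dismiss as a standard fact citable from [GKP12, Thm.~2.12]/[GKKP11]. Those results concern klt, respectively log canonical, pairs --- an isolated rational singularity need not be $\bQ$-Gorenstein, let alone log canonical --- and even where they apply they identify $\Omega^{[i]}_X$ with $\pi_*\Omega^i_{\tilde X}(\log E)$, i.e.\ with \emph{log} forms, not with $\pi_*\Omega^i_{\tilde X}$. There was no off-the-shelf extension theorem for arbitrary rational singularities to quote here; the paper assembles the equality by a case analysis specific to isolated singularities: $1\leq i\leq n-2$ by Steenbrink--van Straten [SvS85, Thm.~1.3] (which uses only that the singularity is normal and isolated), $i=n$ by rationality via $\pi_*\omega_{\tilde X}=\omega_X$ [KM98, Thm.~5.10], and $i=n-1$ from the case $i=n$ via [SvS85, Cor.~1.4]. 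As written, your justification of this step fails, so some such argument must be supplied before the reduction goes through.
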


\begin{proof}
Observe that the case $i=0$ is settled by Proposition~\ref{prop-h-properties}(\ref{it-prop-normal}). From now on, let $i>0$. Let $\pi:\tilde{X}\to X$ be a strong resolution with exceptional divisor $E$. We first prove that
\begin{equation}\label{eqn-refl-equals-f-k} \Omega^{[i]}_X\cong \pi_*\Omega^i_{\tilde{X}} \end{equation}
for $1\leq i\leq n$ by the following case-by-case analysis.
\[
\begin{array}{ll}
  1\leq i\leq n-2: & \text{by normality of } X  \text{ and~\cite[Thm.~1.3]{SvS85},}\\
  i=n: & \text{by the rationality assumption and~\cite[Thm.~5.10]{KM98},}\\
  i=n-1: & \text{by Case } i=n \text{ and~\cite[Cor.~1.4]{SvS85}}.
\end{array}
\]
This proves Equation~(\ref{eqn-refl-equals-f-k}). Recall from~\cite[Prop.~3.9]{Keb12} that there is a short exact sequence $0\to\sI_E\cdot\Omega^i_{\tilde{X}}(log\, E)\to \Omega^i_{\tilde{X}}\to \Omega^i_E/\tor\to 0$ of sheaves. Pushing forward yields an exact sequence
\[0\to\pi_*\bigl(\sI_E\cdot\Omega^i_{\tilde{X}}(log\, E)\bigr)\xrightarrow{\alpha} \pi_*\Omega^i_{\tilde{X}}\to H^0(E,\Omega^i_E/\tor). \]
The group on the right hand side vanishes by~\cite[Lem.~1.2]{Nam01}. In particular, the inclusion $\alpha$ is bijective and the claim follows from Equation~(\ref{eqn-refl-equals-f-k}) and Proposition~\ref{prop-h-isolated}.
\end{proof}

\section{Poincar\'e Lemma and the topology of $X$}\label{sec-top}

Our results concerning the topology of $X$ will be formulated in terms of intersection cohomology. An appropriate introduction can be found in~\cite{Bor84}. Recall from~\cite[Part~IV.]{Bor84} that any complex space $X$ is a pseudomanifold. By the \emph{$k$-th rational intersection cohomology $IH^k(X,\bQ)$ of $X$} we mean the intersection cohomology of $X$ with coefficients in the constant system $\bQ_{X_\sm}$ with respect to the lower middle perversity as defined in~\cite[Sect.~I.3.1,~Not.~V.2.6]{Bor84}.

\begin{defn}[Local intersection cohomology]\label{defn-local-intersection-coho}
Let $p\in X$ be a point on a complex space. The \emph{$k$-th rational local intersection cohomology group at $p\in X$} is the direct limit
\[{\textstyle \IH^k_\text{loc}(p\in X,\bQ):=\varinjlim_{U} \IH^k(U,\bQ),} \]
where $U$ runs over all open neighborhoods of $p$ in $X$.
\end{defn}

In other words, the rational local intersection cohomology group at $p\in X$ is isomorphic to the rational intersection cohomology group of the open cone over the link of the singularity $p\in X$.

\subsection{Topological Poincar\'e Lemma in degree one}\label{ssec-top-poincare} The proof of Theorem~\ref{thm-top-poincare} is based on the following lemma.

\begin{lem}\label{lem-local-int-coho-criterion}
Let $p\in X$ be a normal point on a complex space associated with a complex variety. Let $U\subset X$ be any neighborhood of $p$ that is homeomorphic to the open cone over the link of $p\in X$. Then
\[\IH^1_{\text{loc}}(p\in X,\bQ)\cong H^1(U_\reg,\bQ).\]
\end{lem}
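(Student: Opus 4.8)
The plan is to identify $\IH^1_{\text{loc}}(p\in X,\bQ)$ with the degree-one intersection cohomology of the open cone $U$, and then to compute the latter via two complementary tools: the cone formula for intersection cohomology in low degrees, and comparison of $\IH^1$ with ordinary $H^1$ on a space which is "rationally smooth in codimension $\leq 1$'' after removing the singular point.

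First I would observe that since $U$ is homeomorphic to the open cone over the link $L$ of $p\in X$, the direct limit defining $\IH^1_{\text{loc}}(p\in X,\bQ)$ stabilizes: every sufficiently small neighbourhood is again such a cone, and the inclusions are intersection-cohomology isomorphisms, so $\IH^1_{\text{loc}}(p\in X,\bQ)\cong \IH^1(U,\bQ)$. (Here one uses that $X$ is the analytification of a variety, hence has a cone-like neighbourhood basis at $p$ by the existence of algebraic Whitney stratifications / local conical structure.) Next, by the cone formula for intersection cohomology with the lower middle perversity — $\IH^k(cL)\cong \IH^k(L)$ for $k\leq \dim_\bR cL - \bar p(\dim cL) - 1$ and $0$ otherwise, see~\cite[Sect.~I.3.1]{Bor84} — one gets $\IH^1(U,\bQ)\cong \IH^1(L,\bQ)$ as long as $\dim_\bC X\geq 2$; the case $\dim_\bC X=1$ is trivial since then $U$ is smooth and both sides vanish. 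Alternatively, and more directly for what I actually want, I would use the long exact sequence of the pair $(U, U\setminus\{p\})$ together with the fact that $U\setminus\{p\}$ deformation retracts onto $L$.

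The heart of the argument is then to pass from $\IH^\bullet$ to ordinary $H^\bullet$ of the regular locus. The key point is that $\IH^k(U,\bQ)$ agrees with $H^k(U_\reg,\bQ)$ in the range $k < \codim_{\bR} \Sigma$, where $\Sigma$ is the singular locus, because intersection cohomology and ordinary cohomology of the open dense smooth stratum have the same restriction behaviour in low degrees (the perversity conditions only start to bite once $k$ reaches the real codimension of the first singular stratum, which on a normal complex variety is $\geq 4$). More precisely: normality of $p\in X$ forces $\codim_X X_\sing\geq 2$, i.e. real codimension $\geq 4$, so in degrees $k=0,1$ the natural map $\IH^k(U,\bQ)\to H^k(U_\reg,\bQ)$ is an isomorphism — this can be extracted from the deleted-neighbourhood long exact sequence in~\cite[Sect.~V]{Bor84} or from the fact that $IC_U$ restricted to the $2$-codimension-$0$-and-$1$ part of $U$ is just $\bQ$. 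Combining, $\IH^1_{\text{loc}}(p\in X,\bQ)\cong \IH^1(U,\bQ)\cong H^1(U_\reg,\bQ)$.

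I expect the main obstacle to be bookkeeping the perversity/support conditions cleanly enough to justify the isomorphism $\IH^1(U,\bQ)\cong H^1(U_\reg,\bQ)$ without invoking heavy machinery — one must be careful that $U_\reg$ is connected (which follows from normality of $p\in X$) so that $H^0$ behaves well and the degree-one statement is not contaminated by a degree-zero discrepancy. A secondary technical point is making sure the cone-neighbourhood basis hypothesis is genuinely available: this is where "associated with a complex variety'' is used, so that one has an algebraic, hence conical, local structure; I would cite the local conical structure of complex-algebraic (or more generally subanalytic) sets. Everything else — the stabilization of the direct limit, the deformation retraction of $U\setminus\{p\}$ onto the link — is routine.
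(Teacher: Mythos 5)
Your overall route coincides with the paper's: reduce to the statement $\IH^1(U,\bQ)\cong H^1(U_\reg,\bQ)$ for the cone-like neighborhood $U$, the identification $\IH^1_{\text{loc}}(p\in X,\bQ)\cong\IH^1(U,\bQ)$ being essentially definitional given the local conical structure. The paper disposes of the key comparison by citing Durfee~\cite[Lem.~1]{D95} and observing that his proof applies to $U$; you instead try to justify it directly, and it is exactly there that your argument has a genuine flaw.

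The principle you invoke --- that $\IH^k(U,\bQ)\to H^k(U_\reg,\bQ)$ is an isomorphism for all $k<\codim_{\bR}\Sigma$ because ``the perversity conditions only start to bite once $k$ reaches the real codimension of the first singular stratum'' --- is false: for the lower middle perversity the truncation degree over a stratum of complex codimension $d$ (real codimension $2d$) is $d-1$, not $2d-1$, so the comparison can only be expected in degrees $k\leq d-1$, i.e.\ below the \emph{complex} codimension of the singular locus. Your stated range already fails for a normal surface singularity: if $U$ is the cone over the link $L$ of the cone over an elliptic curve, then $\IH^2(U,\bQ)=0$ by the cone formula you quote, while $H^2(U_\reg,\bQ)\cong H^2(L,\bQ)\neq 0$, even though $\codim_{\bR}\Sigma=4$. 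What rescues the degree you actually need is that normality forces $\codim_{\bC}X_\sing\geq 2$, so $k=1$ sits exactly at the boundary degree $d-1$; but at that boundary degree the isomorphism is not automatic from ``the conditions have not yet bitten'': one has to check that the cone of $IC_U\to Rj_*\bQ_{U_\reg}$ has no cohomology in degrees $\leq 1$, which at strata of codimension $\geq 3$ requires a stalkwise comparison carried out by induction over the depth of the stratification and uses normality again (connectedness of the regular part of small punctured neighborhoods) to control degree zero. Carrying this out is precisely the content of the lemma of Durfee that the paper cites; as written, your sketch (``can be extracted from the deleted-neighbourhood sequence in~\cite[Sect.~V]{Bor84}'', or from $IC_U|_{U_\reg}=\bQ_{U_\reg}$) does not substitute for that argument, and the quantitative claim it rests on is wrong by a factor of two.
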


\begin{proof}
Durfee has proved in~\cite[Lem.~1]{D95} that if $Y$ is the complex space associated with a normal complex variety, then $\IH^1(Y,\bQ)\cong H^1(Y_\reg,\bQ)$. A closer look at his proof reveals that this statement also applies to the open set $U$ given in Lemma~\ref{lem-local-int-coho-criterion}. This establishes the claim since $\IH^1_{\text{loc}}(p\in X,\bQ)\cong \IH^1(U,\bQ).$
\end{proof}

\begin{proof}[Proof of Theorem~\ref{thm-top-poincare}]
Let $\lambda\in\Gamma(V,\Omega^{[1]}_X)$ be a closed reflexive differential form of degree one defined on an open neighborhood $V\subset X$ of $p$. We need to show that there exists an open neighborhood $U\subset V$ of $p$ together with a holomorphic function $f\in\Gamma(U,\sO_U)$ such that $\lambda|_U=\text{d}f$. We claim that any $U\subset V$ of the form specified in Lemma~\ref{lem-local-int-coho-criterion} satisfies this requirement. In fact, let $x_0\in U_\reg$ be an arbitrary point. For $x\in U_\reg$ we define
\[f(x):=\int_\gamma\lambda,\]
where $\gamma$ is a continuous path from $x_0$ to $x$ contained in $U_\reg$. To see that this definition does not depend on the choice of $\gamma$, recall that the value $\int_\delta\lambda$ for a closed path $\delta$ in $U_\reg$ only depends on its integral cohomology class $H^1(U_\reg,\bZ)$. Since $H^1(U_\reg)$ is a torsion module by Lemma~\ref{lem-local-int-coho-criterion} and the universal coefficient theorem for cohomology, the integral $\int_\delta\lambda=0$ vanishes for any closed path so that $f$ is well-defined. Since $X$ is normal by assumption, $f$ extends to a holomorphic function on $U$ satisfying $df=\lambda$.
\end{proof}

As mentioned in the introduction, the following proposition exhibits many results in the literature as special cases of Theorem~\ref{thm-top-poincare}.

\begin{prop}\label{prop-van-loc-int-coho}
Let $X$ be a normal complex space and $p\in X$. Suppose that
\begin{enumerate}
 \item\label{it-van-loc-int-coho-2} the point $p\in X$ is an isolated complete intersection singularity of dimension $\geq 3$, or
 \item\label{it-van-loc-int-coho-1} $X$ is locally algebraic and has $1$-rational singularities, i.e., $R^1\pi_*\sO_{\tilde{X}}=0$ for any resolution $\pi:\tilde{X}\to X$.
\end{enumerate}
Then the first local intersection cohomology vanishes, i.e., $\IH^1_\text{loc}(p\in X,\bQ)=0$.
\end{prop}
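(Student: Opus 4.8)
The plan is to establish the vanishing $\IH^1_\text{loc}(p\in X,\bQ)=0$ in each of the two cases by reducing to a statement about a resolution $\pi:\tilde X\to X$ and its exceptional fiber $E=\pi^{-1}(\{p\})_\red$, using Fact~\ref{fact-top-reso} and the comparison $\IH^1_\text{loc}(p\in X,\bQ)\cong H^1(U_\reg,\bQ)$ of Lemma~\ref{lem-local-int-coho-criterion} (valid since $X$ is locally algebraic in case~\eqref{it-van-loc-int-coho-1}, and since an isolated complete intersection singularity is again of this form). By Fact~\ref{fact-top-reso} one can choose a contractible neighborhood $U$ of $p$ such that $E\hookrightarrow \tilde U:=\pi^{-1}(U)$ is a homotopy equivalence; then $H^1(U_\reg,\bQ)=H^1(\tilde U\setminus E,\bQ)$, and the Gysin/residue sequence relating $H^\bullet(\tilde U,\bQ)$, $H^\bullet(\tilde U\setminus E,\bQ)$ and the cohomology of $E$ will reduce the problem to showing $H^1(\tilde U,\bQ)\to H^1(\tilde U\setminus E,\bQ)$ is surjective, equivalently that $H^1(E,\bQ)\cong H^1(\tilde U,\bQ)$ carries all of $H^1(\tilde U\setminus E,\bQ)$.

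For case~\eqref{it-van-loc-int-coho-1} I would argue as follows. Shrinking $U$ to a Stein contractible neighborhood, the exponential sequence on $\tilde U$ gives $H^1(\tilde U,\bQ)\otimes\bC \hookrightarrow H^1(\tilde U,\sO_{\tilde U})$ up to the discrete part, and $H^1(\tilde U,\sO_{\tilde U})$ is governed by $R^1\pi_*\sO_{\tilde X}$, which vanishes by the $1$-rationality hypothesis. Hence $H^1(\tilde U,\bQ)$ is torsion. On the other hand, for a resolution of an \emph{isolated} (or more generally normal) singularity, local intersection cohomology $\IH^1$ only sees the pure weight-one part, and the mixed Hodge structure argument (Durfee's Lemma~1, already invoked in Lemma~\ref{lem-local-int-coho-criterion}) shows $H^1(U_\reg,\bQ)$ injects into $H^1$ of the resolution, so it too is torsion; being also a $\bQ$-vector space it must vanish. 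One has to be slightly careful that $X$ need not have an isolated singularity here — I would handle this by passing to a small neighborhood and noting that the statement $\IH^1_\text{loc}(p\in X,\bQ)\cong H^1(U_\reg,\bQ)$ of Lemma~\ref{lem-local-int-coho-criterion} together with the $1$-rationality of $R^1\pi_*\sO$ still applies verbatim.

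For case~\eqref{it-van-loc-int-coho-2} the cleanest route is via Milnor-fiber / link topology rather than resolutions. An isolated complete intersection singularity of dimension $n\geq 3$ has a link $L$ which is $(n-2)$-connected by the Milnor fibration theory for ICIS (Hamm's theorem); in particular, since $n-2\geq 1$, one has $H^1(L,\bQ)=0$. The open cone over $L$ minus its vertex is homotopy equivalent to $L$, and $\IH^1_\text{loc}(p\in X,\bQ)$ is, by the cone formula for intersection cohomology (truncation at the middle perversity), the image of $\IH^1$ of the link; since ordinary and intersection cohomology agree in low degrees on the smooth link, $\IH^1_\text{loc}(p\in X,\bQ)\cong H^1(L,\bQ)=0$. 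Alternatively one can invoke Lemma~\ref{lem-local-int-coho-criterion} and compute $H^1(U_\reg,\bQ)=H^1(L,\bQ)=0$ directly.

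\textbf{Main obstacle.} The technical heart is the passage, in case~\eqref{it-van-loc-int-coho-1}, from the sheaf-theoretic vanishing $R^1\pi_*\sO_{\tilde X}=0$ to the topological vanishing of the \emph{rational} cohomology $H^1(U_\reg,\bQ)$: this requires knowing that $H^1(U_\reg,\bC)$ (hence $H^1(U_\reg,\bQ)\otimes\bC$) is controlled by coherent cohomology, which is exactly the mixed-Hodge-theoretic input behind Durfee's lemma and forces $H^1$ to be of type $(0,1)$ and therefore torsion. For case~\eqref{it-van-loc-int-coho-2} the only nontrivial ingredient is the high connectivity of the link of an ICIS, which is classical, so that case is comparatively routine.
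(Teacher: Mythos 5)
Your treatment of the isolated complete intersection case is essentially the paper's own argument (Hamm's $(n-2)$-connectivity of $U_\reg$, respectively of the link, combined with Lemma~\ref{lem-local-int-coho-criterion}), and your computation that $H^1(\tilde U,\bQ)=0$ in the $1$-rational case — exponential sequence plus $R^1\pi_*\sO_{\tilde X}=0$, Leray, and contractibility of $U$ — is, modulo sloppy wording ("torsion", "up to the discrete part"; you should check that the connecting map $H^0(\tilde U,\sO^*)\to H^1(\tilde U,\bZ)$ vanishes, which it does because invertible functions on $\tilde U$ descend to the contractible $U$ and admit logarithms), the same computation the paper performs at the level of $R^1\pi_*\bZ_{\tilde U}$.

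The genuine gap is the bridge from $H^1(\tilde U,\bQ)=0$ to $\IH^1(U,\bQ)=0$ in the $1$-rational case. You assert that "$H^1(U_\reg,\bQ)$ injects into $H^1$ of the resolution" and attribute this to Durfee's lemma / mixed Hodge theory, but Durfee's lemma only gives $\IH^1(U,\bQ)\cong H^1(U_\reg,\bQ)$; it says nothing about a map to $H^1(\tilde U,\bQ)$. The natural map goes the other way (restriction $H^1(\tilde U,\bQ)\to H^1(\pi^{-1}(U_\reg),\bQ)\cong H^1(U_\reg,\bQ)$), and its surjectivity is not automatic: removing the exceptional divisor over $U_\sing$ can create new degree-one classes (weight-two classes dual to loops around the removed components), exactly as in $\bC^*\subset\bC$. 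Ruling these out is the content of the step you are missing, and it is where the paper invokes the decomposition theorem of~\cite{BBD81}: since $\pi$ is the restriction of an \emph{algebraic} resolution, $\IH^1(U,\bQ)$ is a direct summand of $H^1(\tilde U,\bQ)$ — this is also the point where the local algebraicity hypothesis is used beyond Lemma~\ref{lem-local-int-coho-criterion}, which your proposal never engages with. A secondary problem: your identification $H^1(U_\reg,\bQ)=H^1(\tilde U\setminus E,\bQ)$ with $E=\pi^{-1}(\{p\})_\red$ is only valid for isolated singularities; in the $1$-rational case $U_\sing$ may be positive-dimensional, so $U_\reg\cong\tilde U\setminus\pi^{-1}(U_\sing)\subsetneq\tilde U\setminus E$, and shrinking the neighborhood does not repair this. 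The paper avoids both issues by proving $\IH^1(U,\bQ)=0$ directly for a cofinal system of contractible, algebraically embeddable $U$, without ever passing through $H^1(U_\reg,\bQ)$ in this case.
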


\begin{proof}
To prove the claim in Case~\ref{it-van-loc-int-coho-2} choose a neighborhood $U\subset X$ as in Lemma~\ref{lem-local-int-coho-criterion}. Hamm showed in~\cite[Kor.~1.3]{H71} that $U_\reg$ is $(n-2)$-connected. Since $n\geq 3$, the claim follows from Lemma~\ref{lem-local-int-coho-criterion}.

In order to prove the claim in Case~\ref{it-van-loc-int-coho-1}, it certainly suffices to show that $\IH^1(U,\bQ)=0$ for $U$ running over a basis of the system of open neighborhoods of $p$ in $X$. Thus, using the local algebraicity assumption, it suffices to prove the following claim, which we will do in the sequel.

\begin{claim}
If $U\subset X$ is a contractible neighborhood of $p$ that admits an open embedding $U\subset V^\an$ into the complex space associated with a complex variety $V$, then $\IH^1(U,\bQ)=0$.
\end{claim}

Let $\phi:\tilde{V}\to V$ be an algebraic resolution and let $\pi:\tilde{U}\to U$ be the restriction of $\phi^\an$ to $\phi^{\an,-1}(U)$. Pushing forward the exponential sequence on $\tilde{U}$ gives rise to an exact sequence
\[{\textstyle 0\to\bZ_U\to\sO_U\xrightarrow[\text{surj.}]{\text{exp}} \sO_U^*\to R^1\pi_*\bZ_{\tilde{U}}\to \overbrace{R^1\pi_*\sO_{\tilde{U}}}^{=0\text{, by ass.}}\to\cdots} \]
so that $R^1\pi_*\bZ_{\tilde{U}}=0$. The five-term exact sequence of the Leray spectral sequence for the singular cohomology on $\tilde{U}$ is given by
\[0\to \underbrace{H^1(U,\bZ_U)}_{=0\text{, contractibility}} \to H^1(\tilde{U},\bZ)\to H^0(U,\underbrace{R^1\pi_*\bZ_{\tilde{U}}}_{=0}) \to\cdots\,.\]
Together with the universal coefficient theorem for cohomology this shows that 
\[H^1(\tilde{U},\bQ)=0.\]
Finally, since $\pi$ is the restriction of an algebraic resolution, the decomposition theorem~\cite[Thm.~6.2.5]{BBD81} implies that $\IH^1(U,\bQ)$ is a direct summand of $H^1(\tilde{U},\bQ)$, as outlined in the proof of~\cite[Cor.~5.4.11]{Dim04}. This shows that $\IH^1(U,\bQ)=0$.
\end{proof}

\subsection{Gorenstein normal surface singularities}\label{ssec-gorenstein-surface}
The proof of Proposition~\ref{intro-prop-gorenstein-surface} of the introduction can be found at the end of this section. We first include two preparatory lemmas. 

\begin{lem}\label{lem-int-coho-surfaces}
Let $p\in X$ be a normal surface singularity and let $\pi:\tilde{X}\to X$ be a strong resolution with reduced exceptional divisor $E=\sum_iE_i$ over $p$. Let $b$ be the number of loops in the dual graph of $E$ and let $g_i$ be the genus of $E_i$. Then
\[\IH^i_{\text{loc}}(p\in X,\bQ)=\begin{cases} \bQ & \text{if } i=0  \\ \bQ^{b+2\cdot\sum_ig_i} & \text{if } i=1 \\ 0 & \text{else}\end{cases} \]
\end{lem}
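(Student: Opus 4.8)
The plan is to evaluate $\IH^\bullet_{\text{loc}}(p\in X,\bQ)$ degree by degree, after fixing a strong resolution $\pi:\tilde X\to X$ with reduced exceptional divisor $E=\sum_iE_i$ and a closed regular neighbourhood $T\subset\tilde X$ of $E$ that is small enough that $U:=\pi(T\setminus\partial T)$ is a cone neighbourhood of $p$ as in Lemma~\ref{lem-local-int-coho-criterion}. Such a $T$ is a compact oriented real $4$-manifold with boundary $\partial T=L$ the link of $p\in X$, it deformation retracts onto $E$, and $\pi$ restricts to a biholomorphism $T\setminus E\xrightarrow{\sim}U_{\reg}$, so that $U_{\reg}\simeq T\setminus E\simeq L$. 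The computation then falls into three ranges, $i=0$, $i\ge 2$, and the only interesting case $i=1$.

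For $i=0$ one simply notes $\IH^0_{\text{loc}}(p\in X,\bQ)=H^0(U_{\reg},\bQ)=\bQ$, since normality forces the germ $\sg{X}{p}$ to be irreducible and hence $U_{\reg}$ to be connected. For $i\ge 2$ the group vanishes by the cone formula for the intersection cohomology sheaf on the complex surface $X$: the point $\{p\}$ lies in a stratum of real codimension $4$, the lower middle perversity satisfies $\bar m(4)=1$, and therefore the stalk $\mathcal H^i(\mathbf{IC}_X)_p\cong\IH^i_{\text{loc}}(p\in X,\bQ)$ is zero for $i>1$; this is~\cite{Bor84}.

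The heart of the argument is $i=1$. By Lemma~\ref{lem-local-int-coho-criterion}, $\IH^1_{\text{loc}}(p\in X,\bQ)\cong H^1(U_{\reg},\bQ)\cong H^1(L,\bQ)$, so it suffices to compute the first Betti number of the link. I would run the long exact cohomology sequence of the pair $(T,L)$, inserting the Lefschetz duality isomorphisms $H^k(T,L)\cong H_{4-k}(T)\cong H_{4-k}(E)$ and the retraction $H^k(T)\cong H^k(E)$, to reach
\[ H_3(E;\bQ)\longrightarrow H^1(E;\bQ)\longrightarrow H^1(L;\bQ)\longrightarrow H_2(E;\bQ)\xrightarrow{\ \psi\ }H^2(E;\bQ). \]
Here $H_3(E;\bQ)=0$ because $E$ is a curve, while $\psi$ is the map $\bigoplus_i\bQ\cdot[E_i]\to\bigoplus_i\bQ\cdot[E_i]$ given by the intersection matrix $(E_i\cdot E_j)_{i,j}$, which is negative definite by the theorem of Mumford and Grauert and in particular injective. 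Hence $H^1(L,\bQ)\cong H^1(E,\bQ)$. It then remains to identify $H^1(E,\bQ)$: from the Mayer--Vietoris spectral sequence $E_1^{p,q}=\bigoplus_{|I|=p+1}H^q\bigl(\bigcap_{i\in I}E_i;\bQ\bigr)\Rightarrow H^{p+q}(E;\bQ)$, in which only $p\in\{0,1\}$ (triple intersections are empty) and $q\in\{0,1\}$ contribute and which degenerates for degree reasons, one gets $H^1(E,\bQ)\cong\operatorname{coker}\bigl(\bigoplus_iH^0(E_i;\bQ)\to\bigoplus_{i<j}H^0(E_i\cap E_j;\bQ)\bigr)\oplus\bigoplus_iH^1(E_i;\bQ)$. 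The first summand is the first cohomology of the dual graph of $E$, i.e.\ $\bQ^b$, and the second is $\bQ^{2\sum_ig_i}$, giving $\IH^1_{\text{loc}}(p\in X,\bQ)\cong\bQ^{b+2\sum_ig_i}$ as claimed.

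The parts that need genuine care, rather than bookkeeping, are two: matching the cone formula / support condition for $\mathbf{IC}_X$ to the exact normalization and perversity convention of~\cite{Bor84} so that the vanishing in degrees $\ge 2$ is legitimate, and verifying that the connecting homomorphism $\psi$ really is the intersection form --- this is the classical identification of the composite $H_2(T)\to H_2(T,L)\cong H^2(T)$ with cap product against the fundamental class, but it does require invoking naturality of Lefschetz duality rather than quoting it blindly. Everything else (the homotopy equivalences coming from the regular neighbourhood, the degeneration of the Mayer--Vietoris spectral sequence, the graph-cohomology count) is routine.
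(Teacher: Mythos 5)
Your proposal is correct, but the route is genuinely different from the paper's at the key step. The paper disposes of all degrees at once by the homological cone formula from \cite[II.3.1, V.2.9]{Bor84}, which gives $\IH^i_{\text{loc}}(p\in X,\bQ)=H_{3-i}(L,\bQ)$ for $i=0,1$ and $0$ for $i\geq 2$; connectedness of the link $L$ is extracted from normality, $\IH^0$ then comes from orientability of the closed $3$-manifold $L$, and the crucial number $\dim H_1(L,\bQ)=b+2\sum_ig_i$ is simply quoted from Mumford \cite[p.~10]{Mu61} and converted to $H_2(L)$ by Poincar\'e duality. You instead re-derive Mumford's Betti number computation from scratch: the long exact sequence of the pair $(T,\partial T)$ for a regular neighbourhood $T$ of $E$, Lefschetz duality, injectivity of the intersection matrix (negative definiteness), and the Mayer--Vietoris computation $H^1(E)\cong\bQ^b\oplus\bQ^{2\sum g_i}$. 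This is a correct and self-contained substitute for the citation of \cite{Mu61}, at the price of the duality bookkeeping you yourself flag; the paper's proof is shorter but leans on the literature. One wrinkle in your version: for $i=1$ you invoke Lemma~\ref{lem-local-int-coho-criterion}, whose hypothesis is that $X$ is the complex space associated with a complex \emph{variety}, whereas the lemma you are proving makes no algebraicity assumption. This is harmless but should be addressed: either note that every normal surface singularity is algebraizable (Artin's approximation theorem), or, more economically, use in degree $1$ the same cone-formula/stalk argument you already use for $i\geq 2$, which gives $\IH^1_{\text{loc}}(p\in X,\bQ)\cong H^1(L,\bQ)$ directly and makes the appeal to Lemma~\ref{lem-local-int-coho-criterion} (and to Durfee's result behind it) unnecessary. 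With that adjustment your argument is complete and slightly more elementary in its inputs than the paper's.
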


\begin{proof}
Let $L$ be the link of the singularity $p\in X$. It follows immediately from~\cite[II.3.1,~V.2.9]{Bor84} that $\IH^i_{\text{loc}}(p\in X,\bQ)=0$ if $i\geq 2$ and $\IH^i_{\text{loc}}(p\in X,\bQ)=H_{3-i}(L,\bQ)$ is the usual homology group of $L$ with rational coefficients if $i=0,1$.

We claim that the link $L$ is connected. Indeed, let us suppose to the contrary that $L=L_0\cup L_1$ is a disjoint union of two non-empty open subsets $L_0,L_1\subset L$. Let $p\in U\subset X$ further be an open neighbourhood that admits a homeomorphic map $U\cong L\times [0,1)/L\times\{0\}$ to the open cone over the link. Then we can define a holomorphic function $f:U\backslash\{p\}\to\bC$ that constantly equals zero on the preimage of $L_0\times (0,1)$ in $U$ and constantly equals one on the preimage of $L_1\times (0,1)$ in $U$. By normality this holomorphic function can be extended to a holomorphic function $U\to \bC$ defined on $U$. This is absurd since there exists not even continuous extension. In particular, $L$ is connected.

Since the three-dimensional compact manifold $L$ is orientable, we see that $H_3(L,\bQ)=\bQ$ by connectedness. Furthermore we have $\dim_\bQ H_1(L,\bQ)=b+2\cdot\sum_ig_i$ by~\cite[p.~10]{Mu61}, which implies the claim since $\dim_\bQ H_1(L,\bQ)=\dim_\bQ H_2(L,\bQ)$ by Poincar\'e duality.
\end{proof}

\begin{lem}\label{lem-quasihom-surface}
Let $p\in X$ be a quasihomogeneous normal surface singularity. Then 
\[ \sO_{X,p}\to\Omega^{[1]}_{X,p}\to\Omega^{[2]}_{X,p}\quad \text{is exact}\quad \Leftrightarrow\quad \IH^1_{\text{loc}}(p\in X,\bQ)=0.\]
\end{lem}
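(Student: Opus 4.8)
The plan is to prove the two implications separately. The implication ``$\Leftarrow$'' requires nothing new: a quasihomogeneous singularity is algebraic, in particular locally algebraic, by Fact~\ref{fact-quasihom-algebraizity}, so Theorem~\ref{thm-top-poincare} applies and $\IH^{1}_{\text{loc}}(p\in X,\bQ)=0$ already forces $(\star)$ --- hence \emph{a fortiori} the three-term sequence in the statement --- to be exact in degree one.

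For ``$\Rightarrow$'' I would argue by contraposition: assuming $\IH^{1}_{\text{loc}}(p\in X,\bQ)\neq 0$ I will produce a closed reflexive $1$-form germ at $p$ which is not $\mathrm{d}f$ for any $f\in\sO_{X,p}$. By Lemma~\ref{lem-c-star-linearization} the $\bC^{*}$-action on $\sg{X}{p}$ may be assumed linear, so the circle $\bS^{1}\subset\bC^{*}$ acts on a genuine neighbourhood of $p$ and averaging over $\bS^{1}$ commutes with $\mathrm{d}$. The key point is that an exact reflexive $1$-form cannot be homogeneous of $\bC^{*}$-degree $0$ unless it vanishes: if $f\in\sO_{X,p}$ and $\mathrm{d}f$ is homogeneous of degree $0$, then $\mathrm{d}f=\mathrm{d}\bar{f}$ where $\bar{f}$ is the $\bS^{1}$-average of $f$, and $\bar{f}$, being a holomorphic germ at $p$ homogeneous of degree $0$, is constant (every $\bC^{*}$-orbit near $p$ has $p$ in its closure), so $\mathrm{d}f=0$. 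It therefore suffices to exhibit one \emph{nonzero} closed reflexive $1$-form near $p$ that is homogeneous of degree $0$.

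To construct it I would pass to the algebraic model: by Fact~\ref{fact-quasihom-algebraizity}, $\sg{X}{p}\cong\sg{V}{p}^{\an}$ with $V=\Spec R$, $R=\bigoplus_{d\geq 0}R_{d}$ normal and positively graded, $R_{0}=\bC$, $V_{\sing}=\{p\}$; then $\bar{C}:=\operatorname{Proj}R$ is a smooth projective curve and the quotient morphism $q\colon V_{\reg}\to\bar{C}$ is everywhere defined. Invoking the structure theory of quasihomogeneous normal surface singularities (Orlik--Wagreich), the dual graph of the exceptional divisor $E=\sum_{i}E_{i}$ of a strong resolution is star-shaped with rational arms and with central curve isomorphic to $\bar{C}$; hence $b=0$ and $\sum_{i}g_{i}=g(\bar{C})$ in the notation of Lemma~\ref{lem-int-coho-surfaces}, so $\dim_{\bQ}\IH^{1}_{\text{loc}}(p\in X,\bQ)=2\,g(\bar{C})$ and our hypothesis gives $g(\bar{C})\geq 1$. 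Pick $0\neq\eta\in H^{0}(\bar{C},\Omega^{1}_{\bar{C}})$. A short local computation along the finitely many orbits with nontrivial stabiliser shows that $q^{*}\eta$ extends to a holomorphic --- hence reflexive --- $1$-form on all of $V_{\reg}$; it is nonzero ($q$ is dominant), closed ($\mathrm{d}\eta=0$ on a curve), and homogeneous of degree $0$ (it is fibrewise constant for $q$, i.e.\ the Euler vector field is tangent to its kernel). By the previous paragraph it is not exact, so $h^{1}(\star)\neq 0$ at $p$, as desired.

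The main obstacle is the one genuinely non-elementary input, the structure theorem giving star-shapedness of the resolution graph (equivalently $b=0$ together with the identification of the central curve with the $\bC^{*}$-quotient); this is exactly what can fail for non-quasihomogeneous surface singularities, in agreement with Proposition~\ref{intro-ex-top-counterexample}. The remaining verifications are routine but need care: that $q^{*}\eta$ extends holomorphically across the non-free orbits, which follows from the local slice model of the $\bC^{*}$-action, and that the circle average of a holomorphic germ is holomorphic and homogeneous of degree $0$. Alternatively one could avoid the explicit form by studying the hypercohomology spectral sequence of the holomorphic de Rham complex on $U_{\reg}$ together with the mixed Hodge structure $H^{1}(U_{\reg})\cong H^{1}(E)$, obtaining $h^{1}(\star)=\dim_{\bC}F^{1}H^{1}(U_{\reg})$; but identifying the Hodge and weight filtrations there rests on the same structural fact.
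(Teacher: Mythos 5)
Your proof is correct, and its skeleton is the same as the paper's: argue the implication ``$\Leftarrow$'' via Theorem~\ref{thm-top-poincare}, and for ``$\Rightarrow$'' use star-shapedness of the resolution graph together with Lemma~\ref{lem-int-coho-surfaces} to conclude that $\IH^1_{\text{loc}}\neq 0$ forces the central curve to have positive genus, then pull back a nonzero holomorphic $1$-form from that curve to produce a closed, non-exact reflexive $1$-form. Where you differ is in the implementation. The paper compactifies the algebraic model, takes an equivariant resolution $\tilde V\to\overline V$, uses the Bialynicki--Birula decomposition to obtain the limit map $\psi^+:\tilde V\dashrightarrow F^+$ onto the positive-genus fixed component $F^+\subset E$, extends $\psi^{+*}\alpha$ across a codimension-two locus, and gets non-exactness by restricting to the \emph{compact} curve $F^+$ (a nonzero holomorphic $1$-form on a compact curve is not exact). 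You instead stay on the affine cone, pull back along the quotient morphism $q:V\backslash\{p\}\to\operatorname{Proj}R$, and get non-exactness from invariance: an exact $\bC^*$-invariant $1$-form is $\text{d}$ of an $\bS^1$-average, which is an invariant germ and hence constant by positivity of the weights. Your route avoids the compactification, the reference to Bia{\l}ynicki-Birula and the codimension-two extension step, and your averaging argument is a nice elementary replacement for the compactness argument; the price is that you import the Orlik--Wagreich/Pinkham structure theory wholesale (star graph, rational arms, central curve $\cong\operatorname{Proj}R$), whereas the paper only needs $b=0$ from~\cite[Sect.~2]{P77} and derives ``the non-rational component is $F^+$'' from the $\bC^*$-geometry -- a comparable but slightly lighter citation load. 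Two small remarks: no ``local computation along non-free orbits'' is needed, since $q$ is an honest morphism on all of $V\backslash\{p\}$, so $q^*\eta$ is automatically holomorphic there; and for nonvanishing of the germ you should note that $q$ maps every punctured neighborhood of $p$ onto $\operatorname{Proj}R$ (each orbit accumulates at $p$), so dominance persists after shrinking.
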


\begin{proof}[Proof of Lemma~\ref{lem-quasihom-surface} - Setup of notation]
Fact~\ref{fact-quasihom-algebraizity} provides a complex algebraic variety $p\in V$ together with an algebraic $\bC^*$-action leaving $p$ fixed such that there exists an equivariant isomorphism $\sg{V}{p}^\an\cong \sg{X}{p}$ of complex space germs. After shrinking $X$ if necessary, we thus may assume that there exists an equivariant open embedding $X\subset V^\an$. Let $\overline{V}$ be the projective variety with algebraic $\bC^*$-action obtained as follows: We embed $V\subset\bC^N$ such that the torus action is induced by an action by diagonal matrices on $\bC^N$. Then let $\overline{V}$ be the closure of $V$ in $\bP^N\supset\bC^N$.

Let $\pi:\tilde{V}\to \overline{V}$ be a functorial resolution so that there is an algebraic action of $\bC^*$ on $\tilde{V}$ and $\pi$ is an equivariant morphism. The reduced fiber $\pi^{-1}(\{p\})_\red$ is an snc divisor, which we denote by $E=\sum_iE_i$.
\renewcommand{\qedsymbol}{}
\end{proof}

\begin{proof}[Proof of Lemma~\ref{lem-quasihom-surface} - Technical preparation]
Since $\tilde{V}$ is complete,~\cite[Sect.~4]{Bia73} implies the existence of smooth connected components $F^+,F^-\subset \tilde{V}^{\bC^*}$ of the set of fixed points on $\tilde{V}$ that are uniquely determined by the existence of a dominant rational morphisms $\psi^\pm:\tilde{V}\dashedrightarrow F^\pm$ such that
\[\psi^+(v)=\lim_{\bC^*\ni t\to 0}t\cdot v \in F^+ \quad \text{and}\quad \psi^-(v)=\lim_{\bC^*\ni t\to \infty}t\cdot v \in F^- \]
for any general point $v\in \tilde{V}$. Moreover we have
\begin{enumerate}
 \item\label{BB1} $F^+\subset E, F^-\not\subset E$ by quasihomogeneity of $p\in X$.
 \item\label{BB2} If $E_i\not\cong\bP^1$, then $E_i=F^+$. Indeed, if $E_i\not\cong\bP^1$, then $E_i$ is point-wise fixed by $\bC^*$ so that~\cite[Sect.~4, Cor.~1]{Bia73} implies that $E_i=F^+$ or $E_i=F^-$. The latter case does not occur by $(\ref{BB1})$. 
 \item\label{BB3} The dual graph $\Gamma$ of $E$ is a star, see e.g.~\cite[Sect.~2]{P77}. In the terminology of Lemma~\ref{lem-int-coho-surfaces} this means that $b=0$.
\end{enumerate}
\renewcommand{\qedsymbol}{}
\end{proof}

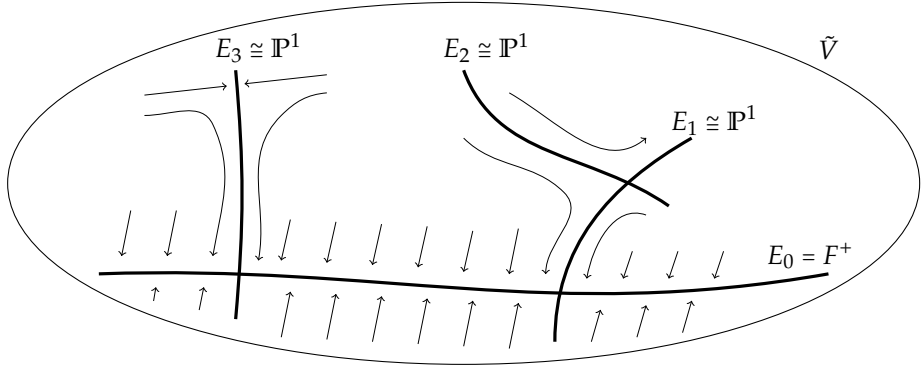
\begin{figure}[ht]\centering
\begin{tikzpicture}
  [scale=.6,auto=left]
\tikzset{curve/.style={circle,draw = black,fill=blue!20, minimum size=0.5cm, inner sep = 0cm}}
\tikzset{contrcurve/.style={very thick,circle,draw = black,fill=blue!20, minimum size=0.5cm, inner sep = 0cm}}
  \draw (0,0) ellipse (10cm and 4cm); 
  \node (L1) at (8,3) {$\tilde{V}$};

  \draw[very thick] (-8,-2) to [out=2,in=190] (8,-2);
  \node at (7.6,-1.6) {$E_0=F^+$};

  \draw[very thick] (2,-3.5) to [out=90,in=210] (5,1);
  \node at (5.5,1.4) {$E_1\cong\bP^1$};

  \draw[very thick] (4.5,-0.5) to [out=145,in=-70] (0,2.5);
  \node at (0.5,3) {$E_2\cong\bP^1$};

  \draw[very thick] (-5,-3) to [out=85,in=275] (-5,2.5);
  \node at (-4.5,3) {$E_3\cong\bP^1$};

  \draw[->,very thin] (0,1) to [out=-45,in=135] (2.2,-0.2) to [out=-45,in=90] (1.8,-2);

  \draw[->,very thin] (0.2,-1.05) to  (0,-1.95);
  \draw[->,very thin] (1.2,-1) to  (1,-2);
  \draw[->,very thin] (-0.8,-0.95) to (-1,-1.85);
  \draw[->,very thin] (-1.8,-0.9) to (-2,-1.8);
  \draw[->,very thin] (-2.8,-0.85) to  (-3,-1.75);
  \draw[->,very thin] (-3.8,-0.8) to  (-4,-1.7);

  \draw[->,very thin] (0,-3.6) to  (0.2,-2.6);
  \draw[->,very thin] (1,-3.65) to (1.2,-2.65);
  \draw[->,very thin] (-1,-3.55) to (-0.8,-2.55);
  \draw[->,very thin] (-2,-3.5) to (-1.8,-2.5);
  \draw[->,very thin] (-3,-3.45) to (-2.8,-2.45);
  \draw[->,very thin] (-4,-3.4) to (-3.8,-2.4);

  \draw[->,very thin] (-3,2) to [out=185,in=45] (-4,1.5) to [out=225,in=80] (-4.5,-1.7);
  \draw[->,very thin] (-3,2.4) to (-4.8,2.2);
  \draw[->,very thin] (-7,1.95) to (-5.2,2.15);
  \draw[->,very thin] (-7,1.5) to [out=0,in=115] (-5.5,1.3) to [out=-65,in=80] (-5.5,-1.6);

  \draw[->,very thin] (-6.3,-0.6) to (-6.5,-1.6);
  \draw[->,very thin] (-7.3,-0.6) to (-7.5,-1.6);

  \draw[->,very thin] (-5.8,-2.8) to (-5.7,-2.3);
  \draw[->,very thin] (-6.8,-2.6) to (-6.75,-2.3);

  \draw[->,very thin] (1,2) to [out=-30,in=220] (4,1);

  \draw[->,very thin] (4,-0.7) to [out=160,in=80] (2.7,-2.1);

  \draw[->,very thin] (3.7,-1.5) to (3.5,-2.1);
  \draw[->,very thin] (4.7,-1.5) to (4.5,-2.1);
  \draw[->,very thin] (5.7,-1.5) to (5.5,-2.1);

  \draw[->,very thin] (2.8,-3.5) to (3,-2.8);
  \draw[->,very thin] (3.8,-3.4) to (4,-2.7);
  \draw[->,very thin] (4.8,-3.3) to (5,-2.6);

\end{tikzpicture}
\setlength{\captionmargin}{2pt}
\caption{Possible situation in the proof of Lemma~\ref{lem-quasihom-surface}. The exceptional divisor $E=E_0+E_1+E_2+E_4$ is drawn thick. The thin drawn arrows indicate in which direction $t\cdot v$ moves when $t\in(0;1]\subset\bC^*$ runs from $1$ towards $0$ and $v\in\tilde{V}$.}
\label{fig:figure1}
\end{figure}

\begin{proof}[Proof of Lemma~\ref{lem-quasihom-surface}]
By Theorem~\ref{thm-top-poincare} we only need to show the 'only if' part. To this end we assume that $IH^1_{\text{loc}}(p\in X,\bQ)\neq 0$. We need to find a closed reflexive $1$-form on $X$ that is not exact on arbitrarily small neighborhoods of $p\in X$.

Lemma~\ref{lem-int-coho-surfaces} and Item~(\ref{BB3}) together imply that $2\cdot\sum_ig_i=b+2\cdot\sum_ig_i\neq 0$. Then Item~(\ref{BB2}) shows that $F^+\subset E$ is an irreducible component of positive genus. In particular, there exists a non-zero differential form $\alpha\in\Gamma(F^+,\Omega^1_{F^+})\backslash\{0\}$. Since $F^+$ is complete, the rational map $\psi^+:\tilde{V}\to F^+$ is a morphism on an open subset $V'\subset\tilde{V}$ whose complement has codimension $\geq 2$. In particular, the pull-back $\psi|_{V'}^*(\alpha)\in\Gamma(V',\Omega^1_{V'})$ extends to a closed non-zero differential form $\beta\in\Gamma(\tilde{V},\Omega^1_{\tilde{V}})$. Since $\beta|_{F^+}=\alpha$ is not exact, the induced holomorphic reflexive differential form on $X$ is not exact on arbitrarily small neighborhoods of $p\in X$.
\end{proof}

\begin{proof}[Proof of Proposition~\ref{intro-prop-gorenstein-surface}]
Because of the Gorenstein assumption we may apply the main result in~\cite{DY10}. It claims that the cohomology groups of the complex $0\to\sO_{X,p}\to\Omega^{[1]}_{X,p}\to\Omega^{[2]}_{X,p}\to 0$ in degrees one and two have the same dimension \emph{if and only if} the singularity $p\in X$ is quasihomogeneous. Together with Theorem~\ref{thm-top-poincare} and Lemma~\ref{lem-quasihom-surface} this implies the claim.
\end{proof}

\subsection{Dependancy on the holomorphic structure}\label{ssec-dependancy-hol-structure}
During the proof of Proposition~\ref{intro-ex-top-counterexample} we are concerned with normal surface singularities whose minimal resolution has an exceptional divisor of the following type.

\begin{defn}\label{cond-S}
Let $S\subset\bP^1$ be a subset of cardinality four. We say that a normal surface singularity $p\in X$ is \emph{of type $(S)$} if the reduced exceptional set of the minimal resolution is a snc divisor $E=C+C_1+\cdots+C_4$ such that
\begin{itemize}
 \item the curves $C,C_1,\cdots,C_4$ are rational curves,
 \item $C_i\cap C_j=\emptyset$ for $i\neq j$,
 \item $(C\cdot C)=-2$, $(C\cdot C_i)=1$ and $(C_i\cdot C_i)=-3$, and
 \item there exists an isomorphism $C\cong \bP^1$ that maps $C\cap(C_1+\cdots+ C_4)\subset C$ onto $S\subset\bP^1$.
\end{itemize}
\end{defn}

\begin{lem}\label{lem-type-S-quasi}
Let $S\subset\bP^1$ be a subset of cardinality $4$. Then, up to isomorphism of complex space germs, there exists exactly one quasihomogeneous normal surface singularity of type $(S)$. 
\end{lem}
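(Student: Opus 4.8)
The plan is to realise the quasihomogeneous surface singularity of type $(S)$ explicitly as the germ of an affine cone, or more precisely as a weighted blow-down, and then to argue that the cross-ratio data encoded by $S$ determines it uniquely. First I would analyse what a quasihomogeneous singularity of type $(S)$ must look like: by Fact~\ref{fact-quasihom-algebraizity} such a germ is the germ at the vertex of an affine variety $V$ with a good $\bC^*$-action with positive weights, so $V = \Spec \bigoplus_{k\geq 0} R_k$ for a graded ring $R$ with $R_0 = \bC$. The minimal resolution $\tilde X \to X$ is then equivariant, the central curve $C$ is the unique fixed component in the exceptional divisor which is \emph{not} contracted to a point under $\psi^+$ (in the language of the proof of Lemma~\ref{lem-quasihom-surface}), and the four satellite curves $C_i$ are $\bC^*$-invariant $\bP^1$'s attached to the four points of $S \subset C \cong \bP^1$. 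The self-intersection data $(C\cdot C) = -2$, $(C_i \cdot C_i) = -3$, $(C\cdot C_i) = 1$ together with the star shape of the dual graph (Item~(\ref{BB3})) pin down, via Pinkham's description of quasihomogeneous singularities \cite{P77} or the Orlik--Wagreich machinery, the Seifert invariants of the link: the base orbifold is $\bP^1$ with four marked points carrying the \emph{same} isotropy data dictated by the $(-3)$-curves, and the central weight comes from $(C\cdot C) = -2$.

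Next I would translate this into a concrete normal form. The graded ring $R$ can be reconstructed from the pair $(C, \sL)$ where $\sL$ is a $\bQ$-divisor (or a Seifert $\bQ$-bundle) on $C \cong \bP^1$ supported on $S$; explicitly one takes $R = \bigoplus_{k\geq 0} H^0\bigl(C, \sO_C(\lfloor k\sL \rfloor)\bigr)$ following Pinkham~\cite[Sect.~2]{P77} (or Dolgachev). The numerical constraints force $\sL$ to be, up to the choice of an ample generator, of the form $\sL = H - \sum_{j=1}^4 \tfrac{a}{b} [s_j]$ with fixed $a,b$ determined by the resolution data (the $(-3)$-curves give $b$, the $(-2)$-central curve gives the degree of $H$), where $s_1,\dots,s_4$ are the four points of $S$. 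Two such data $(\bP^1, S, \sL)$ and $(\bP^1, S', \sL')$ give isomorphic graded rings, hence isomorphic germs, if and only if there is an automorphism of $\bP^1$ carrying $S$ to $S'$ compatibly; since here $S = S'$ is fixed and the coefficients $a/b$ are the same at all four points, the graded rings are literally isomorphic. This proves existence (take any such $\sL$) and uniqueness (the numerical data leave no freedom beyond what is already fixed by $S$).

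The main obstacle I expect is \textbf{the uniqueness direction}, specifically showing that the abstract numerical/combinatorial data of type $(S)$ — the dual graph with its self-intersections plus the embedding of $C\cap(C_1+\cdots+C_4)$ as the fixed set $S$ — genuinely determines the graded ring and not merely the link up to Seifert isomorphism. Two points need care: (i) reconstructing $X$ from $\tilde X$ requires that $X$ be recovered by contracting $E$, which holds because $E$ is negative definite (one should check the intersection matrix of $C + \sum C_i$ is negative definite, which is a routine eigenvalue computation), so the analytic germ is determined by a neighbourhood of $E$ in $\tilde X$; and (ii) for a \emph{quasihomogeneous} germ, Pinkham's theorem says this analytic neighbourhood, and hence $R$, is rigidly determined by the equivariant data $(C,\sL)$, so there is no moduli of analytic structures beyond the position of $S$. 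I would cite \cite{P77} and \cite{Loo84} for these facts rather than reprove them; the only computation I would actually carry out is the verification that the self-intersection numbers are consistent with a well-defined $\bQ$-divisor $\sL$ on $C$ (i.e.\ that $\lceil$suitable intersection identities$\rceil$ hold), which is a short bookkeeping exercise with the adjunction/contraction formulas.
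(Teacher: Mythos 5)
Your uniqueness argument is essentially the paper's: both reduce to Fact~\ref{fact-quasihom-algebraizity} together with Pinkham's uniqueness theorem for quasihomogeneous surface singularities (\cite[Thm.~2.1]{P77}), after the observation that the $(-2)$-curve $C$ is automatically the central curve of the star. Where you genuinely diverge is existence. The paper constructs the singularity by hand: it starts from the total space $|\sO_{\bP^1}(2)|$ with its natural $\bC^*$-action, performs three rounds of equivariant blow-ups (at the points of $S$ on the zero section, then at suitable fixed points) until the configuration $C+C_1+\cdots+C_4$ with self-intersections $-2$ and $-3$ appears, checks negative definiteness, contracts it by Grauert's criterion, and finally verifies that the $\bC^*$-action descends holomorphically with positive weights because $C$ is pointwise fixed. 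You instead produce the germ directly as $\Spec\bigoplus_{k\geq 0}H^0\bigl(\bP^1,\sO(\lfloor k\sL\rfloor)\bigr)$ for the $\bQ$-divisor $\sL$ with coefficients $-\tfrac13$ at the four points of $S$ and degree $2-4\cdot\tfrac13=\tfrac23>0$, relying on the Pinkham--Orlik--Wagreich correspondence both to know that this graded ring is normal and to know the converse direction of the correspondence, namely that its minimal equivariant resolution is exactly the prescribed star (central $(-2)$-curve, four $(-3)$-leaves attached at $S$); that converse step, plus the positivity check (which is equivalent to negative definiteness), is what you must actually quote or verify. Both routes are valid: yours obtains existence and uniqueness in one stroke from the same correspondence, at the price of heavier machinery and convention bookkeeping, while the paper's blow-up construction is more elementary and self-contained (only Grauert contraction and a descent argument for the action) and keeps the fixed-curve/weight picture explicit, which the paper reuses in the proof of Lemma~\ref{lem-quasihom-surface}.
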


\begin{rem}
Although the quasihomogeneous normal surface singularity in Lemma~\ref{lem-type-S-quasi} is unique, the $\bC^*$-action with positive weights is never unique.
\end{rem}

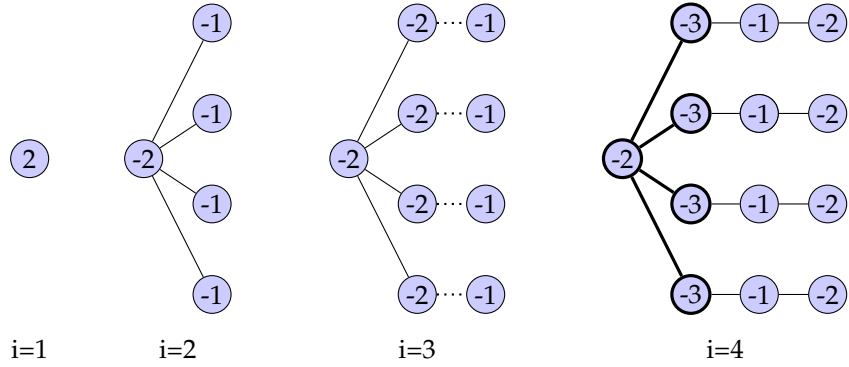
\begin{figure}\centering
\begin{tikzpicture}
  [scale=.6,auto=left]

\tikzset{curve/.style={circle,draw = black,fill=blue!20, minimum size=0.5cm, inner sep = 0cm}}
\tikzset{contrcurve/.style={very thick,circle,draw = black,fill=blue!20, minimum size=0.5cm, inner sep = 0cm}}

  \node (i0) at (0,0.8) {i=1};
  \node (i0) at (3.25,0.8) {i=2};
  \node (i0) at (8.5,0.8) {i=3};
  \node (i0) at (15.25,0.8) {i=4};

  \node [curve] (n00) at (0,5)  {2};

  \node [curve] (n10) at (2.5,5)  {-2};
  \node [curve] (n11) at (4,8)  {-1};
  \node [curve] (n12) at (4,6)  {-1};
  \node [curve] (n13) at (4,4)  {-1};
  \node [curve] (n14) at (4,2)  {-1};

  \node [curve] (n20) at (7,5)  {-2};
  \node [curve] (n21) at (8.5,8)  {-2};
  \node [curve] (n22) at (8.5,6)  {-2};
  \node [curve] (n23) at (8.5,4)  {-2};
  \node [curve] (n24) at (8.5,2)  {-2}; 
  \node [curve] (n25) at (10,8)  {-1};
  \node [curve] (n26) at (10,6)  {-1};
  \node [curve] (n27) at (10,4)  {-1};
  \node [curve] (n28) at (10,2)  {-1};

  \node [contrcurve] (n30) at (13,5)  {-2};
  \node [contrcurve] (n31) at (14.5,8)  {-3};
  \node [contrcurve] (n32) at (14.5,6)  {-3};
  \node [contrcurve] (n33) at (14.5,4)  {-3};
  \node [contrcurve] (n34) at (14.5,2)  {-3}; 
  \node [curve] (n35) at (16,8)  {-1};
  \node [curve] (n36) at (16,6)  {-1};
  \node [curve] (n36) at (16,6)  {-1};
  \node [curve] (n37) at (16,4)  {-1};
  \node [curve] (n38) at (16,2)  {-1};
  \node [curve] (n39) at (17.5,8)  {-2};
  \node [curve] (n310) at (17.5,6)  {-2};
  \node [curve] (n311) at (17.5,4)  {-2};
  \node [curve] (n312) at (17.5,2)  {-2};

  \foreach \from/\to in {n10/n11,n10/n12,n10/n13,n10/n14,n20/n21,n20/n22,n20/n23,n20/n24,n31/n35,n32/n36,n33/n37,n34/n38,n30/n31,n35/n39,n36/n310,n37/n311,n38/n312}
    \draw (\from) -- (\to);

  \foreach \from/\to in {n21/n25,n22/n26,n23/n27,n24/n28}
    \draw [dotted, thick] (\from) -- (\to);

  \foreach \from/\to in {n30/n31,n30/n32,n30/n33,n30/n34}
    \draw [very thick] (\from) -- (\to);

\end{tikzpicture}
\setlength{\captionmargin}{7pt}
\caption{Dual graphs of the reduced inverse image of $\sigma_1$ in $X_i$ in the proof of Lemma~\ref{lem-type-S-quasi}. Vertices are labeled by the self-intersection of the corresponding rational curve. Dotted edges correspond to intersection points blown up in the next step. The divisor corresponding to the thick subgraph on the right hand side gets contracted by $\pi$.}
\end{figure}

\begin{proof}[Proof of uniqueness]
Uniqueness follows directly from Fact~\ref{fact-quasihom-algebraizity} and the algebraic uniqueness result in~\cite[Thm.~2.1]{P77}. To apply Pinkham's result we only need to observe that for a quasihomogeneous normal surface singularity the exceptional curve $C$ in the minimal resolution is automatically the central curve in the sense of~\cite[Sect.~2]{P77}.
\end{proof}

\begin{proof}[Proof of existence]
We prove existence in several steps: 
\begin{enumerate}
 \item Let $X_1:=|\sO_{\bP^1}(2)|$ be the total space of the line bundle $\sO_{\bP^1}(2)$ with zero section $\sigma_1\subset X_1$. There is a natural $\bC^*$-action on $X_1$ with $X_1^{\bC^*}=\sigma_1$. Observe that $(\sigma_1\cdot\sigma_1)=2$.
 \item Let $p_2:X_2\to X_1$ be the blowing up of $X_1$ in the set $S\subset\bP^1\cong\sigma_1\subset X_1$. The strict transform of $\sigma_1$ is denoted by $\sigma_2\subset X_2$ and the $p_2$-exceptional curves are denoted by $C''_1,\cdots,C''_4$. Observe that $\bC^*$ still acts on $X_2$, $(\sigma_2\cdot\sigma_2)=-2$ and $(C''_i\cdot C''_i)=-1$ for $1\leq i\leq 4$.
 \item For any $1\leq i\leq 4$ the set $C''_i\backslash \sigma_2$ contains exactly one $\bC^*$-fixed point $c_i$. Let $p_3:X_3\to X_2$ be the blowing-up in $\{c_1,\cdots,c_4\}$ and let $D_i$ be the exceptional curve over $c_i$. Let further $\sigma_3=p_3^{-1}(\sigma_2)$ and let $C'_i$ be the strict transform of $C''_i$. The $\bC^*$-action extends to $X_3$, $(\sigma_3\cdot\sigma_3)=-2$ and $(C'_i\cdot C'_i)=-2$.
 \item Finally, let $p_4:\tilde{X}=X_4\to X_3$ be the blowing-up of the $\bC^*$-fixed points $C'_i\cap D_i$. We write $C:=p_4^{-1}(\sigma_3)$ and $C_i$ for the strict transform of $C'_i$. Then $(C\cdot C)=-2$, $(C_i\cdot C_i)=-3$ so that the snc divisor $C+C_1+\cdots + C_4$ satisfies the requirements of Definition~\ref{cond-S}.
 \item With $C_0:=C$, a short calculation shows that the intersection matrix $(C_i\cdot C_j)_{0\leq i,j\leq 4}$ is negative definite. Thus~\cite{Grau62} implies that the subset $E=C+C_0+\cdots + C_4\subset\tilde{X}$ is the exceptional set of a resolution. More precisely, there exists a point $p$ on a reduced complex space $X$ together with a holomorphic map $\pi:\tilde{X}\to X$ such that $\pi^{-1}(p)_\red=E$ and $\pi:\tilde{X}\backslash E\to X\backslash\{p\}$ is biholomorphic. We may replace $X$ by its normalization so that it becomes normal.
\end{enumerate}
It follows from the construction that $p\in X$ is a normal surface singularity of type $(S)$. The map $\pi$ is its minimal resolution. Let us show that $p\in X$ is quasihomogeneous.

The proper morphism $\pi:\tilde{X}\times\bC^*\to X\times\bC^*$ is a topological quotient. In particular, the $\bC^*$-action on $\tilde{X}$ descends to a topological group action $\bC^*\times X\to X$ which is holomorphic outside $p\in X$. Thus the map $\bC^*\times X\to X$ is holomorphic. It remains to show that the weights on $T_pX$ all have the same sign. This follows immediately from the fact that the curve $C\subset\tilde{X}$ is point-wise fixed by $\bC^*$.
\end{proof}

\begin{lem}\label{lem-type-S-several}
Let $S\subset\bP^1$ be a subset of cardinality four. Then, up to isomorphism of complex space germs, there exist at least two normal surface singularities of type $(S)$.
\end{lem}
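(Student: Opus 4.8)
The strategy is: by Lemma~\ref{lem-type-S-quasi} there is, up to isomorphism, exactly one \emph{quasihomogeneous} normal surface singularity of type $(S)$, so it suffices to produce one of type $(S)$ that is \emph{not} quasihomogeneous. I would obtain it by a non-equivariant variant of the construction in the existence part of Lemma~\ref{lem-type-S-quasi}. Exactly as in that proof, with $C_0:=C$ the intersection matrix $(C_i\cdot C_j)_{0\le i,j\le 4}$ is negative definite, so by Grauert's criterion~\cite{Grau62} any smooth surface germ carrying a reduced snc divisor $E=C+C_1+\cdots+C_4$ with the incidence and self-intersection data of Definition~\ref{cond-S} can be contracted and, after normalisation, yields a normal surface singularity of type $(S)$ whose minimal resolution is the given germ along $E$. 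Hence it is enough to exhibit one such germ $\tilde X'$ for which the resulting singularity $p'\in X'$ admits no non-trivial holomorphic $\bC^*$-action lifting to a positive-weight action on $T_{p'}X'$; by Lemma~\ref{lem-type-S-quasi} it is then automatically non-isomorphic to the quasihomogeneous model.

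For the construction I would run the blow-up sequence of Lemma~\ref{lem-type-S-quasi}, from $X_1=|\sO_{\bP^1}(2)|$ through $X_2\to X_1$ (the blow-up of $S\subset\sigma_1$), but in steps (3)--(4) replace each $\bC^*$-fixed centre $c_i\in C''_i\setminus\sigma_2$ by an arbitrary point $c'_i\in C''_i\setminus\sigma_2$ lying off the fixed locus of the natural $\bC^*$-action on $X_2$, blowing up the corresponding infinitely near points exactly as before. The self-intersection numbers and incidences produced -- in particular the fact that the four intersection points $C\cap(C_1+\cdots+C_4)$ are still $S\subset C\cong\bP^1$, since the new centres lie far from $\sigma$ -- depend only on the combinatorial pattern of the centres. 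So the outcome is a smooth surface $\tilde X'$ with an snc divisor $C+C_1+\cdots+C_4$ as in Definition~\ref{cond-S}, and $p'\in X'$ is a normal surface singularity of type $(S)$ whose minimal resolution is $\tilde X'$.

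It remains to see that $p'\in X'$ is not quasihomogeneous, and this is the heart of the matter. Suppose it were; then its $\bC^*$-action lifts to a non-trivial action on $\tilde X'$ with positive weights on $T_{p'}X'$. This action preserves the exceptional curve $E'=C+C_1+\cdots+C_4$ and, since $\bC^*$ is connected, each of its components; on $C\cong\bP^1$ it fixes the four points of $S$, hence is trivial, so it fixes $C$ pointwise. If the weight of the action on $T_{s_i}C_i$ (the direction transverse to $C$ at $s_i=C\cap C_i$) were zero, the linearised action would be trivial near $s_i$, hence by the identity theorem along the connected curves $C$ and the $C_j$ trivial near $E'$ and so near $p'$, contradicting positivity of the weights; therefore the action is non-trivial on each $C_i\cong\bP^1$, with fixed points $s_i$ and exactly one further point $c^{\ast}_i\in C_i$. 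The curves contracted by $\tilde X'\to X_2$ are finitely many rigid rational curves, hence preserved by the action, which therefore descends to a non-trivial $\bC^*$-action on $X_2$ and then on $X_1=|\sO_{\bP^1}(2)|$ fixing the zero section $\sigma_1$ pointwise; such an action scales the fibres by a fixed non-trivial character, so the non-$s_i$ fixed point of the induced action on $C''_i$ is forced to be $c_i$. On the other hand $c^{\ast}_i$ is the intersection of $C_i$ with the $(-1)$-curve created by the last blow-up over $c'_i$, and that intersection point maps to $c'_i$ under the equivariant isomorphism $C_i\xrightarrow{\sim}C''_i$; comparing the two descriptions of the non-$s_i$ fixed point gives $c'_i=c_i$, contrary to our choice. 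The only delicate step is exactly this last comparison -- controlling how a hypothetical $\bC^*$-action on $\tilde X'$ interacts with the non-equivariant contraction $\tilde X'\to X_1$ and checking that the forced extra fixed point lies over $c'_i$. An alternative route that sidesteps this bookkeeping is to invoke Laufer's classification of taut and pseudo-taut resolution graphs -- the star-shaped graph underlying type $(S)$ is not taut, so the analytic germ genuinely varies in moduli with only one quasihomogeneous member -- or Pinkham's theory of deformations of varieties with $\bC^*$-action applied to a non-trivial negative-weight equisingular deformation of the quasihomogeneous model.
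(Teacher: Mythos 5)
Your main, constructive route has a genuine gap exactly at the step you flag. After lifting a hypothetical $\bC^*$-action on $\sg{X'}{p'}$ to the minimal resolution, the action is only defined on a germ of $\tilde X'$ along the exceptional divisor $E'=C+C_1+\cdots+C_4$, and the curves contracted by $\tilde X'\to X_2$ (the last two exceptional curves over each $c_i'$) are \emph{not} contained in $E'$: inside the domain of the lifted action they are only germs of curves through points of $E'$, not compact rational curves. So the argument ``rigid rational curves are preserved by a connected group'' does not apply, and there is no reason the action descends through the non-equivariant contractions $\tilde X'\to X_2\to X_1$; the fibration structure of $X_1=|\sO_{\bP^1}(2)|$ and the intermediate blow-down maps are auxiliary data of the construction, not intrinsic to the germ $\sg{X'}{p'}$. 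Consequently the forced identity $c_i'=c_i$ is not established. The problem is in fact deeper than a missing technical lemma: it is not even clear that choosing $c_i'\neq c_i$ changes the analytic type of the germ at all, since an isomorphism between the germs of $\tilde X$ along $E$ and of $\tilde X'$ along $E'$ need not be compatible with the blow-downs to $X_2$ (those contract curves outside $E$), so the germ need not ``remember'' the positions $c_i'$. Showing that it does is essentially the content of the moduli computation that the proof has to supply, and your argument does not supply it.

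The paper's actual proof is the route you mention only as an aside: by Lemma~\ref{lem-type-S-quasi} singularities of type $(S')$ exist for every $4$-element $S'\subset\bP^1$, and since $PGL(2)$ is not transitive on such subsets, the weighted dual graph of Definition~\ref{cond-S} is not taut in the sense of \cite{L73}; the paper then invokes \cite[Eqn.~(4.5)]{L73} to conclude that there are at least two singularities \emph{of the same type $(S)$}. Note that your gloss of this route (``not taut, so the analytic germ genuinely varies in moduli with only one quasihomogeneous member'') has the same logical gap in miniature: non-tautness only says the germ is not determined by the graph, and that is already witnessed by varying the cross-ratio, i.e.\ by changing $S$; to produce two non-isomorphic germs with the \emph{same} $S$ one needs the quantitative information from Laufer's Eqn.~(4.5) (or, along your Pinkham-style suggestion, an actual computation showing the equisingular negative-weight deformations of the quasihomogeneous model are non-trivial), neither of which is carried out in your proposal.
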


\begin{proof}
Observe that by Lemma~\ref{lem-type-S-quasi} there exists a normal surface singularity of type $(S')$ for any subset $S'\subset\bP^1$ of cardinality four. Since $Aut(\bP^1)=PGL(2)$ does not act transitively on the set of subsets $S'\subset\bP^1$ of cardinality four, a singularity of type $(S)$ is not taut in the sense of~\cite[Def.~1.1]{L73}. Thus~\cite[Eqn.~(4.5)]{L73} implies the claim.
\end{proof}

\begin{lem}\label{lem-type-S-min-ell}
Let $S\subset\bP^1$ be a subset of cardinality four. Then any normal surface singularity of type $(S)$ is minimally elliptic.
\end{lem}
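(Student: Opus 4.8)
The plan is to apply Laufer's numerical characterization of minimally elliptic singularities~\cite{Lau77}: if $\pi\colon\tilde{X}\to X$ denotes the minimal resolution of a normal surface singularity and $Z$ its fundamental cycle, then $p\in X$ is minimally elliptic if and only if $\chi(\sO_Z)=0$ and $\chi(\sO_D)>0$ for every effective cycle $D$ supported on $E$ with $0<D<Z$; here $\chi(\sO_D)=-\frac{1}{2}\bigl(D^2+K_{\tilde{X}}\cdot D\bigr)$. The resolution occurring in Definition~\ref{cond-S} is indeed the minimal one, since none of the exceptional components is a $(-1)$-curve.

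First I would collect the intersection data. Adjunction on the smooth rational curves $C$ and $C_i$ yields $K_{\tilde{X}}\cdot C=-C^2-2=0$ and $K_{\tilde{X}}\cdot C_i=-C_i^2-2=1$. The Laufer algorithm for the fundamental cycle starts from $E=C+C_1+\dots+C_4$; as $E\cdot C=-2+4=2>0$ one passes to $E+C$, and since $(2C+\sum_i C_i)\cdot C=0$ and $(2C+\sum_i C_i)\cdot C_i=-1$ the algorithm terminates, so that
\[ Z=2C+C_1+C_2+C_3+C_4. \]
Note in passing that $Z+K_{\tilde{X}}$ is numerically trivial on $E$, as one expects for a minimally elliptic cycle.

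Finally, write an arbitrary cycle as $D=aC+\sum_{i=1}^{4}b_iC_i$ and set $s:=\sum_i b_i$. Because $b_i\in\{0,1\}$, one has $b_i^2=b_i$, and the intersection numbers above give $D^2=-2a^2+2as-3s$ together with $K_{\tilde{X}}\cdot D=s$, hence
\[ \chi(\sO_D)=a^2-as+s, \]
which depends on $D$ only through the pair $(a,s)$. In particular $\chi(\sO_Z)=4-8+4=0$. For $0<D<Z$ one has $a\in\{0,1,2\}$, $s\in\{0,1,2,3,4\}$ and $(a,s)\notin\{(0,0),(2,4)\}$ (the excluded pair $(2,4)$ would force $D=Z$), and then $\chi(\sO_D)$ equals $s\ge1$ when $a=0$, equals $1$ when $a=1$, and equals $4-s\ge1$ when $a=2$. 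Thus $\chi(\sO_D)>0$ in all cases and Laufer's criterion applies. I do not expect a genuine obstacle: the argument is routine once the right criterion has been identified, the only points needing care being the computation of the fundamental cycle and the observation that the symmetry of the configuration collapses the verification of $\chi(\sO_D)>0$ to the handful of cases above.
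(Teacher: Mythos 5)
Your proof is correct and follows essentially the same route as the paper: computing the fundamental cycle $Z=2C+\sum_i C_i$ on the minimal resolution, verifying $\chi(Z)=0$ and $\chi(D)>0$ for all $0<D<Z$, and invoking Laufer's criterion from~\cite{Lau77}. You merely spell out the "straightforward calculation" that the paper leaves implicit, and your arithmetic ($\chi(\sO_D)=a^2-as+s$ and the case check) is accurate.
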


\begin{proof}
A straightforward calculation shows that the fundamental cycle is \[{\textstyle Z=2\cdot C+\sum_iC_i.}\] Moreover it is easy to calculate that $\chi(Z)=0$ and $\chi(Z')>0$ for any $0<Z'<Z$. This implies that $p\in X$ is minimally elliptic by~\cite[Thm.~3.4, Def.~3.2]{Lau77}.
\end{proof}

\begin{proof}[Proof of Proposition~\ref{intro-ex-top-counterexample}]
Let $S=\{0,1,2,3\}$. Let $p_1\in X_1$ be a quasihomogeneous normal surface singularity of type $(S)$ given by Lemma~\ref{lem-type-S-quasi}. By Lemma~\ref{lem-type-S-several} there exists a normal surface singularity $p_2\in X_2$ of type $(S)$ that is not isomorphic to $p_1\in X_1$. By Lemma~\ref{lem-type-S-quasi} the singularity $p_2\in X_2$ is \emph{not} quasihomogeneous.

The singularities $p_i\in X_i$ are minimally elliptic by Lemma~\ref{lem-type-S-min-ell}.

By shrinking $X_i$ further we may assume that it is homeomorphic to the open cone over the link of $p_i\in X_i$. This proves Item~(\ref{it-top-counterexample-homeo}) since the link only depends on the resolution graph. Indeed, the link can be obtained from the resolution graph by the plumbing construction in~\cite[Sect.~1]{Mu61}. 

Item~(\ref{it-top-counterexample-int-coho}) is a direct consequence of Lemma~\ref{lem-int-coho-surfaces}.

To prove Item~(\ref{it-top-counterexample-poincare}) recall that $p_i\in X_i$ is Gorenstein by~\cite[Thm.~3.9]{Lau77}. Then we may apply Proposition~\ref{intro-prop-gorenstein-surface} proved in Section~\ref{ssec-gorenstein-surface}.
\end{proof}

\section{Poincar\'e Lemma on holomorphically contractible spaces}\label{sec-contr}

\subsection{Notions of holomorphic contractibility}
The following definition of holomorphic contractibility is slightly less general than Reiffen's original version in~\cite[Def.~3]{Rei67}. In fact, it coincides with Reiffen's notion of ``$p$-fixed contraction'' in~\cite[Def.~4(b)]{Rei67}. We favor the more restrictive notion since it captures all examples we can think of and proofs become technically less involved in this way.

The definition of a holomorphic deformation retract coincides with Reiffen's definition in~\cite[Def.~4(a)]{Rei67}.

\begin{defn}[Holomorphic contractions]\label{defn-contraction}
Let $\sg{X}{p}$ be a reduced complex space germ and let $\sg{Y}{p}\subset\sg{X}{p}$ be a reduced subgerm. 
\begin{enumerate}
\item We say that the space germ \sg{X}{p} is \emph{holomorphically contractible to \sg{Y}{p}} if there exist representatives $X\supset Y\ni p$ of the germs together with an open neighborhood $U\subset X$ of $p$, a domain $T\subset \bC$ containing both $0$ and $1$ and a holomorphic map
\[ \phi:U\times T\to X,\]
called \emph{contraction map}, such that $\phi(p,t)=p$ for any $t\in T$, $\phi(x,1)=x$ and $\phi(x,0)\in Y$ for any $x\in U$.
\item The contraction map $\phi$ is called a \emph{holomorphic deformation retraction} if it satisfies $\phi(y,0)=y$ for any $y\in Y\cap U$. In this case, the subgerm \sg{Y}{p} is called a \emph{holomorphic deformation retract} of \sg{X}{p}.
\end{enumerate}
\end{defn}

\begin{example}\label{ex-contraction-to-subspace}
If $p$ is a smooth point of $X$ then $\sg{X}{p}$ is holomorphically contractible to the germ of any closed subspace $Y\subset X$ passing through $p$. Indeed, we may assume that $X\subset\bC^n$ is an open set and $p=0$ is the origin. Then the scalar multiplication $(x,t)\mapsto \phi(x,t):=t\cdot x$ gives a contraction map when restricted to an appropriate open subset of $X\times\bC$.
\end{example}

\begin{rem}[Holomorphic deformation retracts of normal spaces]\label{rem-contr-normal}
If $p\in X$ is a normal point and $Y\subset X$ is a holomorphic retract of $X$ at $p$, then $p$ is a normal point of $Y$. Indeed, let $\phi:U\times T\to X$ denote a holomorphic deformation retraction as in Definition~\ref{defn-contraction}. By shrinking $U$ and $X$ further we may assume that $X$ is a normal complex space. Then the identity map $\id_{Y\cap U}:Y\cap U\to Y\cap U$ factors through the map $\phi(\bullet,0):U\to Y$. After shrinking $U$ further we may certainly assume that $\phi(\bullet,0)^{-1}(\{\text{non-normal points of }Y\})\subset U $ is nowhere dense. Then \cite[Prop.~71.15]{KK83} implies that the map $\phi(\bullet,0):U\to Y$ factors through the normalization $Y^{\text{norm}}\to Y$. This implies that the identity map $\id_{Y}$ factors through the normalization of $Y$, i.e., $Y$ is normal.
\end{rem}

\begin{figure}[ht]
\begin{minipage}[t]{0.45\linewidth}
\centering
\begin{tikzpicture}
  [scale=.3,auto=left]
\tikzset{curve/.style={circle,draw = black,fill=blue!20, minimum size=0.5cm, inner sep = 0cm}}
\tikzset{contrcurve/.style={very thick,circle,draw = black,fill=blue!20, minimum size=0.5cm, inner sep = 0cm}}
  \draw (0,0) ellipse (10cm and 6cm); 
  \node (L1) at (8,-5) {$X$};
 
  \draw[->] (2,0) -- ($(2,0)!1cm!(0,0)$);
  \draw[->] (-2,0) -- ($(-2,0)!1cm!(0,0)$);
  \draw[->] (0,2) -- ($(0,2)!1cm!(0,0)$);
  \draw[->] (0,-2) -- ($(0,-2)!1cm!(0,0)$);
  \draw[->] (1.4,1.4) -- ($(1.4,1.4)!1cm!(0,0)$);
  \draw[->] (1.4,-1.4) -- ($(1.4,-1.4)!1cm!(0,0)$);
  \draw[->] (-1.4,1.4) -- ($(-1.4,1.4)!1cm!(0,0)$);
  \draw[->] (-1.4,-1.4) -- ($(-1.4,-1.4)!1cm!(0,0)$);

  \draw[->] (5,0) -- ($(5,0)!2cm!(0,0)$);
  \draw[->] (-5,0) -- ($(-5,0)!2cm!(0,0)$);
  \draw[->] (0,5) -- ($(0,5)!2cm!(0,0)$);
  \draw[->] (0,-5) -- ($(0,-5)!2cm!(0,0)$);
  \draw[->] (3.5,3.5) -- ($(3.5,3.5)!2cm!(0,0)$);
  \draw[->] (3.5,-3.5) -- ($(3.5,-3.5)!2cm!(0,0)$);
  \draw[->] (-3.5,3.5) -- ($(-3.5,3.5)!2cm!(0,0)$);
  \draw[->] (-3.5,-3.5) -- ($(-3.5,-3.5)!2cm!(0,0)$);

  \draw[->] (4.6,1.9) -- ($(4.6,1.9)!2cm!(0,0)$);
  \draw[->] (4.6,-1.9) -- ($(4.6,-1.9)!2cm!(0,0)$);
  \draw[->] (-4.6,1.9) -- ($(-4.6,1.9)!2cm!(0,0)$);
  \draw[->] (-4.6,-1.9) -- ($(-4.6,-1.9)!2cm!(0,0)$);

  \draw[->] (1.9,4.6) -- ($(1.9,4.6)!2cm!(0,0)$);
  \draw[->] (-1.9,4.6) -- ($(-1.9,4.6)!2cm!(0,0)$);
  \draw[->] (1.9,-4.6) -- ($(1.9,-4.6)!2cm!(0,0)$);
  \draw[->] (-1.9,-4.6) -- ($(-1.9,-4.6)!2cm!(0,0)$);

  \draw[very thick] (0,0) to [out=90,in=195] (5,4);
  \draw[very thick] (0,0) to [out=90,in=345] (-5,4);
  \node (L2) at (5,3.2) {$C$};
  \node (L3) at (0.2,-0.5) {$p$};
\end{tikzpicture}
\setlength{\captionmargin}{2pt}
\caption{A smooth space $X$ can be contracted to any singular curve passing through $p$.}
\label{fig:figure1}
\end{minipage}
\hspace{0.5cm}
\begin{minipage}[t]{0.45\linewidth}
\centering
\begin{tikzpicture}
  [scale=.3,auto=left]

\tikzset{curve/.style={circle,draw = black,fill=blue!20, minimum size=0.5cm, inner sep = 0cm}}
\tikzset{contrcurve/.style={very thick,circle,draw = black,fill=blue!20, minimum size=0.5cm, inner sep = 0cm}}
  \draw (0,0) ellipse (10cm and 6cm); 
  \node (L1) at (8,-5) {$X$};
  \node (L2) at (0,-0.8) {$p$};
  \draw[very thick] (-8,0) to [out=10,in=170] (0,0) to [out=-10,in=190] (8,0);
  \node (L3) at (8.7,0) {$C$};

  \draw[->] (0,2) -- (0,1);
  \draw[->] (1,1.9) -- (1,0.9);
  \draw[->] (2,1.7) -- (2,0.7);
  \draw[->] (3,1.6) -- (3,0.6);
  \draw[->] (4,1.5) -- (4,0.5);
  \draw[->] (5,1.6) -- (5,0.6);
  \draw[->] (6,1.7) -- (6,0.7);
  \draw[->] (7,1.9) -- (7,0.9);
  \draw[->] (8,2) -- (8,1);
  \draw[->] (-1,2.1) -- (-1,1.1);
  \draw[->] (-2,2.3) -- (-2,1.3);
  \draw[->] (-3,2.4) -- (-3,1.4);
  \draw[->] (-4,2.5) -- (-4,1.5);
  \draw[->] (-5,2.4) -- (-5,1.4);
  \draw[->] (-6,2.3) -- (-6,1.3);
  \draw[->] (-7,2.1) -- (-7,1.1);
  \draw[->] (-8,2) -- (-8,1);

  \draw[->] (0,-2) -- (0,-1);
  \draw[->] (1,-2.1) -- (1,-1.1);
  \draw[->] (2,-2.3) -- (2,-1.3);
  \draw[->] (3,-2.4) -- (3,-1.4);
  \draw[->] (4,-2.5) -- (4,-1.5);
  \draw[->] (5,-2.4) -- (5,-1.4);
  \draw[->] (6,-2.3) -- (6,-1.3);
  \draw[->] (7,-2.1) -- (7,-1.1);
  \draw[->] (8,-2) -- (8,-1);
  \draw[->] (-1,-1.9) -- (-1,-0.9);
  \draw[->] (-2,-1.7) -- (-2,-0.7);
  \draw[->] (-3,-1.6) -- (-3,-0.6);
  \draw[->] (-4,-1.5) -- (-4,-0.5);
  \draw[->] (-5,-1.6) -- (-5,-0.6);
  \draw[->] (-6,-1.7) -- (-6,-0.7);
  \draw[->] (-7,-1.9) -- (-7,-0.9);
  \draw[->] (-8,-2) -- (-8,-1);

  \draw[->] (0,5) -- (0,3);
  \draw[->] (1,4.9) -- (1,2.9);
  \draw[->] (2,4.7) -- (2,2.7);
  \draw[->] (3,4.6) -- (3,2.6);
  \draw[->] (4,4.5) -- (4,2.5);
  \draw[->] (5,4.6) -- (5,2.6);
  \draw[->] (-1,5.1) -- (-1,3.1);
  \draw[->] (-2,5.3) -- (-2,3.3);
  \draw[->] (-3,5.4) -- (-3,3.4);

 \draw[->] (0,-5) -- (0,-3);
  \draw[->] (-1,-4.9) -- (-1,-2.9);
  \draw[->] (-2,-4.7) -- (-2,-2.7);
  \draw[->] (-3,-4.6) -- (-3,-2.6);
  \draw[->] (-4,-4.5) -- (-4,-2.5);
  \draw[->] (-5,-4.6) -- (-5,-2.6);
  \draw[->] (1,-5.1) -- (1,-3.1);
  \draw[->] (2,-5.3) -- (2,-3.3);
  \draw[->] (3,-5.4) -- (3,-3.4);

\end{tikzpicture}
\setlength{\captionmargin}{2pt}
\caption{A one-dimensional deformation retract $C$ of a normal space $X$ is automatically smooth.}
\label{fig:figure2}
\end{minipage}
\end{figure}

\begin{example}\label{ex-action-contraction}
Suppose that $\bC^*$ acts holomorphically on a space germ $\sg{X}{p}$ with only non-negative weights, see Section~\ref{ssec-local-c-star-action}. Then the subgerm $\sg{X}{p}^{\bC^*}$ of fixed points is a holomorphic deformation retract of \sg{X}{p}. A holomorphic deformation retraction is given by $T=\{t\in\bC:\, |t|< 2\}$, a sufficiently small open neighborhood $U\subset X$ of $p$ and
\[\phi(x,t)=\begin{cases} t\cdot x &\mbox{if } t\in T\backslash\{0\} \\ {\displaystyle \lim_{s\to 0}{s\cdot x}} & \mbox{if } t=0. \end{cases}\]
These claims follow immediately from Lemma~\ref{lem-c-star-linearization}.
\end{example}

\subsection{Poincar\'e Lemmas on holomorphically contractible complex spaces}\label{ssec-poincare-contractible}
In order to formulate the main result of this section in an optimal way, we need to introduce the following notation for the cohomology groups of the cochain complex of stalks of sheaves of differential forms.

\begin{notation}\label{not-poincare-failure} Let $p\in X$ be a point on reduced complex space. We write
\[ P^i_\h(\sg{X}{p}) := h^i(\Omega^{\bullet}_\h|_{X,p},\text{d})=\frac{ker\, \text{d}:\Omega^i_\h|_{X,p}\to \Omega^{i+1}_\h|_{X,p}}{im\, \text{d}:\Omega^{i-1}_\h|_{X,p}\to \Omega^{i}_\h|_{X,p}},\quad i>0.\]
In a similar way we denote by $P^i_\text{K\"a}(\sg{X}{p})$ and $P^i_\text{K\"a/tor}(\sg{X}{p})$ the cohomology groups of the complexes of K\"ahler differential forms and K\"ahler differential forms modulo torsion, respectively. If $p\in X$ is a normal point, we define $P^i_\refl(\sg{X}{p})$ in a similar manner.

Observe that if $\bullet\in\{\h,\text{K\"a},\text{K\"a}/\tor\}$, then any holomorphic map $f:\sg{X}{p}\to \sg{Y}{q}$ between reduced complex space germs induces a pull-back map $f^*:P^i_\bullet(\sg{Y}{q})\to P^i_\bullet(\sg{X}{p})$. Moreover there exist functorial complex-linear maps
\[P^i_\text{K\"a}(\sg{X}{p})\to P^i_\text{K\"a/tor}(\sg{X}{p})\to P^i_\h(\sg{X}{p}) .\]
\end{notation}

\subsubsection*{Formulation of the main results} The following theorem covers Theorem~\ref{thm-intro-contractions-h-forms} of the introduction and likewise constitutes a considerable strengthening of the main results in~\cite{Rei67} and~\cite{Ferr70}. 

\begin{thm}[Poincar\'e Lemma for $\h$-differential forms on contractible spaces, Section~\ref{ssec-proof-poincare-contractions}]\label{thm-h-poincare-contraction}
Let $p\in X$ be a point on a reduced complex space and let $j:Y\subset X$ be the inclusion of a reduced complex subspace containing $p\in X$, $i>0$. Then the following hold:
\begin{enumerate}
 \item\label{it-thm-contr-h-1} If $\sg{X}{p}$ is holomorphically contractible to $\sg{Y}{p}$, then there exists a surjective complex-linear map
 \[P^i_\h(\sg{Y}{p})\twoheadrightarrow P^i_\h(\sg{X}{p}).\]
 \item\label{it-thm-contr-h-2} If $\sg{Y}{p}$ is a holomorphic deformation retract of $\sg{X}{p}$, then the pull-back map
 \[j^*:P^i_\h(\sg{X}{p})\cong P^i_\h(\sg{Y}{p})\]
is bijective.
\end{enumerate}
The same statements hold for K\"ahler differential forms and K\"ahler differential forms modulo torsion.
\end{thm}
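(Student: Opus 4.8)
The plan is to imitate the classical construction of a chain--homotopy operator for the de Rham complex along a homotopy, using the pull-back of $\h$-differential forms available for \emph{every} holomorphic map (Definition~\ref{defn-h-diff}) in place of the pull-back of differential forms on manifolds. Fix representatives and a contraction map $\phi\colon U\times T\to X$ as in Definition~\ref{defn-contraction}; since only germs at $p$ matter we may shrink $U$ and $T$ as needed, arranging in particular that $T$ is simply connected (and still contains $0$ and $1$), and that $\phi^*\alpha$ is defined on $U\times T$ for the germs $\alpha$ under consideration. For $t_0\in T$ write $\phi_{t_0}\colon U\to X$, $x\mapsto\phi(x,t_0)$, so $\phi_1$ is the open inclusion $U\hookrightarrow X$, and --- since $\phi_0(U)\subset Y$ and $U$, $Y$ are reduced --- $\phi_0$ factors as $U\xrightarrow{\psi_0}Y\xrightarrow{j}X$ with $\psi_0(p)=p$; thus $\psi_0$ defines a germ map $\sg{X}{p}\to\sg{Y}{p}$. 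The theorem will follow once we produce, for every closed germ $\alpha\in\Omega^i_\h|_{X,p}$, an $\h$-form $K\alpha\in\Omega^{i-1}_\h|_{X,p}$ with $\alpha-\phi_0^*\alpha=\text{d}(K\alpha)$: then $[\alpha]=[\phi_0^*\alpha]=\psi_0^*\bigl(j^*[\alpha]\bigr)$ in $P^i_\h(\sg{X}{p})$, i.e.\ $\psi_0^*\circ j^*=\id$ on $P^i_\h(\sg{X}{p})$, which yields the surjection in~(\ref{it-thm-contr-h-1}); and if $\phi$ is moreover a deformation retraction then $\psi_0\circ j=\id_Y$ as germs, hence $j^*\circ\psi_0^*=\id$ on $P^i_\h(\sg{Y}{p})$, so $j^*$ and $\psi_0^*$ are mutually inverse, which yields~(\ref{it-thm-contr-h-2}).

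To construct $K$, fix the standard coordinate $t$ on $T\subset\bC$. By Proposition~\ref{prop-h-product}, applied with the one-dimensional manifold factor $T$, every $\h$-form $\omega$ on $U\times T$ is uniquely of the form $\omega=\beta_\omega+\text{d}t\wedge\gamma_\omega$ with $\beta_\omega$ a section of $\pr_U^*(\Omega^i_\h|_U)$ and $\gamma_\omega$ a section of $\pr_U^*(\Omega^{i-1}_\h|_U)$; equivalently $\gamma_\omega=\iota_{\partial_t}\omega$, the contraction along the vector field $\partial_t\in T_{U\times T}(U\times T)$ pulled back from $T$, which is defined by Proposition~\ref{prop-h-properties}(\ref{it-prop-contraction}). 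Restricting to $U\times\{t\}$, the assignment $t\mapsto\gamma^t:=\gamma_{\phi^*\alpha}|_{U\times\{t\}}$ is a holomorphic family of $\h$-forms on $U$ --- here one uses coherence of $\Omega^{i-1}_\h|_U$ (Proposition~\ref{prop-h-properties}(\ref{it-prop-coh})) and a local presentation of that sheaf to make sense both of ``holomorphic family'' and of its path-integral --- and we set
\[K\alpha:=\int_{\sigma}\gamma^t\,\text{d}t\ \in\ \Omega^{i-1}_\h|_{X,p},\]
where $\sigma$ is any path in $T$ from $0$ to $1$; the value is independent of $\sigma$ since $T$ is simply connected and $\gamma^t$ is holomorphic in $t$. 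By functoriality of $\h$-pull-back, $\beta^t:=\beta_{\phi^*\alpha}|_{U\times\{t\}}=\phi_t^*\alpha$; in particular $\beta^1=\alpha$ and $\beta^0=\phi_0^*\alpha$ as germs at $p$.

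Decomposing the exterior derivative on $U\times T$ along the two factors --- and using that $\text{d}$, $\iota_{\partial_t}$, $\LieDer_{\partial_t}$ on $\Omega^\bullet_\h|_{U\times T}$ extend the corresponding Kähler operations and obey Cartan's formula, by Proposition~\ref{prop-h-properties}(\ref{it-prop-contraction}),(\ref{it-prop-lie-der}) --- one obtains the identity $\gamma_{\text{d}\omega}=\partial_t\beta_\omega-\text{d}_U\gamma_\omega$, where $\text{d}_U$ is the exterior derivative of $\h$-forms on $U$ applied fibrewise (it commutes with $\pr_U^*$). Applying this with $\omega=\phi^*\alpha$, integrating along $\sigma$ and differentiating under the integral sign gives
\[\text{d}(K\alpha)+K(\text{d}\alpha)=\int_\sigma\bigl(\text{d}_U\gamma^t+\gamma_{\text{d}\phi^*\alpha}^{\,t}\bigr)\,\text{d}t=\int_\sigma\partial_t\beta^t\,\text{d}t=\beta^1-\beta^0=\alpha-\phi_0^*\alpha\]
as germs at $p$; for closed $\alpha$ the summand $K(\text{d}\alpha)$ is zero. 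This settles the case of $\h$-forms. For Kähler differential forms and Kähler differential forms modulo torsion the argument carries over verbatim: the pull-back $\phi^*$ on these sheaves is supplied by the results recalled in Section~\ref{ssec-pull-back-properties-classical}, the standard product decomposition $\Omega^i_{X\times T}\cong\bigoplus_{l=0}^{i}\pr_X^*\Omega^l_X\otimes\pr_T^*\Omega^{i-l}_T$ (respectively its torsion-free quotient) replaces Proposition~\ref{prop-h-product}, and $\iota_{\partial_t}$, $\LieDer_{\partial_t}$, $\text{d}$ are classical on them.

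The only genuinely non-formal point --- and the step I expect to be the main obstacle --- is the well-definedness of $K$ as an operator valued in $\h$-forms: that $\iota_{\partial_t}\phi^*\alpha$ is really a section of $\pr_U^*(\Omega^{i-1}_\h|_U)$, which is precisely what Proposition~\ref{prop-h-product} delivers, and that path-integration of the resulting holomorphic family of $\h$-forms produces an honest $\h$-form on $U$. Once $K$ is available, the homotopy identity and the deductions of~(\ref{it-thm-contr-h-1}) and~(\ref{it-thm-contr-h-2}) are routine.
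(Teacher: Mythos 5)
Your proposal is correct and follows essentially the same route as the paper: your operator $K\alpha=\int_\sigma\gamma^t\,\text{d}t$ is exactly the form $\beta=\int_\gamma q^{i-1}\iota_E\alpha\,\text{dt}$ of Proposition~\ref{prop-contractions-key}, your identity $\gamma_{\text{d}\omega}=\partial_t\beta_\omega-\text{d}_U\gamma_\omega$ is Cartan's formula combined with Remark~\ref{rem-q-vs-deriv-d}, and the deduction of (\ref{it-thm-contr-h-1}) and (\ref{it-thm-contr-h-2}) via $\psi_0^*\circ j^*=\id$ and $j^*\circ\psi_0^*=\id$ matches Claims~\ref{plainclaim-surjectivity} and~\ref{plainclaim-defo-retract}. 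The technical point you flag (integrating a holomorphic family of sections of a coherent sheaf along a path, via Stein local presentations) is precisely what the paper establishes in Proposition~\ref{fact-int-diff-coh-sheaves} and Remark~\ref{rem-fundamental-2}; note also that simple connectedness of $T$ is not needed, since any fixed path from $0$ to $1$ suffices.
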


\begin{rem}
In Theorem~\ref{thm-h-poincare-contraction}(\ref{it-thm-contr-h-2}) it is not sufficient to require that $X_p$ is holomorphically contractible to $Y_p$. Indeed, together with Example~\ref{ex-contraction-to-subspace} this would imply that the Poincar\'e Lemma holds for $\h$-differential forms on all reduced complex spaces.
\end{rem}

\begin{cor}[Poincar\'e Lemma for reflexive differential forms on contractible spaces I, Section~\ref{ssec-proof-cor-contr-klt-base-space}]\label{cor-contr-klt-base-space}
Let $p\in X$ be a point on a locally algebraic klt base space $X$ or let $p\in X$ be an isolated rational singularity. Let further $Y\subset X$ be a reduced complex subspace containing $p$, $i>0$. Then the following hold:
\begin{enumerate}
 \item\label{it-cor-contr-klt-base-space-1} If $\sg{X}{p}$ is holomorphically contractible to $\sg{Y}{p}$, then there exists a surjective map
 \[P^i_\h(\sg{Y}{p})\twoheadrightarrow P^i_{\text{refl}}(\sg{X}{p}).\]
 \item\label{it-cor-contr-klt-base-space-2} If $\sg{X}{p}$ is holomorphically contractible to $\sg{Y}{p}$ and $\dim_pY\leq 2$, then the sequence
\[0\to\bC\to\sO_{X,p}\xrightarrow{\text{d}}\Omega^{[1]}_{X,p}\xrightarrow{\text{d}}\cdots\xrightarrow{\text{d}}\Omega^{[n]}_{X,p}\to 0 \]
is exact.
 \item\label{it-cor-contr-klt-base-space-3} If $\sg{Y}{p}$ is a holomorphic deformation retract of $\sg{X}{p}$, then there exists a bijective map
 \[{\textstyle P^i_\h(\sg{Y}{p})\cong P^i_{\text{refl}}(\sg{X}{p})}.\]
\end{enumerate}
\end{cor}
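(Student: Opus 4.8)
The plan is to reduce everything to Theorem~\ref{thm-h-poincare-contraction} by trading reflexive forms for $\h$-forms. Since $X$ is a locally algebraic klt base space, resp.\ has an isolated rational singularity at $p$, Proposition~\ref{prop-h-refl-klt}, resp.\ Proposition~\ref{prop-h-refl-isolated-rational}, identifies $\Omega^i_\h|_X$ with $\Omega^{[i]}_X$ near $p$ for every $i\geq0$, compatibly with $\text{d}$. Hence the reflexive de Rham complex of stalks at $p$ agrees with its $\h$-analog; in particular $P^i_\refl(\sg{X}{p})\cong P^i_\h(\sg{X}{p})$ for $i>0$, and $(\star)$ is exact at a given spot if and only if the corresponding $\h$-complex is. Parts~(\ref{it-cor-contr-klt-base-space-1}) and~(\ref{it-cor-contr-klt-base-space-3}) are then immediate: under the respective hypotheses Theorem~\ref{thm-h-poincare-contraction}(\ref{it-thm-contr-h-1}), resp.~(\ref{it-thm-contr-h-2}), gives a surjection $P^i_\h(\sg{Y}{p})\twoheadrightarrow P^i_\h(\sg{X}{p})$, resp.\ an isomorphism $P^i_\h(\sg{X}{p})\cong P^i_\h(\sg{Y}{p})$, and one composes with the identification above.

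For part~(\ref{it-cor-contr-klt-base-space-2}) put $m:=\dim_pY\leq2$. Exactness of $(\star)$ at $\sO_{X,p}$ means $\ker(\text{d}\colon\sO_{X,p}\to\Omega^{[1]}_{X,p})=\bC$; this holds because $X$ is normal, hence locally irreducible at $p$, so a function with vanishing differential on the connected dense set $X_\sm$ is constant. For $i>m$ we have $\Omega^i_\h|_{Y,p}=0$ by Proposition~\ref{prop-h-properties}(\ref{it-prop-high-deg}), so $P^i_\h(\sg{Y}{p})=0$, and part~(\ref{it-cor-contr-klt-base-space-1}) forces $P^i_\refl(\sg{X}{p})=0$; thus $(\star)$ is exact in all degrees $>m$. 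Since $m\leq2$, only degrees $i=1$ and $i=2$ remain.

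Degree one: if $X$ is a locally algebraic klt base space it has $1$-rational singularities, so $\IH^1_{\text{loc}}(p\in X,\bQ)=0$ by Proposition~\ref{prop-van-loc-int-coho}(\ref{it-van-loc-int-coho-1}) and exactness in degree one follows from Theorem~\ref{thm-top-poincare}; if $p\in X$ is an isolated rational singularity, exactness in degrees one and two is~\cite[Prop.~2.5]{CF02}, which finishes this case. It remains to prove exactness of $(\star)$ in degree two when $X$ is a klt base space and $m=2$. By part~(\ref{it-cor-contr-klt-base-space-1}) it suffices to establish $P^2_\h(\sg{Y}{p})=0$, i.e.\ the Poincar\'e Lemma for $\h$-differential $2$-forms on the two-dimensional germ $\sg{Y}{p}$. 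Choosing a resolution $\pi\colon\tilde Y\to Y$ with reduced exceptional set $E=\pi^{-1}(\{p\})_\red$, which is a curve, one uses Lemma~\ref{lem-easy-descr} (together with Proposition~\ref{prop-h-isolated} in the normal isolated case) to identify $\h$-$2$-forms with holomorphic $2$-forms on $\pi^{-1}(U)$ and $\h$-$1$-forms with those holomorphic $1$-forms vanishing along every component of $E$. An $\h$-$2$-form $\omega$ restricts to zero on $E$ for dimension reasons, so by Fact~\ref{fact-top-reso} its de Rham class in $H^2(\pi^{-1}(U),\bC)\cong H^2(E,\bC)$ vanishes and $\pi^*\omega=\text{d}\eta$ for a holomorphic $1$-form $\eta$; one then has to correct $\eta$ by a closed holomorphic $1$-form so that it too vanishes along $E$. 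This correction amounts to the vanishing of an obstruction class in $R^1\pi_*$ of the relevant kernel sheaf, which can be controlled using $R^{\geq2}\pi_*=0$ (the fibres of $\pi$ have dimension at most one) together with the isomorphism $H^2(E,\bC)\xrightarrow{\sim}H^2(\pi^{-1}(U),\bC)$ on top cohomology; this final step is the technical heart of the argument, and I expect it to be the main obstacle.
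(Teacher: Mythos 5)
Your treatment of Items~(\ref{it-cor-contr-klt-base-space-1}) and~(\ref{it-cor-contr-klt-base-space-3}), and of the easy parts of Item~(\ref{it-cor-contr-klt-base-space-2}) (exactness at $\sO_{X,p}$, degrees $>\dim_pY$ via Proposition~\ref{prop-h-properties}(\ref{it-prop-high-deg}), degree one via Theorem~\ref{thm-top-poincare} resp.\ \cite{CF02}, and the whole isolated rational case via \cite[Prop.~2.5]{CF02}), matches the paper's argument, which indeed deduces (\ref{it-cor-contr-klt-base-space-1}) and (\ref{it-cor-contr-klt-base-space-3}) from Theorem~\ref{thm-h-poincare-contraction} together with Propositions~\ref{prop-h-refl-klt} and~\ref{prop-h-refl-isolated-rational}. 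The problem is degree two in the klt case when $\dim_pY=2$. You reduce it to the claim $P^2_\h(\sg{Y}{p})=0$ for an \emph{arbitrary} reduced two-dimensional germ $\sg{Y}{p}$, and this claim is neither proved in the paper nor established by your sketch. Two concrete gaps: first, your identification of $\h$-forms on $Y$ via Proposition~\ref{prop-h-isolated} requires $Y$ normal with isolated singularity, which the contraction target need not be; second, and more seriously, from the vanishing of the de Rham class of $\pi^*\omega$ in $H^2(\pi^{-1}(U),\bC)\cong H^2(E,\bC)$ you conclude $\pi^*\omega=\text{d}\eta$ with $\eta$ \emph{holomorphic}. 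The neighbourhood $\pi^{-1}(U)$ is not Stein, and the edge map from the cohomology of global holomorphic forms to $H^2(\pi^{-1}(U),\bC)$ need not be injective: its failure is governed by terms such as $H^1(\pi^{-1}(U),\sO)=\bigl(R^1\pi_*\sO_{\tilde Y}\bigr)_p$, which do not vanish for irrational surface germs. So both that step and the final ``correction'' step (which you yourself flag as the main obstacle) are genuinely open, and there is no reason to expect the statement to hold for all surface germs without rationality-type hypotheses.

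The paper takes a different, klt-specific route for exactly this step: Lemma~\ref{lem-poincare-low-degrees-first-kind} shows that on \emph{any} locally algebraic klt base space the sequence $(\star)$ is exact in degrees one and two, with no contractibility and no reference to $Y$. Its proof uses the extension theorem $\pi_*\bigl(\sI_F\cdot\Omega^i_{\tilde X}(\log F)\bigr)=\Omega^{[i]}_X$ (via \cite[Thm.~2.12]{GKP12} and Namikawa's vanishing $H^0(F,\Omega^i_F/\tor)=0$), the resolution of $j_!\bC_{\tilde X\setminus F}$ by $\sI_F\cdot\Omega^\bullet_{\tilde X}(\log F)$ with the filtration b\^ete, the vanishing $R^{k}\pi_*j_!\bC_{\tilde X\setminus F}=0$ for $k>0$ from \cite[Lem.~14.4]{GKKP11}, and rationality of klt singularities to kill $E^{0,1}_1=R^1\pi_*\sO_{\tilde X}$. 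These klt inputs are precisely what replaces the missing differential $d_2$-control in your sketch; without them (or \cite{CF02} in the isolated rational case) the degree-two exactness does not follow from Item~(\ref{it-cor-contr-klt-base-space-1}) alone. To repair your proof, either invoke Lemma~\ref{lem-poincare-low-degrees-first-kind} as the paper does, or supply a complete proof of the top-degree Poincar\'e Lemma for $\h$-forms on two-dimensional germs, which is a substantial new statement.
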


\begin{thm}[Poincar\'e Lemma for reflexive differential forms on contractible spaces II, Section~\ref{ssec-proof-poincare-contractions}]\label{thm-refl-poincare-contraction}
Let $X$  be a normal complex space equipped with a local holomorphic $\bC^*$-action at $p\in X$. Suppose that the weights at $p\in X$ are non-negative. Then the following hold:
\begin{enumerate}
 \item\label{it-1-thm-poincare-reflexive} If the germ at $p$ of the fixed locus is not contained in the singular locus of $X$, i.e., if $\sg{X}{p}^{\bC^*}\not\subset X_{sing}$, then
\[{\textstyle P^i_{\refl}(\sg{X}{p})=0\quad\forall i> dim_p(X^{\bC^*}).}\]
 \item\label{it-2-thm-poincare-reflexive} If in addition to~(\ref{it-1-thm-poincare-reflexive}) the fixed locus $X^{\bC^*}$ is of dimension one, then the sequence
\[0\to\bC\to\sO_{X,p}\xrightarrow{\text{d}}\Omega^{[1]}_{X,p}\xrightarrow{\text{d}}\cdots\xrightarrow{\text{d}}\Omega^{[dim_p(X)]}_{X,p}\to 0 \]
is exact.
\end{enumerate}
\end{thm}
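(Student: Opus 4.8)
The plan is to argue directly with reflexive differential forms, using the holomorphic vector field generated by the $\bC^*$-action; passing through $\h$-differential forms does not suffice here, since $X$ is neither assumed to be a klt base space nor to have isolated rational singularities, so Corollary~\ref{cor-contr-klt-base-space} does not apply and $\Omega^i_\h|_X\hookrightarrow\Omega^{[i]}_X$ need not be an isomorphism. Write $Y:=\sg{X}{p}^{\bC^*}$ and $m:=\dim_p Y$. By Example~\ref{ex-action-contraction} the subgerm $Y$ is a holomorphic deformation retract of $\sg{X}{p}$; denote by $r$ the retraction and by $j\colon Y\hookrightarrow X$ the inclusion, so $r\circ j=\id_Y$. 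By Remark~\ref{rem-contr-normal} the germ $\sg{Y}{p}$ is normal, hence irreducible of pure dimension $m$. Choosing a small $\bS^1$-invariant, connected, normal representative $U$ of $\sg{X}{p}$, the open set $U_{sm}:=U\cap X_{sm}$ is connected, and recall that a reflexive form on $U$ is precisely a holomorphic form on $U_{sm}$.

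First I would perform a weight decomposition. The $\bC^*$-action generates a holomorphic vector field $\xi$ on $U_{sm}$ which extends to a section $\xi\in T_X(U)$ because $T_X$ is reflexive on the normal space $X$; its contraction $\iota_\xi$ and Lie derivative $\LieDer_\xi$ then act on reflexive forms, and on a reflexive form of $\bS^1$-weight $w$ one has $\LieDer_\xi=w\cdot\id$. A closed reflexive $i$-form $\alpha$ on $U$ admits a Fourier expansion $\alpha=\sum_{w\in\bZ}\alpha_w$ for the $\bS^1$-action, with each $\alpha_w$ a closed reflexive $i$-form and the series converging uniformly on compacta of $U_{sm}$ with geometrically decaying terms (average $g_t^*\alpha$ over $\bS^1$ and shift the integration contour inside the annulus of $\bC^*$ on which the local action is defined). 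Cartan's formula $\LieDer_\xi=d\iota_\xi+\iota_\xi d$ gives $w\,\alpha_w=d(\iota_\xi\alpha_w)$, and since $\|\iota_\xi\alpha_w\|_K\le C_K\|\alpha_w\|_K$ and $|w|^{-1}\le 1$ the series $\beta:=\sum_{w\ne 0}w^{-1}\iota_\xi\alpha_w$ converges to a reflexive $(i-1)$-form on a slightly smaller neighborhood with $d\beta=\alpha-\alpha_0$. Hence both assertions reduce to the weight-zero part $\alpha_0$.

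The heart of the proof — and the place where $\sg{X}{p}^{\bC^*}\not\subset X_{sing}$ enters — is a local analysis near a smooth fixed point. Since $Y\not\subset X_{sing}$ and $\sg{Y}{p}$ is irreducible, $Y\cap X_{sing}$ is nowhere dense in $Y$, so $U$ contains a point $q\in Y\cap X_{sm}$. Near $q$ the action is linearizable by Bochner's theorem; because the retraction $r$ is defined near $q$, all weights of the linearized action are $\ge 0$, and the weight-zero coordinate subspace, which coincides with $T_qY$, has dimension $m$. In adapted coordinates $(u_1,\dots,u_m,v_1,\dots,v_{n-m})$ a weight-zero holomorphic $i$-form near $q$ is necessarily of the shape $\sum_{|I|=i,\ I\subset\{1,\dots,m\}}f_I(u)\,du_I$; for $i>m$ this sum is empty, so $\alpha_0$ vanishes near $q$, hence on the connected set $U_{sm}$, hence $\alpha_0=0$. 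Thus $\alpha=d\beta$, which proves $P^i_\refl(\sg{X}{p})=0$ for $i>m$, i.e.\ part~(\ref{it-1-thm-poincare-reflexive}).

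For part~(\ref{it-2-thm-poincare-reflexive}) one has $m=1$, so the complex is exact in degrees $\ge 2$ by part~(\ref{it-1-thm-poincare-reflexive}), and exactness at $\sO_{X,p}$ with kernel $\bC$ is immediate from connectedness. In degree one, write a closed reflexive $1$-form as $\alpha=\alpha_0+d\beta$ with $\beta\in\sO_{X,p}$ as above; it remains to make $\alpha_0$ exact. Near the smooth fixed point $q$ one gets $\alpha_0=f(u,v)\,du$, and closedness forces $\partial_{v_l}f\equiv 0$, so $f=f(u)$ and $\alpha_0=r^*(j^*\alpha_0)$ there. The reflexive $1$-form $\alpha_0-r^*(j^*\alpha_0)$ — note that $r^*(j^*\alpha_0)$ is the pull-back of a form on the smooth curve germ $\sg{Y}{p}$, hence a reflexive form — therefore vanishes on a nonempty open subset of the connected manifold $U_{sm}$, hence identically, so $\alpha_0=r^*(j^*\alpha_0)$. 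Finally $\sg{Y}{p}\cong(\bC,0)$, so $j^*\alpha_0=dh$ for some $h\in\sO_{Y,p}$, whence $\alpha_0=d(h\circ r)$ and $\alpha=d(h\circ r+\beta)$ is exact. The principal obstacle is exactly the weight-zero part $\alpha_0$: the homotopy $w^{-1}\iota_\xi$ disposes of all other weights mechanically, but $\alpha_0$ requires locating a smooth fixed point and reading off the normal form of the linearized action there; a secondary technical point is making the Fourier decomposition rigorous for forms defined only on $U_{sm}$ and verifying that the resulting primitive is again reflexive.
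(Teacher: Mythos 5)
Your treatment of Item~(\ref{it-1-thm-poincare-reflexive}) is correct but takes a genuinely different route from the paper. Where the paper pulls $\alpha$ back along the contraction $\phi:U\times B\to X$, extends the pulled-back form across the codimension-two bad set of Claim~\ref{thm-refl-poincare-contraction-claim-1} and then applies the coherent-sheaf line-integral homotopy of Propositions~\ref{fact-int-diff-coh-sheaves} and~\ref{prop-contractions-key}, you Fourier-decompose under the $\bS^1$-action and dispose of every weight $w\neq 0$ by the purely algebraic homotopy $w^{-1}\iota_\xi$, reserving the hypothesis $\sg{X}{p}^{\bC^*}\not\subset X_\sing$ for the weight-zero part, which you kill by Bochner linearization at a smooth fixed point $q$ and the identity theorem on the connected manifold $U_\sm$. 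This never leaves the reflexive category and avoids both the integration machinery and the extension step; the price is the convergence of the weight series, where your justification should be stated more carefully: the point is not only that the action is defined on an annulus, but that for $x$ in a compact subset of $U_\sm$ and $|t|$ close to $1$ the point $t\cdot x$ stays in $X_\sm$ (by compactness, since this holds on $|t|=1$), so that $t\mapsto \Phi_t^*\alpha$ is holomorphic on a uniform annulus around the unit circle and Cauchy estimates give the geometric decay. With that repaired, your proof of Item~(\ref{it-1-thm-poincare-reflexive}) stands.

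In Item~(\ref{it-2-thm-poincare-reflexive}) there is a genuine gap at the step where you declare that ``$r^*(j^*\alpha_0)$ is the pull-back of a form on the smooth curve germ $\sg{Y}{p}$'' and then write $j^*\alpha_0=dh$ with $h\in\sO_{Y,p}$. The reflexive form $\alpha_0$ is a priori only a holomorphic form on $U_\sm$, while the fixed curve $Y$ passes through the singular point $p$; hence $j^*\alpha_0$ is defined only on $Y\cap X_\sm$, i.e.\ away from $p$, and could a priori have a pole there. Your identity-theorem argument yields $\alpha_0=r^*(j^*\alpha_0)$ only on $r^{-1}(Y\cap X_\sm)$, which is not a neighbourhood of $p$, so the primitive $h\circ r$ is not yet defined near $p$. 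Proving that $j^*\alpha_0$ extends across $p$ without pole is exactly the content of Claim~\ref{thm-refl-poincare-contraction-claim-2} in the paper: one uses Krull's principal ideal theorem to find a smooth point $v$ of the pure codimension-one set $\phi_0^{-1}(\{p\})$, chooses a curve $D$ through $v$ on which $\phi_0$ restricts to a finite nonconstant map onto the fixed curve, and checks in local coordinates that holomorphy of the form along $D$ forces its restriction to the fixed curve to be pole-free at $p$. Some argument of this kind must be supplied (your weight-zero normal form at $q$ gives no control at $p$ itself); once it is, the rest of your degree-one step, and hence Item~(\ref{it-2-thm-poincare-reflexive}), goes through.
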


\subsubsection*{Strategy of proof of these results} We will give a unified proof of Theorem~\ref{thm-h-poincare-contraction} which works for all classes of differential forms specified there. This increases slightly the necessary technical preparations and the reader will observe that if one fixes a class of differential forms the line of arguments admits shortcuts.

After some preparations the proofs of the above results will be given in Subsections~\ref{ssec-proof-poincare-contractions} and~\ref{ssec-proof-cor-contr-klt-base-space}.

\subsection{Preparation: Differentiation and line integrals of functions with values in coherent sheaves}
For lack of an adequate reference we give a formulation of the following fact. We denote the standard coordinate on $\bC$ by $t$.

\begin{prop}[Differentiation and line integrals of functions with values in coherent sheaves]\label{fact-int-diff-coh-sheaves}
There exist unique $\sO_X(X)$-linear maps
\[
\begin{array}{lllrcl}
 \Gamma(X\times T,\pr_X^*(\sF))&\to&\Gamma(X\times T,\pr_X^*(\sF)), & s & \mapsto & \frac{d}{dt}s, \vspace{5pt}\\
 \Gamma(X\times T,\pr_X^*(\sF))&\to&\Gamma(X,\sF), & s&\mapsto&\int_\gamma s\text{dt},
\end{array}
\]
where $T\subset\bC$ is any open subset, $X$ is any complex space, $\sF$ is any coherent sheaf on $X$ and $\gamma:[a,b]\to T$ is any continuous path such that the following properties are satisfied:
\begin{enumerate}
\item\label{it-int-coh-sheaves-T} (Functoriality I). Let $T'\subset T$ be an open subset such that $\text{im}(\gamma)\subset T'$. Then 
\[\begin{array}{rcl}\bigl(\frac{d}{dt}s\bigr)|_{X\times T'}  & = & \frac{d}{dt}(s|_{X\times T'}),  \vspace{5pt} \\ \int_\gamma s\text{dt} & = & \int_\gamma s|_{X\times T'}\text{dt}. \end{array}\]
\item\label{it-int-coh-sheaves-func} (Functoriality II). If $f:Y\to X$ is a holomorphic map between complex spaces equipped with coherent sheaves $\sG$ and $\sF$, respectively, and if $\phi:f^*\sF\to\sG$ is a morphism of coherent sheaves, then
\[\begin{array}{rcll}\phi\bigl(\int_\gamma s\text{dt}\bigr) & = & \int_\gamma(\phi(s))\text{dt} & \in\Gamma(Y,\sG), \vspace{5pt} \\ \frac{d}{dt}\phi_T(s) & = & \phi_T(\frac{d}{dt}s) & \in \Gamma(Y\times T,\pr_{Y}^*(\sG)) \end{array}\]
for any section $s\in\Gamma(X\times T,\pr_X^*(\sF))$. Here, by $\phi_T$ we denote the induced map $\phi_T:(f\times\id_T)^*\pr_{X}^*(\sF)\to\pr_{Y}^*(\sG)$.
\item\label{it-int-coh-sheaves-add} (Additivity). If $a\leq c\leq b$, then $\int_\gamma s\text{dt}=\int_{\gamma|_{[a,c]}}s\text{dt} + \int_{\gamma|_{[c,b]}}s\text{dt}.$
\item\label{it-int-coh-sheaves-norm} (Normalization). If $\sF=\sO_X$ and $s\in\Gamma(X\times T,\pr_X^*\sO_X)=\Gamma(X\times T,\sO_{X\times T})$, then the following hold.
 \begin{itemize}
\item  The holomorphic function $\frac{d}{dt}s:X\times T\to\bC$ is the partial derivative of  $s:X\times T\to\bC$ w.r.t. the standard coordinate $t$ on $T\subset\bC$. In other words,
\[{\textstyle (\frac{d}{dt}s)(p,t)=\frac{d}{dt'}|_{t'=t}s(p,t').}\]
\item  The value of the holomorphic function $\int_\gamma s\text{dt}:X\to\bC$ at some point $p\in X$ is the line integral of the holomorphic function $s(p,-):T\to \bC$ along the path $\gamma$. In other words,
\[{\textstyle \bigl(\int_\gamma s\text{dt}\bigr)(p)=\int_\gamma s(p,t)\text{dt}\quad\forall p\in X.} \] 
\end{itemize}
\end{enumerate}
\end{prop}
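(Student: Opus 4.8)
The plan is to build both operations locally from a coherent presentation of $\sF$, thereby reducing everything to the classical partial derivative in $t$ and the classical line integral that the normalization property~(\ref{it-int-coh-sheaves-norm}) pins down on the structure sheaf, and then to glue the local pieces using the uniqueness that any admissible operation is forced to satisfy.

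\textbf{Local construction.} I would cover $X$ by Stein open sets $W$ small enough to carry a presentation $\sO_W^{\oplus a}\xrightarrow{A}\sO_W^{\oplus b}\to\sF|_W\to 0$. Since $\pr_X^*(-)$ is right exact and $\pr_X^*\sO_W=\sO_{W\times T}$, pulling back gives $\sO_{W\times T}^{\oplus a}\xrightarrow{\pr_X^*A}\sO_{W\times T}^{\oplus b}\to\pr_X^*(\sF)|_{W\times T}\to 0$, in which the matrix $\pr_X^*A$ has entries that are holomorphic functions on $W$, hence constant along the $t$-direction. As $T\subset\bC$ is an open subset of $\bC$ it is Stein, so $W\times T$ is Stein; by Cartan's Theorem~B every section of $\pr_X^*(\sF)$ over $W\times T$ then lifts to a section of $\sO_{W\times T}^{\oplus b}$, and any two lifts differ by an element of $\pr_X^*A\bigl(\Gamma(W\times T,\sO_{W\times T}^{\oplus a})\bigr)$. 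On the free sheaf I would define $\tfrac{d}{dt}$ and $\int_\gamma(-)\,\text{dt}$ componentwise by the formulas of~(\ref{it-int-coh-sheaves-norm}): the componentwise partial derivative in $t$, and the componentwise line integral $p\mapsto\int_\gamma s(p,t)\,\text{dt}$, which is well defined for a continuous path by patching local $t$-antiderivatives of the (jointly holomorphic) components along $\gamma$ and depends holomorphically on $p$ because those antiderivatives may be chosen jointly holomorphic. Both maps are $\sO_W(W)$-linear, hence commute with the $t$-independent matrix $\pr_X^*A$; therefore they carry $\operatorname{im}(\pr_X^*A)$ into $\operatorname{im}(\pr_X^*A)$, respectively into $\operatorname{im}(A)$, and so descend to well-defined $\sO_W(W)$-linear maps on $\pr_X^*(\sF)|_{W\times T}$, independent of the chosen lift and of the presentation.

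\textbf{Gluing, uniqueness, and the remaining axioms.} These local maps are compatible with restriction to smaller Stein opens (lift over the larger set, then restrict), so they assemble into $\sO_X(X)$-linear maps $\Gamma(X\times T,\pr_X^*\sF)\to\Gamma(X\times T,\pr_X^*\sF)$ and $\Gamma(X\times T,\pr_X^*\sF)\to\Gamma(X,\sF)$, in fact into morphisms of sheaves on $X$. Uniqueness is then automatic: on a Stein $W$, lifting a section to $\sO_{W\times T}^{\oplus b}$ and applying~(\ref{it-int-coh-sheaves-func}) to the surjection together with~(\ref{it-int-coh-sheaves-norm}) on the free sheaf forces any admissible pair of operations to agree with the ones just constructed. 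Properties~(\ref{it-int-coh-sheaves-T}) and~(\ref{it-int-coh-sheaves-add}) hold componentwise on free sheaves — differentiation is local in $t$, line integrals concatenate additively — and pass to the quotient; the free-sheaf case of~(\ref{it-int-coh-sheaves-func}) is a direct check, and the general case follows by factoring $\phi\colon f^*\sF\to\sG$ as the pullback along $f\times\id_T$ followed by a morphism of coherent sheaves on $Y$, reducing each step to free presentations.

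\textbf{Expected main obstacle.} The delicate point is dealing with sections that do not globally lift to a free module; this is exactly why I would work over the Stein opens $W\times T$, exploiting that $T\subset\bC$ is Stein and invoking Theorem~B both to lift sections and to identify $\Gamma\bigl(W\times T,\operatorname{im}(\pr_X^*A)\bigr)$ with $\pr_X^*A\bigl(\Gamma(W\times T,\sO_{W\times T}^{\oplus a})\bigr)$. The other mildly technical ingredient, underlying the free-sheaf case, is the classical assertion that the line integral of a jointly holomorphic function along a merely continuous path is well defined, holomorphic in the parameters, and strictly $\sO_X(X)$-linear — that is, commutes with multiplication by $t$-independent functions — which is precisely what makes the operations descend through an arbitrary coherent presentation. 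Everything else is bookkeeping dictated by uniqueness.
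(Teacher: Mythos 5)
Your proposal is correct and follows essentially the same route as the paper: reduce to free sheaves over Stein open sets via Cartan's theorems, define both operations componentwise by the classical formulas, use $\sO(W)$-linearity together with the $t$-independence of the relations to descend to $\sF$, glue over a Stein cover, and obtain uniqueness by lifting and invoking Functoriality~II plus Normalization. The only cosmetic differences are that you work with a two-term presentation (the paper uses a single surjection and finitely many generators of its kernel sheaf) and that you integrate over all of $T$ at once instead of subdividing $\gamma$ into pieces lying in Stein subsets $T_u\subset T$, which is harmless since every open subset of $\bC$ is Stein.
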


\subsubsection{Strategy of the proof of Proposition~\ref{fact-int-diff-coh-sheaves}} The proof of Proposition~\ref{fact-int-diff-coh-sheaves} consists of four parts: For both collections of maps $\int\,\text{dt}$ and $\frac{d}{dt}$ we need to show existence and uniqueness. Since the proofs for $\int\,\text{dt}$ and $\frac{d}{dt}$ proceed along the same lines, we will only outline the arguments in the more involved case of the line integral $\int\,\text{dt}$, leaving the analogous proof for $\frac{d}{dt}$ to the reader.

During the proof we will denote the usual line integral of a holomorphic function $X\times T\to\bC^s$ along a continuous path $\gamma:[a,b]\to T$ by
\[{\textstyle \int^{\text{usual}}_\gamma f\text{dt}:X\to\bC^s.}\]
In other words, $\bigl(\int_\gamma^\text{usual}f\text{dt}\bigr)(p)=\int_\gamma f(p,t)\text{dt}$ for any $p\in X$.

\subsubsection{Uniqueness of the line integral}
Let $T$, $X$, $\sF$, $s\in\Gamma(X\times T,\pr_X^*(\sF))$ and $\gamma:[a,b]\to T$ be as in Proposition~\ref{fact-int-diff-coh-sheaves}. We need to show that $\int_\gamma s\text{dt}\in\Gamma(X,\sF)$ is already determined by the properties listed in Proposition~\ref{fact-int-diff-coh-sheaves}.

To this end let us choose an integer $m\geq 1$, real numbers $a=a_0\leq a_1\leq \cdots\leq a_m=b$, Stein open subsets $T_1,\cdots,T_m\subset T$ such that $\gamma_u:=\gamma|_{[a_{u-1},a_u]}:[a_{u-1},a_u]\to T_u$ and a covering $X=\bigcup_{i\in I}X_i$ by Stein open subsets $X_i\subset X$ together with non-negative integers $r_i$ and $\sO_{X_i}$-linear surjections $\alpha_i:\sO^{\oplus r_i}_{X_i}\to \sF|_{X_i}$.

The Stein property implies that the induced maps
\[{\textstyle \alpha_{i,u}:\Gamma(X_i\times T_u,\pr_X^*(\sO^{\oplus r_i}_{X_i}))\to\Gamma(X_i\times T_u,\pr_X^*(\sF)) }\]
are surjective for $i\in I$ and $1\leq u\leq m$. In particular, we may choose in addition preimages $s_{i,u}\in\Gamma(X_i\times T_u,\pr_X^*(\sO^{\oplus r_i}_{X_i}))$ of $s|_{X_i\times T_u}$, i.e., $\alpha_{i,u}(s_{i,u})=s|_{X_i\times T_u}$.

Using the properties required in the proposition we can now calculate
\begin{align*}
{\textstyle \int_\gamma s\text{dt}|_{X_i}}&\,= \,{\textstyle \int_\gamma s|_{X_i\times T}\text{dt}}   &&  \text{by Property } (\ref{it-int-coh-sheaves-func})\\
& \,= \,{\textstyle \sum_u\int_{\gamma_u}s|_{X_i\times T}\text{dt} } &&\text{by Property } (\ref{it-int-coh-sheaves-add})\\
& \,= \, {\textstyle \sum_u\int_{\gamma_u}s|_{X_i\times T_u}\text{dt}} &&\text{by Property } (\ref{it-int-coh-sheaves-T})\\
& \,= \,{\textstyle  \sum_u\alpha_{i}\bigl(\int_{\gamma_u}s_{i,u}\text{dt}\bigr)} &&\text{by Property } (\ref{it-int-coh-sheaves-func})\\
& \,= \,{\textstyle  \sum_u\alpha_{i}\bigl(\int_{\gamma_u}^\text{usual}s_{i,u}\text{dt}\bigr)} && \text{by Properties } (\ref{it-int-coh-sheaves-func}),(\ref{it-int-coh-sheaves-norm}).
\end{align*}
This finishes the proof of the uniqueness part.\qed

\subsubsection{Existence of the line integral} The basic idea of the proof of the existence part is to use the formula in the proof of the uniqueness part as a definition. Let $T$, $X$, $\sF$, $s\in\Gamma(X,\pr_X^*(\sF))$ and $\gamma:[a,b]\to T$ be as in Proposition~\ref{fact-int-diff-coh-sheaves}.

Fix some Stein open subset $X'\subset X$ together with an $\sO_{X'}$-linear surjection $\alpha:\sO_{X'}^{\oplus r}\to\sF|_{X'}$.

\begin{notation}\label{not-integration-defn}
Let $\xi=((a_u)_{0\leq u\leq m},(T_u)_{1\leq u\leq m},(s_u)_{1\leq u\leq m})$ be a triple consisting of a sequence $a=a_0\leq\cdots\leq a_m=b$ of real numbers, Stein open subset $T_u\subset T$ such that $\gamma([a_{u-1},a_u])\subset T_u$ and preimages $s_u\in\Gamma(X'\times T_u,\pr_X^*(\sO_{X'}^{\oplus r}))$ of $s|_{X'\times T_u}$ under the map induced by $\alpha$. Then we define
\begin{equation}\label{eqn-integration-defn}
{\textstyle \int_{\gamma,\xi} s\text{dt}_{X',\,\alpha}:=\alpha\bigl(\sum_{u=1}^m\int^\text{usual}_{\gamma|_{[a_{u-1},a_u]}}s_u\text{dt}\bigr)\,\,\,\in\,\,\,\Gamma(X',\sF). }
\end{equation}
\end{notation}

Observe that such choices of $s_u$ as in Notation~\ref{not-integration-defn} always exist by the Stein property.

\begin{plainclaim}\label{claim-integration-existence-1}
The value of $\int_{\gamma,\xi}s\text{dt}_{X',\,\alpha}$ does not depend on the choice of $\xi$.
\end{plainclaim}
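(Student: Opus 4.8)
The plan is to reduce the independence of $\int_{\gamma,\xi}s\,\text{dt}_{X',\alpha}$ on $\xi$ to two elementary reductions: refinement of partitions, and change of Stein neighbourhoods $T_u$ together with the choice of preimages $s_u$. First I would observe that any two triples $\xi=((a_u),(T_u),(s_u))$ and $\xi'=((a'_u),(T'_u),(s'_u))$ admit a common refinement of their underlying partitions: let $(b_w)_{0\le w\le N}$ be a partition of $[a,b]$ refining both $(a_u)$ and $(a'_u)$, and for each subinterval $[b_{w-1},b_w]$ pick a Stein open set $T''_w\subset T_u\cap T'_{u'}$ (where $u,u'$ are the indices with $[b_{w-1},b_w]\subset[a_{u-1},a_u]\cap[a'_{u'-1},a'_{u'}]$) containing $\gamma([b_{w-1},b_w])$. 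So it suffices to compare each $\xi$ with a refinement whose intervals sit inside the original ones and whose Stein sets are nested in the original ones, and then with the particular preimages $s_u$ restricted to the smaller data.

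The two comparisons are then: (i) \emph{Refinement of the partition with fixed preimages}: if $[a_{u-1},a_u]$ is split as $a_{u-1}=c_0\le\cdots\le c_l=a_u$ and we keep $T_u$ and $s_u|_{X'\times T_u}$, then additivity of the \emph{usual} line integral gives $\int^\text{usual}_{\gamma|_{[a_{u-1},a_u]}}s_u\,\text{dt}=\sum_{j}\int^\text{usual}_{\gamma|_{[c_{j-1},c_j]}}s_u\,\text{dt}$, hence the value of $\int_{\gamma,\xi}s\,\text{dt}_{X',\alpha}$ is unchanged after applying $\alpha$. (ii) \emph{Shrinking $T_u$ and changing the preimage}: suppose on a single interval $[a_{u-1},a_u]$ we replace $T_u$ by a smaller Stein $\tilde T_u\subset T_u$ still containing $\gamma([a_{u-1},a_u])$, and replace $s_u|_{X'\times\tilde T_u}$ by another preimage $\tilde s_u\in\Gamma(X'\times\tilde T_u,\pr_X^*\sO_{X'}^{\oplus r})$ of $s|_{X'\times\tilde T_u}$. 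Then $d:=\tilde s_u-s_u|_{X'\times\tilde T_u}$ is a section of $\pr_X^*(\ker\alpha|_{X'})$ over $X'\times\tilde T_u$; since $\ker\alpha$ is coherent, $\int^\text{usual}_{\gamma|_{[a_{u-1},a_u]}}d\,\text{dt}$ — which is a holomorphic map $X'\to\bC^r$ computed entrywise by ordinary line integration, and which commutes with restriction of Stein covers — is again a section of $\ker\alpha|_{X'}$, because $\ker\alpha$ is locally the common zero set of finitely many coordinate functionals and the line integral is $\bC$-linear in the integrand and respects these linear relations. Therefore $\alpha$ kills the difference, and $\int_{\gamma,\xi}s\,\text{dt}_{X',\alpha}$ is unchanged.

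Combining (i) and (ii): pass from $\xi$ to the common refinement data $(b_w,T''_w)$, choosing for $s''_w$ any preimage of $s|_{X'\times T''_w}$; step (i) shows this agrees with $\xi$ after possibly first refining the partition with the old $s_u$'s restricted, and step (ii) absorbs the change of $T_u\rightsquigarrow T''_w$ and of preimage. Doing the symmetric thing for $\xi'$ shows $\int_{\gamma,\xi}s\,\text{dt}_{X',\alpha}=\int_{\gamma,\xi'}s\,\text{dt}_{X',\alpha}$. The main technical point — and the step I expect to need the most care — is (ii): the assertion that the ordinary $\bC^r$-valued line integral of a section of $\pr_X^*(\ker\alpha)$ is again a section of $\ker\alpha$. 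This is where one uses that $\ker\alpha\subset\sO_{X'}^{\oplus r}$ is a coherent subsheaf, so locally cut out by finitely many $\sO$-linear equations $\sum_k f_{jk}y_k=0$ with $f_{jk}\in\sO_{X'}(X')$ after shrinking; since these equations do not involve $t$ and line integration is $\bC$-linear and commutes with multiplication by $t$-independent holomorphic functions, the integrated section still satisfies them. (One should also note $\sO$-linearity of the resulting map and its independence of the auxiliary data is exactly what Claim~\ref{claim-integration-existence-1} and the following gluing step will promote to the global statement of Proposition~\ref{fact-int-diff-coh-sheaves}.)
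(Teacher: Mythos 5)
Your overall strategy is the same as the paper's (reduce to refinements of the partition, shrinking the $T_u$, and comparing two preimages $s_u$, $s_u'$ of $s|_{X'\times T_u}$), and your step (i) is fine. But the justification of the key step (ii) contains a genuine error: you claim that $\sI:=\ker\bigl(\alpha:\sO_{X'}^{\oplus r}\to\sF|_{X'}\bigr)$ is locally cut out by finitely many $\sO$-linear equations $\sum_k f_{jk}y_k=0$, and then argue that entrywise line integration preserves these equations. Coherence gives you finitely many local \emph{generators} of $\sI$, not finitely many \emph{equations}; the latter would mean that $\sF|_{X'}\cong\sO_{X'}^{\oplus r}/\sI$ embeds locally into a free sheaf, which fails for general coherent $\sF$. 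Concretely, take $X'\subset\bC$ a disc, $r=1$, $\sF=\sO_{X'}/m_p$ and $\alpha$ the quotient map, so $\sI=m_p\cdot\sO_{X'}$: the kernel of any $\sO$-linear functional $g\mapsto fg$ on $\sO_{X'}$ is either $0$ or all of $\sO_{X'}$, so $m_p\cdot\sO_{X'}$ is not a common zero set of such functionals. Since Proposition~\ref{fact-int-diff-coh-sheaves} is stated for arbitrary coherent $\sF$ (and is later applied to sheaves like $m_p^r\cdot\Omega^{i-1}_\h|_X\subset\Omega^{i-1}_\h|_X$, whose quotients are far from locally free), your argument as written does not go through.

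The statement you want in (ii) — that $\int^{\text{usual}}_{\gamma_u}d\,\text{dt}\in\Gamma(X',\sI)$ for $d\in\Gamma(X'\times T_u,\pr_{X'}^*\sI)$ — is true, but the correct argument runs through generators, which is exactly what the paper does: since $X'$ is Stein, $\sI$ is generated by finitely many global sections $\eta_1,\dots,\eta_w\in\Gamma(X',\sI)$, and since $X'\times T_u$ is again Stein, the section $d$ can be written as $d=\sum_j f_j\cdot\pr_{X'}^*(\eta_j)$ with $f_j\in\sO(X'\times T_u)$ (surjectivity on global sections uses Theorem B on $X'\times T_u$). Then
\[
\int^{\text{usual}}_{\gamma_u}d\,\text{dt}=\sum_j\Bigl(\int^{\text{usual}}_{\gamma_u}f_j\,\text{dt}\Bigr)\cdot\eta_j\in\Gamma(X',\sI),
\]
so $\alpha$ kills the difference. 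With this replacement your reduction scheme (common refinement, then steps (i) and (ii) and their inverses) coincides with the paper's proof.
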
 

\begin{proof}[Proof of Claim~\ref{claim-integration-existence-1}]
Let us first prove that it does not depend on the choice of $s_u$. Fix some $u$ and let $s_u'\in\Gamma(X'\times T_u,\pr_{X'}^*(\sO_{X'}^{\oplus r}))$ be another preimage of $s|_{X'\times T_u}$. By the Stein property of $X'$ the kernel $\sI=\ker(\sO_{X'}^{\oplus r}\xrightarrow{\alpha}\sF)$ is generated by finitely many sections $\eta_1,\cdots,\eta_w\in\Gamma(X',\sI)$. Then any section of $\ker(\pr_{X'}^*(\sO_{X'}^{\oplus r})\to\pr_{X'}^*(\sF))=\pr_X^*(\sI)$ lies in the span of the sections $\pr_{X'}^*(\eta_1),\cdots,\pr_{X'}^*(\eta_w)\in\Gamma(X'\times T_U,\pr_{X'}^*(\sI))$, since $X'\times T_u$ is likewise Stein. In particular, there exist holomorphic functions $f_1,\cdots,f_w:X'\times T_u\to\bC$ such that 
\[{\textstyle s_u-s_u'=f_1\cdot\pr_{X'}^*(\eta_1)+\cdots+f_w\cdot\pr_{X'}^*(\eta_w).}\]
Writing $\gamma_u:=\gamma|_{[a_{u-1},a_u]}$ we can now calculate
\[{\textstyle \int^\text{usual}_{\gamma_u}(s_u-s_u')\text{dt}=\sum_j\int^\text{usual}_{\gamma_u}f_j\cdot\pr_{X'}^*(\eta_j)\text{dt}=\sum_j\Bigl(\int^\text{usual}_{\gamma_u}f_j\text{dt}\Bigr)\cdot\pr_{X'}^*(\eta_j)\in\Gamma(X',\sI)}\]
which in fact shows independence from the choice of $s_u$.

It is obvious that the value of~(\ref{eqn-integration-defn}) does not change if one replaces the sequence $(a_u)_u$ by a finer subdivision of $[a,b]$ or if one shrinks $T_u$. This proves the independence from the choice of $\xi$ since one can pass from one $\xi$ to another by a finite sequence of such steps and their inverses.
\end{proof}

Thus we can simply write $\int_\gamma s\text{dt}_{X',\,\alpha}\in\Gamma(X',\sF)$. This section obviously does not depend on the open set $T\supset\text{im}(\gamma)$, but it may still depend on the choice of $\alpha$.

\begin{plainclaim}\label{claim-integration-existence-2}
The value of $\int_\gamma s\text{dt}_{X',\,\alpha}$ does not depend on the choice of $\alpha$.
\end{plainclaim}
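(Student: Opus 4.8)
The plan is to reduce to a comparison between a surjection and one through which it factors. Given two $\sO_{X'}$-linear surjections $\alpha:\sO_{X'}^{\oplus r}\to\sF|_{X'}$ and $\alpha':\sO_{X'}^{\oplus r'}\to\sF|_{X'}$, I would form the surjection $\beta:\sO_{X'}^{\oplus r}\oplus\sO_{X'}^{\oplus r'}\to\sF|_{X'}$, $(v,v')\mapsto\alpha(v)+\alpha'(v')$, and observe that the two coordinate inclusions $\iota,\iota'$ into $\sO_{X'}^{\oplus(r+r')}$ satisfy $\beta\circ\iota=\alpha$ and $\beta\circ\iota'=\alpha'$. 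Thus it is enough to treat the following special case: given $\sO_{X'}$-linear surjections $\mu:\sO_{X'}^{\oplus a}\to\sF|_{X'}$ and $\nu:\sO_{X'}^{\oplus b}\to\sF|_{X'}$ together with an $\sO_{X'}$-linear map $\psi:\sO_{X'}^{\oplus a}\to\sO_{X'}^{\oplus b}$ with $\nu\circ\psi=\mu$, one has $\int_\gamma s\text{dt}_{X',\,\mu}=\int_\gamma s\text{dt}_{X',\,\nu}$. Applying this twice (with $(\mu,\nu,\psi)=(\alpha,\beta,\iota)$ and then $(\alpha',\beta,\iota')$) gives the claim.

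For the special case I would start from an arbitrary admissible datum $\xi=((a_u)_{0\le u\le m},(T_u)_{1\le u\le m},(s_u)_{1\le u\le m})$ for $\mu$ as in Notation~\ref{not-integration-defn}, so that each $s_u\in\Gamma(X'\times T_u,\pr_X^*(\sO_{X'}^{\oplus a}))$ maps to $s|_{X'\times T_u}$ under the map induced by $\mu$. Letting $s_u'$ denote the image of $s_u$ under the map induced by $\psi$, the relation $\nu\circ\psi=\mu$ shows that $s_u'$ maps to $s|_{X'\times T_u}$ under the map induced by $\nu$; hence $\xi':=((a_u),(T_u),(s_u'))$ is an admissible datum for $\nu$. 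By Claim~\ref{claim-integration-existence-1} I may use $\xi$ and $\xi'$ to compute the two integrals:
\[\int_\gamma s\text{dt}_{X',\,\mu}=\mu\Bigl(\sum_{u=1}^m\int^{\text{usual}}_{\gamma|_{[a_{u-1},a_u]}}s_u\text{dt}\Bigr),\qquad\int_\gamma s\text{dt}_{X',\,\nu}=\nu\Bigl(\sum_{u=1}^m\int^{\text{usual}}_{\gamma|_{[a_{u-1},a_u]}}s_u'\text{dt}\Bigr).\]

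It then remains to compare the two sums of usual line integrals. Since $\psi$ is $\sO_{X'}$-linear it is given by a $b\times a$ matrix of holomorphic functions on $X'$, and the map it induces on $X'\times T_u$ is given by the same matrix, with entries regarded as functions constant in the $T_u$-direction. The honest $\bC^a$-valued line integral $\int^{\text{usual}}_{\gamma|_{[a_{u-1},a_u]}}(-)\text{dt}$ is $\bC$-linear and commutes with multiplication by holomorphic functions on $X'$ that do not depend on $t$, so $\int^{\text{usual}}_{\gamma|_{[a_{u-1},a_u]}}s_u'\text{dt}=\psi\bigl(\int^{\text{usual}}_{\gamma|_{[a_{u-1},a_u]}}s_u\text{dt}\bigr)$. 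Summing over $u$, applying $\nu$, and using $\nu\circ\psi=\mu$ then yields $\int_\gamma s\text{dt}_{X',\,\nu}=\mu\bigl(\sum_u\int^{\text{usual}}_{\gamma|_{[a_{u-1},a_u]}}s_u\text{dt}\bigr)=\int_\gamma s\text{dt}_{X',\,\mu}$. The argument is essentially formal; the only point requiring a little care is this last commutation of the matrix $\psi$ with the honest line integral, which is exactly where the hypothesis that the entries of $\psi$ carry no $t$-dependence is used, together with Claim~\ref{claim-integration-existence-1} to legitimize working with the single datum $\xi$ and its transport $\xi'$ on both sides.
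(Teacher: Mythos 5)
Your proof is correct, and its computational core is exactly the paper's: reduce to the situation where one presentation factors through another, transport the lifting data of Notation~\ref{not-integration-defn} along the factorization, and use that an $\sO_{X'}$-linear map between free sheaves (a matrix of $t$-independent holomorphic functions) commutes with the usual line integral, together with Claim~\ref{claim-integration-existence-1} to identify both sides with the canonical values. The only genuine difference is the bridging step between two arbitrary surjections $\alpha$ and $\alpha'$: the paper treats the special case $\alpha'=\alpha\circ\delta$ and then passes between arbitrary presentations ``by such a step and its inverse,'' which, made explicit, requires lifting one surjection through the other (available because $X'$ is Stein and $\sF$ coherent); you instead introduce the direct-sum surjection $\beta:\sO_{X'}^{\oplus r}\oplus\sO_{X'}^{\oplus r'}\to\sF|_{X'}$ and use the explicit coordinate inclusions $\iota,\iota'$, so both $\alpha=\beta\circ\iota$ and $\alpha'=\beta\circ\iota'$ factor through $\beta$ with no lifting needed, and accordingly you state the special case with the factorization running the other way ($\mu=\nu\circ\psi$ rather than $\alpha'=\alpha\circ\delta$). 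Your variant buys a fully explicit and slightly more self-contained comparison; the paper's buys brevity by leaning once more on the Stein property already used to produce the lifts $s_u$. Either way the argument is complete.
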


\begin{proof}[Proof of Claim~\ref{claim-integration-existence-2}]
Suppose first that $\alpha':\sO_{X'}^{\oplus r'}\to\sF$ factors as $\alpha'=\alpha\circ\delta$ for some $\delta:\sO_{X'}^{\oplus r'}\to\sO_{X'}^{\oplus r}$. Then we can certainly choose the lifts $s_u$ in Notation~\ref{not-integration-defn} such that they are compatible with $\delta$ and this easily implies that $\int_\gamma s\text{dt}_{X',\,\alpha}=\int_\gamma s\text{dt}_{X',\,\alpha'}$.

We can pass from $\alpha$ to an arbitrary $\alpha'$ by such a step and its inverse. This proves Claim~\ref{claim-integration-existence-2}.
\end{proof}

Using Claims~\ref{claim-integration-existence-1} and~\ref{claim-integration-existence-2} we write $\int_\gamma s\text{dt}_{X'}\in\Gamma(X',\sF)$ for the unique section satisfying $\int_\gamma s\text{dt}_{X'}=\int_{\gamma,\xi} s\text{dt}_{X',\,\alpha}$ for any choice of $\xi$ and $\alpha$.

Let $X''\subset X'$ be an open Stein subset. Then $\int_\gamma s\text{dt}_{X'}|_{X''}=\int_\gamma s\text{dt}_{X''}$ by construction. In particular these sections glue and we may introduce the following notation.

\begin{notation}
Let $\int_\gamma s\text{dt}\in\Gamma(X,\sF)$ be the unique section satisfying
\[{\textstyle \int_\gamma s\text{dt}|_{X'}=\int_\gamma s\text{dt}_{X'}}\]
for any open Stein subset $X'\subset X$.
\end{notation}

\begin{plainclaim}\label{claim-integration-existence-3}
The sections $\int_\gamma s\text{dt}\in\Gamma(X,\sF)$ constructed above satisfy Properties~(\ref{it-int-coh-sheaves-T}),~(\ref{it-int-coh-sheaves-func}),~(\ref{it-int-coh-sheaves-add}) and~(\ref{it-int-coh-sheaves-norm}) of Proposition~\ref{fact-int-diff-coh-sheaves}.
\end{plainclaim}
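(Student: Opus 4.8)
The plan is to check the four properties in turn, in each case reducing to elementary features of the ordinary line integral $\int^{\text{usual}}$ together with the independence statements already established in Claims~\ref{claim-integration-existence-1} and~\ref{claim-integration-existence-2}. Since an equality of sections of a coherent sheaf is a local statement, it always suffices to argue over a Stein open $X'\subset X$ carrying an $\sO_{X'}$-linear surjection $\alpha\colon\sO_{X'}^{\oplus r}\to\sF|_{X'}$ and to evaluate $\int_\gamma s\,\text{dt}|_{X'}$ by means of a triple $\xi$ as in Notation~\ref{not-integration-defn} and formula~(\ref{eqn-integration-defn}).

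Properties~(\ref{it-int-coh-sheaves-T}),~(\ref{it-int-coh-sheaves-add}) and~(\ref{it-int-coh-sheaves-norm}) are essentially built into the construction. Functoriality~I holds because the Stein sets $T_u$ in a triple $\xi$ may be taken inside any prescribed $T'\supset\text{im}(\gamma)$, so that the value of $\int_{\gamma,\xi}s\,\text{dt}_{X',\,\alpha}$ — independent of $\xi$ by Claim~\ref{claim-integration-existence-1} — does not change upon restricting $T$ to $T'$. Additivity follows by choosing a subdivision $\xi$ whose breakpoints include the point $c$; then both sides of the asserted identity are computed from compatible data and the claim reduces to additivity of $\int^{\text{usual}}$ along concatenated paths. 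For Normalization, take $\alpha=\id_{\sO_{X'}}$, so that the lifts $s_u$ can be chosen to be $s|_{X'\times T_u}$ themselves; formula~(\ref{eqn-integration-defn}) then collapses to $\sum_u\int^{\text{usual}}_{\gamma|_{[a_{u-1},a_u]}}s\,\text{dt}=\int^{\text{usual}}_\gamma s\,\text{dt}$, and the pointwise description of the ordinary line integral of a holomorphic function on $X\times T$ along $\gamma$ is precisely the asserted formula.

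The substantial point is Functoriality~II, which I would derive from two special cases: (a) $f=\id_X$ and $\phi\colon\sF\to\sG$ a morphism of coherent sheaves on $X$; and (b) $\phi=\id$, expressing compatibility of $\int_\gamma$ with pull-back along an arbitrary holomorphic map $f\colon Y\to X$, that is, $f^*\bigl(\int_\gamma s\,\text{dt}\bigr)=\int_\gamma(f\times\id_T)^*s\,\text{dt}$. Granting these, a general $\phi\colon f^*\sF\to\sG$ is handled by first applying~(b) to replace $f^*\bigl(\int_\gamma s\,\text{dt}\bigr)$ by $\int_\gamma(f\times\id_T)^*s\,\text{dt}$ over $Y$, and then applying~(a) to the morphism $\phi$ on $Y$ and the section $(f\times\id_T)^*s$. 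Case~(b) is verified over a Stein $Y'\subset f^{-1}(X')$: the pull-back $f^*\alpha$ is again a surjection from a free sheaf, $(f\times\id_T)^*s_u$ lifts $(f\times\id_T)^*s$ over $Y'\times T_u$, and $\int^{\text{usual}}_{\gamma|_{[a_{u-1},a_u]}}(f\times\id_T)^*s_u\,\text{dt}=f^*\bigl(\int^{\text{usual}}_{\gamma|_{[a_{u-1},a_u]}}s_u\,\text{dt}\bigr)$ by the pointwise formula; applying $\alpha$ and using that pull-back of sections intertwines $\alpha$ with $f^*\alpha$ gives the assertion.

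The one genuine obstacle lies in Case~(a): the composite $\phi\circ\alpha\colon\sO_{X'}^{\oplus r}\to\sG|_{X'}$ need not be surjective, and so cannot directly serve as the presentation of $\sG$ used to evaluate $\int_\gamma\phi_T(s)\,\text{dt}$. To get around this I would, after shrinking $X'$ if necessary, pick a surjection $\delta\colon\sO_{X'}^{\oplus r'}\to\sG|_{X'}$ and — using that $\delta$ induces a surjection on global sections by the Stein property of $X'$ — a factorisation $\phi\circ\alpha=\delta\circ\epsilon$ with $\epsilon\colon\sO_{X'}^{\oplus r}\to\sO_{X'}^{\oplus r'}$. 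Then $\epsilon_T(s_u)$ is a lift of $\phi_T(s)$ over $X'\times T_u$ with respect to $\delta$, and applying $\phi$ to~(\ref{eqn-integration-defn}), commuting the $\bC$-linear map $\epsilon$ past $\int^{\text{usual}}$, and invoking independence of the chosen presentation (Claim~\ref{claim-integration-existence-2}) identifies $\phi\bigl(\int_\gamma s\,\text{dt}\bigr)|_{X'}$ with $\int_\gamma\phi_T(s)\,\text{dt}|_{X'}$. As $X'$ ranges over a Stein covering, this finishes Case~(a), hence Functoriality~II, and with it the proof of Proposition~\ref{fact-int-diff-coh-sheaves} for the line integral.
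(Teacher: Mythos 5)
Your proposal is correct and follows essentially the same route as the paper: every property is checked locally on Stein opens and reduced, via compatible lifts through free presentations together with Claims~\ref{claim-integration-existence-1} and~\ref{claim-integration-existence-2}, to elementary properties of the usual line integral. The only (harmless) difference is organizational: you split Functoriality~II into a pull-back step and a morphism-over-the-identity step with the factorization $\phi\circ\alpha=\delta\circ\epsilon$, whereas the paper treats both at once by lifting $\phi$ to a single map $\tau$ between free presentations over a Stein pair $Y'\subset f^{-1}(X')$.
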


\begin{proof}[Proof of Claim~\ref{claim-integration-existence-3}]
Property~(\ref{it-int-coh-sheaves-T}) is obvious. Properties~(\ref{it-int-coh-sheaves-add}) and~(\ref{it-int-coh-sheaves-norm}) can be checked locally on open Stein subsets $X'\subset X$ and thus follow from Notation~\ref{not-integration-defn} and Claim~\ref{claim-integration-existence-1}.

Property~(\ref{it-int-coh-sheaves-func}) can be checked locally on an open Stein subsets $Y'\subset Y$ such that there exists an open Stein subset $X'\subset X$ containing $f(Y')$ and quotient maps $\alpha_{X'}:\sO_{X'}^{\oplus r_X}\to \sF$ and $\alpha_{Y'}:\sO_{Y'}^{\oplus r_Y}\to\sG$ together with a map $\tau$ fitting into a commutative diagram
\[ {\textstyle \begin{array}{l}\xymatrix{f^*(\sF)|_{Y'} \ar[r]^(.6){\phi|_{Y'}} & \sG|_{Y'} \\ f^*(\sO_X^{\oplus r_X})|_{Y'} \ar[u]^{\alpha_{X'}} \ar[r]_(.6)\tau & \sO_{Y'}^{\oplus r_Y} \ar[u]_{\alpha_{Y'}}. }\end{array}}\]
Given a section $s\in\Gamma(X\times T,\pr_X^*(\sF))$ we can certainly choose the two sets of data required in Notation~\ref{not-integration-defn} in order to calculate $\int_\gamma s\text{dt}|_{X'}$ and $\int_\gamma \phi(s)\text{dt}|_{Y'}$ such that they are compatible with respect to $\tau$. Then Property~(\ref{it-int-coh-sheaves-func}) follows immediately.
\end{proof}

The assignment $s\mapsto \int_\gamma s\text{dt}$ can be easily verified to be $\sO_X(X)$-linear using the construction. Thus Claim~\ref{claim-integration-existence-3} finishes the proof of the existence of the line integral.\qed

\subsubsection{Further remarks} We gather some remarks needed in the sequel.

\begin{notation}\label{not-restr-to-slices}
For any section $s\in\Gamma(X\times T,\pr_X^*\sF)$ and any $t\in T$ we denote the restriction of $s$ to $\sF|_{X\times\{t\}}$ by $s_t\in\Gamma(X,\sF)$.
\end{notation}

\begin{rem}\label{rem-fundamental-2}
Using Notation~\ref{not-restr-to-slices} observe the formula
\[\int_\gamma\frac{d}{dt}s\,\text{dt}=s_{\gamma(b)}-s_{\gamma(a)}\]
for any continuous path $\gamma:[a,b]\to X $. It can be verified by reduction to the case $\sF=\sO_X$, following the same line of arguments as in the proof of Proposition~\ref{fact-int-diff-coh-sheaves}.
\end{rem}

\begin{rem}\label{rem-q-vs-deriv-d}
Let $X$ be a reduced complex space and $T\subset\bC$ an open subset. We denote by $q^i:\Omega^i_\h|_{X\times T}\to \pr_X^*\Omega^i_\h|_X$ the projection map of the product decomposition in Proposition~\ref{prop-h-product} and 
\[E:=\frac{\partial}{\partial t}\in \Gamma(X\times T,\pr_X^*(T_X))\subset \Gamma(X\times T,T_{X\times T})\]
is the vector field associated with the standard coordinate $t$ on $T$. Then the formulas 
\begin{enumerate}[label={(\alph*)}] 
 \item\label{it-rem-q-vs-deriv} $q^i\LieDer_E(\alpha)=\frac{d}{dt}q^i(\alpha)$, and
 \item\label{it-rem-q-vs-int} $\int_\gamma q^{i+1}d(\alpha)\,\text{dt}=d\int_\gamma q^i\alpha\,\text{dt}$
\end{enumerate}
hold for any $\alpha\in\Gamma(X\times T,\Omega^i_\h|_{X\times T})$ and any continuous path $\gamma$ in $T$. Similar statements hold for K\"ahler differential forms, K\"ahler differential forms modulo torsion and, if $X$ is normal, for reflexive differential forms.

To see these claims we first observe that they can be checked easily by a computation in local coordinates in the case when $X$ is smooth. The singular case can be reduced to the smooth case in the following way:

For K\"ahler differential forms we use the properties listed in Proposition~\ref{fact-int-diff-coh-sheaves} to reduce to the case when $X$ and $T$ are both Stein and $X$ admits a closed embedding $X\subset M$ into a complex manifold $M$ so that $\Omega^i_M\to\Omega^i_X$ is surjective. Then Formulas~\ref{it-rem-q-vs-deriv}and~\ref{it-rem-q-vs-int} for $M$ easily imply the same statements for $X$.

For all other classes we use the inclusion $\Omega^i_X/\tor\subset\Omega^i_\h|_X\subset j_*\Omega^i_{X_\sm}$ to reduce to the smooth case.
\end{rem}

\subsection{Application to closed differential forms}

\begin{prop}\label{prop-contractions-key}
Let $X$ be a reduced complex space, $T\subset\bC$ a domain containing $0$ and $1$, and let $\alpha\in\Gamma(X\times T,\Omega^i_\h|_{X\times T})$ be a \emph{closed} $\h$-differential form of positive degree $i>0$ on the product. Then there exists an $\h$-differential form $\beta\in\Gamma(X,\Omega^{i-1}_\h|_X)$ of degree $i-1$ on $X$ such that
\begin{enumerate}
\item\label{it-prop-key-contractions-1} $j_1^*\alpha - j_0^*\alpha = d\beta$, where $j_t:X\cong X\times\{t\}\subset X\times T$ is the inclusion, and
\item\label{it-prop-key-contractions-2} if $m_p\subset\sO_X$ is the vanishing ideal of some point $p\in X$ and $r\geq 0$ is a non-negative integer, then
\[\alpha\in\Gamma\bigl(X\times T,\pr_X^{-1}(m^r_p)\cdot\Omega^i_\h|_{X\times T}\bigr)\quad\implies\quad\beta\in\Gamma\bigl(X,m^r_p\cdot\Omega^{i-1}_\h|_X\bigr). \]
\end{enumerate}
The same statements hold for K\"ahler differential forms, K\"ahler differential forms modulo torsion and, if $X$ is normal, for reflexive differential forms.
\end{prop}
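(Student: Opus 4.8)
The plan is to write down an explicit homotopy operator, exactly as in the classical Poincaré lemma with parameters, but phrased entirely in terms of the operations made available above: the product decomposition of Proposition~\ref{prop-h-product} (which over $X\times T$ has only the two summands $\pr_X^*\Omega^i_\h|_X$ and $\pr_X^*\Omega^{i-1}_\h|_X\wedge\text{dt}$, since $T$ is one-dimensional), the contraction $\iota_E$ and Lie derivative $\LieDer_E$ along $E=\tfrac{\partial}{\partial t}\in\Gamma(X\times T,\pr_X^*(T_X))$, and the line integral of Proposition~\ref{fact-int-diff-coh-sheaves}. Fix a continuous path $\gamma\colon[0,1]\to T$ with $\gamma(0)=0$, $\gamma(1)=1$, which exists since $T$ is a domain, and let $q^k\colon\Omega^k_\h|_{X\times T}\to\pr_X^*\Omega^k_\h|_X$ be the projection of Remark~\ref{rem-q-vs-deriv-d}. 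Using that $\iota_E$ extends to $\h$-differential forms by Proposition~\ref{prop-h-properties}(\ref{it-prop-contraction}), I set
\[\beta:=\int_\gamma q^{i-1}(\iota_E\alpha)\,\text{dt}\ \in\ \Gamma(X,\Omega^{i-1}_\h|_X),\]
which is meaningful by Proposition~\ref{fact-int-diff-coh-sheaves}.

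To prove~(\ref{it-prop-key-contractions-1}) I would compute $d\beta$ along the following chain. First, Remark~\ref{rem-q-vs-deriv-d}(\ref{it-rem-q-vs-int}) (applied to the $(i-1)$-form $\iota_E\alpha$) gives $d\beta=\int_\gamma q^i(d\,\iota_E\alpha)\,\text{dt}$. Since $\LieDer_E=d\circ\iota_E+\iota_E\circ d$ holds on $\h$-differential forms — this is precisely how $\LieDer_E$ was defined, see the proof of Proposition~\ref{prop-h-properties} — and $\alpha$ is closed, we get $d\,\iota_E\alpha=\LieDer_E\alpha$, so $d\beta=\int_\gamma q^i(\LieDer_E\alpha)\,\text{dt}$. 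Now Remark~\ref{rem-q-vs-deriv-d}(\ref{it-rem-q-vs-deriv}) rewrites the integrand as $\tfrac{d}{dt}q^i(\alpha)$, and Remark~\ref{rem-fundamental-2} yields $d\beta=(q^i\alpha)_{1}-(q^i\alpha)_{0}$. Finally, restricting the product decomposition to a slice $X\times\{t\}$ and using $\pr_T\circ j_t\equiv t$ (so that $j_t^*$ annihilates the $\text{dt}$–summand) identifies $j_t^*\alpha$ with $(q^i\alpha)_t$; hence $d\beta=j_1^*\alpha-j_0^*\alpha$, as claimed.

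For~(\ref{it-prop-key-contractions-2}), assume $\alpha$ is a section of $\pr_X^{-1}(m_p^r)\cdot\Omega^i_\h|_{X\times T}$. Since $\iota_E$ and $q^{i-1}$ are $\sO_{X\times T}$-linear, $q^{i-1}(\iota_E\alpha)$ is a section of $\pr_X^{-1}(m_p^r)\cdot\pr_X^*\Omega^{i-1}_\h|_X$, which equals $\pr_X^*\bigl(m_p^r\cdot\Omega^{i-1}_\h|_X\bigr)$ by flatness of $\pr_X$. Running the line integral of Proposition~\ref{fact-int-diff-coh-sheaves} with the coherent sheaf $m_p^r\cdot\Omega^{i-1}_\h|_X$ and invoking its functoriality (Property~(\ref{it-int-coh-sheaves-func})) for the inclusion $m_p^r\cdot\Omega^{i-1}_\h|_X\hookrightarrow\Omega^{i-1}_\h|_X$ identifies the resulting section with $\beta$; hence $\beta\in\Gamma(X,m_p^r\cdot\Omega^{i-1}_\h|_X)$.

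For the remaining classes — Kähler forms, Kähler forms modulo torsion, and, when $X$ is normal, reflexive forms — the argument is the same word for word: each admits the analogous product decomposition over $X\times T$, hence a projection $q^k$, and all the compatibilities with $d$, $\iota_E$, $\LieDer_E$ and the line integral used above are exactly those recorded for all four classes in Remark~\ref{rem-q-vs-deriv-d} and Remark~\ref{rem-fundamental-2}. I do not expect a real obstacle here; the only genuinely delicate point, and the one I would write out with care, is the bookkeeping in~(\ref{it-prop-key-contractions-2}) — tracking the ideal $m_p^r$ through $\iota_E$, $q^{i-1}$ and $\int_\gamma$ — which comes down to $\sO$-linearity of these operations and functoriality of Proposition~\ref{fact-int-diff-coh-sheaves}.
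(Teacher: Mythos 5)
Your proposal is correct and coincides with the paper's own proof: the same homotopy operator $\beta=\int_\gamma q^{i-1}(\iota_E\alpha)\,\text{dt}$, the same chain of identities via Cartan's formula, Remark~\ref{rem-q-vs-deriv-d} and Remark~\ref{rem-fundamental-2}, and the same use of $\sO$-linearity plus functoriality of the line integral for the ideal bookkeeping in~(\ref{it-prop-key-contractions-2}), with the other classes of forms handled verbatim.
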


\begin{proof}[Proof in the setup of $\h$-differential forms]
Since $T$ is connected by assumption we may choose a path
\[\gamma:[a,b]\to T\]
in $T$ running from $\gamma(a)=0$ to $\gamma(b)=1$. We take up the notation of Remark~\ref{rem-q-vs-deriv-d}: The projection map associated with the decomposition in Proposition~\ref{prop-h-product} is denoted by $q^i: \Omega^i_\h|_{X\times T}\to\pr_X^*\Omega^i_\h|_X$ and the vector field on $X\times T$ associated with the standard coordinate $t$ on $T\subset\bC$ is denoted by $E:=\frac{\partial}{\partial t}\in\Gamma(X\times T,T_{X\times T})$. Using the notation introduced in Proposition~\ref{fact-int-diff-coh-sheaves} we can now formulate the following claim.

\begin{claim}
Conditions~(\ref{it-prop-key-contractions-1}) and~(\ref{it-prop-key-contractions-2}) are satisfied if $\beta$ is given as \[\beta:=\int_\gamma q^{i-1}\iota_E\alpha \text{dt}\,\in\,\Gamma(X,\Omega^{i-1}_\h|_X).\]
\end{claim}

\begin{proof}[Proof of Condition~(\ref{it-prop-key-contractions-1})]
We can simply calculate that
\begin{align*}
j_1^*(\alpha)-j_0^*(\alpha) &\,= \, q^i(\alpha)_{\gamma(b)}-q^i(\alpha)_{\gamma(a)}  &&  \text{using Notation } \ref{not-restr-to-slices}\\
& \,= \, \int_\gamma\frac{d}{dt}q^i(\alpha)\,\text{dt}  &&\text{by Remark } \ref{rem-fundamental-2}\\
& \,= \, \int_\gamma q^i\LieDer_E\alpha\,\text{dt}  &&\text{by Remark }\ref{rem-q-vs-deriv-d}\ref{it-rem-q-vs-deriv} \\
& \,= \, \int_\gamma q^id\iota_E\alpha\,\text{dt}  &&\text{since } d\alpha=0\\
& \,= \, d\int_\gamma q^{i-1}\iota_E\alpha\,\text{dt}  &&\text{by Remark } \ref{rem-q-vs-deriv-d}\ref{it-rem-q-vs-int} \\
& \,= \, d\beta. &&
\end{align*}
which shows the claim. In the fourth step of the preceding calculation we make use of Cartan's formula $\LieDer_E=d\circ\iota_E+\iota_E\circ d$. To see this formula for $\h$-differential forms, recall that the sheaf of $\h$-differential forms are torsion-free by Proposition~\ref{prop-h-properties}(\ref{it-prop-coh}) so that is suffices to prove the formula on the smooth locus $X_{\sm}$. On the smooth locus it follows immediately from Items~(\ref{it-prop-smooth}),~(\ref{it-prop-contraction}) and~(\ref{it-prop-lie-der}) of Propostion~\ref{prop-h-properties}.
\end{proof}

\begin{proof}[Proof of Condition~(\ref{it-prop-key-contractions-2})]
Condition~\ref{it-prop-key-contractions-2} follows immediately from Property~(\ref{it-int-coh-sheaves-func}) in Proposition~\ref{fact-int-diff-coh-sheaves} applied to the inclusion $m_p^r\cdot\Omega^{i-1}_\h|_X\subset\Omega^{i-1}_\h|_X$ of coherent sheaves on $X$, and the fact that $\iota_E$ and $q^i$ are $\sO_X$-linear, see Proposition~\ref{prop-h-properties}.
\end{proof}
\renewcommand{\qedsymbol}{}
\end{proof}
\vspace{-\baselineskip}
\begin{proof}[Proof for other classes of differential forms]
The line of arguments given above  applies verbatim to the other classes of differential forms specified in Proposition~\ref{prop-contractions-key}. Observe that Remark~\ref{rem-q-vs-deriv-d} remains valid.
\end{proof}

\subsection{Proof of theorems in Section~ \ref{ssec-poincare-contractible}}\label{ssec-proof-poincare-contractions}

\subsubsection{Proof of Theorem~\ref{thm-h-poincare-contraction} in the setup of $\h$-differential forms}
We maintain the notation of Definition \ref{defn-contraction}. In particular, a contraction map is given by
\[\phi:U\times T\to X \]
where $U\subset X$ is an open neighborhood of $p\in X$ and $T\subset \bC$ is a domain containing both $0$ and $1$. Recall that $\phi_1:=\phi(\bullet,1)=\text{id}_U:U\to X$ is the identity map and that $\phi_0:=\phi(\bullet,0):U\to U\cap Y$. We fix some index $i>0$.

Items~(\ref{it-thm-contr-h-1}) and~(\ref{it-thm-contr-h-2}) of Theorem~\ref{thm-h-poincare-contraction} are proved in the following two claims.

\begin{plainclaim}\label{plainclaim-surjectivity}
The pull-back map $\phi_0^*:P^i_\h(Y_p)\to P^i_\h(X_p)$ is surjective.
\end{plainclaim}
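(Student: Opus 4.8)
The plan is to reduce the statement to Proposition~\ref{prop-contractions-key} applied to the pull-back of a closed form along the contraction map $\phi$. Fix a germ $[\omega]\in P^i_\h(\sg{X}{p})$, represented by a closed $\h$-differential form $\omega\in\Gamma(V,\Omega^i_\h|_X)$ of degree $i>0$ on some open neighborhood $V$ of $p$. First I would do the (routine) domain bookkeeping: choose a path in $T$ from $0$ to $1$ with compact image $K\subset T$; since $\phi(\{p\}\times K)=\{p\}\subset V$ and $\phi$ is continuous, the tube lemma yields an open neighborhood $U'\subset U$ of $p$ together with a connected open subset $T'\subset T$ containing $K$ (hence containing $0$ and $1$) such that $\phi(U'\times T')\subset V$. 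From now on I regard $\phi$ as a map $U'\times T'\to V$.

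Next, $\alpha:=\phi^*\omega\in\Gamma(U'\times T',\Omega^i_\h|_{U'\times T'})$ is a \emph{closed} $\h$-form on the product, because pull-back of $\h$-differential forms commutes with the exterior derivative (Definition~\ref{defn-h-diff}). Applying Proposition~\ref{prop-contractions-key}(\ref{it-prop-key-contractions-1}) with $X$ replaced by $U'$ and $T$ by $T'$ produces a form $\beta\in\Gamma(U',\Omega^{i-1}_\h|_{U'})$ such that $j_1^*\alpha-j_0^*\alpha=\mathrm{d}\beta$, where $j_t:U'\cong U'\times\{t\}\hookrightarrow U'\times T'$ is the inclusion.

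Now I identify the two boundary restrictions by functoriality of the $\h$-pull-back. Since $\phi\circ j_1=\phi_1=\mathrm{id}_{U'}$, we get $j_1^*\alpha=\omega|_{U'}$. Since $\phi\circ j_0$ factors as $\iota_Y\circ\phi_0$, where $\iota_Y\colon Y\hookrightarrow X$ is the inclusion and $\phi_0\colon U'\to U'\cap Y$, transitivity of the pull-back gives $j_0^*\alpha=\phi_0^*(\iota_Y^*\omega)=\phi_0^*\eta$, where I set $\eta:=\iota_Y^*\omega\in\Gamma(U'\cap Y,\Omega^i_\h|_Y)$. The form $\eta$ is closed (again because $\mathrm{d}$ commutes with pull-back), and $p\in Y$, so it defines a class $[\eta]\in P^i_\h(\sg{Y}{p})$. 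Combining the above, $\omega|_{U'}-\phi_0^*\eta=\mathrm{d}\beta$, whence $\phi_0^*[\eta]=[\omega]$ in $P^i_\h(\sg{X}{p})$. As $[\omega]$ was arbitrary, $\phi_0^*$ is surjective. The same argument, invoking the corresponding assertion of Proposition~\ref{prop-contractions-key}, works verbatim for K\"ahler differential forms, for K\"ahler forms modulo torsion, and (when $X$ is normal) for reflexive differential forms.

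I do not expect a genuine obstacle here: all the analytic substance — differentiation and line integrals of sections of coherent sheaves, together with the homotopy formula $j_1^*\alpha-j_0^*\alpha=\mathrm{d}\!\int_\gamma q^{i-1}\iota_E\alpha\,\mathrm{d}t$ on the product — has already been packaged into Propositions~\ref{fact-int-diff-coh-sheaves} and~\ref{prop-contractions-key}, so this step is essentially formal. The only place requiring a little care is the very first one, namely shrinking the product $U'\times T'$ so that $\phi$ lands in the domain of the chosen representative while keeping $T'$ connected and containing both $0$ and $1$; this is exactly what the tube lemma provides.
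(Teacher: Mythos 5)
Your argument is correct and follows essentially the same route as the paper: represent the class by a closed form near $p$, shrink the product domain so that $\phi$ maps it into the domain of the representative, pull back along $\phi$, and apply Proposition~\ref{prop-contractions-key}(\ref{it-prop-key-contractions-1}) together with $\phi\circ j_1=\mathrm{id}$ and $\phi\circ j_0=\iota_Y\circ\phi_0$ to conclude $[\omega]=\phi_0^*[\iota_Y^*\omega]$. The only cosmetic difference is your explicit tube-lemma bookkeeping, which the paper handles by choosing a relatively compact subdomain $T'\subset T$ and shrinking $U'$ accordingly.
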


\begin{proof} Let $[\alpha]\in P^i_\h(X_p)$ be an element. We may choose
\begin{itemize}
 \item $X'\subset X$ an open  neighborhood of $p$ such that $[\alpha]$ is represented by a \emph{closed} section $\alpha\in\Omega^i_\h(X')$,
 \item $T'\subset T$ a relatively compact subdomain containing both $0$ and $1$, and
 \item $U'\subset X'$ an open neighborhood of $p$ such that $\phi(U'\times T')\subset X'$.
\end{itemize}
Proposition~\ref{prop-contractions-key}(\ref{it-prop-key-contractions-1}) applied to the closed differential form $\phi|_{U'\times T'}^*(\alpha)\in\Gamma(U'\times T',\Omega^i_\h|_{X\times T})$ shows that
\[\phi_1|_{U'}^*(\alpha) - \phi_0|_{U'}^*(\alpha)\in d\,\Gamma(U',\Omega^{i-1}_\h|_X),\]
which immediately implies that $[\alpha]=[\phi_1|_{U'}^*(\alpha)]=[\phi_0|_{U'}^*(\alpha)]\in\phi_0^*P^i_\h(Y_p)\subset P^i_\h(X_p)$.
\end{proof}

\begin{plainclaim}\label{plainclaim-defo-retract}
If $\sg{Y}{p}$ is a deformation retract of $\sg{X}{p}$, then the pull-back map \[{\textstyle j^*:P^i_\h(\sg{X}{p})\to P^i_\h(\sg{Y}{p})}\] is bijective.
\end{plainclaim}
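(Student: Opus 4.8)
The plan is to exhibit an explicit two-sided inverse for $j^*$, namely pull-back along the retraction, using Claim~\ref{plainclaim-surjectivity} for one half and the deformation-retract identity for the other. First I would fix representatives as in Definition~\ref{defn-contraction}: the contraction map $\phi:U\times T\to X$ gives $\phi_0:=\phi(\bullet,0):U\to X$ whose set-theoretic image lies in $Y$, so, $Y\subset X$ being a reduced complex subspace, $\phi_0$ factors uniquely as $\phi_0=j\circ r$ for a holomorphic germ map $r:\sg{U}{p}\to\sg{Y}{p}$ (note $\sg{U}{p}=\sg{X}{p}$). The deformation-retract hypothesis $\phi(y,0)=y$ for $y\in Y\cap U$ says precisely that $r\circ j=\id_{\sg{Y}{p}}$ as germ maps at $p$.

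Next I would invoke the functoriality of $P^i_\h(-)$ recorded in Notation~\ref{not-poincare-failure}, which gives
\[ j^*\circ r^*=(r\circ j)^*=\id\quad\text{on }P^i_\h(\sg{Y}{p}); \]
in particular $j^*$ is surjective. For injectivity I would reuse the computation already carried out in the proof of Claim~\ref{plainclaim-surjectivity}: there it is shown that every class $[\alpha]\in P^i_\h(\sg{X}{p})$ satisfies $[\alpha]=[\phi_0^*\alpha]$. Since $\phi_0=j\circ r$, contravariance of pull-back turns this into $[\alpha]=r^*\bigl(j^*[\alpha]\bigr)$, i.e. $r^*\circ j^*=\id$ on $P^i_\h(\sg{X}{p})$. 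Hence $j^*[\alpha]=0$ forces $[\alpha]=0$, so $j^*$ is injective; combined with the previous identity, $j^*$ is bijective with inverse $r^*$.

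The only points requiring care — and they are routine bookkeeping — are shrinking the representatives $U'\subset U$, $T'\subset T$ so that the factorization $\phi_0=j\circ r$, the identity $r\circ j=\id$, and the equality $[\alpha]=[\phi_0^*\alpha]$ (from Claim~\ref{plainclaim-surjectivity}) all hold on one common small neighbourhood of $p$, together with the observation that the pull-back maps on the groups $P^i_\bullet$ are genuinely functorial, which is immediate from the definition of pull-back of $\h$-differential forms in Definition~\ref{defn-h-diff}. No analytic input beyond Claim~\ref{plainclaim-surjectivity} enters. Finally, since the argument uses only functoriality of $P^i_\bullet(-)$ and the analogue of Claim~\ref{plainclaim-surjectivity}, the same reasoning with $\h$-differential forms replaced by K\"ahler differential forms, respectively K\"ahler differential forms modulo torsion, proves the corresponding statements in Theorem~\ref{thm-h-poincare-contraction}(\ref{it-thm-contr-h-2}).
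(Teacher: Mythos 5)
Your proof is correct and follows essentially the paper's route: the paper likewise notes $\phi_0\circ j|_{Y\cap U}=\id$, deduces $j^*\circ\phi_0^*=\id$ on $P^i_\h(\sg{Y}{p})$, and combines this with Claim~\ref{plainclaim-surjectivity} to get bijectivity. Your only cosmetic difference is naming the corestriction $r$ and extracting the identity $[\alpha]=[\phi_0^*\alpha]$ from the proof of Claim~\ref{plainclaim-surjectivity} to exhibit $r^*$ as a two-sided inverse, which is an equivalent repackaging of the same argument.
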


\begin{proof}
By assumption the inclusion map $j:Y\to X$ satisfies $\phi_0\circ j|_{Y\cap U}=id_{Y\cap U}$. It follows that 
\[j^*\circ \phi_0^*=\id:P^i_\h(Y_p)\to P^i_\h(Y_p)\]
is the identity map. Thus Claim~\ref{plainclaim-defo-retract} is a consequence of Claim~\ref{plainclaim-surjectivity}.
\end{proof}

This finishes the proof of Theorem~\ref{thm-h-poincare-contraction}.\qed

\subsubsection{Proof of Theorem~\ref{thm-h-poincare-contraction} for other classes of differential forms}
The proof can be applied verbatim to the other cases. Recall from Section~\ref{ssec-class-diff-forms} that there exists still a pull-back of differential forms by $\phi$.\qed

\subsubsection{Proof of Theorem~\ref{thm-refl-poincare-contraction}}
Let $i\geq dim_pX^{\bC^*}$ be arbitrary and fix some element $[\alpha]\in P^i_\refl(\sg{X}{p})$. We choose an open neighborhood $X'\subset X$ of $p\in X'$ such that $[\alpha]$ is represented by a closed reflexive differential form $\alpha\in\Gamma(X',\Omega^{[i]}_X)$.

Let $B:=B_2(0)\subset\bC$ and choose a sufficiently small neighborhood $U\subset X'$ of $p$ such that the contraction map given by Example \ref{ex-action-contraction} restricts to a map
\[\phi:U\times B\to X'.\]
Restricting to $U\cong U\times \{0\}$ yields a holomorphic map $\phi_0:U\to X'^{\bC^*}$, which is a left inverse for the inclusion $U^{\bC^*}\subset U$.

\begin{plainclaim}\label{thm-refl-poincare-contraction-claim-1}
Let $W:=U_\reg\times B\cap\phi^{-1}(X'_\reg)\subset U\times B$. Then the complementary subset  $W^c:=(U\times B)\backslash W\subset U\times B$ is a closed analytic subset of codimension $\geq 2$.
\end{plainclaim}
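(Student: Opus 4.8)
The plan is to rewrite the complement $W^c$ as a union of two closed analytic subsets and to estimate the codimension of each of them separately. Since $X$ is normal, so is the open subset $U$; choosing $U$ connected we may moreover assume it is irreducible, and then $X'_\sing=X'\cap X_\sing$ and $U_\sing=U\cap X_\sing$. Unwinding $W=(U_\reg\times B)\cap\phi^{-1}(X'_\reg)$ and using that $\phi$ maps $U\times B$ into $X'$, de Morgan gives
\[ W^c=(U_\sing\times B)\cup\phi^{-1}(X'_\sing). \]
Both pieces are closed analytic in $U\times B$: the first because $U_\sing\subset U$ is closed analytic, the second because it is the preimage of the closed analytic set $X'_\sing\subset X'$ under the holomorphic map $\phi$. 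Since the codimension of a finite union of analytic sets is the minimum of the codimensions, it suffices to show each piece has codimension $\geq 2$.

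The first piece is immediate: normality of $U$ gives $\codim_U U_\sing\geq 2$, hence $\codim_{U\times B}(U_\sing\times B)\geq 2$. For $A:=\phi^{-1}(X'_\sing)$ I would work first over the open set $U\times B^*$, where $B^*:=B\setminus\{0\}$ and $\phi$ coincides with the action map $(x,t)\mapsto t\cdot x$. For each fixed $t\in B^*$ the map $x\mapsto t\cdot x$ is, on a neighbourhood of $p$, biholomorphic onto its image with inverse $y\mapsto t^{-1}\cdot y$ (by the group axioms of Definition~\ref{defn-loc-hol-c-star-action}); in particular it is a local biholomorphism and hence sends singular points of $X'$ to singular points and smooth points to smooth points. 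Consequently $A\cap(U\times B^*)=U_\sing\times B^*$, which has codimension $\geq 2$ in $U\times B^*$, and therefore in $U\times B$.

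It remains to exclude codimension-one components of $A$ contained in $U\times\{0\}$. Running over the irreducible components $A_j$ of $A$: if $A_j\not\subset U\times\{0\}$, then $A_j\cap(U\times B^*)$ is a non-empty, hence dense, open subset of $A_j$, so $\dim A_j\le\dim(U_\sing\times B^*)\le\dim U-1$ and $\codim_{U\times B}A_j\geq 2$; if $A_j\subsetneq U\times\{0\}$ it automatically has codimension $\geq 2$ in $U\times B$; and the only remaining possibility, $A_j=U\times\{0\}$ (which is irreducible of dimension $\dim U$, as $U$ is), would force $\phi_0(U)\subset X'_\sing$, hence $U^{\bC^*}\subset X_\sing$ since $\phi_0$ restricts to the identity on $U^{\bC^*}$ — contradicting the standing hypothesis $\sg{X}{p}^{\bC^*}\not\subset X_{\sing}$ of Theorem~\ref{thm-refl-poincare-contraction}. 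I expect this last step to be the crux: it is the only point where the assumption on the fixed locus enters, and one has to observe that ``$U^{\bC^*}\subset X_\sing$'' genuinely contradicts the germ-level hypothesis, because $\sg{X}{p}^{\bC^*}\subset X_\sing$ holds precisely when some representative neighbourhood $U$ satisfies $U\cap X^{\bC^*}\subset X_\sing$. The rest is routine dimension bookkeeping together with the local-biholomorphism observation in the previous paragraph.
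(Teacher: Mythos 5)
Your proof is correct and follows essentially the same route as the paper: away from $t=0$ you use that $\phi(\cdot,t)$ is an open immersion (local biholomorphism via the action of $t^{-1}$) so that over $U\times B^*$ the complement is $U_\sing\times B^*$, of codimension $\geq 2$ by normality, and the only potentially bad component $U\times\{0\}$ is excluded using $\phi_0|_{U^{\bC^*}}=\id_{U^{\bC^*}}$ together with the hypothesis $\sg{X}{p}^{\bC^*}\not\subset X_\sing$. The explicit decomposition $W^c=(U_\sing\times B)\cup\phi^{-1}(X'_\sing)$ and the component-by-component bookkeeping are just a slightly more detailed write-up of the paper's argument.
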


\begin{proof}[Proof of Claim~\ref{thm-refl-poincare-contraction-claim-1}]
We write $B^*=B\backslash\{0\}$. For $t\in B^*$ the map $\phi(\cdot,t):U\to X'$ is an open immersion. In particular, the set $W$ satisfies
\[W\cap U\times B^*=U_\reg\times B^*\]
and by normality of $X$ any component of $W^c$ meeting $U\times B^*$ is of codimension at least $2$.

It remains to show that $U\times \{0\}\cap W\neq\emptyset$. Indeed, this follows from the assumption that $X^{\bC^*}\not\subset X_\sing$ at $p\in X$ and $\phi_0|_{U^{\bC^*}}=id_{U^{\bC^*}}$.
\end{proof}

By Claim~\ref{thm-refl-poincare-contraction-claim-1} the closed differential form $\phi|_W^*(\alpha)\in\Gamma(W,\Omega^{[i]}_{X\times B})$ extends to a unique closed section $\beta\in\Gamma(U\times B,\Omega^{[i]}_{X\times B})$, i.e., $\beta|_W=\phi|_W^*(\alpha)$. Proposition~\ref{prop-contractions-key} shows that
\begin{equation}\label{eqn-refl-contr}
\alpha|_U - j_0^*\beta=j_1^*\beta-j_0^*\beta\in \text{d}\,\Gamma(U,\Omega^{[i-1]}_X),
\end{equation}
where $j_t:U\cong U\times\{t\}\subset U\times B$ is the inclusion map. Item~(\ref{it-1-thm-poincare-reflexive}) of Theorem~\ref{thm-refl-poincare-contraction} follows immediately since $j^*_0\beta=0$ for $i>dim_p(X^{\bC^*})$.

\begin{plainclaim}\label{thm-refl-poincare-contraction-claim-2}
If $C:=X^{\bC^*}$ is a curve, then $C$ is smooth at $p\in C$ and the differential form $\alpha|_{C\cap X'_\reg}\in\Gamma(C\cap X'_\reg,\Omega^i_C)$ extends to a closed differential form $\gamma\in\Gamma(C\cap X'_\reg\cup\{p\},\Omega^i_C)$.
\end{plainclaim}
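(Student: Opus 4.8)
The plan is to prove the two assertions separately, and to obtain the extension statement by first reducing to a $\bC^*$-invariant form and then exploiting the weight grading explicitly. Because the weights at $p$ are non-negative, Example~\ref{ex-action-contraction} shows that $\sg{C}{p}=\sg{X}{p}^{\bC^*}$ is a holomorphic deformation retract of the normal germ $\sg{X}{p}$; hence $\sg{C}{p}$ is normal by Remark~\ref{rem-contr-normal}, and a one-dimensional normal germ is smooth, so $C$ is smooth at $p$. Shrinking $X'$, I may assume $C\cap X'$ is a disc and, since $C\not\subset X_\sing$ forces $C\cap X_\sing$ to be a proper analytic subset of the irreducible germ $\sg{C}{p}$, that $C\cap X'_\reg=(C\cap X')\setminus\{p\}$. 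If $i\geq 2$ then $\Omega^i_C=0$ and there is nothing to show, so I assume $i=1$; note that closedness of a $1$-form on a curve is automatic.

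\emph{Reduction to an invariant form.} On a sufficiently small Stein representative $U$ the circle $\bS^1\subset\bC^*$ acts by biholomorphisms, hence on $\Gamma(U,\Omega^{[1]}_X)$; I would Fourier-decompose $\alpha=\sum_{m\in\bZ}\alpha_m$, where $\LieDer_E\alpha_m=m\,\alpha_m$ for the holomorphic generator $E$ of the action and each $\alpha_m$ is again a closed reflexive $1$-form. For $x$ in the dense subset $U_\reg\setminus\phi_0^{-1}(p)$ the point $q:=\phi_0(x)=\lim_{t\to 0}t\cdot x$ lies in $C\setminus\{p\}\subset X_\reg$, the function $\iota_E\alpha_m$ is holomorphic near $q$, homogeneous of weight $m$, and vanishes at $q$ since $E(q)=0$; evaluating along the orbit gives $t^m(\iota_E\alpha_m)(x)\to 0$ as $t\to 0$, which forces $(\iota_E\alpha_m)(x)=0$ whenever $m\leq 0$. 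So $\iota_E\alpha_m=0$ for $m\leq 0$, and Cartan's formula with $\mathrm{d}\alpha_m=0$ gives $m\,\alpha_m=\mathrm{d}(\iota_E\alpha_m)$: thus $\alpha_m=0$ for $m<0$, while $\alpha_m=\mathrm{d}f_m$ for $m>0$ with $f_m:=\tfrac1m\iota_E\alpha_m$ a holomorphic function (a reflexive $0$-form, by normality of $X$). The series $\mu:=\sum_{m>0}f_m$ converges to an element of $\mathcal{O}_{X,p}$ and $\alpha=\alpha_0+\mathrm{d}\mu$; since $\mu|_C\in\mathcal{O}_{C,p}$ is holomorphic at $p$, it suffices to extend $\alpha_0|_{C\cap X'_\reg}$ over $p$ for the $\bC^*$-invariant form $\alpha_0$.

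\emph{The invariant case.} Linearizing the action (Lemma~\ref{lem-c-star-linearization}), I embed $\sg{X}{p}\subset(\bC^n,0)$ with $\bC^*$ acting diagonally by weights $w_1,\dots,w_n\geq 0$, so that the coordinate ring and the defining ideal of $X$ are graded and $C$ is cut out in $X$ by the coordinates of positive weight. Over the $\bC^*$-stable open set $U_\reg$ the presentation $\bigoplus_j\mathcal{O}_{U_\reg}\,\mathrm{d}x_j\twoheadrightarrow\Omega^1_{U_\reg}$ is graded; since all weights are $\geq 0$, the weight-$0$ functions on $U_\reg$ (extended across $X_\sing$ by normality) are exactly the holomorphic functions of the weight-$0$ coordinates, so the weight-$0$ part of $\Omega^1_{U_\reg}$ is spanned by the $\mathrm{d}x_j$ with $w_j=0$ with such coefficients. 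Hence $\alpha_0$ has a representative $\sum_{w_j=0}b_j\,\mathrm{d}x_j$ with each $b_j$ holomorphic at $p$, and restricting to $C$ — contained in the smooth weight-$0$ coordinate subspace — produces the honest holomorphic $1$-form $\gamma_0=\sum_{w_j=0}(b_j|_C)\,(\mathrm{d}x_j|_C)\in\Omega^1_{C,p}$, which agrees with $\alpha_0|_{C\cap X'_\reg}$ off $p$. Then $\gamma:=\gamma_0+\mathrm{d}(\mu|_C)$ is the desired closed extension.

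\emph{Main obstacle and caveats.} The genuine content is that a reflexive $1$-form need not stay bounded near the singular or exceptional locus, so a priori $\alpha|_{C\setminus\{p\}}$ might acquire a pole at $p$; it is precisely the non-negativity of the weights that rules this out, entering both in the vanishing $\alpha_m=0$ for $m<0$ and in the identification of the weight-$0$ forms with $1$-forms pulled back from a smooth coordinate subspace. Note also that the contraction operator of Proposition~\ref{prop-contractions-key} alone does not suffice here, since its output $j_0^*\beta$ is exactly the reflexive extension of $\phi_0^*(\alpha|_C)$ and hence presupposes the extension we are after. Convergence of the Fourier series and of $\mu$, and the fact that the homogeneous representatives $b_j$ may be chosen in $\mathcal{O}_{U_\reg}=\mathcal{O}_U$, are routine consequences of normality and of the standard theory of analytic circle actions on coherent sheaves.
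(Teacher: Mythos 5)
Your first assertion (smoothness of $C$ at $p$ via Example~\ref{ex-action-contraction} and Remark~\ref{rem-contr-normal}) is exactly the paper's argument, and your reduction via the $\bS^1$-Fourier decomposition — killing the components of negative weight by evaluating $\iota_E\alpha_m$ along orbits and integrating the positive-weight components via Cartan's formula — is an attractive alternative idea (modulo the convergence of $\sum_m\alpha_m$ and $\mu=\sum_{m>0}f_m$ and the interchange of $d$ with the sum, which you defer to standard facts). The problem is the invariant case, which is where all the difficulty sits. You claim that, after linearizing, the weight-$0$ part of $\Omega^1_{U_\reg}$ is spanned by the $dx_j$ with $w_j=0$ with coefficients holomorphic at $p$, "since the presentation $\bigoplus_j\sO_{U_\reg}\,dx_j\twoheadrightarrow\Omega^1_{U_\reg}$ is graded". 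This does not follow: the surjection is one of sheaves, while you apply it to the global section $\alpha_0\in\Gamma(U_\reg,\Omega^1_{U_\reg})$. The set $U_\reg$ is the complement of a codimension-$\geq 2$ analytic subset of a Stein space and is not Stein, so $\alpha_0$ need not lift to $\Gamma(U_\reg,\bigoplus_j\sO\,dx_j)$; if it always did, then (coefficients extending across $X_\sing$ by normality) every reflexive $1$-form would lie in the image of $\Omega^1_{X,p}\to\Omega^{[1]}_{X,p}$, which is false already for quotient surface singularities (e.g. $u\,dv-v\,du$ on $\{xy=z^2\}=\bC^2/\pm$ is not in that image). And if instead you lift only locally on invariant Stein opens $W\subset U_\reg$ away from $p$, the weight bookkeeping breaks down there, because negative-weight holomorphic functions do exist on such $W$ (e.g. $s/u$ on $\{u\neq 0\}$ for weights $(1,0)$), so the local weight-$0$ coefficients need not be functions of the weight-$0$ coordinates, and in particular need not stay bounded as you approach $p$. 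In short, the assertion "$\alpha_0=\sum_{w_j=0}b_j\,dx_j$ with $b_j$ holomorphic at $p$" is essentially equivalent to (indeed stronger than) the pole-freeness of $\alpha_0|_{C\setminus\{p\}}$ at $p$ that you are trying to prove, and your justification of it is circular.

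The paper closes exactly this gap differently, and contrary to your caveat it does so using Equation~(\ref{eqn-refl-contr}) from Proposition~\ref{prop-contractions-key}: since $\phi_0^{-1}(\{p\})$ is a hypersurface (Krull) and $U$ is normal, one can pick $v\in U_\sm\cap\phi_0^{-1}(\{p\})$ and a smooth curve germ $D\ni v$ in the \emph{smooth} locus with $D\cap\phi_0^{-1}(\{p\})=\{v\}$, so that $f:=\phi_0|_D:D\to C$ is a finite map of smooth curve germs. Equation~(\ref{eqn-refl-contr}) gives $f^*(\alpha|_{C\setminus\{p\}})\in\alpha|_{D\setminus\{v\}}+\Gamma(U,\Omega^i_U)|_{D\setminus\{v\}}$; both terms on the right are restrictions to $D$ of honest forms (no reflexive extension of $\phi_0^*(\alpha|_C)$ is presupposed), hence $f^*(\alpha|_{C\setminus\{p\}})$ has no pole at $v$, and the elementary fact that pulling back under $z\mapsto z^k$ cannot remove a pole shows $\alpha|_{C\setminus\{p\}}$ has no pole at $p$. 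If you want to keep your Fourier reduction, you would still need an argument of this kind (or a genuine proof of your weight-$0$ structure statement) for the invariant part $\alpha_0$.
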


\begin{figure}\centering
\begin{tikzpicture}
  [scale=.7,auto=center]

\tikzset{curve/.style={circle,draw = black,fill=blue!20, minimum size=0.5cm, inner sep = 0cm}}
\tikzset{contrcurve/.style={very thick,circle,draw = black,fill=blue!20, minimum size=0.5cm, inner sep = 0cm}}

\draw (0,0) ellipse (2cm and 3cm);
\draw[->] (3,0) -- node[above] {$\phi_0$} (5,0);
\draw (0,3) to [out=260,in=80] (0,-3);
\draw (6,3) to [out=260,in=80] (6,-3);
\draw[thick] (1,1.5) to [out=280,in=70] (1,-1.5);

\draw[thick] (-2,0) to (2,0);
\draw (-1.89,1) to (1.89,1);
\draw (-1.49,2) to (1.49,2);
\draw (-1.89,-1) to (1.89,-1);
\draw (-1.49,-2) to (1.49,-2);

\node at (1.3,1.4) {$D$};
\node at (1.5,-0.3) {$v$};
\node at (0.2,-0.3) {$p$};
\node at (6.2,-0.3) {$p$};
\node at (0,-0.05) {\textbullet};
\node at (6,-0.05) {\textbullet};
\node at (1.25,-0.05) {\textbullet};
\node at (1.7,-2.5) {$U$};
\node at (0.35,-2.5) {$C$};
\node at (6.35,-2.5) {$C$};
\node at (-1.2,0.25) {\tiny $\phi_0^{-1}(p)$};

\end{tikzpicture}
\setlength{\captionmargin}{7pt}
\caption{Situation in the proof of Claim~\ref{thm-refl-poincare-contraction-claim-2}.}
\end{figure}
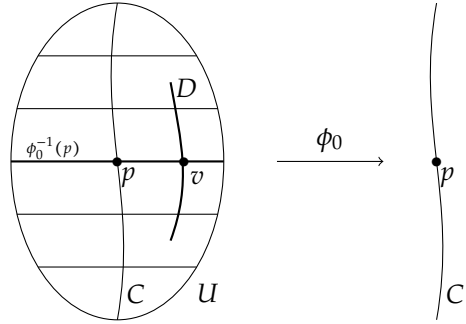

\begin{proof}[Proof of Claim~\ref{thm-refl-poincare-contraction-claim-2}]
Smoothness of $C$ follows from Remark~\ref{rem-contr-normal} and normality of $X$.

By Krull's principal ideal theorem the set $\phi_0^{-1}(\{p\})\subset U$ is of pure codimension one. Since $U$ is normal there exists a point $v\in U_\sm\cap\phi_0^{-1}(\{p\})$. Let $D\subset V$ be locally closed smooth curve passing through $v\in U$ such that $D\cap \phi_0^{-1}(\{p\})=\{v\}$. Then the restricted map
\[f:=\phi_0|_D:D\to C \]
is a non-constant holomorphic map between smooth curve germs and by Equation~(\ref{eqn-refl-contr}) the differential form
\[f|_{D\backslash\{v\}}^*(\alpha|_{C\backslash\{p\}})\in \alpha|_{D\backslash\{v\}} + \Gamma(U,\Omega^i_U)|_{D\backslash\{v\}}\subset \Gamma(D,\Omega^i_D)\subset\Gamma(D\backslash\{v\},\Omega^i_D)\]
has no pole at $v$. A computation in local coordinates shows that this already implies that $\alpha|_{C\cap X_\reg}$ has no pole at $p$ and thus extends to a differential form $\gamma$ on $C\cap X'_\reg\cup\{p\}$ as claimed.
\end{proof}

If $i=1$ and $X^{\bC^*}$ is a curve not contained in $X_\singu$, then Claim~\ref{thm-refl-poincare-contraction-claim-2} shows that in a suitable neighborhood of $p$ the reflexive differential form $j_0^*\beta$ coincides with the K\"ahler differential form $\phi_0^*\gamma$, which is exact since $\gamma$ is so by smoothness of $X^{\bC^*}$ at $p$. Equation~(\ref{eqn-refl-contr}) implies that in this case $P^1_\refl(\sg{X}{p})=0$. Together with Item~(\ref{it-1-thm-poincare-reflexive}) this implies Item~(\ref{it-2-thm-poincare-reflexive}).\qed

\begin{rem}\label{rem-refl-poincare-contraction}
In the situation of Item~(\ref{it-2-thm-poincare-reflexive}) of Theorem~\ref{thm-refl-poincare-contraction}, the proof presented above even shows that for any closed germ $\alpha\in m_p\cdot\Omega^i_\h|_{X,p}$ of degree $i\geq 1$ there exists a germ of a differential form $\beta\in m_p\cdot\Omega^{i-1}_\h|_{X,p}$ such that $\alpha=\text{d}\beta$. In fact, if $i=1$, this is true since $\text{d}m_p=\text{d}\sO_{X,p}$. If $i>1$, then in Equation~(\ref{eqn-refl-contr}) we have $j_1^*\beta-j_0^*\beta\in \text{d}\Gamma(U,m_p\cdot\Omega^{[i-1]}_{X})$ by Item~(\ref{it-prop-key-contractions-2}) of Proposition~\ref{prop-contractions-key}.
\end{rem}

\subsection{Proof of Corollary~\ref{cor-contr-klt-base-space}}\label{ssec-proof-cor-contr-klt-base-space}
The proof of Corollary~\ref{cor-contr-klt-base-space} relies on the following strengthening of~\cite[Thm.~5.4]{GKP12}.

\begin{lem}\label{lem-poincare-low-degrees-first-kind}
Let $p\in X$ be a point on a locally algebraic klt base space. Then the sequence
\[(\star)\quad 0\to\bC\to\sO_{X,p}\xrightarrow{\text{d}}\Omega^{[1]}_{X,p}\xrightarrow{\text{d}}\Omega^{[2]}_{X,p}\xrightarrow{\text{d}}\Omega^{[3]}_{X,p} \]
is exact.
\end{lem}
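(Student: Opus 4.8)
The plan is to establish exactness separately at the three non-trivial spots $\sO_{X,p}$, $\Omega^{[1]}_{X,p}$ and $\Omega^{[2]}_{X,p}$; only the last is genuinely new, since exactness in degree one is already the content of~\cite[Thm.~5.4]{GKP12}. Exactness at $\sO_{X,p}$ is immediate: $X$ is normal, hence locally irreducible at $p$, so the only germs of holomorphic functions annihilated by $\text{d}$ are the constants. Exactness at $\Omega^{[1]}_{X,p}$ follows from Theorem~\ref{thm-top-poincare} together with Proposition~\ref{prop-van-loc-int-coho}(\ref{it-van-loc-int-coho-1}): a locally algebraic klt base space has rational, in particular $1$-rational, singularities, so $\IH^1_{\text{loc}}(p\in X,\bQ)=0$.

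It remains to prove exactness at $\Omega^{[2]}_{X,p}$, i.e.\ that every closed reflexive $2$-form defined near $p$ equals $\text{d}\eta$ for some reflexive $1$-form $\eta$ defined near $p$. If $\dim_pX=2$, then, locally at $p$, a klt base space is a quotient $\bC^2/G$ with $G\subset GL_2(\bC)$ finite and without quasi-reflections, and $\Omega^{[i]}_{X,p}$ is the module of $G$-invariant germs of holomorphic $i$-forms on $\bC^2$; since $\Omega^{[3]}_{X,p}=0$ one only has to see that $\text{d}\colon\Omega^{[1]}_{X,p}\to\Omega^{[2]}_{X,p}$ is surjective, which follows by applying $\tfrac{1}{|G|}\sum_{g\in G}g^*$ to a local primitive on $\bC^2$ of the given invariant $2$-form, the primitive itself being furnished by the classical Poincaré Lemma. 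So assume $n:=\dim_pX\ge 3$. Using local algebraicity, replace $X$ by a neighbourhood of $p$ in the analytification of an algebraic klt base space, choose a log resolution $\pi\colon\tilde{X}\to X$ with reduced exceptional divisor $E:=\pi^{-1}(p)_{\red}$ a connected projective snc variety, and pick by Fact~\ref{fact-top-reso} a contractible neighbourhood $U$ of $p$ with $\tilde{U}:=\pi^{-1}(U)$ deformation retracting onto $E$.

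Since klt singularities are rational, the case analysis in the proof of Proposition~\ref{prop-h-refl-isolated-rational} (normality and~\cite[Thm.~1.3]{SvS85} for $i\le n-2$; rationality and~\cite[Thm.~5.10]{KM98} for $i=n$; \cite[Cor.~1.4]{SvS85} for $i=n-1$) gives $\Omega^{[i]}_X\cong\pi_*\Omega^i_{\tilde{X}}$ for every $i$; in particular, near $p$ the truncated complex $(\star)$ is $\pi_*$ of the truncated holomorphic de Rham complex of $\tilde{X}$. Now $\Omega^\bullet_{\tilde{X}}$ is a resolution of $\bC_{\tilde{X}}$ by the classical Poincaré Lemma, so pushing it forward produces the hypercohomology spectral sequence $E_1^{p,q}=R^q\pi_*\Omega^p_{\tilde{X}}\Rightarrow R^{p+q}\pi_*\bC_{\tilde{X}}$, whose bottom row $E_2^{p,0}$ is exactly the $p$-th cohomology sheaf of $\pi_*\Omega^\bullet_{\tilde{X}}$ and whose abutment has stalk $(R^k\pi_*\bC_{\tilde{X}})_p\cong H^k(E,\bC)$ by proper base change (or Fact~\ref{fact-top-reso}). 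Rationality gives $R^1\pi_*\sO_{\tilde{X}}=0$, which forces $E_2^{0,1}=0$ and hence $E_2^{2,0}=E_\infty^{2,0}=F^2(R^2\pi_*\bC_{\tilde{X}})$; taking stalks, $h^2(\star)=F^2H^2(E,\bC)$. Finally, rationality of $X$ forces $H^2(E,\sO_E)=0$ (via the theorem on formal functions, as in the isolated case of Fact~\ref{fact-isolated-du-bois}), and since $E$ is a projective snc variety we have $\operatorname{Gr}^0_FH^2(E,\bC)=H^2(E,\sO_E)$; as $H^2(E,\bC)$ carries weights $\le 2$, the vanishing of its Hodge piece $\operatorname{Gr}^0_F$ kills, by Hodge symmetry on each weight-graded quotient, the only summand (the $(2,0)$-part of $\operatorname{Gr}^W_2$) that can contribute to $F^2$. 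Thus $h^2(\star)=F^2H^2(E,\bC)=0$, and exactness at $\Omega^{[2]}_{X,p}$ follows. The same spectral sequence re-proves $h^1(\star)=F^1H^1(E,\bC)=0$ from $H^1(E,\sO_E)=0$.

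I expect the technical heart to be the final paragraph. On the algebraic side one must check that the results of Steenbrink--van Straten and~\cite[Thm.~5.10]{KM98} quoted above really yield $\Omega^{[i]}_X=\pi_*\Omega^i_{\tilde{X}}$ for $i\le 2$ without an isolated-singularity hypothesis, using only normality and rationality. On the Hodge-theoretic side one must justify the vanishing $H^2(E,\sO_E)=0$ for the possibly non-isolated singularity, together with the identification $\operatorname{Gr}^0_FH^2(E,\bC)=H^2(E,\sO_E)$ for the non-normal projective variety $E$; this is where the properness of $\pi$ and, if needed, a projective compactification of the algebraic model (in the spirit of the proof of Lemma~\ref{lem-quasihom-surface}) enter. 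Everything else is bookkeeping with the spectral sequence and a direct computation in the quotient-singularity case.
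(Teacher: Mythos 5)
Your reduction to degrees $0,1,2$, the treatment of degrees $0$ and $1$ (normality, and Theorem~\ref{thm-top-poincare} plus Proposition~\ref{prop-van-loc-int-coho}), and the separate surface case are fine, and your overall strategy -- push a de Rham-type complex forward along a resolution and read off $h^2(\star)$ from the bottom row of the resulting spectral sequence -- is the same in spirit as the paper's. But two steps in the $n\geq 3$ case do not hold up as written. First, the identification $\Omega^{[i]}_X\cong\pi_*\Omega^i_{\tilde X}$ cannot be extracted from the case analysis in the proof of Proposition~\ref{prop-h-refl-isolated-rational}: the Steenbrink--van Straten results \cite{SvS85} and the argument there require an \emph{isolated} singularity, whereas a klt base space may be singular along positive-dimensional sets. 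The identification is nevertheless true, but the correct reference is the Extension Theorem \cite[Thm.~2.12]{GKP12} (i.e.\ \cite{GKKP11}), which is exactly what the paper invokes; this part is a citation gap rather than a mathematical one.

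The genuine gap is the vanishing $H^2(E,\sO_E)=0$ for the reduced fiber $E=\pi^{-1}(p)_{\red}$, which your argument needs in order to kill $F^2H^2(E,\bC)=h^2(\star)$. Your justification ``via the theorem on formal functions, as in the isolated case of Fact~\ref{fact-isolated-du-bois}'' does not work: formal functions computes $\varprojlim_n H^2$ of the \emph{thickened} fibers from the completion of $R^2\pi_*\sO_{\tilde X}$, and says nothing about the reduced fiber; even in the isolated case the passage from $R^i\pi_*\sO_{\tilde X}$ to $H^i(E,\sO_E)$ is precisely the nontrivial Du Bois property recorded in Fact~\ref{fact-isolated-du-bois}, and that fact is stated (and used in the paper) only for isolated singularities. (A smaller issue: $E$ need not be snc in the non-isolated case, so $\mathrm{Gr}^0_FH^2(E,\bC)=H^2(E,\sO_E)$ must be replaced by the surjectivity $H^2(E,\sO_E)\twoheadrightarrow\mathrm{Gr}^0_FH^2(E,\bC)$ coming from the Du Bois complex, which suffices but should be said.) The paper avoids this Hodge-theoretic input altogether: instead of $\Omega^\bullet_{\tilde X}$ it pushes forward the twisted logarithmic complex $\sI_F\cdot\Omega^\bullet_{\tilde X}(\log F)$, which resolves $j_!\bC_{\tilde X\setminus F}$, so the abutment vanishes at $p$ for purely \emph{topological} reasons (Fact~\ref{fact-top-reso}, the proof of \cite[Lem.~14.4]{GKKP11}); the only coherent inputs are then \cite[Thm.~2.12]{GKP12} and \cite[Lem.~1.2]{Nam01} (giving $\pi_*(\sI_F\cdot\Omega^i_{\tilde X}(\log F))=\Omega^{[i]}_X$) together with $R^1\pi_*\sO_{\tilde X}=0$ from rationality. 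To repair your route you would either have to prove the fiberwise vanishing $H^2(E,\sO_E)=0$ over a non-isolated klt point (not supplied by anything you cite) or switch to the twisted logarithmic complex as the paper does.
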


\begin{proof}[Proof of Lemma~\ref{lem-poincare-low-degrees-first-kind}]
Let $\pi:\tilde{X}\to X$ be a strong resolution such that the reduced fiber $F=\pi^{-1}(\{p\})_\red\subset\tilde{X}$ is an snc divisor. First, we prove the following claim.

\begin{claim}\label{claim-extension-theorem-version}
$\pi_*\bigl(\sI_F\cdot\Omega^i_{\tilde{X}}(log\, F)\bigr)=\Omega^{[i]}_X$ if $i\geq 1$.
\end{claim}

\begin{proof}[Proof of Claim~\ref{claim-extension-theorem-version}]
Pushing forward the short exact sequence $0\to \sI_F\cdot\Omega^i_{\tilde{X}}(log\, F)\to\Omega^i_{\tilde{X}}\to\Omega^i_F/\tor\to 0$ yields a left exact sequence
\[0\to\pi_*\bigl(\sI_F\cdot\Omega^i_{\tilde{X}}(log\, F)\bigr)\to\underbrace{\pi_*\Omega^i_{\tilde{X}}}_{=\,\Omega^{[i]}_X \text{ by} \atop \text{\cite[2.12]{GKP12}}}\to\underbrace{\Gamma(F,\Omega^i_F/\tor)}_{=0 \text{ by~\cite[5.22]{KM98}} \atop \text{and~\cite[1.2]{Nam01}}}, \]
which finishes the proof of the claim.
\end{proof}

In the sequel we denote the inclusion of the complement of $F$ by $j:\tilde{X}\backslash F\to \tilde{X}$. The sheaf $j_!\bC_{\tilde{X}\backslash E}$ is obtained from the constant sheaf on $\tilde{X}\backslash F$ by extension by zero. We equip the resolution
\[0\to j_!\bC_{\tilde{X}\backslash F}\to\sI_F\xrightarrow{\text{d}}\sI_F\cdot\Omega^1_{\tilde{X}}(log\, F)\xrightarrow{\text{d}}\cdots\xrightarrow{\text{d}}\sI_F\cdot \Omega^n_{\tilde{X}}(log\, F)\to 0 \]
with the \emph{filtration b\^{e}te} and consider the resulting spectral sequence
\[E^{i,j}_1=R^j\pi_*\bigl(\sI_F\cdot\Omega^i_{\tilde{X}}(log\, F)\bigr)\implies R^{i+j}\pi_*j_!\bC_{\tilde{X}\backslash F}=E^{i+j}_\infty.\]
Using Claim~\ref{claim-extension-theorem-version} the cohomology groups $h^i(\star)$ of the sequence $(\star)$ appear in the five-term exact sequence as follows:
\begin{equation}\label{eqn-five-term-klt-poincare} 0\to \underbrace{h^1(\star)}_{=E^{1,0}_2}\to E^1_\infty\to E^{0,1}_2\to \underbrace{h^2(\star)}_{=E^{2,0}_2}\to E^2_\infty\end{equation}
Observe that
\begin{itemize}
 \item $E^{i+j}_\infty=0$ for $i+j>0$ by the proof of~\cite[Lem.~14.4]{GKKP11}, and
 \item $E^{0,1}_1=0$, since $X$ has rational singularities by~\cite[Thm.~5.22]{KM98}.  
\end{itemize}
These facts together with Sequence~(\ref{eqn-five-term-klt-poincare}) imply Lemma~\ref{lem-poincare-low-degrees-first-kind}. 
\end{proof}

\begin{proof}[Proof of Items~(\ref{it-cor-contr-klt-base-space-1}) and~(\ref{it-cor-contr-klt-base-space-3}) of Corollary~\ref{cor-contr-klt-base-space}]
These claims are immediate consequences of Theorem~\ref{thm-h-poincare-contraction} and Propositions~\ref{prop-h-refl-klt} and~\ref{prop-h-refl-isolated-rational}.
\end{proof}

\begin{proof}[Proof of Item~(\ref{it-cor-contr-klt-base-space-2}) of Corollary~\ref{cor-contr-klt-base-space}]
The exactness in degrees $>2$ follows immediately from Item~(\ref{it-cor-contr-klt-base-space-1}). The exactness in degrees $\leq 2$ is covered by~\cite[Prop.~2.5]{CF02} and Lemma~\ref{lem-poincare-low-degrees-first-kind}.
\end{proof}

\section{On the degeneration of the reflexive Hodge-de Rham spectral sequence}\label{sec-degeneration}
We first prove Item~\ref{it-intro-ex-degeneration-surface} of Proposition~\ref{intro-ex-degeneration}. Recall that we claimed the following.

\begin{prop}\label{prop-degeneration-rat-surface}
Let $X$ be a normal projective surface with rational singularities. Then the spectral sequence $E^{i,j}_1=H^j(X,\Omega^{[i]}_X)\implies H^{i+j}(X,\bC)$ degenerates at $E_1$.
\end{prop}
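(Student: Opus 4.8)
The plan is to prove degeneration by a dimension count carried out on a resolution. By \cite[Prop.~2.5]{CF02} the complex $(\star)$ is exact at every point of $X$, so $\Omega^{[\bullet]}_X$ is a resolution of $\bC_X$ and $(\star\star)$ is its Hodge--de Rham spectral sequence. For any first-quadrant spectral sequence $\sum_{i+j=k}\dim E_1^{i,j}\ge\dim H^k$, with equality for all $k$ if and only if $E_1=E_\infty$; hence it suffices to verify $\sum_{i+j=k}h^j(X,\Omega^{[i]}_X)=b_k(X)$ for all $k$, where $b_k(X):=\dim_{\bC}H^k(X,\bC)$. I would fix a strong projective resolution $\pi\colon\tilde X\to X$ and write $r$ for the number of $\pi$-exceptional prime divisors, and use $h^{p,q}(\tilde X):=\dim H^q(\tilde X,\Omega^p_{\tilde X})$. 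Recall $\Omega^{[i]}_X\cong\pi_*\Omega^i_{\tilde X}$ for $i=0,1,2$ by Equation~(\ref{eqn-refl-equals-f-k}) (normality for $i=0$).

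First I would dispose of the extremal Hodge columns: rationality gives $R^q\pi_*\sO_{\tilde X}=0$ for $q>0$, and Grauert--Riemenschneider gives $R^q\pi_*\Omega^2_{\tilde X}=R^q\pi_*\omega_{\tilde X}=0$ for $q>0$, so $h^j(X,\sO_X)=h^{0,j}(\tilde X)$ and $h^j(X,\Omega^{[2]}_X)=h^{2,j}(\tilde X)$ for all $j$. Since the fibres of $\pi$ have dimension $\le1$, one has $R^q\pi_*\Omega^1_{\tilde X}=0$ for $q\ge2$ and $\sT:=R^1\pi_*\Omega^1_{\tilde X}$ is a coherent skyscraper. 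I would compute its length by pushing forward the holomorphic de Rham complex of $\tilde X$: in the spectral sequence $E_1^{p,q}=R^q\pi_*\Omega^p_{\tilde X}\Rightarrow R^{p+q}\pi_*\bC_{\tilde X}$ the bottom row is $\sO_X\to\Omega^{[1]}_X\to\Omega^{[2]}_X$, which by exactness of $(\star)$ has cohomology sheaves $\bC_X$ (in degree $0$) and $0$; the $q=1$ row collapses, by the two vanishings above, to $\sT$ in position $p=1$, and the higher rows vanish. Thus this sequence degenerates at $E_2$ and yields $R^1\pi_*\bC_{\tilde X}=0$ and $R^2\pi_*\bC_{\tilde X}\cong\sT$, so by Fact~\ref{fact-top-reso} $\operatorname{length}\sT=\sum_{x\in X_\sing}\dim H^2(\pi^{-1}(x)_\red,\bC)=r$. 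These same direct images run the Leray spectral sequence $H^p(X,R^q\pi_*\bC_{\tilde X})\Rightarrow H^{p+q}(\tilde X,\bC)$, whose $d_2$ vanishes for degree reasons and whose $d_3$ vanishes because the classes $[E_i]\in H^2(\tilde X,\bC)$ restrict onto a spanning set of $\bigoplus_{x}H^2(\pi^{-1}(x)_\red,\bC)$ (the intersection form of the $E_i$ lying over each $x$ is negative definite by Grauert's contractibility criterion, hence invertible). Reading off graded pieces gives $b_k(\tilde X)=b_k(X)$ for $k\neq2$ and $b_2(\tilde X)=b_2(X)+r$.

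The crux, and the step I expect to be most delicate, is the column $H^\bullet(X,\Omega^{[1]}_X)$: I must show that the Leray differential $d_2\colon H^0(X,\sT)\to H^2(X,\Omega^{[1]}_X)$ for $\pi_*\Omega^1_{\tilde X}$ vanishes. I would obtain this from Serre duality on the projective Cohen--Macaulay surface $X$: using $\omega_X=\Omega^{[2]}_X$ and the perfect wedge pairing $\Omega^{[1]}_X\otimes\Omega^{[1]}_X\to\Omega^{[2]}_X$ (perfect on $X_\sm$, hence on $X$ by reflexivity), one gets $H^2(X,\Omega^{[1]}_X)^\vee\cong\Hom_X(\Omega^{[1]}_X,\omega_X)\cong H^0(X,\Omega^{[1]}_X)$, whence $h^2(X,\Omega^{[1]}_X)=h^0(X,\Omega^{[1]}_X)=h^0(\tilde X,\Omega^1_{\tilde X})=h^{1,0}(\tilde X)=h^{1,2}(\tilde X)$, the last equality by Hodge and Serre symmetry on $\tilde X$. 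The Leray sequence for $\pi_*\Omega^1_{\tilde X}$ gives $h^{1,2}(\tilde X)=h^2(X,\Omega^{[1]}_X)-\operatorname{rk}(d_2)$ (the contributions $E_\infty^{1,1}$, $E_\infty^{0,2}$ vanishing), so $\operatorname{rk}(d_2)=0$. Consequently $h^0(X,\Omega^{[1]}_X)=h^{1,0}(\tilde X)$ and $h^1(X,\Omega^{[1]}_X)=h^{1,1}(\tilde X)-\operatorname{length}\sT=h^{1,1}(\tilde X)-r$.

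Finally I would assemble the count, using the Hodge decomposition of the smooth projective surface $\tilde X$: the sum $\sum_{i+j=k}h^j(X,\Omega^{[i]}_X)$ equals $1=b_0(X)$ for $k=0$; $h^{0,1}(\tilde X)+h^{1,0}(\tilde X)=b_1(\tilde X)=b_1(X)$ for $k=1$; $h^{0,2}(\tilde X)+(h^{1,1}(\tilde X)-r)+h^{2,0}(\tilde X)=b_2(\tilde X)-r=b_2(X)$ for $k=2$; $h^{2,1}(\tilde X)+h^{1,2}(\tilde X)=b_3(\tilde X)=b_3(X)$ for $k=3$; and $h^{2,2}(\tilde X)=1=b_4(X)$ for $k=4$. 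By the reduction in the first paragraph, $(\star\star)$ degenerates at $E_1$. Thus the two non-formal inputs are the length computation for $R^1\pi_*\Omega^1_{\tilde X}$, which genuinely exploits the exactness of $(\star)$, and the Serre-duality identity forcing $d_2=0$; everything else is Leray bookkeeping and Hodge theory on $\tilde X$.
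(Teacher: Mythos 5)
Your proof is correct, but it takes a genuinely different route from the paper's. You carry out a full dimension count: you compute \emph{every} entry of the $E_1$-page in terms of the resolution, including the delicate $(1,1)$-spot, via two extra spectral-sequence arguments — pushing forward the holomorphic de Rham complex of $\tilde X$ and using the sheaf-level exactness of $(\star)$ to identify $R^1\pi_*\Omega^1_{\tilde X}$ with $R^2\pi_*\bC_{\tilde X}$ (hence of length $r$), and killing the Leray differential $d_2\colon H^0(X,R^1\pi_*\Omega^1_{\tilde X})\to H^2(X,\Omega^{[1]}_X)$ by the Serre-duality count $h^2(X,\Omega^{[1]}_X)=h^0(X,\Omega^{[1]}_X)$ — and then match $\sum_{i+j=k}\dim E_1^{i,j}$ against $b_k(X)$ in every degree. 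The paper instead establishes only the equalities away from the problematic entries: $\dim H^j(X,\Omega^{[i]}_X)=\dim H^j(\tilde X,\Omega^i_{\tilde X})$ for $(i,j)\neq(1,1)$ (Claim~\ref{claim-degeneration-reflexive-hodge-numbers}, using the same inputs as you: rationality, relative Kodaira/Grauert--Riemenschneider, and the same Serre-duality identity) and $\dim H^k(X,\bC)=\dim H^k(\tilde X,\bC)$ for $k\neq 2$ (Claim~\ref{claim-degeneration-betti-numbers}, by the same negative-definiteness/surjectivity argument you use for $d_3$), and then observes that a nonzero differential would force strict inequalities $\sum_{a+b=k}\dim E^{a,b}_\infty<\sum_{a+b=k}\dim E^{a,b}_1$ in \emph{two consecutive} total degrees, at least one of which is $\neq 2$, contradicting the two claims; this sidesteps the $(1,1)$-entry and $b_2(X)$ entirely. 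Your approach is longer but buys more: the explicit formulas $h^1(X,\Omega^{[1]}_X)=h^{1,1}(\tilde X)-r$, $b_2(X)=b_2(\tilde X)-r$, and the vanishing of the Leray $d_2$, all of which the paper's shortcut never produces; conversely, the paper's argument needs the exactness of $(\star)$ only to make the spectral sequence $(\star\star)$ converge to $H^{\ast}(X,\bC)$, whereas you use it a second time, essentially, in the length computation for $R^1\pi_*\Omega^1_{\tilde X}$.
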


\begin{proof}[Proof of Proposition~\ref{prop-degeneration-rat-surface}]
Let $\pi:\tilde{X}\to X$ be a strong resolution such that $\tilde{X}$ is a projective complex manifold. First, we prove two preparatory claims.

\begin{claim}\label{claim-degeneration-reflexive-hodge-numbers}
For any $(i,j)\neq (1,1)$ we have $\dim_\bC H^j(X,\Omega^{[i]}_X)=\dim_\bC H^j(\tilde{X},\Omega^i_{\tilde{X}})$.
\end{claim}

\begin{proof}[Proof of Claim~\ref{claim-degeneration-reflexive-hodge-numbers}]
Observe that $\Omega^{[i]}_X=\pi_*\Omega^i_{\tilde{X}}$ by Proposition~\ref{prop-h-refl-isolated-rational} and its proof. This already settles the case $j=0$. The other cases are proved in the sequel.
\[
\begin{array}{ll}
  i=0: & \text{by rationality of the singularities of } X,\\
  i=2: & \text{by Kodaira vanishing~\cite[Cor.~2.68]{KM98},}\\
  (i,j)=(1,2): & \text{by Serre duality } \dim_\bC H^2(X,\Omega^{[1]}_X)=\dim_\bC H^0(X,\Omega^{[1]}_X).
\end{array}
\]
This finishes the proof of Claim~\ref{claim-degeneration-reflexive-hodge-numbers}.
\end{proof}

\begin{claim}\label{claim-degeneration-betti-numbers}
For any $k\neq 2$ we have $\dim_\bC H^k(X,\bC)=\dim_\bC H^k(\tilde{X},\bC)$.
\end{claim}

\begin{proof}[Proof of Claim~\ref{claim-degeneration-betti-numbers}]
Let $F=\sum_iF_i=\pi^{-1}(X_\singu)_\red$ be the reduced exceptional set. The Leray spectral sequence for the sheaf cohomology of the constant sheaf $\bC_{\tilde{X}}$ is 
\[\tilde{E}_2^{i,j}=H^i(X,R^j\pi_*\bC_{\tilde{X}})\implies H^{i+j}(\tilde{X},\bC).\]
Since $\pi$ is a homeomorphism over $X_\sm$ we have $\tilde{E}_2^{i,j}=$ for $i>0$ and $j>0$. Moreover we know that $\tilde{E}_2^{0,j}=H^j(F,\bC)$ by Fact~\ref{fact-top-reso}. Thus the spectral sequence machinery establishes the following long exact sequence
\[\begin{tikzpicture}[>=angle 90]
  \matrix (m) [matrix of math nodes,row sep=1.5em, column sep=3em]
  {
|[name=00]|0 &  |[name=01]| H^1(X,\bC) &  |[name=02]| H^1(\tilde{X},\bC) &  |[name=03]| H^1(F,\bC) & \\
 &  |[name=11]| H^2(X,\bC) &  |[name=12]| H^2(\tilde{X},\bC) & |[name=13]| H^2(F,\bC) & |[name=14]|  \\
 &  |[name=21]| H^3(X,\bC) &  |[name=22]| H^3(\tilde{X},\bC) & |[name=23]| 0 & |[name=24]|  \\
 &  |[name=31]| H^4(X,\bC) &  |[name=32]| H^4(\tilde{X},\bC) & |[name=33]| 0. & |[name=34]|  \\
};
 \draw[->,font=\scriptsize]
 (00) edge (01)
 (01) edge (02)
 (02) edge (03)
 (11) edge (12)
 (12) edge (13)
 (21) edge (22)
 (22) edge (23)
 (31) edge (32)
 (32) edge (33);
 \path[->, font=\scriptsize]
 (03) edge[out=-5, in=175] node[pos=0.6, above] {$\tilde{d}_2$} (11)
 (13) edge[out=-5, in=175] node[pos=0.6, above] {$\tilde{d}_3$} (21)
 (23) edge[out=-5, in=175] node[pos=0.6, above] {$\tilde{d}_4$} (31);
\end{tikzpicture}\]
In this sequence, for any $r\in\{2,3,4\}$, the map $\tilde{d}_r$ can be identified with the boundary map $\tilde{E}_r^{0,r-1}\to\tilde{E}_r^{r,0}$.

Observe that $H^1(F,\bC)=0$ since the exceptional set over a rational singularity is a tree of rational curves. Thus Claim~\ref{claim-degeneration-betti-numbers} is equivalent to the map $H^2(\tilde{X},\bC)\to H^2(F,\bC)$ being surjective. This is true since by negative definiteness of the intersection matrix $(F_i\cdot F_j)_{i,j}$ the vector space $H^2(F,\bC)$ is spanned by the images of the cohomology classes $[F_i]\in H^2(\tilde{X},\bC)$ of the irreducible components. 
\end{proof}

In order to prove that the spectral sequence of Proposition~\ref{prop-degeneration-rat-surface} degenerates at $E_1$, we assume to the contrary that it does not degenerate and subsequently establish a contradiction. So let us assume that some boundary map $d^{i,j}_r:E^{i,j}_r\to E^{i+r,j-r+1}_r$ of the $E_r$-page is non-zero for some $r\geq 1$. Then we know that $\dim_\bC E_{r+1}^{i,j}<\dim_\bC E_r^{i,j}$ and $\dim_\bC E_{r+1}^{i+r,j-r+1}<\dim_\bC E_r^{i+r,j-r+1}$. In particular, we see that
\begin{equation}\textstyle \label{eqn-i-j}dim_\bC E^{i+j}_\infty = \sum_{a+b=i+j} \dim_\bC E^{a,b}_\infty < \sum_{a+b=i+j} \dim_\bC E^{a,b}_1\end{equation}
and 
\begin{equation}\textstyle \label{eqn-i-j-1}\dim_\bC E^{i+j+1}_\infty = \sum_{a+b=i+j+1} \dim_\bC E^{a,b}_\infty < \sum_{a+b=i+j+1} \dim_\bC E^{a,b}_1.\end{equation}
Let now $k\in\{i+j,i+j+1\}\backslash\{2\}$. Then the inequality
\[\dim_\bC \underbrace{H^k(X,\bC)}_{=E^k_\infty}\,\,\, \stackbin[(\ref{eqn-i-j-1})]{(\ref{eqn-i-j})}{<}\,\,\, \sum_{a+b=k}\dim_\bC \underbrace{H^b(X,\Omega^{[a]}_X)}_{=E^{a,b}_2} \,\xlongequal[\text{and } k\neq 2]{\text{Claim }\ref{claim-degeneration-reflexive-hodge-numbers}} \,\dim_\bC H^k(\tilde{X},\bC) \]
contradicts Claim~\ref{claim-degeneration-betti-numbers}. This yields the desired degeneration at $E_1$.
\end{proof}

The counterexample in Item~\ref{it-intro-ex-degeneration-threefold} of Proposition~\ref{intro-ex-degeneration} is constructed as follows.

\begin{const}\label{const-counterexample-degeneration}
For $k\geq 2$ the hypersurface singularity
\[ p=(0,0,0,0)\in X^0=\{\underbrace{x^2+y^2+z^2+w^{2k}}_{=:f(x,y,z,w)}=0\}\subset \bC^4\]
is terminal since it admits a small resolution by \cite[Ex.~2.2]{L81}. The torus $\bC^*$ acts on $X^0$ by $t\cdot (x,y,z,w)=(t^kx,t^ky,t^kz,tw)$. Let $X'\subset\bP^4$ be the Zariski closure of $X^0\subset\bC^4\subset\bP^4$. The torus action can be extended to an algebraic action on $X'$. Finally let $X\to X'$ be obtained by applying the functorial resolution in~\cite[Thm.~3.36]{Koll07} to the set of singular points $X'_\singu\backslash\{p\}$ away from $p\in X'$. In particular, the torus action lifts to an algebraic action on $X$.

Despite the algebraicity of the construction we regard $X$ as a complex space rather than a variety.
\end{const}

\begin{rem}
Construction~\ref{const-counterexample-degeneration} can also be performed if $k=1$. In this case the projective variety $X$ is toric so that the spectral sequence degenerates at $E_1$ by~\cite[Thm.~12.5]{Dan78}.
\end{rem}

Observe that by construction the singularity $p\in X$ satisfies all properties specified in Proposition~\ref{intro-ex-degeneration}.

\begin{prop}\label{prop-degeneration-counterexample}
Let $X$ be the projective complex space of Construction~\ref{const-counterexample-degeneration}, $k\geq 2$ arbitrary. Then the spectral sequence
\[E^{i,j}_1=H^j(X,\Omega^{[i]}_X)\implies H^{i+j}(X,\bC)=E^{i+j}_\infty \]
does not degenerate at $E_1$.
\end{prop}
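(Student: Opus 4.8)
The plan is to compute enough Hodge and Betti numbers of $X$ to show that the $E_1$-page is strictly bigger than the target $E_\infty$-page in total rank in some degree, and then to exhibit the surviving differential explicitly. Since $X$ is constructed by resolving all singularities of $X'\subset\bP^4$ except the single quasihomogeneous terminal hypersurface point $p$, I would first observe that $p\in X$ is an isolated rational (indeed terminal, hence rational) singularity, so Proposition~\ref{prop-h-refl-isolated-rational} gives $\Omega^{[i]}_X\cong\pi_*\Omega^i_{\tilde X}$ for a strong resolution $\pi\colon\tilde X\to X$, and moreover the reflexive de Rham complex $(\star)$ is \emph{exact} at $p$ by Corollary~\ref{cor-poincare-klt-isol-rat}, since $\sg{X}{p}$ is quasihomogeneous hence holomorphically contractible to $\{p\}$ (Example~\ref{ex-action-contraction}). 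This is the key structural input: the \emph{local} reflexive de Rham complex has no cohomology in positive degrees, so the hypercohomology of the reflexive de Rham complex on $X$ equals $H^*(X,\bC)$, and the reflexive Hodge--de Rham spectral sequence is genuinely the one abutting to the correct topological invariant.

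**Key steps.** First I would record the numerics of the resolution: since $p\in X$ admits a small resolution (Construction~\ref{const-counterexample-degeneration} via \cite{L81}), one has $\pi_*\sO_{\tilde X}=\sO_X$, $R^i\pi_*\sO_{\tilde X}=0$ for $i>0$, and more generally $R^i\pi_*\Omega^j_{\tilde X}$ is supported at $p$; using the isolated Du Bois/rational property (Fact~\ref{fact-isolated-du-bois}) these higher direct images can be read off from $H^i(E,\sO_E)$ and the exceptional configuration, where $E$ is the exceptional divisor of the small resolution of $p$. Second, I would compare $H^j(X,\Omega^{[i]}_X)$ with $H^j(\tilde X,\Omega^i_{\tilde X})$ via the Leray spectral sequence for $\pi$, exactly as in the surface case (Claims~\ref{claim-degeneration-reflexive-hodge-numbers} and~\ref{claim-degeneration-betti-numbers}), and compare $H^k(X,\bC)$ with $H^k(\tilde X,\bC)$ using Fact~\ref{fact-top-reso} — the discrepancies are concentrated around the middle degree and are controlled by $H^2(E,\bC)$ (or $H^1$, $H^3$ of $E$). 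Third — the crux — I would show that for this particular threefold the sum $\sum_{a+b=k}\dim_\bC H^b(X,\Omega^{[a]}_X)$ strictly exceeds $\dim_\bC H^k(X,\bC)$ for some $k$ (presumably $k=3$), which forces a nonzero differential on some page; equivalently, I would identify a nonzero $d_r$ directly, e.g. a class in $H^1(X,\Omega^{[2]}_X)$ not killed by the previous differentials and not lifting to $H^3(X,\bC)$. The cleanest route is to run the same contradiction argument as in Proposition~\ref{prop-degeneration-rat-surface} in reverse: if the sequence degenerated, then $\dim H^k(X,\bC)=\sum_{a+b=k}\dim H^b(X,\Omega^{[a]}_X)$ for all $k$, and I would derive from this, together with the Leray comparisons above, an equality of Betti numbers or Hodge numbers of $X$ with those of $\tilde X$ that fails because of the nontrivial cohomology of the exceptional set of the small resolution of $p$ (which, for $k\ge2$, is a chain/configuration of rational curves contributing to $H^2$ but \emph{not} to $H^1$ of $E$ — breaking the surface-case symmetry).

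**Main obstacle.** The hard part will be the explicit computation of the local invariants of the quasihomogeneous terminal singularity $p\in X^0=\{x^2+y^2+z^2+w^{2k}=0\}$: one must pin down the exceptional divisor $E$ of its small resolution and its cohomology (in particular show $H^2(E,\bC)\ne0$ while $H^1(E,\bC)=0$), compute the relevant $R^i\pi_*\Omega^j_{\tilde X}$ as skyscrapers at $p$, and track how these feed through \emph{two} Leray spectral sequences (for $\bC$ and for each $\Omega^i$) into the Euler-characteristic-style bookkeeping. A subtlety is that, unlike the surface case, Serre duality no longer pairs up the off-diagonal Hodge numbers into a single anomaly at $(1,1)$; there will be anomalies in a genuine range of bidegrees, and one must be careful that they do not accidentally cancel in the sum $\sum_{a+b=k}$. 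I expect the weights $(k,k,k,1)$ of the $\bC^*$-action, and the fact that $f$ has an isolated singularity with a computable Milnor algebra, to make the local computation tractable — e.g. via the graded pieces of the Milnor algebra or via an explicit toric small resolution — but this is where the real work lies.
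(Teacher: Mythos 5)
Your plan has a genuine gap at its crux. Non-degeneration is indeed equivalent to the strict inequality $\sum_{a+b=k}\dim_\bC H^b(X,\Omega^{[a]}_X)>\dim_\bC H^k(X,\bC)$ for some $k$, but you never establish it, and the route you sketch does not plausibly produce it. The groups you would need — $H^3(X,\bC)$, $H^1(X,\Omega^{[2]}_X)$, $H^2(X,\Omega^{[1]}_X)$, etc. — depend on the \emph{global} geometry of $X$, including the projective closure in $\bP^4$ and the functorial resolution of the singularities at infinity, and are not determined by local invariants of $p\in\{x^2+y^2+z^2+w^{2k}=0\}$ (Milnor algebra, exceptional set of the small resolution). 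Unlike the surface case of Proposition~\ref{prop-degeneration-rat-surface}, where Serre duality and Kodaira vanishing reduce everything to one anomalous spot $(1,1)$ and a surjectivity statement forced by negative definiteness, in dimension three the comparison with a strong resolution involves several nonvanishing $R^j\pi_*\Omega^i_{\tilde X}$, Leray spectral sequences for each $\Omega^i_{\tilde X}$ that need not degenerate, and restriction maps to the exceptional fiber whose ranks are global, not local, data. You acknowledge the danger that the anomalies "accidentally cancel in the sum," but that is exactly the point: without a new mechanism there is no reason the antidiagonal totals should exceed the Betti numbers, and your heuristic (exceptional rational curves contribute to $H^2(E)$ but not $H^1(E)$) applies verbatim to rational surface singularities, where degeneration \emph{does} hold — so it cannot by itself be the source of the failure. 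Note also that the small resolution is used in the construction only to certify terminality; the comparisons via Fact~\ref{fact-top-reso} and $\Omega^{[i]}_X\cong\pi_*\Omega^i_{\tilde X}$ are run with a strong (divisorial) resolution, and the two should not be conflated.

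The paper's actual proof avoids computing any Hodge or Betti number of $X$. It exploits the $\bC^*$-action on $X$ inherited from the weights $(k,k,k,1)$: the torus acts trivially on $H^*(X,\bC)$, so if the spectral sequence degenerated at $E_1$, functoriality would force the action on every $H^j(X,\Omega^{[i]}_X)$ to be trivial (Observation~\ref{obs-invariants}). One then writes the equivariant five-term exact sequence of the local-to-global Ext spectral sequence for $\sExt^\bullet_{\sO_X}(\Omega^{[1]}_X,\Omega^{[3]}_X)$, identifies the global $\mathrm{Ext}^1$ with $H^2(X,\Omega^{[1]}_X)'$ by Serre duality (using that $X$ is Cohen--Macaulay), and computes by hand (Lemma~\ref{claim-ext-weights}, via the presentation $0\to\sO_{X,p}\mathrm{d}f\to\bigoplus\sO_{X,p}\mathrm{d}x_i\to\Omega^{[1]}_{X,p}\to 0$) that $\mathrm{Ext}^1_{\sO_{X,p}}(\Omega^{[1]}_{X,p},\Omega^{[3]}_{X,p})$ carries the nonzero weights $-k+1,\dots,k-1$; since this nontrivially-acted group sits in an exact equivariant sequence between groups on which the action would be trivial, one gets a contradiction. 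If you want to salvage your approach you would in effect have to prove the dimension inequality, which amounts to redoing this local obstruction in a far less tractable form; I would recommend adopting the equivariance argument instead.
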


\begin{proof}[Proof of Proposition~\ref{prop-degeneration-counterexample}] The torus $\bC^*$ acts both on the pointed space $p\in X$ and the space germ $\sg{X}{p}$. The proof relies on a careful study of the induced action of $\bC^*$ on various vector spaces naturally associated with $p\in X$ and $\sg{X}{p}$. All naturally defined maps between these vector spaces are compatible with the torus actions.

For example the torus acts by the identity on the discrete groups $H^k(X,\bZ)$. By naturality of the universal coefficient theorem for cohomology it likewise acts by the identity on $H^k(X,\bC)$ for any $k$. This together with the functoriality of the spectral sequence immediately yields the following observation.

\begin{obs}\label{obs-invariants}
If the spectral sequence $E^{i,j}_1=H^j(X,\Omega^{[i]}_X)\implies H^{i+j}(X,\bC)$ degenerates at $E_1$, then the torus $\bC^*$ acts trivially on $H^j(X,\Omega^{[i]}_X)$ for any $(i,j)$.
\end{obs}

From now on, we assume that the spectral sequence $E^{i,j}_r$ degenerates at $E_1$. Recall the local-to-global Ext spectral sequence
\[H^i(X,\sExt_{\sO_X}^j(\Omega^{[1]}_X,\Omega^{[3]}_X))\implies\text{Ext}^{i+j}_{\sO_X}(\Omega^{[1]}_X,\Omega^{[3]}_X),\]
which is a special case of the Grothendieck spectral sequence. Since $\sHom_{\sO_X}(\Omega^{[1]}_X,\Omega^{[3]}_X)\cong\Omega^{[2]}_X$, its five-term exact sequence is given by
\[0\to H^1(X,\Omega^{[2]}_X)\to \overbrace{\underbrace{\text{Ext}^1_{\sO_X}(\Omega^{[1]}_X,\Omega^{[3]}_X)}_{\bC^* \text{ acts trivially,}\atop\text{by Obs. }\ref{obs-invariants}}}^{\text{X terminal }\implies\text{ Cohen-Macaulay}\atop\cong\, H^2(X,\Omega^{[1]}_X)'\text{ by Serre duality}} \to\underbrace{\text{Ext}^1_{\sO_{X,p}}(\Omega^{[1]}_{X,p},\Omega^{[3]}_{X,p})}_{\bC^*\text{ acts non-trivially},\atop\text{by Lemma }\ref{claim-ext-weights}}\to \underbrace{H^2(X,\Omega^{[2]}_X)}_{\bC^* \text{ acts trivially,}\atop\text{by Obs. }\ref{obs-invariants}}.\]
Since the sequence is $\bC^*$-invariant, this leads to the desired contradiction by the following Lemma~\ref{claim-ext-weights}.
\end{proof}

\begin{lem}\label{claim-ext-weights}
The torus $\bC^*$ acts on $\text{Ext}^1_{\sO_{X,p}}(\Omega^{[1]}_{X,p},\Omega^{[3]}_{X,p})$ with weights $-k+1,-k+2,\cdots,k-2,k-1$. Each of these weights has multiplicity one.
\end{lem}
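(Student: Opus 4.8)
The plan is to reduce the statement to a graded commutative-algebra computation on the affine hypersurface cone, reading off the $\bC^*$-weights from the grading. Near $p$ the germ $\sg{X}{p}$ is the isolated, $3$-dimensional hypersurface germ $0\in\{f=0\}\subset\bC^4$ with $f=x^2+y^2+z^2+w^{2k}$, hence Gorenstein. Set $R=\bC[x,y,z,w]/(f)$, graded by the $\bC^*$-action so that $x,y,z$ have weight $k$ and $w$ has weight $1$ (the opposite sign convention only negates all weights, which leaves the final, symmetric answer unchanged); then $f$ is homogeneous of weight $2k$. Since $\Omega^{[1]}_X$ and $\Omega^{[3]}_X$ are locally free away from $p$, the module $\text{Ext}^1_{\sO_{X,p}}(\Omega^{[1]}_{X,p},\Omega^{[3]}_{X,p})$ has finite length, agrees with the corresponding graded $\text{Ext}$-module over $R$, and its $\bC^*$-weights are precisely its nonzero graded pieces; so it suffices to compute that graded module.

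I would first identify the two reflexive sheaves. As $X$ is Gorenstein of dimension $3$, $\Omega^{[3]}_{X,p}\cong\omega_{X,p}$ (the dualizing sheaf) is free of rank one; by adjunction it is generated by the Poincar\'e residue of $\frac{dx\wedge dy\wedge dz\wedge dw}{f}$, which has weight $(3k+1)-2k=k+1$, so $\omega_X\cong R(-(k+1))$ as graded modules. For $\Omega^{[1]}$ I would use the conormal sequence
\[0\to R(-2k)\xrightarrow{\,\cdot df\,}R(-k)^{\oplus 3}\oplus R(-1)\to\Omega^1_X\to 0,\]
which is left-exact because $R$ is a domain and $df\neq 0$; since the submodule and the middle term have depth $3$ at $p$, the depth lemma gives $\text{depth}_p\Omega^1_X\geq 2$, so $\Omega^1_X$ is torsion-free and $(S_2)$, hence reflexive, and $\Omega^{[1]}_{X,p}=\Omega^1_{X,p}$. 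This is the only step that genuinely uses $\dim X\geq 3$, and it is what lets us avoid passing to a reflexive hull.

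Next I would apply $\text{Hom}_R(-,\omega_X)$ to the conormal sequence. The two outer terms are free, so their higher $\text{Ext}$ into $\omega_X$ vanishes, giving
\[\text{Ext}^1_R(\Omega^1_X,\omega_X)\cong\text{coker}\!\Bigl(\omega_X(k)^{\oplus 3}\oplus\omega_X(1)\xrightarrow{(df)^{\top}}\omega_X(2k)\Bigr)\cong\bigl(\omega_X\otimes_R R/J_f\bigr)(2k)\cong\bigl(R/J_f\bigr)(k-1),\]
where $J_f=(f_x,f_y,f_z,f_w)=(x,y,z,w^{2k-1})\subset R$ is the Jacobian ideal, and the net shift $k-1$ is the twist $2k$ from dualizing $R(-2k)$ combined with the twist $-(k+1)$ carried by $\omega_X$. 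Since $f$ is quasi-homogeneous, Euler's relation gives $f\in J_f$, so $R/J_f\cong\bC[w]/(w^{2k-1})$ with $\bC$-basis $1,w,\dots,w^{2k-2}$ sitting in weights $0,1,\dots,2k-2$; the shift by $k-1$ turns these into the weights $-(k-1),-(k-2),\dots,k-1$, each with multiplicity one. That is the assertion.

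The main obstacle is the careful bookkeeping of the three independent grading shifts — the one built into the presentation of $\Omega^1_X$, the one introduced by dualizing, and the one carried by $\omega_X$ — since it is their cancellation that centres the final weight range at $0$; a sign slip in the weight of the coordinates or of the residue form would produce an equally innocuous-looking wrong range. The other point genuinely using the hypotheses is the reflexivity of $\Omega^1_X$ in the second step; after that, the computation is the standard Jacobian/Tjurina-ideal calculation for the $A_{2k-1}$ singularity, whose Tjurina number $2k-1$ reassuringly matches the claimed dimension.
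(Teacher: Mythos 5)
Your proof is correct and follows essentially the same route as the paper's: present $\Omega^{[1]}_{X,p}=\Omega^1_{X,p}$ by the conormal sequence, dualize into $\Omega^{[3]}_{X,p}\cong\omega_{X,p}$, and identify the cokernel with the Jacobian quotient $\bC[w]/(w^{2k-1})$ twisted so that the weights are $-k+1,\dots,k-1$, each of multiplicity one. The only cosmetic differences are that the paper justifies $\Omega^1_{X,p}=\Omega^{[1]}_{X,p}$ by citing Greuel's local-cohomology vanishing rather than your depth-lemma argument, and carries out the weight count directly on homogeneous analytic generators (the generator $\lambda$ with $\lambda(\text{d}f)=\frac{dx\wedge dy\wedge dz}{w^{2k-1}}$ of weight $-k+1$) instead of your graded-ring bookkeeping with shifts.
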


\begin{proof}
Observe that $\bigl(\Omega^1_{X,p}\bigr)^\tor\cong H^0_{\{p\}}(X,\Omega^1_X)=0$ by~\cite[Sect.~2.3]{G80}. Moreover the short exact sequence $0\to\Omega^1_X\to\Omega^{[1]}_X\to\bigl(\Omega^1_X\bigr)^\text{cotor}\to 0$ and $H^1_{\{p\}}(X,\Omega^1_X)=0$ by~\cite[Sect.~2.3]{G80} immediately imply that $\bigl(\Omega^1_{X,p}\bigr)^\text{cotor}=0$ so that $\Omega^1_X=\Omega^{[1]}_X$.

In particular there exists a short exact sequence
\begin{equation}\label{seq-degeneration}
0\to\sO_{X,p}\text{d}f\to \underbrace{\sO_{X,p}\text{d}x\oplus\sO_{X,p}\text{d}y\oplus\sO_{X,p}\text{d}z\oplus\sO_{X,p}\text{d}w}_{=:\sF}\xrightarrow{\phi}\Omega^{[1]}_{X,p}\to 0
\end{equation}
of $\sO_{X,p}$-modules with torus action. More precisely, we denote by $\sO_{X,p}\text{d}x\subset\Omega^{[1]}_{X,p}$ the submodule spanned by $\text{d}x$, which is closed by pull-back by the torus action on the space germ $\sg{X}{p}$. A similar definition applies to $\text{d}y,\text{d}z$ and $\text{d}w$. The module $\sO_{X,p}\text{d}f$ is the kernel of the resulting map $\phi$. Again there exist natural isomorphisms $(t\cdot)^*\sO_{X,p}\text{d}f\cong\sO_{X,p}\text{d}f$ of $\sO_{X,p}$-modules so that functoriality of spectral sequences shows that the long exact Ext-sequence
\[\hspace{-10em}\begin{tikzpicture}[>=angle 90]
  \matrix (m) [matrix of math nodes,row sep=1em, column sep=1em]
  {
|[name=00]|0 &  |[name=01]|\Homom_{\sO_{X,p}}(\Omega^{[1]}_{X,p},\Omega^{[3]}_{X,p}) &  |[name=02]|\Homom_{\sO_{X,p}}(\sF,\Omega^{[3]}_{X,p}) &  |[name=03]|\Homom_{\sO_{X,p}}(\sO_{X,p}\text{d}f,\Omega^{[3]}_{X,p}) \\
 &  |[name=11]|\text{Ext}^1_{\sO_{X,p}}(\Omega^{[1]}_{X,p},\Omega^{[3]}_{X,p}) &  |[name=12]|\text{Ext}^1_{\sO_{X,p}}(\sF,\Omega^{[3]}_{X,p}) & |[name=13]| \\
};
 \draw[->,font=\scriptsize]
 (00) edge (01)
 (01) edge (02)
 (02) edge node[auto] {$\alpha$}(03)
 (11) edge (12);
 \path[->, font=scriptsize]
 (03) edge[out=-5, in=175] (11);
\end{tikzpicture}\]
is compatible with the torus action. The last term is zero since $\sF$ is a projective module. This exhibits the module in question as the cokernel of the map $\alpha$.

A homogeneous generator $\lambda$ of the module $\Homom_{\sO_{X,p}}(\sO_{X,p}\text{d}f,\Omega^{[3]}_{X,p})$ satisfies $\lambda(\text{d}f)=\frac{dx\wedge dy\wedge dz}{w^{2k-1}}$ and is thus of degree $-2k+3k-(2k-1)=-k+1$. Sequence~(\ref{seq-degeneration}) shows that the image of $\alpha$ is spanned by $x\cdot\lambda, y\cdot\lambda,z\cdot\lambda$ and $w^{2k-1}\cdot \lambda$. In particular, a basis of the cokernel of $\alpha$ over $\bC$ is given by the residue classes of $\lambda,w\cdot\lambda,\cdots,w^{2k-2}\cdot\lambda$. This shows the lemma.
\end{proof}

\section{On the Lipman-Zariski conjecture}\label{sec-lz}
In this section we prove Corollary~\ref{cor-lz}. We assume that $T_X$ is locally free and seek to prove that $X$ is smooth. Let us consider the evaluation map 
\begin{equation}\label{eqn-lz-varphi}
\varphi:T_{X,p}\otimes_{\sO_{X,p}}\bC_p\to T_pX
\end{equation}
and its dual
\[\varphi^*:m_p/m_p^2\to\Hom_{\sO_{X,p}}(T_{X,p},\bC_p).\]
It satisfies $\varphi^*([f])(V)=V(f)(p)$ for any $f\in m_p$ and $V\in T_{X,p}$.

\subsection{Proof of Corollary~\ref{cor-lz} if $\varphi$ is the zero map}
Under this additional assumption, the exterior derivative satisfies $d(m_p\cdot\Omega^{[i]}_{X,p})\subset m_p\cdot\Omega^{[i+1]}_{X,p}$. By Theorem~\ref{thm-refl-poincare-contraction} and Remark~\ref{rem-refl-poincare-contraction} the complexes
\[{\textstyle 
\begin{array}{c}\xymatrix@C=25pt@R=0pt{
0 \ar[r] &  m_p\ar[r]^{\text{d}} &   \Omega^{[1]}_{X,p} \ar[r]^(.56){\text{d}} &  \cdots \ar[r]^(.45){\text{d}} &  \Omega^{[n]}_{X,p} \ar[r] &  0 \\
0 \ar[r] &  m_p \ar[r]^(.4){\text{d}} &  m_p\cdot \Omega^{[1]}_{X,p} \ar[r]^(.65){\text{d}} & \cdots \ar[r]^(.35){\text{d}} &  m_p\cdot\Omega^{[n]}_{X,p} \ar[r] &  0 }
\end{array}}
\]
are acyclic. Then so is the quotient complex
\[{\textstyle 
\begin{array}{c}\xymatrix@C=25pt@R=0pt{
0\ar[r] & 0 \ar[r] &  \Omega^{[1]}_{X,p}\otimes\bC_p \ar[r]^(.65){\text{d}} &  \cdots \ar[r]^(.35){\text{d}} &  \Omega^{[n]}_{X,p}\otimes\bC_p \ar[r] &  0.  }
\end{array}}\]
Since $\Omega^{[1]}_{X,p}$ is free, we have $\Omega^{[i]}_{X,p}\otimes\bC_p=\bigwedge^i\bigl(\Omega^{[1]}_{X,p}\otimes\bC_p\bigr)$ and calculate
\[0 = \sum_{i=1}^{n}(-1)^i\cdot dim_\bC(\Omega^{[i]}_{X,p}\otimes\bC_p)=\sum_{i=1}^n(-1)^i\cdot{n \choose i} = -1.\]
This contradiction finishes the proof of Corollary~\ref{cor-lz} if $\varphi$ is the zero map.\qed

\subsection{Proof of Corollary~\ref{cor-lz} in the general case}

\begin{figure}[ht]\centering
\includegraphics[width=110mm]{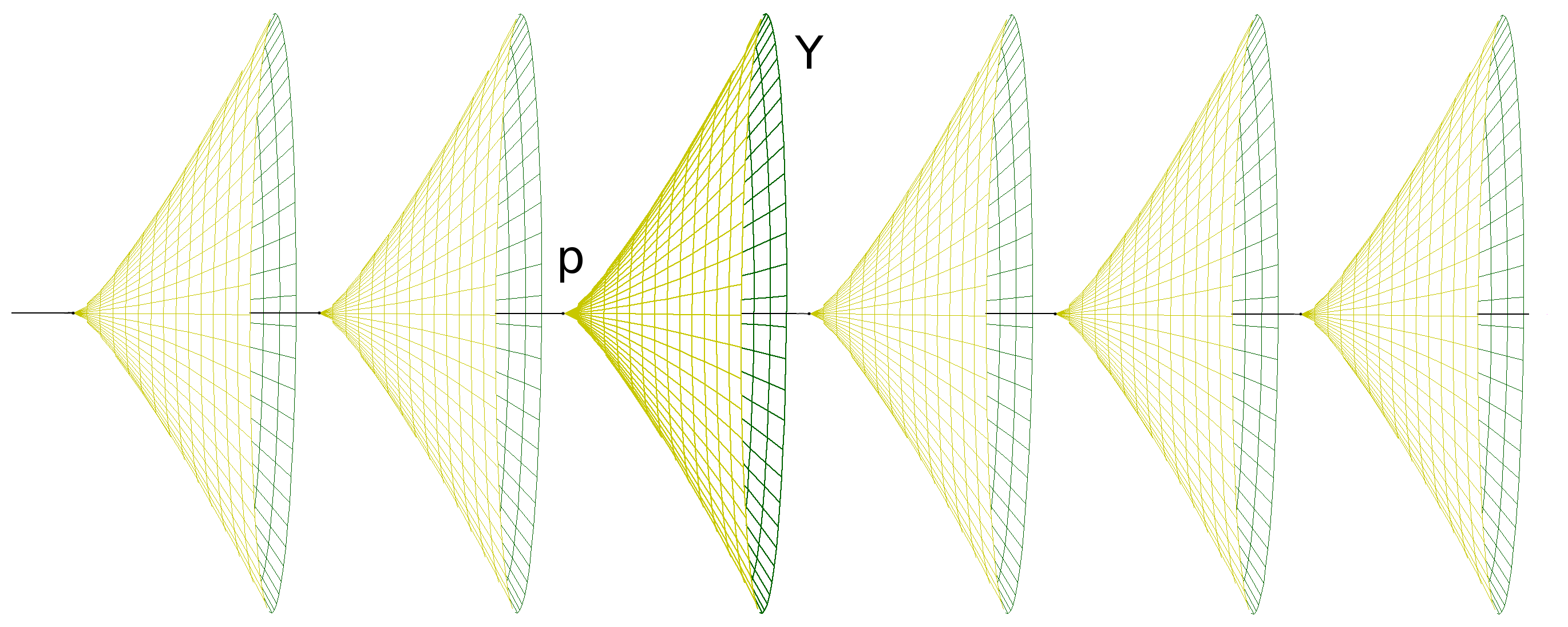}
\setlength{\captionmargin}{2pt}
\caption{Hypothetical situation in the proof of Corollary~\ref{cor-lz}, if $s=1$. The figure shows the thick drawn subspace $Y=\{f_1=0\}_\red$ and further thin drawn level sets of the holomorphic function $f_1:X\to\bC$. The flow map associated with the vector field $V_1$ interchanges these subspaces of $X$.}
\label{fig:figure1}
\end{figure}

By Lemma~\ref{lem-c-star-linearization} we may choose homogeneous functions $f_1,\cdots,f_s\in m_p$ such that $\varphi^*([f_1]),\cdots,\varphi^*([f_s])$ is a basis of $\text{im}(\varphi^*)$. By construction there exist vector fields $V_i\in T_{X,p}$, $1\leq i\leq s$, such that $V_i(f_j)(p)=\delta_{i,j}$.

Let $Y\subset X$ be a representative of the reduced space germ $\{f_1=\cdots=f_s=0\}_\red\subset \sg{X}{p}$ that is sufficiently small so that the vector fields $V_i$ and the functions $f_i$ are defined on an open neighbourhood of $Y$. Observe that the space germ $\sg{Y}{p}\subset\sg{X}{p}$ is $\bC^*$-stable.

For any $1\leq i\leq s$, let $\bC\times X\ni(t,x)\mapsto e^{t\cdot V_i}x\in X$ denote the flow associated with $V_i$. After shrinking $Y$ if necessary, there exist suitable disks $B_i=\{z:|z|<\epsilon_i\}\subset\bC$ with $\epsilon_i>0$ such that the map
\[
{\textstyle \Psi:\prod_iB_i\times Y\to X,\quad (t_1,\cdots,t_s,y)\mapsto e^{t_1\cdot V_1}\cdots e^{t_s\cdot V_s}y\in X}
\]
is well-defined.

\begin{plainclaim}\label{claim-immersion-lz}
The map $\Psi$ induces an open immersion of a neighbourhood of $\xi:=(0,\cdots,0,p)\in\prod_iB_i\times Y$ into $X$.
\end{plainclaim}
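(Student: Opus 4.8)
The plan is to exhibit an explicit holomorphic two‑sided inverse to $\Psi$ near $\xi$. Write $B:=\prod_{i}B_i\subset\bC^s$, a polydisc (in particular smooth), and introduce the ``reversed flow''
\[ \rho:B\times X\supset W\longrightarrow X,\qquad \rho(t_1,\dots,t_s,x):=e^{-t_s\cdot V_s}\cdots e^{-t_1\cdot V_1}x, \]
holomorphic on a neighbourhood $W$ of $(0,\dots,0,p)$ in $B\times X$ as a composition of flows; by construction $\rho\bigl(t,\Psi(t,y)\bigr)=y$ whenever this is defined. Since $Y$ coincides, as a germ at $p$, with the common zero set $\{f_1=\dots=f_s=0\}\subset X$, membership $\rho(t,x)\in Y$ (for $(t,x)$ near $(0,\dots,0,p)$) is governed by the holomorphic map
\[ H:W\longrightarrow\bC^{s},\qquad H(t,x):=\bigl(f_1(\rho(t,x)),\dots,f_s(\rho(t,x))\bigr), \]
which vanishes at $(0,\dots,0,p)$. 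The first step I would carry out is to compute the partial differential of $H$ in the (smooth) $t$‑direction at that point: from $\partial_{t_i}\big|_{t=0}\rho(t,x)=-V_i(x)$ and the normalisation $V_i(f_j)(p)=\delta_{i,j}$ one gets $\partial H_j/\partial t_i\,(0,\dots,0,p)=-\delta_{i,j}$, so this partial differential is invertible.

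Next I would invoke the holomorphic implicit function theorem, with the complex space $X$ serving as a parameter space and $t\in B$ as the variable to be solved for — this is legitimate precisely because the solved variable ranges over the smooth polydisc $B$. It produces an open neighbourhood $U\ni p$ in $X$ and a holomorphic map $\tau=(\tau_1,\dots,\tau_s):U\to B$ with $\tau(p)=0$, $H(\tau(x),x)=0$ on $U$, and the local uniqueness property that $H(t,x)=0$ for $(t,x)$ near $(0,\dots,0,p)$ forces $t=\tau(x)$. Putting $g(x):=\rho(\tau(x),x)$, the vanishing $H(\tau(x),x)=0$ says $g(x)\in\{f_1=\dots=f_s=0\}=Y$ (as points), and since $X$ is normal, hence reduced, the holomorphic map $g:U\to X$ factors through the reduced closed subspace $Y$; thus $\Lambda:=(\tau,g):U\to B\times Y$ is a well‑defined holomorphic map with $\Lambda(p)=\xi$.

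Finally I would verify that $\Lambda$ inverts $\Psi$ near $\xi$. In one direction the telescoping identity $e^{t_1V_1}\cdots e^{t_sV_s}\circ e^{-t_sV_s}\cdots e^{-t_1V_1}=\mathrm{id}$ (valid as germs, since $e^{aV}e^{bV}=e^{(a+b)V}$) gives $\Psi(\Lambda(x))=e^{\tau_1(x)V_1}\cdots e^{\tau_s(x)V_s}g(x)=x$ for $x$ near $p$. In the other direction, for $(t,y)$ near $\xi$ set $x:=\Psi(t,y)$; then $\rho(t,x)=y\in Y$, hence $H(t,x)=0$, and local uniqueness forces $\tau(x)=t$ and $g(x)=\rho(t,x)=y$, i.e. $\Lambda(\Psi(t,y))=(t,y)$. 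Hence $\Psi$ restricts to a biholomorphism from a neighbourhood of $\xi$ onto an open neighbourhood of $p$ in $X$, which is exactly the assertion of Claim~\ref{claim-immersion-lz} — and, as a by‑product, it identifies the germ $\sg{X}{p}$ with $(B,0)\times\sg{Y}{p}$, which is what the remainder of the argument will use.

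The genuinely delicate points do not lie in this formal manipulation but in the two standing facts it rests on: the existence and the elementary group‑law and functoriality properties of local flows of holomorphic vector fields on the (reduced, possibly singular) complex space $X$; and the applicability of the implicit function theorem over $X$. The latter is harmless here because we only ever solve equations in the smooth variable $t$, and the former is classical; so I expect the main effort to be purely bookkeeping of neighbourhoods (shrinking $U$, $B$ and $W$ so that all compositions and the unique solution $\tau$ are defined).
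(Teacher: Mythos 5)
Your argument is correct, but it takes a genuinely different route from the paper. The paper does not construct an inverse: it first notes, via Krull's principal ideal theorem and irreducibility of $\sg{X}{p}$ (normality), that $\dim\bigl(\prod_iB_i\times\sg{Y}{p}\bigr)\geq\dim_pX$, so that any embedding of germs $\prod_iB_i\times\sg{Y}{p}\hookrightarrow\sg{X}{p}$ is automatically an isomorphism; by the immersion criterion of \cite[Prop.~45.9]{KK83} it then suffices to check that $d_\xi\Psi$ is injective on Zariski tangent spaces, which is done by pairing $\sum_ia_i\cdot\varphi(V_i\otimes 1)+A$ with the classes $[f_j]\in m_p/m_p^2$. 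You instead build an explicit two-sided inverse: the reversed flow $\rho$, the map $H=(f_j\circ\rho)_j$, and the implicit function theorem with the possibly singular $X$ as parameter space and the flow times as the smooth unknowns, the Jacobian $\partial H/\partial t(0,p)=-\mathrm{id}$ coming from $V_i(f_j)(p)=\delta_{i,j}$. This is legitimate: near $(0,p)$ the components of $H$ lift to an ambient $B\times\Omega$ with $\Omega\subset\bC^N$ a local model for $X$, the classical implicit function theorem applies there, and existence and uniqueness of $\tau$ descend by restriction; the factorization of $g=\rho(\tau(\cdot),\cdot)$ through the reduced subspace $Y$ uses only that $X$ is reduced. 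Your route avoids the dimension count, the irreducibility of $\sg{X}{p}$ and the citation of the immersion criterion, and it directly exhibits the germ isomorphism $\sg{X}{p}\cong\bigl(\prod_iB_i,0\bigr)\times\sg{Y}{p}$ that the remainder of the proof of Corollary~\ref{cor-lz} actually uses; the paper's route buys brevity, since once the criterion from \cite{KK83} is invoked the whole verification is a two-line tangent-space computation. In a final write-up you should only make explicit the standing facts you already flag: local existence and the one-parameter group law of the flows of the $V_i$ on the reduced space $X$ (already implicit in the definition of $\Psi$), and the shrinking of neighbourhoods so that all compositions, the unique solution $\tau$, and the identities $\Psi\circ\Lambda=\mathrm{id}$ and $\Lambda\circ\Psi=\mathrm{id}$ are defined on the sets where you assert them.
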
 

\begin{proof}[Proof of Claim~\ref{claim-immersion-lz}]
The space germ $\sg{Y}{p}$ is of dimension at least $\dim_pX-s$ by Krull's principal ideal theorem and the space germ $\sg{X}{p}$ is irreducible by normality. Thus any embedding $\prod_iB_i\times\sg{Y}{p}\hookrightarrow\sg{X}{p}$ of space germs is an isomorphism of space germs so that~\cite[Prop.~45.9]{KK83} implies that we only need to show that the derivative
\[{\textstyle d_\xi\Psi:\bC^s\times T_pY\to T_pX, \quad d_\xi\Psi(a_1,\cdots,a_s,A)=\sum_ia_i\cdot\varphi(V_i\otimes 1)+A}\]
at $\xi$ is injective.

To prove the claimed injectivity assume that $\sum_ia_i\cdot\varphi(V_i\otimes 1)\in T_pY\subset T_pX$ for coefficients $a_i\in\bC$. For any $1\leq j\leq s$, pairing with $[f_j]\in m_p/m_p^2=T_p^*X$ gives
\[{\textstyle a_j=\sum_ia_i\delta_{i,j}=\langle [f_j],\sum_ia_i\cdot\varphi(V_i\otimes 1)\rangle\in \langle [f_j], T_pY\rangle =\{0\}. }\]
This shows that $a_j=0$ for any $j$ and thus finishes the proof of Claim~\ref{claim-immersion-lz}.
\end{proof}

Using Claim~\ref{claim-immersion-lz} the assumption of Corollary~\ref{cor-lz} implies that $\sg{Y}{p}$ is normal and  $T_Y$ is likewise locally free in a neighbourhood of $p$. If $\bC^*$ acts on $\sg{Y}{p}$ with only positive weights, then $\sg{Y}{p}$ is locally algebraic by Fact~\ref{fact-quasihom-algebraizity}, so that~\cite{Hoch77} implies smoothness of $Y$. Otherwise $Y^{\bC^*}\subset Y$ is again a curve not contained in the singular locus $Y_{\singu}$ so that the proof in the special case above applies to $Y$ and thus shows smoothness of $Y$. In any case Claim~\ref{claim-immersion-lz} proves Corollary~\ref{cor-lz}.\qed

\begin{rem}
If $p\in X$ is a surface singularity satisfying the assumptions of Corollary~\ref{cor-lz}, then \cite[Satz~(3.1)]{SW80} implies that $p\in X$ is a quasihomogeneous singularity. For quasihomogeneous singularities Hochster proved the conjecture in \cite{Hoch77}.
\end{rem}

\section{On KAN type non-vanishing}\label{sec-kan}
Theorem~\ref{thm-poincare-vs-kan} is an immediate consequence of the following more general result. Recall our Notation~\ref{not-poincare-failure} from Section~\ref{ssec-poincare-contractible}.

\begin{thm}[Subsection~\ref{ssec-kan-proof}]\label{thm-kan-poincare}
Let $\LB$ be an ample line bundle on a projective normal complex space $X$. Assume that $X$ has
\begin{itemize}
 \item isolated rational singularities of dimension $n\geq 4$, or
 \item isolated Cohen-Macaulay Du Bois singularities of dimension $n\geq 5$.
\end{itemize}
 Then, if $\text{WDiv}_\bQ(\sg{X}{p})/\hspace{-0.2em}\sim_\bQ$ denotes the group of local analytic Weil divisors with rational coefficients on arbitrarily small neighborhoods of $p\in X$ modulo $\bQ$-linear equivalence, we have
\[ \dim_\bC\,H^2(X,\Omega^{[1]}_X\otimes\LB^{-1})\geq\sum_{p\in X_\singu}\Bigl(\text{dim}_\bC\,P^3_\refl(\sg{X}{p}) + \dim_\bQ\text{WDiv}_\bQ(\sg{X}{p})/\hspace{-0.2em}\sim_\bQ\Bigr).\]
Moreover, the natural map $P^3_\h(\sg{X}{p})\xrightarrow{\sim} P^3_\refl(\sg{X}{p})$ is isomorphic for any $p\in X$.
\end{thm}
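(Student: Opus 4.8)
\emph{Overall strategy.} The plan is to descend the whole statement to a strong resolution $\pi\colon\tilde X\to X$ and then feed it into two kinds of vanishing: the Akizuki--Nakano vanishing for the semiample and big bundle $\pi^*\LB$, and the $H^{\le 1}$-vanishing of Greb--Kebekus--Peternell. Write $E=\pi^{-1}(X_\singu)_\red$ for the (snc) exceptional divisor and recall the complex $\sI_E\!\cdot\!\Omega^\bullet_{\tilde X}(\log E)$: it is a resolution of $j_!\bC_{\tilde X\setminus E}$ (as exploited in the proof of Lemma~\ref{lem-poincare-low-degrees-first-kind}), it is stable under $\mathrm d$, and $\pi_*\bigl(\sI_E\!\cdot\!\Omega^i_{\tilde X}(\log E)\bigr)=\Omega^i_\h|_X$ by Proposition~\ref{prop-h-isolated}. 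I would dispose of the ``moreover'' clause first. By the van Straten--Steenbrink extension theorem (and, for an isolated rational singularity, by Proposition~\ref{prop-h-refl-isolated-rational}) one has $\Omega^{[i]}_X=\pi_*\Omega^i_{\tilde X}$ for $i\le n-2$; since $H^0(E,\Omega^i_E/\tor)=0$ this push-forward coincides with $\pi_*\bigl(\sI_E\!\cdot\!\Omega^i_{\tilde X}(\log E)\bigr)$, so $\Omega^{[i]}_X=\Omega^i_\h|_X$ for $i=2,3$ under either hypothesis. Moreover, the pull-back of a reflexive $3$-form is a section of $\sI_E\!\cdot\!\Omega^3_{\tilde X}(\log E)$, and this complex is $\mathrm d$-stable, so $\mathrm d\colon\Omega^{[3]}_{X,p}\to\Omega^{[4]}_{X,p}$ factors through $\Omega^4_\h|_X$; hence $\ker\mathrm d^3$ and $\operatorname{im}\mathrm d^2$ agree for the reflexive and the $\h$-de Rham complex, giving $P^3_\h(\sg{X}{p})\cong P^3_\refl(\sg{X}{p})$ for every $p\in X$.

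\emph{Reduction to a local count.} For the inequality, combine $\Omega^{[1]}_X\otimes\LB^{-1}=\pi_*\bigl(\sI_E\!\cdot\!\Omega^1_{\tilde X}(\log E)\otimes\pi^*\LB^{-1}\bigr)$ (projection formula plus the identities above) with the Leray spectral sequence of $\pi$, using that $R^q\pi_*\bigl(\sI_E\!\cdot\!\Omega^1_{\tilde X}(\log E)\bigr)$ is a skyscraper at $X_\singu$ for $q\ge 1$ and that $\pi^*\LB$ is trivial along $E$. With $H^1\bigl(X,\Omega^{[1]}_X\otimes\LB^{-1}\bigr)=0$ (Greb--Kebekus--Peternell; the singularities are mild enough) the relevant portion of the exact sequence reads
\[0\to H^1\bigl(\tilde X,\sI_E\!\cdot\!\Omega^1_{\tilde X}(\log E)\otimes\pi^*\LB^{-1}\bigr)\to\bigoplus_{p\in X_\singu}\bigl(R^1\pi_*\sI_E\!\cdot\!\Omega^1_{\tilde X}(\log E)\bigr)_p\xrightarrow{\ \delta\ }H^2\bigl(X,\Omega^{[1]}_X\otimes\LB^{-1}\bigr),\]
and the residue sequence $0\to\sI_E\!\cdot\!\Omega^1_{\tilde X}(\log E)\to\Omega^1_{\tilde X}\to\Omega^1_E/\tor\to 0$, twisted by $\pi^*\LB^{-1}$ (trivial on $E$), embeds the first term into $H^1\bigl(\tilde X,\Omega^1_{\tilde X}\otimes\pi^*\LB^{-1}\bigr)$, which vanishes by Akizuki--Nakano for $\pi^*\LB$ (here $1+1<n$). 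Hence $\delta$ is injective and $\dim_\bC H^2\bigl(X,\Omega^{[1]}_X\otimes\LB^{-1}\bigr)\ge\sum_{p}\dim_\bC\bigl(R^1\pi_*\sI_E\!\cdot\!\Omega^1_{\tilde X}(\log E)\bigr)_p$.

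\emph{The local estimate.} It remains to bound each $M_p:=\bigl(R^1\pi_*\sI_E\!\cdot\!\Omega^1_{\tilde X}(\log E)\bigr)_p$ from below by $\dim_\bC P^3_\refl(\sg{X}{p})+\dim_\bQ\text{WDiv}_\bQ(\sg{X}{p})/\!\sim_\bQ$. I would run the hypercohomology spectral sequence of $\pi$ on the complex $\sI_E\!\cdot\!\Omega^\bullet_{\tilde X}(\log E)$: stalked at $p$ it abuts to $\varinjlim_U H^\bullet(\pi^{-1}(U),E;\bC)=0$ by Fact~\ref{fact-top-reso}, its zeroth row computes $P^\bullet_\refl(\sg{X}{p})$, and the vanishings $R^1\pi_*\sO_{\tilde X}=R^2\pi_*\sO_{\tilde X}=0$ (automatic for rational singularities; valid for Cohen--Macaulay singularities of dimension $\ge 4$ by relative Grauert--Riemenschneider duality) force $\bigl(R^q\pi_*\sI_E\bigr)_p=0$ for $q=1,2$. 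A diagram chase then exhibits $P^3_\refl(\sg{X}{p})$ as a subquotient of $M_p$ — in fact as $\ker\bigl(\mathrm d\colon M_p\to(R^1\pi_*\sI_E\!\cdot\!\Omega^2_{\tilde X}(\log E))_p\bigr)$ — while the complementary quotient of $M_p\subseteq(R^1\pi_*\Omega^1_{\tilde X})_p$ receives, via the cycle map, the local divisor class group $\operatorname{Cl}(\sg{X}{p})\otimes\bQ$, which equals $H^2(E,\bQ)/\langle[E_i]\rangle$ once $H^1(E,\sO_E)=H^2(E,\sO_E)=0$ and which dies in $R^1\pi_*(\Omega^1_E/\tor)$. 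Checking that these two contributions are genuinely complementary (no non-trivial divisor class is $\mathrm d$-closed inside $M_p$ modulo de Rham coboundaries) then yields $\dim_\bC M_p=\dim_\bC P^3_\refl(\sg{X}{p})+\dim_\bQ\text{WDiv}_\bQ(\sg{X}{p})/\!\sim_\bQ$ and finishes the proof.

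\emph{Where the difficulty lies.} The main obstacle is this last local computation: identifying $M_p$ finely enough to see the ``de Rham'' contribution $P^3_\refl$ and the ``Hodge/Picard'' contribution $\text{WDiv}_\bQ$ simultaneously, and to see that they add up rather than overlap (the Greb--Kebekus--Peternell counterexample only engages the second). A secondary but not entirely routine point is checking that all vanishing inputs — $H^{\le 1}(X,\Omega^{[i]}_X\otimes\LB^{-1})=0$, Akizuki--Nakano for the merely semiample-and-big bundle $\pi^*\LB$, and $R^{\le 2}\pi_*\sO_{\tilde X}=0$ — really hold under \emph{both} alternative hypotheses; this is where the distinction between the rational and the Cohen--Macaulay Du Bois case, and the numerical conditions $n\ge 4$ respectively $n\ge 5$, enter the argument.
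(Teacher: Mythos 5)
Your outline follows the same broad architecture as the paper (reduce via Leray on the resolution to a skyscraper $R^1\pi_*$ supported on $X_\singu$, then extract the two local contributions), but it has two genuine gaps. First, the vanishing that drives your global reduction is wrong as stated: you kill $H^1\bigl(\tilde X,\Omega^1_{\tilde X}\otimes\pi^*\LB^{-1}\bigr)$ by ``Akizuki--Nakano for $\pi^*\LB$'', but $\pi^*\LB$ is only semiample and big, and Nakano-type vanishing for $\Omega^p$ twisted by a nef-and-big bundle is false in general (Ramanujam-type counterexamples); ampleness is essential in Akizuki--Nakano. The correct input here is Steenbrink's vanishing theorem for logarithmic forms on the resolution twisted by $\pi^*\LB^{-1}$, which is exactly what the paper uses in Lemma~\ref{lem-kan-local-interpretation}; applied directly to $\sI_E\cdot\Omega^1_{\tilde X}(\log E)\otimes\pi^*\LB^{-1}$ it gives the $H^1$-vanishing you want without the residue-sequence detour, and it also makes your appeal to the Greb--Kebekus--Peternell $H^{\leq 1}$-vanishing (whose hypotheses you have not verified for isolated Cohen--Macaulay Du Bois singularities) superfluous. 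So this step is repairable, but as written it rests on a false statement.

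The more serious gap is the local estimate, which you yourself flag as ``the main obstacle'': you need that inside $\bigl(R^1\pi_*(\cdot)\bigr)_p$ the de Rham contribution $P^3_\refl(\sg{X}{p})$ and the divisor-class contribution $\text{WDiv}_\bQ(\sg{X}{p})/\!\sim_\bQ$ occur \emph{complementarily}, and your proposal only asserts that ``checking that these two contributions are genuinely complementary \dots yields'' the result, without an argument. This is precisely the content of the paper's Proposition~\ref{prop-key-kan} and Lemma~\ref{lem-key-kan}, and it is the technical heart of the theorem: there one compares the filtration-b\^ete spectral sequences of $\Omega^\bullet_{\tilde X}$ (pushed forward and stalked at $p$) and of $\Omega^\bullet_E/\tor$, both converging to $H^\bullet(E,\bC)$ by Fact~\ref{fact-top-reso}; the vanishings of Lemma~\ref{lem-key-du-bois-vanishing} show that $({_{\tilde X}d}^{1,1}_2,\res^{1,1}_2)$ is an isomorphism onto $P^3_\refl(\sg{X}{p})\oplus H^2(E,\bC)$, and a two-filtration argument in the sense of Deligne (weight filtration together with the filtration b\^ete on the log complex) identifies the image of the connecting map of the residue sequence with the span of the exceptional classes inside the $H^2(E,\bC)$-summand, so that nothing of $P^3_\refl$ is absorbed. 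Your outline gestures at the right objects but supplies no substitute for this identification, so it stops short of a proof at exactly the decisive point; the remaining ingredients (the ``moreover'' clause and the Weil-divisor count) match the paper's Lemmas~\ref{lem-dubois-comparison} and~\ref{lem-kan-weil-divisors} and are essentially fine.
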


In Theorem~\ref{thm-kan-poincare} we can not mitigate the assumption by only assuming that $p\in X$ is an isolated Du Bois singularity of dimension $n\geq 4$, which is shown by the following proposition.

\begin{prop}[Subsection~\ref{ssec-prop-kan-optimality}]\label{prop-kan-optimality}
The singularity $p\in X$ of Construction~\ref{ex-kan-poincare} is an isolated Du Bois cone singularity of dimension $n$ such that
\begin{enumerate}
 \item\label{kan-counterex-infty} $\dim_\bQ\text{WDiv}_\bQ(\sg{X}{p})/\hspace{-0.2em}\sim_\bQ=\infty$ if $n\geq 4$, and
 \item\label{kan-counterex-poincare} $ \dim_\bC\,H^2(X,\Omega^{[1]}_X\otimes\LB^{-1}) < \dim_\bC P^3_\refl(\sg{X}{p})$ if $n\geq 11$ and $\LB$ is an ample line bundle on $X$.
\end{enumerate}
\end{prop}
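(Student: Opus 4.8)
The singularity of Construction~\ref{ex-kan-poincare} is the projective cone $X$ over a smooth projective variety $Y$ of dimension $n-1$ together with a sufficiently positive polarisation $L$, arranged so that $H^j(Y,L^{\otimes m})=0$ for all $j>0$ and $m\geq 1$, that $\Pic^0(Y)\neq 0$, and that the Hodge numbers $h^{2,0}(Y),h^{3,0}(Y)$ are large compared with $h^{1,1}(Y),h^{0,1}(Y)$; for concreteness one may take $Y$ to be an abelian variety. Throughout I would use the contracting $\bC^*$-action on $\sg{X}{p}$ and the strong resolution $\pi\colon\widetilde X=\bP_Y(\sO_Y\oplus L)\to X$ which blows down the section $E\cong Y$ (of normal bundle $L^{-1}$) to the vertex $p$. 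That $p$ is the only singular point and $\dim X=n$ is immediate. For the Du Bois property I would apply the criterion of Fact~\ref{fact-isolated-du-bois}: the theorem on formal functions identifies $(R^i\pi_*\sO_{\widetilde X})_p$ with $H^i(Y,\sO_Y)$ for $i>0$ (the contributions $H^i(Y,L^{\otimes m})$, $m\geq 1$, vanish by positivity of $L$), and the natural map to $H^i(E,\sO_E)=H^i(Y,\sO_Y)$ is the identity. The same computation for $0<i<n-1$ shows $X$ is not Cohen--Macaulay, which is why the example is relevant, though this is not part of the statement.

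For item~(\ref{kan-counterex-infty}) I would identify $\text{WDiv}_\bQ(\sg{X}{p})/\hspace{-0.2em}\sim_\bQ$ with $\operatorname{Cl}(\sg{X}{p})\otimes_\bZ\bQ$. A punctured neighbourhood of $p$ is the complement of the zero section in the total space of $L^{-1}\to Y$, a $\bC^*$-bundle over $Y$, whence $\operatorname{Cl}(\sg{X}{p})\cong\operatorname{Cl}(Y)/\bZ[L]$; this group surjects onto $\Pic^0(Y)$ modulo a cyclic subgroup. For $n\geq 4$ one has $\dim Y\geq 1$ and $\Pic^0(Y)\neq 0$, so $\Pic^0(Y)(\bC)$ is a positive-dimensional complex torus; being divisible, $\Pic^0(Y)(\bC)\otimes_\bZ\bQ$ is infinite-dimensional over $\bQ$ (because $\bR/\bZ\otimes_\bZ\bQ\cong\bR\otimes_\bZ\bQ$ is), and this persists after quotienting by a cyclic subgroup. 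Hence $\dim_\bQ\text{WDiv}_\bQ(\sg{X}{p})/\hspace{-0.2em}\sim_\bQ=\infty$, as claimed.

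For item~(\ref{kan-counterex-poincare}) I would compute both sides explicitly. Decomposing $\Omega^{[\bullet]}_{X,p}$ into $\bC^*$-weight pieces $m\geq 0$: the pieces with $m\geq 1$ give acyclic graded de Rham complexes by Cartan's homotopy along the contracting vector field, which acts on the weight-$m$ piece by the nonzero scalar $m$, and the weight-$0$ piece is $H^0(Y,\Omega^i_Y)\oplus H^0(Y,\Omega^{i-1}_Y)\tfrac{\mathrm dt}{t}$ with zero differential (holomorphic forms on projective $Y$ are closed and $\mathrm d\tfrac{\mathrm dt}{t}=0$); hence $\dim_\bC P^3_\refl(\sg{X}{p})=h^{3,0}(Y)+h^{2,0}(Y)$. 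On the other side, a direct computation on the cone gives $\Omega^{[1]}_X\cong\pi_*\bigl(\Omega^1_{\widetilde X}(\log E)\bigr)$, and since $R^{\geq 1}\pi_*$ is supported at $p$ the Leray spectral sequence yields
\[\dim_\bC H^2\!\bigl(X,\Omega^{[1]}_X\otimes\LB^{-1}\bigr)\leq\dim_\bC H^2\!\bigl(\widetilde X,\Omega^1_{\widetilde X}(\log E)\otimes\pi^*\LB^{-1}\bigr)+\dim_\bC\bigl(R^1\pi_*\Omega^1_{\widetilde X}(\log E)\bigr)_p.\]
The first term vanishes for $n\geq 4$ since $\pi^*\LB$ is semiample and big and $1+2<n$ (log Akizuki--Nakano vanishing), and the second is bounded by $h^{1,1}(Y)+h^{0,1}(Y)$ via formal functions and the log sequence $0\to\rho^*\Omega^1_Y\to\Omega^1_{\widetilde X}(\log E)\to\sO_{\widetilde X}\to 0$, where $\rho\colon\widetilde X\to Y$ is the bundle map. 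Comparing the two numbers, for $Y$ abelian of dimension $n-1$ the required inequality is $\binom{n-1}{3}+\binom{n-1}{2}>(n-1)^2+(n-1)$, which holds for $n\geq 11$.

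The class-group and divisibility steps of item~(\ref{kan-counterex-infty}) are routine. The delicate parts, which I expect to be the main obstacle, are the identification $\Omega^{[1]}_X=\pi_*\Omega^1_{\widetilde X}(\log E)$ and the acyclicity of the positive-weight de Rham pieces --- both requiring care with the conical $\bC^*$-equivariant structure and with how positive $L$ must be --- the application of semiample-and-big log vanishing on $\widetilde X$ in exactly the right numerical range, and above all a sufficiently sharp evaluation of $(R^1\pi_*\Omega^1_{\widetilde X}(\log E))_p$ so that the final Hodge-number comparison produces precisely the threshold $n\geq 11$ rather than merely "$n$ sufficiently large".
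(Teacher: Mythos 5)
The structure of your argument parallels the paper for most steps: the Du Bois property via Fact~\ref{fact-isolated-du-bois} and the formal function theorem, the infinite-dimensionality of $\text{WDiv}_\bQ(\sg{X}{p})/\hspace{-0.2em}\sim_\bQ$ from $\Pic^0$ of the abelian base being a positive-dimensional torus, and the bound $\dim_\bC H^2(X,\Omega^{[1]}_X\otimes\LB^{-1})\leq h^{1,1}(E)+h^{0,1}(E)=n(n-1)$ via $\Omega^{[1]}_X=\pi_*\Omega^1_{\tilde X}(\log E)$, a Steenbrink-type vanishing, the residue sequence and formal functions. These are all essentially the paper's Claims~\ref{claim-du-bois-optimality-db}, \ref{claim-du-bois-wdiv-infty} and \ref{claim-du-bois-optimality-kan}.

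However, your computation of $P^3_\refl(\sg{X}{p})$ contains a genuine error. You take the weight-zero piece of the reflexive de Rham complex to be $H^0(Y,\Omega^i_Y)\oplus H^0(Y,\Omega^{i-1}_Y)\tfrac{\mathrm dt}{t}$, but the form $\tfrac{\mathrm dt}{t}$ does \emph{not} exist as a global holomorphic form on the punctured cone: a $\bC^*$-invariant $1$-form restricting to $\tfrac{\mathrm dt}{t}$ on the fibres of the $\bC^*$-bundle is precisely a holomorphic connection on $L^{-1}$, which is obstructed by the Atiyah class $c_1(L)\neq 0$ (for $i$-forms, the invariant part is an extension $0\to\Omega^i_Y\to W^i\to\Omega^{i-1}_Y\to 0$ whose connecting map on sections is cup product with $c_1(L)$, and this is injective on $H^0(\Omega^{i-1}_Y)$ for an ample class on an abelian variety by Hard Lefschetz). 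Hence the weight-zero contribution in degree $3$ is only $H^0(Y,\Omega^3_Y)$, and $\dim_\bC P^3_\refl(\sg{X}{p})={n-1\choose 3}$, not ${n-1\choose 3}+{n-1\choose 2}$; the paper obtains this via the exact sequence $0\to\Omega^\bullet_\h|_{X,p}\to(\pi_*\Omega^\bullet_{\tilde X})_p\to H^0(E,\Omega^\bullet_E)\to 0$, the vanishing $P^i_\h(\sg{X}{p})=0$ from holomorphic contractibility (Theorem~\ref{thm-h-poincare-contraction}), and Lemma~\ref{lem-dubois-comparison}. Since you need a \emph{lower} bound on $P^3_\refl$, the overcount is not harmless as a matter of proof (your claimed inequality would in fact already hold for $n\geq 9$, a sign that something is off); with the corrected value the comparison ${n-1\choose 3}>n(n-1)$ is exactly what forces the threshold $n\geq 11$, and the final conclusion survives, but the step as you wrote it is false and must be replaced by an argument of the above type.
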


\begin{const}[Isolated Du Bois singularities of dimension $n\geq 4$ violating Theorem~\ref{thm-kan-poincare}]\label{ex-kan-poincare}
We choose arbitrary elliptic curves $E_1,\cdots,E_{n-1}$ and let $E=E_1\times\cdots\times E_{n-1}$. Let further $\sM$ be a very ample line bundle on $E$ such that
\begin{enumerate}
 \item $H^i(E,\sM^{k})=0$ for $i>0$ and $k>0$, and
 \item the linear system $|\sM|$ induces a projectively normal embedding $E\to\bP^N$ for some $N>0$.
\end{enumerate} 
Finally we define $X\subset\bP^{N+1}$ to be the projective cone over the image of $E$ in $\bP^N$ with vertex $p\in X$.
\end{const}

\subsection{A result on certain isolated Du Bois singularities}\label{ssec-key-kan}
For the ease of formulation in the arguments below, we will repeatedly consider skyscraper sheaves at some point $p\in X$ of a complex space as complex vector spaces. Moreover, in order to shorten statements, we will use the following assumption.

\begin{plainass}\label{ass-du-bois}
The isolated Du Bois singularity $p\in X$ is normal, of dimension $n$ and satisfies the following:
\begin{itemize}
 \item $p\in X$ is Cohen-Macaulay and $n\geq 5$; or
 \item $p\in X$ is a rational singularity and $n\geq 4$.
\end{itemize}
\end{plainass}

The following Proposition~\ref{prop-key-kan} is the key to the proof of Theorem~\ref{thm-kan-poincare}.

\begin{prop}\label{prop-key-kan}
Let $p\in X$ be an isolated Du Bois singularity of dimensions $n\geq 4$ satisfying Assumption~\ref{ass-du-bois}. Let $\pi:\tilde{X}\to X$ be a strong resolution and let $E=\sum_rE_r=\pi^{-1}(\{x\})_\red$ be the exceptional divisor with its reduced scheme structure. Then there exists a natural inclusion
\[{\textstyle P^3_\refl(\sg{X}{p})\oplus H^2(E,\bC)/\sum_r\langle E_r|_E\rangle\subset \text{ker}\Bigl(R^1\pi_*\Omega^1_{\tilde{X}}(log\, E)\to R^2\pi_*\Omega^1_{\tilde{X}}(log\, E)\Bigr),}\]
where $\langle E_r|_E\rangle\subset H^2(E,\bC)$ is the $\bC$-span of the image of the cohomology class of the divisor $E_r$ in $H^2(E,\bC)$. 
\end{prop}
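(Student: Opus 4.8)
The plan is to express $P^3_{\refl}(\sg{X}{p})$ in terms of the resolution $\pi$, to run the Hodge--de Rham spectral sequence of the logarithmic de Rham complex of $\tilde X$ in a twisted and an untwisted version, and to match the two summands of the asserted inclusion with these two sequences. \emph{Step 1 (reducing $P^3_\refl$ to the resolution).} Under Assumption~\ref{ass-du-bois}, the reflexivity of $\pi_*\Omega^i_{\tilde X}$ together with the extension statements of \cite{SvS85}, the vanishing $H^0(E,\Omega^i_E/\tor)=0$ of \cite{Nam01} (and its Du Bois counterpart, cf.\ \cite{Kov99}), and the rationality/Du Bois package used in the proof of Proposition~\ref{prop-h-refl-isolated-rational} give $\Omega^{[i]}_X=\pi_*\Omega^i_{\tilde X}=\pi_*\bigl(\sI_E\cdot\Omega^i_{\tilde X}(\log E)\bigr)$ for $2\le i\le 4$. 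Since the reflexive de Rham complex is exact off $p$ in positive degrees, the sheaf $P^3_\refl$ is a skyscraper at $p$, and by the displayed identities its stalk is the degree-$3$ cohomology of $\pi_*\bigl(\sI_E\cdot\Omega^{\bullet}_{\tilde X}(\log E)\bigr)$ at $p$.

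\emph{Step 2 (two spectral sequences).} I would equip the acyclic complex $0\to j_!\bC_{\tilde X\setminus E}\to\sI_E\xrightarrow{d}\sI_E\cdot\Omega^1_{\tilde X}(\log E)\xrightarrow{d}\cdots$, with $j\colon\tilde X\setminus E\hookrightarrow\tilde X$, and the logarithmic de Rham complex $\Omega^{\bullet}_{\tilde X}(\log E)\simeq Rj_*\bC_{\tilde X\setminus E}$ with the \emph{filtration b\^ete}, and push forward by $\pi$; this yields spectral sequences $E_1^{i,j}=R^j\pi_*\bigl(\sI_E\cdot\Omega^i_{\tilde X}(\log E)\bigr)$ (with $E_1^{0,j}=R^j\pi_*\sI_E$) and ${}'E_1^{i,j}=R^j\pi_*\Omega^i_{\tilde X}(\log E)$, related by the morphism induced by $\sI_E\cdot\Omega^{\bullet}_{\tilde X}(\log E)\hookrightarrow\Omega^{\bullet}_{\tilde X}(\log E)$. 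By Fact~\ref{fact-top-reso}, $\pi^{-1}(U)$ deformation retracts onto $E$, so the abutment $R^{i+j}\pi_*j_!\bC_{\tilde X\setminus E}$ of the first sequence has vanishing stalk at $p$ in positive total degree, while the abutment of the second has stalk $H^{i+j}(U\setminus\{p\},\bC)$ at $p$.

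\emph{Step 3 (the $P^3_\refl$ summand).} Since $p$ is Du Bois, Fact~\ref{fact-isolated-du-bois} gives $R^j\pi_*\sO_{\tilde X}\xrightarrow{\sim}\underline{H}^j(E,\sO_E)$ for $j>0$; feeding this into the long exact $R^{\bullet}\pi_*$-sequence of $0\to\sI_E\to\sO_{\tilde X}\to\sO_E\to 0$ and inducting shows $R^j\pi_*\sI_E=0$ near $p$ for all $j\ge 1$. Hence $E_1^{0,j}(p)=0$ for $j\ge 1$; together with the vanishing of the abutment this forces $d_2\colon E_2^{1,1}\to E_2^{3,0}$ to be an isomorphism at the stalk over $p$, whence
\[ P^3_\refl(\sg{X}{p})=E_2^{3,0}(p)\cong E_2^{1,1}(p)=\ker\Bigl(d\colon R^1\pi_*\bigl(\sI_E\cdot\Omega^1_{\tilde X}(\log E)\bigr)\to R^1\pi_*\bigl(\sI_E\cdot\Omega^2_{\tilde X}(\log E)\bigr)\Bigr)_p. \]
Transporting this along $E_{\bullet}\to{}'E_{\bullet}$ places $P^3_\refl$ inside the kernel of the statement; injectivity of the resulting map is checked by comparing the two sequences in bidegree $(1,1)$, the discrepancy being $\mathrm{im}\bigl(R^1\pi_*\sO_{\tilde X}\to R^1\pi_*\Omega^1_{\tilde X}(\log E)\bigr)_p$, which meets the image of $P^3_\refl$ trivially.

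\emph{Step 4 (the divisor-class summand; main obstacle).} For the second summand I would invoke the Gysin (weight) exact sequence of the snc pair $\bigl(\pi^{-1}(U),E\bigr)$, which near $p$ begins $\bigoplus_r H^0(E_r,\bC)\xrightarrow{1_{E_r}\mapsto[E_r]|_E}H^2(E,\bC)\to H^2(U\setminus\{p\},\bC)$, so that $H^2(E,\bC)/\sum_r\langle E_r|_E\rangle$ embeds into $H^2(U\setminus\{p\},\bC)$; this being a morphism of mixed Hodge structures and $H^0(E,\Omega^2_E/\tor)=0$ (Step 1) killing the $F^2$-part, the image lands in the $(1,1)$-piece ${}'E_{\infty}^{1,1}(p)$, hence in $\ker\bigl(R^1\pi_*\Omega^1_{\tilde X}(\log E)\to R^2\pi_*\Omega^1_{\tilde X}(\log E)\bigr)$. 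Finally the two images are put in direct sum by separating them along the $\sI_E$-adic filtration: the $P^3_\refl$-part lies in the image of $R^1\pi_*\bigl(\sI_E\cdot\Omega^1_{\tilde X}(\log E)\bigr)$, whereas the divisor-class part injects under the residue map $R^1\pi_*\Omega^1_{\tilde X}(\log E)\to\bigoplus_r R^1\pi_*\sO_{E_r}$, which annihilates the former. I expect the principal difficulty to be exactly this last bookkeeping --- determining which part of $H^2(E,\bC)$ survives in $H^2(U\setminus\{p\},\bC)$ and remains of Hodge type $(1,1)$ (ruling out leakage into ${}'E_\infty^{2,0}(p)$ and showing that the higher-stratum terms of the weight spectral sequence of $E$ do not interfere), together with verifying the direct-sum decomposition --- all of which rests on the vanishing of $H^i(E,\sO_E)$ and $H^0(E,\Omega^j_E/\tor)$ in the ranges guaranteed precisely by the bounds $n\ge 4$ in the rational case and $n\ge 5$ in the Cohen--Macaulay Du Bois case.
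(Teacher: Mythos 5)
Your Steps 1--3 follow essentially the same lines as the paper's preparations (Lemmas~\ref{lem-key-du-bois-vanishing} and~\ref{lem-dubois-comparison}): the vanishing $R^j\pi_*\sI_E=0$ from the Du Bois property, the identification of $P^3_\refl(\sg{X}{p})$ with the degree-three cohomology of the pushed-forward complex, and the spectral sequences of the \emph{filtration b\^ete} with topological input from Fact~\ref{fact-top-reso}. Two caveats there: the sheaf identities you assert for $2\le i\le 4$ are overstated ($\Omega^{[3]}_X=\pi_*\Omega^3_{\tilde X}$ is not automatic when $n=4$, nor is the $i=4$ identity when $n=5$ in the Cohen--Macaulay Du Bois case; what you actually need, and what suffices, is the more careful statement of Lemma~\ref{lem-dubois-comparison}, resting on~\cite[Cor.~1.4]{SvS85}); and the ``discrepancy'' you name for injectivity of the $P^3_\refl$-part is wrong: the kernel of $R^1\pi_*\bigl(\sI_E\cdot\Omega^1_{\tilde X}(\log E)\bigr)\to R^1\pi_*\Omega^1_{\tilde X}(\log E)$ is the image of $H^0\bigl(E,\Omega^1_{\tilde X}(\log E)|_E\bigr)$ under the connecting map of the sequence $0\to\sI_E\cdot\Omega^1_{\tilde X}(\log E)\to\Omega^1_{\tilde X}(\log E)\to\Omega^1_{\tilde X}(\log E)|_E\to 0$, not the image of $R^1\pi_*\sO_{\tilde X}$ (which vanishes anyway under Assumption~\ref{ass-du-bois}), so injectivity of that piece is not yet established either.

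The genuine gap is Step 4, which you yourself flag as the main obstacle and then do not carry out. First, placing the image of $H^2(E,\bC)/\sum_r\langle E_r|_E\rangle$ in the $(1,1)$-piece requires $F^2H^2(E,\bC)=0$; this follows from $H^2(E,\sO_E)=0$ together with Hodge symmetry on the weight-graded pieces of the mixed Hodge structure of the snc variety $E$, not from $H^0(E,\Omega^2_E/\tor)=0$ (that group injects into forms on the normalization, so its vanishing controls nothing about $\bigoplus_rH^0(E_r,\Omega^2_{E_r})$) --- fixable, but not as written. Second, and decisively, your mechanism for separating the two summands fails: you propose that the divisor-class part injects under the residue map to $\bigoplus_rR^1\pi_*\sO_{E_r}=\bigoplus_rH^1(E_r,\sO_{E_r})$, but under Assumption~\ref{ass-du-bois} this target is typically zero (already when $E$ is irreducible, since $H^1(E,\sO_E)=0$), so the residue map carries no information and cannot show that the divisor-class part meets the image of $R^1\pi_*\bigl(\sI_E\cdot\Omega^1_{\tilde X}(\log E)\bigr)$ trivially; no other argument is offered, so the direct-sum assertion --- the actual content of Proposition~\ref{prop-key-kan} --- remains unproved. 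The paper resolves exactly this point differently: it works with the untwisted complex $\Omega^\bullet_{\tilde X}$ and the restriction morphism to $\Omega^\bullet_E/\tor$, obtains a single isomorphism $\psi:\ker\bigl(d:R^1\pi_*\Omega^1_{\tilde X}\to R^1\pi_*\Omega^2_{\tilde X}\bigr)\cong P^3_\refl(\sg{X}{p})\oplus H^2(E,\bC)$ (Lemma~\ref{lem-key-kan}), and then identifies the image of the connecting map $\delta$ of the residue sequence with $0\oplus\sum_r\langle E_r|_E\rangle$ via Deligne's comparison of the filtrations $W$ and $F$~\cite[(1.4.9)--(1.4.10)]{Del71}; the inclusion then falls out of the long exact residue sequence, settling simultaneously the injectivity and the directness that your sketch leaves open.
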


Proposition~\ref{prop-key-kan} is an immediate consequence of Lemma~\ref{lem-key-kan}, which is proved after some technical preliminaries.

\begin{lem}\label{lem-key-du-bois-vanishing}
Let $p\in X$ be an isolated singularity of dimension $n\geq 4$ satisfying Assumption~\ref{ass-du-bois} and let $\pi:\tilde{X}\to X$ be a strong resolution and let $E=\pi^{-1}(\{x\})_\red$ be the exceptional divisor with its reduced scheme structure.

Then $R^i\pi_*\sO_{\tilde{X}}=0$, $H^i(E,\sO_E)=0$ and $H^0(E,\Omega^i_E/\tor)=0$ for $1\leq i\leq 3$.
\end{lem}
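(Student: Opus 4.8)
The plan is to reduce the whole lemma to the single assertion that $R^i\pi_*\sO_{\tilde{X}}=0$ for $1\le i\le 3$, and then read off the two other vanishings. Since $p\in X$ is Du Bois by Assumption~\ref{ass-du-bois}, Fact~\ref{fact-isolated-du-bois} identifies the skyscraper $R^i\pi_*\sO_{\tilde{X}}$ at $p$ with $H^i(E,\sO_E)$ for every $i>0$; hence $R^i\pi_*\sO_{\tilde{X}}=0$ for $1\le i\le 3$ immediately gives $H^i(E,\sO_E)=0$ for $1\le i\le 3$, and then $H^0(E,\Omega^i_E/\tor)=0$ for $1\le i\le 3$ follows from~\cite[Lem.~1.2]{Nam01}, just as in the proof of Proposition~\ref{prop-h-refl-isolated-rational} (note that $E$ is a connected reduced snc divisor, because $X$ is normal and $\pi$ is a strong resolution). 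So it remains to establish the vanishing of $R^i\pi_*\sO_{\tilde{X}}$ in degrees $1,2,3$.

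In the rational case this is immediate: by definition $R^i\pi_*\sO_{\tilde{X}}=0$ for all $i>0$, and the bound $n\ge 4$ plays no role.

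For the Cohen--Macaulay case (where $n\ge 5$) I would argue via duality. Grauert--Riemenschneider vanishing gives $R^j\pi_*\omega_{\tilde{X}}=0$ for $j>0$, so $R\pi_*\omega_{\tilde{X}}=\pi_*\omega_{\tilde{X}}=:\omega'$, and Grothendieck duality for the proper map $\pi$, in the form $R\pi_*R\sHom_{\tilde{X}}(\sO_{\tilde{X}},\pi^!\omega_X^\bullet)\cong R\sHom_X(R\pi_*\sO_{\tilde{X}},\omega_X^\bullet)$ with $\pi^!\omega_X^\bullet=\omega_{\tilde{X}}[n]$, combined with $\omega_X^\bullet=\omega_X[n]$ (valid since $X$ is Cohen--Macaulay), yields after applying the duality functor a second time (biduality)
\[R\pi_*\sO_{\tilde{X}}\;\cong\;R\sHom_X(\omega',\omega_X),\qquad\text{hence}\qquad R^i\pi_*\sO_{\tilde{X}}\cong\sExt^i_X(\omega',\omega_X)\ \text{ for all }i.\]
Since $\pi$ is an isomorphism over $X\setminus\{p\}$, the inclusion $\omega'\hookrightarrow\omega_X$ has finite length cokernel $Q$ supported at $p$. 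Feeding $0\to\omega'\to\omega_X\to Q\to 0$ into $R\sHom_X(-,\omega_X)$ and using $\sExt^j_X(\omega_X,\omega_X)=\sO_X$ for $j=0$ and $=0$ for $j>0$ (standard for the canonical sheaf of a normal Cohen--Macaulay space) together with $\sExt^j_X(Q,\omega_X)=0$ for $j\ne n$ and $\sExt^n_X(Q,\omega_X)=Q^\vee$ (local duality, $Q$ finite length), one finds $\sExt^j_X(\omega',\omega_X)=0$ for $1\le j\le n-2$, the only surviving term being $\sExt^{n-1}_X(\omega',\omega_X)\cong Q^\vee=(\omega_X/\pi_*\omega_{\tilde{X}})^\vee$ (the classical obstruction to rationality). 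As $n\ge 5$, this covers $j=1,2,3$, so $R^i\pi_*\sO_{\tilde{X}}=0$ for $i=1,2,3$, completing the reduction.

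The main obstacle is exactly this Cohen--Macaulay vanishing: a Cohen--Macaulay Du Bois singularity need not be rational — for instance a cone over a K3 surface has $R^2\pi_*\sO_{\tilde{X}}\ne 0$ in dimension $3$ — so the argument must genuinely use both the depth hypothesis and the dimension bound $n\ge 5$ in order to push the single possibly nonzero higher direct image $R^{n-1}\pi_*\sO_{\tilde{X}}$ out of the range $[1,3]$. All the remaining inputs (Fact~\ref{fact-isolated-du-bois}, \cite[Lem.~1.2]{Nam01}, Grauert--Riemenschneider vanishing, and Grothendieck duality for proper maps of complex spaces) are available off the shelf, so once the displayed $\sExt$-formula is in hand the lemma follows by bookkeeping.
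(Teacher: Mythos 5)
Your proof is correct, and its second and third steps (Fact~\ref{fact-isolated-du-bois} to pass from $R^i\pi_*\sO_{\tilde{X}}=0$ to $H^i(E,\sO_E)=0$, then Namikawa to kill $H^0(E,\Omega^i_E/\tor)$) follow the same chain as the paper. Where you genuinely depart from the paper is the first and main vanishing: the paper simply cites \cite[Lem.~3.3]{Kov99} for $R^i\pi_*\sO_{\tilde{X}}=0$, $1\le i\le 3$, whereas you prove it by hand -- trivially in the rational case, and in the Cohen--Macaulay case via Grauert--Riemenschneider plus Grothendieck--Verdier duality, yielding $R^i\pi_*\sO_{\tilde{X}}\cong\sExt^i_X(\pi_*\omega_{\tilde{X}},\omega_X)$ and then, from $0\to\pi_*\omega_{\tilde{X}}\to\omega_X\to Q\to 0$ with $Q$ of finite length, vanishing in the range $1\le i\le n-2$, which contains $1,2,3$ because $n\ge 5$. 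This argument is classical (it is the statement that for an isolated normal Cohen--Macaulay singularity the only possibly nonzero higher direct image of $\sO_{\tilde X}$ sits in degree $n-1$, equivalently $R^i\pi_*\sO_{\tilde X}\cong H^{i+1}_{\{p\}}(X,\sO_X)$ for $1\le i\le n-2$), it is valid in the analytic category (Ramis--Ruget--Verdier duality, analytic GR vanishing), and it has the merit of being self-contained and of making transparent that the Du Bois hypothesis is not needed for this particular vanishing -- it enters only through Fact~\ref{fact-isolated-du-bois} and the statement about $E$. So your route buys independence from the external reference at the cost of a page of standard duality bookkeeping; the paper buys brevity.

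One small caveat on your last step: in the Cohen--Macaulay Du Bois case the singularity need not be rational, while \cite[Lem.~1.2]{Nam01} is formulated for (the exceptional divisor of a resolution of) an isolated rational singularity; this is why the paper says ``a closer look at the proof'' rather than quoting the statement. Your citation ``just as in the proof of Proposition~\ref{prop-h-refl-isolated-rational}'' therefore needs the same inspection: one checks that Namikawa's argument only uses the vanishing $H^j(E,\sO_E)=0$ in the relevant range, which is exactly what you have established for $j\le 3$. With that remark added, your argument is complete.
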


\begin{proof}
The vanishing of $R^i\pi_*\sO_{\tilde{X}}$ is a consequence of \cite[Lem.~3.3]{Kov99}. Fact~\ref{fact-isolated-du-bois} implies that $H^i(E,\sO_E)=0$ for $1\leq i\leq 3$. Now a closer look at the proof of \cite[Lem.~1.2]{Nam01} shows that $H^0(E,\Omega^i_E/\tor)=0$ for $1\leq i\leq 3$.
\end{proof}

\begin{lem}\label{lem-dubois-comparison}
Let $p\in X$ be an isolated Du Bois singularity of dimension $n\geq 4$ and let $\pi:\tilde{X}\to X$ be a strong resolution. Then
\begin{enumerate}
 \item\label{it-fk-to-refl} the natural map 
 \[ \frac{\text{ker}\,d:\pi_*\Omega^3_{\tilde{X}}\to\pi_*\Omega^4_{\tilde{X}}}{\text{im}\, d:\pi_*\Omega^2_{\tilde{X}}\to\pi_*\Omega^3_{\tilde{X}}}\xrightarrow{\sim} P^3_\refl(\sg{X}{p})\]
is bijective, and
 \item\label{it-h-to-fk} if in addition $p\in X$ satisfies Assumption~\ref{ass-du-bois}, then the natural map
\[P^3_\h(\sg{X}{p})\xrightarrow{\sim}\frac{\text{ker}\,d:\pi_*\Omega^3_{\tilde{X}}\to\pi_*\Omega^4_{\tilde{X}}}{\text{im}\, d:\pi_*\Omega^2_{\tilde{X}}\to\pi_*\Omega^3_{\tilde{X}}}\]
is also bijective.
\end{enumerate}
\end{lem}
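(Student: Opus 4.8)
The plan is to route the natural map $P^3_\h(\sg{X}{p})\to P^3_\refl(\sg{X}{p})$ through $h^3$ of the push-forward complex $\pi_*\Omega^\bullet_{\tilde{X}}$ and to establish the two halves by comparing complexes of sheaves, in each case reducing to the vanishing of $H^0(E,\Omega^i_E/\tor)$ in low degrees. First I would pin down the maps. Since $X$ is normal and $\pi$ is an isomorphism over $X_\sm$, restriction to $X_\sm$ yields an \emph{injective} morphism of complexes $\pi_*\Omega^\bullet_{\tilde{X}}\hookrightarrow j_*\Omega^\bullet_{X_\sm}=\Omega^{[\bullet]}_X$, which on stalks at $p$ induces the map of~(\ref{it-fk-to-refl}); and by Proposition~\ref{prop-h-isolated}, $\Omega^\bullet_\h|_X$ equals $\pi_*\sO_{\tilde{X}}$ in degree $0$ and $\pi_*\bigl(\sI_E\cdot\Omega^\bullet_{\tilde{X}}(\log E)\bigr)$ in positive degrees. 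A local computation in snc coordinates shows that $d$ preserves $\sI_E\cdot\Omega^\bullet_{\tilde{X}}(\log E)$ inside $\Omega^\bullet_{\tilde{X}}$, so the inclusion $\Omega^\bullet_\h|_X\hookrightarrow\pi_*\Omega^\bullet_{\tilde{X}}$ is a morphism of complexes, inducing the map of~(\ref{it-h-to-fk}); its composite with the previous map is the tautological map $P^3_\h\to P^3_\refl$.

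For~(\ref{it-h-to-fk}) I would examine the quotient complex $Q^\bullet:=\pi_*\Omega^\bullet_{\tilde{X}}/\Omega^\bullet_\h|_X$. Pushing forward the short exact sequence $0\to\sI_E\Omega^i_{\tilde{X}}(\log E)\to\Omega^i_{\tilde{X}}\to\Omega^i_E/\tor\to0$ of \cite[Prop.~3.9]{Keb12} exhibits $Q^i$ for $i\geq1$ as a subsheaf of the skyscraper $H^0(E,\Omega^i_E/\tor)$, which vanishes for $1\leq i\leq3$ by Lemma~\ref{lem-key-du-bois-vanishing} (this is precisely where Assumption~\ref{ass-du-bois} enters). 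Hence $Q^\bullet$ is concentrated in degrees $\geq4$, so $h^2(Q^\bullet)=h^3(Q^\bullet)=0$, and the long exact cohomology sequence of $0\to\Omega^\bullet_\h|_X\to\pi_*\Omega^\bullet_{\tilde{X}}\to Q^\bullet\to0$ gives the asserted bijection in degree~$3$.

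For~(\ref{it-fk-to-refl}) I would argue as in the proof of Proposition~\ref{prop-h-refl-isolated-rational}, aiming at the identification $\pi_*\Omega^i_{\tilde{X}}=\Omega^{[i]}_X$ in all degrees $1\leq i\leq n$. The reflexive extension theorem \cite[Thm.~1.3]{SvS85} gives this for $1\leq i\leq n-2$ using only normality; since the degrees occurring in $h^3$ are $2,3,4$ and $\pi_*\Omega^4_{\tilde{X}}\hookrightarrow\Omega^{[4]}_X$ always, this already settles the statement whenever $n\geq5$, and for $n=4$ it handles degree~$2$ (which is exactly why $n\geq4$ is assumed). For $n=4$ there remain the degrees $i=n-1,n$; these follow, as in Proposition~\ref{prop-h-refl-isolated-rational}, from $\pi_*\omega_{\tilde{X}}=\omega_X=\Omega^{[n]}_X$ together with \cite[Cor.~1.4]{SvS85}, the Du Bois hypothesis here playing the role that rationality plays in Proposition~\ref{prop-h-refl-isolated-rational}: for a normal isolated Du Bois singularity one still has $\pi_*\omega_{\tilde{X}}=\omega_X$ (alternatively, one kills the residue along $E$ of a closed reflexive $(n-1)$-form by a direct computation on the snc divisor $E$). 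Once the sheaves coincide in degrees $1\leq i\leq n$, the two complexes agree in the range relevant to $h^3$ and the comparison map is an isomorphism on the nose.

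I expect the only substantive obstacle in~(\ref{it-fk-to-refl}) to be the equality $\pi_*\omega_{\tilde{X}}=\omega_X$ for isolated Du Bois singularities, required to descend to the boundary case $n=4$; everything in~(\ref{it-h-to-fk}) is packaged into Lemma~\ref{lem-key-du-bois-vanishing}, after which that part is bookkeeping with two short exact sequences of complexes. The one recurring technical check is the snc-local verification that $\sI_E\cdot\Omega^\bullet_{\tilde{X}}(\log E)$ is a subcomplex of $\Omega^\bullet_{\tilde{X}}$, which is what makes all the maps above morphisms of complexes.
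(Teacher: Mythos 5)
Your Item~(\ref{it-h-to-fk}) is correct and is essentially the paper's own argument: Proposition~\ref{prop-h-isolated} together with Lemma~\ref{lem-key-du-bois-vanishing} gives $\Omega^i_\h|_X=\pi_*\Omega^i_{\tilde{X}}$ for $1\leq i\leq 3$, and whether one phrases the conclusion via your quotient complex $Q^\bullet$ or directly is immaterial. The genuine gap is in Item~(\ref{it-fk-to-refl}) at the boundary case $n=4$: there you base the identification in degrees $n-1,n$ on the claim that $\pi_*\omega_{\tilde{X}}=\omega_X$ for a normal isolated Du Bois singularity, ``the Du Bois hypothesis playing the role of rationality''. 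This claim is false. For a normal isolated singularity the quotient $\omega_X/\pi_*\omega_{\tilde{X}}$ at $p$ is dual to $R^{n-1}\pi_*\sO_{\tilde{X}}$, which for a Du Bois singularity equals $H^{n-1}(E,\sO_E)$ by Fact~\ref{fact-isolated-du-bois}; for the cone of Construction~\ref{ex-kan-poincare} with $n=4$ (Du Bois by Claim~\ref{claim-du-bois-optimality-db}) this is $H^{3}(E_1\times E_2\times E_3,\sO)\cong\bC\neq 0$, so $\pi_*\omega_{\tilde{X}}\subsetneq\omega_X$. Note also that Item~(\ref{it-fk-to-refl}) is stated \emph{without} Assumption~\ref{ass-du-bois}, so you cannot retreat to the rational case, and your proposed ``alternative'' (killing the residue along $E$ by a direct computation) is not an argument as written; moreover the full equality $\Omega^{[3]}_X=\pi_*\Omega^3_{\tilde{X}}$ that you are aiming for in dimension $4$ is not known to hold for general Du Bois singularities and is not what the lemma requires.

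The repair is cheap and is exactly what the paper does: no identification of the sheaves in degrees $n-1$ and $n$ is needed. For $n=4$ invoke \cite[Cor.~1.4]{SvS85} directly, which for \emph{any} normal isolated singularity says that $d$ induces an injective map $\Omega^{[3]}_X/\pi_*\Omega^3_{\tilde{X}}\to\Omega^{[4]}_X/\pi_*\Omega^4_{\tilde{X}}$. Hence every closed germ $\alpha\in\Omega^{[3]}_{X,p}$ already lies in $(\pi_*\Omega^3_{\tilde{X}})_p$, which gives surjectivity of the comparison map on $h^3$; injectivity follows from $\Omega^{[2]}_X=\pi_*\Omega^2_{\tilde{X}}$ (\cite[Thm.~1.3]{SvS85}, since $2\leq n-2$), because a reflexive primitive of a class in $\pi_*\Omega^3_{\tilde{X}}$ is then automatically a primitive inside the pushed-forward complex. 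In particular no Du Bois input enters Item~(\ref{it-fk-to-refl}) at all; your treatment of the range $n\geq 5$ is fine and coincides with the paper's.
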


\begin{proof}
To prove Item~(\ref{it-fk-to-refl}) recall that \cite[Thm.~(1.3)]{SvS85} states that $\Omega^{[2]}_X=\pi_*\Omega^2_{\tilde{X}}$ and, if $n>4$,  $\Omega^{[3]}_X=\pi_*\Omega^3_{\tilde{X}}$. If $n=4$, then \cite[Cor.~(1.4)]{SvS85} implies that the map
\[d:\Omega^{[3]}_X/\pi_*\Omega^3_{\tilde{X}}\to\Omega^{[4]}_X/\pi_*\Omega^4_{\tilde{X}} \]
induced by the exterior derivative is injective. This completes the proof of Item~(\ref{it-fk-to-refl}).

To prove Item~(\ref{it-h-to-fk}) write $E=\pi^{-1}(\{p\})_\red$ so that Proposition~\ref{prop-h-isolated} yields a left exact sequence
\[0\to\Omega^i_\h|_X\to\pi_*\Omega^i_{\tilde{X}}\to H^0(E,\Omega^i_E/\tor) \]
for any $i>0$. Lemma~\ref{lem-key-du-bois-vanishing} immediately implies that the first map is bijective for $1\leq i\leq 3$, which proves Item~(\ref{it-h-to-fk}).
\end{proof}

\begin{lem}\label{lem-key-kan}
Let $p\in X$ be an isolated Du Bois singularity of dimension $n\geq 4$ satisfying Assumption~\ref{ass-du-bois}. Let $\pi:\tilde{X}\to X$ be a strong resolution and let $E=\pi^{-1}(\{x\})_\red=\sum_rE_r$ be the exceptional divisor with its reduced scheme structure. Then
\begin{enumerate}
\item\label{it-lem-key-kan-psi} there exists a natural isomorphism
\[\psi:\text{ker}\,\Bigl(d:R^1\pi_*\Omega^1_{\tilde{X}}\to R^1\pi_*\Omega^2_{\tilde{X}}\Bigr) \xrightarrow{\sim}  P^3_\refl(\sg{X}{p})\oplus H^2(E,\bC),\]
\item\label{it-lem-key-kan-comm} the connecting morphism $\delta:\bigoplus_rH^0(E_r,\sO_{E_r})\to R^1\pi_*\Omega^1_{\tilde{X}}$ associated with the short exact sequence $0\to\Omega^1_{\tilde{X}}\to\Omega^1_{\tilde{X}}(log\, E)\to\bigoplus_i\sO_{E_i}\to 0$ fits into a commutative diagram
\[\begin{array}{l}\xymatrix{
\bigoplus_rH^0(E_r,\sO_{E_r}) \ar[r]^(.55)\alpha \ar[d]_\delta & H^2(E,\bC) \ar[d]^{(0,\,\id)}\\
R^1\pi_*\Omega^1_{\tilde{X}} & P^3_\refl(\sg{X}{p})\oplus H^2(E,\bC)  \ar[l]_(.6){\psi^{-1}}  
}\end{array}\]
where $\alpha(H^0(E_r,\sO_{E_r}))=\langle E_r|_E\rangle \subset H^2(E,\bC)$ is the subspace spanned by the cohomology class in $H^2(E,\bC)$ of the line bundle $E_r|_E$.
\end{enumerate}
\end{lem}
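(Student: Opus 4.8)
The plan is to run the ``b\^ete'' (Hodge--de Rham) spectral sequence of the pushforward of the holomorphic de Rham complex on $\tilde X$ and to read off the two exact sequences hidden in parts~(\ref{it-lem-key-kan-psi}) and~(\ref{it-lem-key-kan-comm}) from its low-degree terms. Since $\Omega^\bullet_{\tilde X}$ resolves $\bC_{\tilde X}$, applying $R\pi_*$ with the b\^ete filtration yields a spectral sequence $E_1^{i,j}=R^j\pi_*\Omega^i_{\tilde X}\Rightarrow R^{i+j}\pi_*\bC_{\tilde X}$, and by Fact~\ref{fact-top-reso} the abutment $R^k\pi_*\bC_{\tilde X}$ is, for $k>0$, the skyscraper sheaf at $p$ with stalk $H^k(E,\bC)$ (and $\bC_X$ for $k=0$); away from the $(0,0)$-entry every term of the sequence is a skyscraper at $p$, so I argue throughout with stalks at $p$. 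The inputs I would feed in are: $R^j\pi_*\sO_{\tilde X}=0$ and $H^0(E,\Omega^j_E/\tor)=0$ for $j=1,2,3$ from Lemma~\ref{lem-key-du-bois-vanishing}; the identification $\pi_*\Omega^i_{\tilde X}=\Omega^i_\h|_X$ for $1\le i\le 3$ coming from Proposition~\ref{prop-h-isolated}(\ref{it-h-isolated-1}) together with the vanishing of $H^0(E,\Omega^i_E/\tor)$, which gives $E_2^{i,0}=P^i_\h(\sg{X}{p})$ for $i=1,2,3$ and, via Lemma~\ref{lem-dubois-comparison}, $E_2^{3,0}\cong P^3_\refl(\sg{X}{p})$; and the identification of the top Hodge graded piece $\mathrm{gr}_F^iH^i(E,\bC)=H^0\bigl(E,\mathcal H^0(\underline\Omega^i_E)\bigr)=H^0(E,\Omega^i_E/\tor)$ for the reduced SNC variety $E$ (using $\mathcal H^0(\underline\Omega^i_E)=\Omega^i_E/\tor$), so that Lemma~\ref{lem-key-du-bois-vanishing} forces $E_2^{2,0}=\mathrm{gr}_F^2H^2(E,\bC)=0$ and $E_\infty^{3,0}=\mathrm{gr}_F^3H^3(E,\bC)=0$; here one may replace $\sg{X}{p}$ by an algebraic model (Artin approximation) so that $E$ carries a mixed Hodge structure.

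Granting these, a routine inspection in total degree $\le 3$ finishes part~(\ref{it-lem-key-kan-psi}). Because $E_1^{0,j}=R^j\pi_*\sO_{\tilde X}=0$ for $j=1,2,3$, the only differential touching $E_2^{1,1}=\ker(d\colon R^1\pi_*\Omega^1_{\tilde X}\to R^1\pi_*\Omega^2_{\tilde X})$ is $d_2\colon E_2^{1,1}\to E_2^{3,0}$, so $E_\infty^{1,1}=\ker(d_2)$ and $E_\infty^{3,0}=\mathrm{coker}(d_2)$. As $E_\infty^{3,0}=0$, the map $d_2\colon E_2^{1,1}\twoheadrightarrow E_2^{3,0}\cong P^3_\refl(\sg{X}{p})$ is surjective; as $E_\infty^{2,0}=E_\infty^{0,2}=0$, the abutment filtration on $H^2(E,\bC)$ collapses to $E_\infty^{1,1}$, i.e.\ $E_\infty^{1,1}\cong H^2(E,\bC)$. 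This produces a short exact sequence $0\to H^2(E,\bC)\to E_2^{1,1}\xrightarrow{d_2}P^3_\refl(\sg{X}{p})\to 0$, and I would define $\psi=(d_2,\varrho)$ with $\varrho\colon E_2^{1,1}\to H^2(E,\bC)$ a retraction of the inclusion. The existence (and naturality) of $\varrho$ is the main technical point: all these groups carry mixed Hodge structures, the subobject $H^2(E,\bC)$ has weights $\le 2$ while the quotient $P^3_\refl(\sg{X}{p})$, being the $(3,0)$-part of a local hypercohomology group, has weights $\ge 3$, so the extension class in $\mathrm{Ext}^1_{\mathrm{MHS}}$ vanishes and the sequence splits canonically; checking this is the hard part of the argument.

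For part~(\ref{it-lem-key-kan-comm}) I would invoke the standard description of the connecting map of the residue sequence $0\to\Omega^1_{\tilde X}\to\Omega^1_{\tilde X}(\log E)\xrightarrow{\res}\bigoplus_i\sO_{E_i}\to 0$: the image $\delta(1_{E_r})$ of the unit section of $\sO_{E_r}$ is the image in $R^1\pi_*\Omega^1_{\tilde X}$ of the Atiyah--Chern class $c_1(\sO_{\tilde X}(E_r))\in H^1(\tilde X,\Omega^1_{\tilde X})$ (the usual identification of the log-residue extension class with $c_1$, cf.\ Esnault--Viehweg). In particular $\delta(1_{E_r})$ is $d$-closed, so it lies in the domain of $\psi$, and because it comes from an integral class in $H^2$ it lifts to the de Rham class $c_1(\sO_{\tilde X}(E_r))\big|_E=[E_r|_E]\in H^2(E,\bC)$, which sits inside $E_\infty^{1,1}=H^2(E,\bC)\subset E_2^{1,1}$ where $d_2$ vanishes. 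Hence $\psi\circ\delta$ carries the $r$-th summand $H^0(E_r,\sO_{E_r})$ onto $\{0\}\oplus\langle E_r|_E\rangle$, which is exactly the commutative square with $\alpha$ claimed in the lemma.
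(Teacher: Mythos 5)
Your computation with the spectral sequence $E_1^{i,j}=R^j\pi_*\Omega^i_{\tilde X}\Rightarrow R^{i+j}\pi_*\bC_{\tilde X}$ follows the same skeleton as the paper's proof, but it breaks down at exactly the point you label ``the hard part'': the existence of the retraction $\varrho$, i.e.\ the splitting of $0\to H^2(E,\bC)\to E_2^{1,1}\xrightarrow{d_2}P^3_\refl(\sg{X}{p})\to 0$. Your weight argument is in the wrong direction. An extension of mixed Hodge structures splits canonically when the \emph{sub}object has strictly larger weights than the quotient (then $W_mC\to B$ is an isomorphism by strictness); when the sub has weights $\le 2$ and the quotient weights $\ge 3$ --- your configuration --- the group $\mathrm{Ext}^1_{\mathrm{MHS}}$ is in general a nonzero Jacobian-type quotient, e.g.\ $\mathrm{Ext}^1_{\mathrm{MHS}}(\bQ(-1),\bQ(0))\cong\bC/\bQ(1)\neq 0$, so no splitting follows. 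Moreover you never equip $E_2^{1,1}$ and $P^3_\refl(\sg{X}{p})$ with mixed Hodge structures making the sequence an extension of MHS, and your auxiliary identifications (that the b\^ete filtration on $R\pi_*\Omega^\bullet_{\tilde X}$ induces, under the isomorphism of Fact~\ref{fact-top-reso}, Deligne's Hodge filtration on $H^*(E,\bC)$, and that $\mathcal H^0(\underline\Omega^i_E)=\Omega^i_E/\tor$), which you need for $E^{2,0}_\infty=0$ and $E^{3,0}_\infty=0$, are asserted without proof. Without the splitting you only obtain the exact sequence, not the isomorphism $\psi$, and then Item~(\ref{it-lem-key-kan-comm}) has no $\psi^{-1}$ to refer to.

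The paper obtains the splitting explicitly and without any Hodge theory: the restriction $\res:\Omega^\bullet_{\tilde X}\to\Omega^\bullet_E/\tor$ is a map of b\^ete-filtered complexes, hence induces a morphism of spectral sequences compatible with the isomorphism $R^k\pi_*\bC_{\tilde X}\cong H^k(E,\bC)$ for $k>0$. Since $H^0(E,\Omega^i_E/\tor)=0$ and $H^j(E,\sO_E)=0$ for $1\le i,j\le 3$ (Lemma~\ref{lem-key-du-bois-vanishing}), the target spectral sequence has $E_2^{1,1}$-term equal to $H^2(E,\bC)$, its $(2,0)$- and $(3,0)$-terms vanish (this replaces your Hodge-theoretic input for $E^{2,0}_\infty=E^{3,0}_\infty=0$), and $\res^{1,1}_2$ restricts to an isomorphism on $E^{1,1}_\infty\subset E_2^{1,1}$; so $\psi=(d_2,\res^{1,1}_2)$ is the desired isomorphism, with its second component being precisely the canonical identification you invoke in part~(\ref{it-lem-key-kan-comm}). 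Granting such a $\psi$, your Chern-class description of $\delta(1_{E_r})$ is a reasonable and somewhat slicker substitute for the paper's two-filtration argument (Deligne (1.4.9)--(1.4.10)) for Item~(\ref{it-lem-key-kan-comm}), but as it stands it also leans on the unproven identification $E^{1,1}_\infty\cong H^2(E,\bC)$ discussed above.
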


\begin{proof}[Proof of Item~(\ref{it-lem-key-kan-psi}) in Lemma~\ref{lem-key-kan}] We equip the de Rham complexes $\Omega^\bullet_{\tilde{X}}$ and $\Omega^\bullet_E/\tor$ both with the \emph{filtration b\^{e}te}. In this way the pull-back $\res:\Omega^\bullet_{\tilde{X}}\to \Omega^\bullet_E/\tor$ induces a morphism $\res^{i,j}_k:{_{\tilde{X}}E}^{i,j}_k\to {_EE}^{i,j}_k$  between spectral sequences converging to an isomorphism
\[\res^{i+j}_\infty :R^{i+j}\pi_*\bC_{\tilde{X}}={_{\tilde{X}}E}^{i+j}_\infty\xrightarrow{\sim}{_EE}^{i+j}_\infty=H^{i+j}(E,\bC) \]
for $i+j>0$ by Fact~\ref{fact-top-reso}. On the $E_1$-page $\res$ is just the obvious morphism
\[R^j\pi_*\Omega^i_{\tilde{X}}={_{\tilde{X}}E}^{i,j}_1\xrightarrow{\res^{i,j}_1}{_EE}^{i,j}_1=H^j(E,\Omega^i_E/\tor).\]
Observe that
\begin{enumerate}
  \item ${_{\tilde{X}}E}^{0,j}_1=0$ for $1\leq j\leq 3$ by Lemma~\ref{lem-key-du-bois-vanishing},
  \item ${_EE}^{i,0}_1=0$ and ${_EE}^{0,j}_1=0$ for $1\leq i,j\leq 3$ by Lemma~\ref{lem-key-du-bois-vanishing}, and
  \item ${_{\tilde{X}}E}^{3,0}_2\cong P^3_\refl(\sg{X}{p})$ by Lemma~\ref{lem-dubois-comparison}.
\end{enumerate}
The observations (1)-(3) together imply that the boundary map ${_{\tilde{X}}d}_2$ on ${_{\tilde{X}}E}_2$ and $\res$ yield an isomorphism
\[({_{\tilde{X}}d}^{1,1}_2,\res^{1,1}_2):{_{\tilde{X}}E}^{1,1}_2\xrightarrow{\sim} {_{\tilde{X}}E}^{3,0}_2\oplus {_EE}^{1,1}_2, \]
which can be identified with the isomorphism in Item~(\ref{it-lem-key-kan-psi}) of the lemma.
\end{proof}

\begin{proof}[Proof of Item~(\ref{it-lem-key-kan-comm}) in Lemma~\ref{lem-key-kan}]
We pick up the notation of the proof of Item~\ref{it-lem-key-kan-psi}. Fix some $r_0$ and abbreviate $G=E_{r_0}$. We denote by ${_GE}^{i,j}_1=H^j(G,\Omega^i_G)\implies H^{i+j}(G,\bC)$ the spectral sequence associated with the \emph{filtration b\^{e}te} on $\Omega^\bullet_G$. Then the morphisms
\[\begin{array}{l}\xymatrix@R-1.2pc{
0 \ar[r] & \Omega^1_{\tilde{X}} \ar[r] & \Omega^1_{\tilde{X}}(log\, E) \ar[r] & \oplus_i\sO_{E_i} \ar[r] & 0 & \\
0 \ar[r] & \Omega^1_{\tilde{X}} \ar[r]\ar@{=}[u]\ar[d]^d & \Omega^1_{\tilde{X}}(log\, G) \ar[r]\ar[u]\ar[d]^d & \sO_{G} \ar[r]\ar[u]\ar[d]^d & 0 & (\star)_1\ar[d]^d\\
0 \ar[r] & \Omega^2_{\tilde{X}} \ar[r] & \Omega^2_{\tilde{X}}(log\, G) \ar[r] & \Omega^1_G \ar[r] & 0 & (\star)_2
}\end{array}\]
between short exact residue sequences yield a commutative diagram
\[\begin{array}{l}\xymatrix@R-1.2pc{
\bigoplus_rH^0(E_r,\sO_{E_r}) \ar[d]_\delta & H^0(G,\sO_G) \ar[d]\ar[l]\ar[r]^{d=0} & H^0(G,\Omega^1_G) \ar[d]  \\
R^1\pi_*\Omega^1_{\tilde{X}} & R^1\pi_*\Omega^1_{\tilde{X}} \ar@{=}[l]\ar[r]^{d} &  R^1\pi_*\Omega^2_{\tilde{X}} 
}\end{array}\]
and this shows that 
\[im\,\delta\subset\text{ker}\,\Bigl(d:R^1\pi_*\Omega^1_{\tilde{X}}\to R^1\pi_*\Omega^2_{\tilde{X}}\Bigr). \]
We equip the complex $\Omega^\bullet_{\tilde{X}}(log\, G)$ with two decreasing filtrations:  the \emph{filtration b\^{e}te} $F^\bullet$ and the filtration $W^\bullet$ determined by
\[W^p\Omega^\bullet_{\tilde{X}}(log\, G) = \begin{cases} \Omega^\bullet_{\tilde{X}}(log\, G) & \text{if }p\leq 0 \\ \Omega^\bullet_{\tilde{X}} & \text{if } p=1  \\ 0 & \text{if }p\geq 2  \end{cases} \]
\cite[(1.4.9)+(1.4.10)]{Del71} shows that there exists a morphism of spectral sequences
\[{_{W,F}d}^{i,j}_k:{_GE}^{i,j}_k\to {_{\tilde{X}}E}^{i+1,j+1}_k \]
of degree $(+1,+1)$ such that ${_{W,F}d}_1$ is given by the connecting morphisms of the long exact sequences associated with the residue sequences $(\star)_\bullet$ and $\pi_*$. In particular, we have
\[\delta|_{H^0(G,\sO_G)}={_{W,F}d}^{0,0}_2:H^0(G,\sO_G)\to \text{ker}\,\Bigl(d:R^1\pi_*\Omega^1_{\tilde{X}}\to R^1\pi_*\Omega^2_{\tilde{X}}\Bigr)\]
and a commutative diagram
\[\begin{array}{l}\xymatrix{
H^0(G,\sO_G) \ar[d]_{{_Gd}^{0,0}_2}\ar[r]^(.3)\delta & \text{ker}\,\Bigl(d:R^1\pi_*\Omega^1_{\tilde{X}}\to R^1\pi_*\Omega^2_{\tilde{X}}\Bigr) \ar[d]^{{_{\tilde{X}}d}^{1,1}_2} \\
0 \ar[r]^{{_{W,F}d}^{2,-1}_2} & P^3_\refl(\sg{X}{p})
}\end{array}\]
so that $\delta=\psi^{-1}\circ(0,\id)\circ\alpha$ indeed factors as claimed. It is a standard fact that $\alpha$ satisfies the property stated in Item~(\ref{it-lem-key-kan-comm}).
\end{proof}

\subsection{Proof of Theorem~\ref{thm-kan-poincare}}\label{ssec-kan-proof}

The following lemma has already been observed for log canonical singularities in \cite[Lem.~4.11]{GKP12}.

\begin{lem}\label{lem-kan-local-interpretation}
Let $V$ be a normal projective complex variety of dimension $n\geq 4$ with only isolated singularities and let $\LB$ be an ample line bundle.

Then there exists a natural isomorphism
\[H^2(V,\Omega^{[1]}_V\otimes \LB^{-1})\xrightarrow{\sim} H^0(V,R^1\pi_*\Omega^1_{\tilde{V}}(log\, E)\otimes\LB^{-1}) \]
for any strong resolution $\pi:\tilde{V}\to V$, where $E\subset\tilde{V}$ is the exceptional divisor with its reduced scheme structure.
\end{lem}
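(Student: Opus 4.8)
The plan is to follow the proof of~\cite[Lem.~4.11]{GKP12}, substituting the log canonical extension theorem used there by its isolated-singularity analogue. First I would check that $\pi_*\Omega^1_{\tilde V}(\log E)=\Omega^{[1]}_V$. Since $\pi$ is biholomorphic over the smooth locus $V_\sm$ and $\Omega^1_{\tilde V}(\log E)$ is locally free, restriction of logarithmic forms to $\tilde V\setminus E\cong V_\sm$ yields an injection $\pi_*\Omega^1_{\tilde V}(\log E)\hookrightarrow j_*\Omega^1_{V_\sm}=\Omega^{[1]}_V$, where $j\colon V_\sm\hookrightarrow V$. On the other hand, because $n=\dim V\geq 4$, the extension result of Steenbrink--van Straten~\cite[Thm.~1.3]{SvS85} gives $\pi_*\Omega^1_{\tilde V}=\Omega^{[1]}_V$ at the isolated singularities of $V$. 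As $\pi_*\Omega^1_{\tilde V}\subseteq\pi_*\Omega^1_{\tilde V}(\log E)\subseteq\Omega^{[1]}_V$, the three sheaves coincide.

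Next I would feed $\mathcal F:=\Omega^1_{\tilde V}(\log E)\otimes\pi^*\LB^{-1}$ into the Leray spectral sequence of $\pi$,
\[E_2^{i,j}=H^i\bigl(V,\,R^j\pi_*\mathcal F\bigr)\ \Longrightarrow\ H^{i+j}(\tilde V,\mathcal F).\]
By the projection formula $R^j\pi_*\mathcal F\cong R^j\pi_*\Omega^1_{\tilde V}(\log E)\otimes\LB^{-1}$, and for $j\geq 1$ this is a coherent sheaf supported on the finite set $V_\sing$, whence $E_2^{i,j}=0$ for $i\geq 1$, $j\geq 1$; moreover $E_2^{i,0}=H^i\bigl(V,\Omega^{[1]}_V\otimes\LB^{-1}\bigr)$ by the first step. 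The five-term exact sequence of the spectral sequence,
\[0\to E_2^{1,0}\to H^1(\tilde V,\mathcal F)\to E_2^{0,1}\xrightarrow{\ d_2\ }E_2^{2,0}\to H^2(\tilde V,\mathcal F),\]
then exhibits $d_2$ as an isomorphism $H^0\bigl(V,R^1\pi_*\Omega^1_{\tilde V}(\log E)\otimes\LB^{-1}\bigr)\xrightarrow{\ \sim\ }H^2\bigl(V,\Omega^{[1]}_V\otimes\LB^{-1}\bigr)$ as soon as $H^1(\tilde V,\mathcal F)=H^2(\tilde V,\mathcal F)=0$; the inverse of $d_2$ is the map of the lemma, and its naturality with respect to $\pi$ follows from the functoriality of the Leray spectral sequence and its differentials.

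It remains to establish $H^q(\tilde V,\Omega^1_{\tilde V}(\log E)\otimes\pi^*\LB^{-1})=0$ for $q=1,2$, and I expect this to be the delicate point of the argument. Since $\pi^*\LB$ is semiample and big (the pullback of an ample line bundle), I would derive the vanishing from logarithmic Akizuki--Nakano vanishing: Serre duality on the smooth projective variety $\tilde V$, combined with $\bigl(\Omega^1_{\tilde V}(\log E)\bigr)^\vee\otimes\omega_{\tilde V}\cong\Omega^{n-1}_{\tilde V}(\log E)(-E)$, identifies the group above with $H^{n-q}\bigl(\tilde V,\Omega^{n-1}_{\tilde V}(\log E)(-E)\otimes\pi^*\LB\bigr)^\vee$, which vanishes for $(n-1)+(n-q)>n$, i.e.\ for $q<n-1$, by the Esnault--Viehweg type vanishing theorem for nef and big line bundles; as $n\geq 4$ this covers $q=1,2$. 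The subtlety is to invoke the logarithmic vanishing theorem in a form valid for the merely semiample-and-big bundle $\pi^*\LB$ in the range $p+q<n$; a cleaner route, staying close to~\cite[Lem.~4.11]{GKP12}, is to quote directly the vanishing theorem for reflexive logarithmic differential forms established in~\cite{GKKP11}, whose hypotheses in the isolated-singularity situation reduce precisely to what is needed here.
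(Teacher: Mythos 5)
Your overall architecture is the same as the paper's: identify $\pi_*\Omega^1_{\tilde V}(\log E)=\Omega^{[1]}_V$ via \cite[Thm.~1.3]{SvS85} (your sandwich argument $\pi_*\Omega^1_{\tilde V}\subseteq\pi_*\Omega^1_{\tilde V}(\log E)\subseteq j_*\Omega^1_{V_\sm}$ is fine for $n\geq 3$), then run the Leray spectral sequence for $\Omega^1_{\tilde V}(\log E)\otimes\pi^*\LB^{-1}$ and read off the isomorphism from the five-term sequence once $H^1$ and $H^2$ of $\Omega^1_{\tilde V}(\log E)\otimes\pi^*\LB^{-1}$ vanish. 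Up to that point there is nothing to object to.

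The gap is exactly at the step you yourself flag as delicate, and neither of your two proposed justifications works. There is no Akizuki--Nakano/Esnault--Viehweg vanishing for merely nef (or semiample) and big line bundles in the range $p+q<n$: for the blow-up $\sigma\colon X\to\bP^3$ of a point and $L=\sigma^*\sO_{\bP^3}(1)$ one computes $H^1(X,\Omega^1_X\otimes L^{-1})\cong H^1(\bP^2,\Omega^1_{\bP^2})\neq 0$ although $p+q=2<3$, so Kawamata--Viehweg does not extend to intermediate $(p,q)$ and the general theorem you invoke is false. The fallback reference \cite{GKKP11} does not apply either: its Steenbrink-type vanishing is a \emph{relative} statement (vanishing of $R^{n-1}\pi_*$ of certain twisted logarithmic forms) and requires log canonical hypotheses, whereas here $V$ is only assumed normal with isolated singularities and the required vanishing is a global one twisted by $\pi^*\LB^{\pm 1}$. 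What saves the argument is that the statement you arrive at after Serre duality, namely $H^{n-q}\bigl(\tilde V,\Omega^{n-1}_{\tilde V}(\log E)(-E)\otimes\pi^*\LB\bigr)=0$ for $q<n-1$, is not an instance of a general nef-and-big vanishing but is precisely Steenbrink's vanishing theorem for projective varieties with isolated singularities; in its dual form \cite[Thm.~2a')]{Steen85} it gives directly $H^m\bigl(\tilde V,\Omega^1_{\tilde V}(\log E)\otimes\pi^*\LB^{-1}\bigr)=0$ for $m<n$, which is the reference the paper quotes. With that citation substituted for your vanishing step, your proof becomes the paper's proof.
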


\begin{proof}
The Leray spectral sequence for the sheaf $\Omega^1_{\tilde{V}}(log\, E)\otimes\pi^*\LB^{-1}$ on $\tilde{V}$ and the morphism $\pi$ starts off with
\[E^{i,j}_2=H^j(V,R^i\pi_*(\Omega^1_{\tilde{V}}(log\, E))\otimes\LB^{-1})\implies E^{i+j}_\infty=H^{i+j}(\tilde{V},\Omega^1_{\tilde{V}}(log\, E)\otimes\pi^*\LB^{-1}).\]
Steenbrink's vanishing theorem \cite[Thm.~2a')]{Steen85} states that $E^{m}_\infty=0$ for $m<n$. Since $n\geq 4$ the five-term exact sequence associated with the Leray spectral sequence shows that the boundary map of the $E_2$-page gives an isomorphism
\[H^0(V,R^1\pi_*(\Omega^1_{\tilde{V}}(log\, E))\otimes\LB^{-1})=E^{0,1}_0 \xrightarrow{\sim} E^{2,0}_2 =  H^2(V,\pi_*\Omega^1_{\tilde{V}}(log\, E)\otimes\LB^{-1}).\]
This proves the lemma, since $\Omega^{[1]}_V=\pi_*\Omega^1_{\tilde{V}}(log\, E)$ by \cite[Thm.~(1.3)]{SvS85}.
\end{proof}

\begin{lem}\label{lem-kan-weil-divisors}
Let $X$ be a reduced complex space with an isolated Du Bois singularity $p\in X$ of dimension $n\geq 4$ satisfying Assumption~\ref{ass-du-bois}. Let $\pi:\tilde{X}\to X$ be a strong resolution and let $E=\sum_rE_r$ be the exceptional divisor with its reduced scheme structure. Then
\[\dim_\bC H^2(E,\bC)/{\textstyle \sum_r}\langle E_r|_E\rangle = \dim_\bQ\text{WDiv}_\bQ(\sg{X}{p})/\hspace{-0.2em}\sim_\bQ.\]
\end{lem}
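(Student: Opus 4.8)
The plan is to identify both sides of the asserted equality with the same finite-dimensional quotient of $H^2(E,\bC)$, by comparing local Weil divisor classes near $p$ with the Picard group of the resolution and then invoking the vanishing of low-degree higher direct images of $\sO_{\tilde{X}}$.

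First I would treat the right-hand side. For every sufficiently small neighbourhood $U$ of $p$ write $\tilde{U}:=\pi^{-1}(U)$. Since $X$ is normal at $p$ and $\{p\}$ has codimension $\geq 2$, one has $\mathrm{Cl}(U)=\mathrm{Cl}(U\setminus\{p\})=\mathrm{Cl}(\tilde{U}\setminus E)$. Taking strict transforms of Weil divisors, and using that a meromorphic function on the normal space $U$ extends across the codimension-$\geq 2$ set $\{p\}$, one obtains a natural exact sequence
\[\bigoplus_r\bZ\,[E_r]\longrightarrow\Pic(\tilde{U})\longrightarrow\mathrm{Cl}(U)\longrightarrow 0,\]
where the first map sends $[E_r]$ to the class of $\sO_{\tilde{U}}(E_r)$ (surjectivity comes from strict transforms, and the kernel description from meromorphic continuation of rational functions to $\tilde{U}$). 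Consequently $\text{WDiv}_\bQ(\sg{X}{p})/\hspace{-0.2em}\sim_\bQ\,=\varinjlim_U\mathrm{Cl}(U)\otimes\bQ$ is the cokernel of $\bigoplus_r\bQ\,[E_r]\to\varinjlim_U\Pic(\tilde{U})\otimes\bQ$.

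Next I would compute $\varinjlim_U\Pic(\tilde{U})$ using the exponential sequence, which gives exactness of $H^1(\tilde{U},\sO_{\tilde{U}})\to\Pic(\tilde{U})\to H^2(\tilde{U},\bZ)\to H^2(\tilde{U},\sO_{\tilde{U}})$. Passing to the direct limit over shrinking $U$, the two outer terms become the stalks $(R^1\pi_*\sO_{\tilde{X}})_p$ and $(R^2\pi_*\sO_{\tilde{X}})_p$, which vanish by Lemma~\ref{lem-key-du-bois-vanishing} (this is where $n\geq 4$ together with Assumption~\ref{ass-du-bois} enters); hence $\varinjlim_U\Pic(\tilde{U})\xrightarrow{\ \sim\ }\varinjlim_U H^2(\tilde{U},\bZ)$. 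By Fact~\ref{fact-top-reso} there is a cofinal family of contractible $U$ for which $E\hookrightarrow\tilde{U}$ is a homotopy equivalence, so $\varinjlim_U H^2(\tilde{U},\bZ)\cong H^2(E,\bZ)$, and under this composite isomorphism the class of $\sO_{\tilde{U}}(E_r)$ is carried to $c_1\bigl(\sO_{\tilde{X}}(E_r)|_E\bigr)$, i.e.\ to the class $[E_r|_E]$. Since direct limits are exact, combining with the previous paragraph yields $\varinjlim_U\mathrm{Cl}(U)\cong H^2(E,\bZ)/\textstyle\sum_r\bZ\,[E_r|_E]$, so $\text{WDiv}_\bQ(\sg{X}{p})/\hspace{-0.2em}\sim_\bQ\,\cong H^2(E,\bQ)/\sum_r\bQ\,[E_r|_E]$. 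Finally, since $H^2(E,\bC)=H^2(E,\bQ)\otimes_\bQ\bC$ by the universal coefficient theorem and each $[E_r|_E]$ is a rational class, $\sum_r\langle E_r|_E\rangle$ is the complexification of $\sum_r\bQ\,[E_r|_E]$; hence the two quotient spaces have the same dimension, which is the claim.

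The main obstacle is the first step: establishing the exact sequence $\bigoplus_r\bZ[E_r]\to\Pic(\tilde{U})\to\mathrm{Cl}(U)\to 0$ carefully in the analytic category (the kernel computation relying on extending meromorphic functions from $U\setminus\{p\}$ over $\{p\}$ and then to $\tilde{U}$), and verifying that these sequences are compatible with the restriction maps in the direct system, so that the colimit may be formed termwise. Everything after that is a formal consequence of Lemma~\ref{lem-key-du-bois-vanishing}, Fact~\ref{fact-top-reso}, and base change from $\bQ$ to $\bC$.
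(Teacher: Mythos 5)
Your proposal is correct and follows essentially the same route as the paper: the exponential sequence on $\tilde{U}$ together with the vanishing of $R^1\pi_*\sO_{\tilde{X}}$ and $R^2\pi_*\sO_{\tilde{X}}$ from Lemma~\ref{lem-key-du-bois-vanishing} and the homotopy equivalence of Fact~\ref{fact-top-reso} gives $\varinjlim_U\Pic(\tilde{U})\cong H^2(E,\bZ)$, and the strict-transform/push-forward sequence $\sum_r\bQ[E_r]\to\varinjlim_U\Pic(\tilde{U})\otimes\bQ\to\text{WDiv}_\bQ(\sg{X}{p})/\hspace{-0.2em}\sim_\bQ\to 0$ finishes the count. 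The only difference is cosmetic: you spell out the divisor-class-group sequence that the paper simply invokes, and you take the limit of the exponential sequence on $\tilde{U}$ directly instead of comparing it with the exponential sequence on $E$.
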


\begin{proof}
By Fact~\ref{fact-top-reso} there exist arbitrarily small neighborhoods $U$ of $p\in X$ so that $E\to\tilde{U}:=\pi^{-1}(U)$ is a homotopy equivalence. The exponential sequences on $\tilde{U}$ and $E$ give rise to a commutative diagram
\[\begin{array}{l}\xymatrix{
H^1(\tilde{U},\sO_{\tilde{U}}) \ar[d]\ar[r] & \text{Pic}(\tilde{U}) \ar[r]\ar[d] & H^2(\tilde{U},\bZ) \ar[d]^{\cong}\ar[r] & H^2(\tilde{U},\sO_{\tilde{U}}) \ar[d] \\
H^1(E,\sO_E) \ar[r] & \text{Pic(E)} \ar[r] & H^2(E,\bZ) \ar[r] & H^2(E,\sO_E).
}\end{array}\]
By Lemma~\ref{lem-key-du-bois-vanishing}, the outer terms vanish if we take the limit over all such $U$. This establishes the isomorphism $\varinjlim_U\text{Pic}(\tilde{U})\cong H^2(E,\bZ)$. The lemma now follows from the exact push forward sequence
\[{\textstyle \sum_r\bQ\cdot[E_r]\to\varinjlim_U\text{Pic}(\tilde{U})\otimes\bQ\to\text{WDiv}_\bQ(\sg{X}{p})/\hspace{-0.2em}\sim_\bQ\to 0}\]
for analytic divisors with coefficients in $\bQ$.
\end{proof}

\begin{proof}[Proof of Theorem~\ref{thm-kan-poincare}]
Let $\pi:\tilde{X}\to X$ be a strong resolution of $X$ where $\tilde{X}$ is a projective manifold. For any $p\in X_\singu$ let $E^p=\sum_{r\in R_p}E^p_r\subset \tilde{X}$ denote the reduced fiber over $p\in X$. We can calculate using relative GAGA in the first step
\begin{align*}
  \dim_\bC\,H^2(X,\Omega^{[1]}_X\otimes\LB^{-1}) &\,\stackbin[]{ \ref{lem-kan-local-interpretation}}{=} \, \sum_{p\in X_\singu}\dim_\bC\bigl(R^1\pi_*\Omega^1_{\tilde{X}}(\text{log}\,E^p)\bigr)_p \\
& \,\stackbin[]{ \ref{prop-key-kan}}{\geq} \, \sum_{p\in X_\singu} {\textstyle \Bigl(\dim_\bC P^3_\refl(\sg{X}{p}) + \dim_\bC H^2(E^p,\bC)/\sum_{r\in R_p}\langle E^p_r|_{E^p}\rangle\Bigr)}  \\
& \,\stackbin[]{ \ref{lem-kan-weil-divisors}}{=} \,  \sum_{p\in X_\singu}{\textstyle \Bigl(\dim_\bC P^3_\refl(\sg{X}{p}) + \dim_\bQ\text{WDiv}_\bQ(\sg{X}{p})/\hspace{-0.2em}\sim_\bQ\Bigr),}  
\end{align*}
which proves the first claim of Theorem~\ref{thm-kan-poincare}. The second claim follows from Lemma~\ref{lem-dubois-comparison}.
\end{proof}

\subsection{Proof of Proposition~\ref{prop-kan-optimality}}\label{ssec-prop-kan-optimality}
We maintain the notation and assumptions of Example~\ref{ex-kan-poincare}. We denote the blowing-up in the vertex by $\pi:\tilde{X}\to X$. The projective complex manifold $E$ will be considered as the exceptional divisor of $\pi$ and $\sI_E\subset\sO_{\tilde{X}}$ is its ideal sheaf.

We split the proof into several claims proven separately.

\begin{plainclaim}\label{claim-du-bois-optimality-db}
The singularity $p\in X$ is Du Bois.
\end{plainclaim}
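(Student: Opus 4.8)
The plan is to verify the criterion of Fact~\ref{fact-isolated-du-bois} for the singularity $p\in X$. Since $E=E_1\times\cdots\times E_{n-1}$ is smooth and projective of dimension $n-1$, the cone $X$ has a single isolated singular point, namely the vertex $p$, and the projective normality assumption (2) in Construction~\ref{ex-kan-poincare} guarantees that $X$ is normal in a neighbourhood of $p$. Thus Fact~\ref{fact-isolated-du-bois} becomes applicable once a strong resolution is produced and $R^i\pi_*\sO_{\tilde X}$ is brought under control.

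First I would observe that $\pi:\tilde X\to X$, the blowing-up of the vertex, is a strong resolution in the sense of Section~\ref{sec-diff-forms}: working over a small affine neighbourhood of $p$, on which $X$ is the affine section cone $\operatorname{Spec}\bigoplus_{k\geq 0}H^0(E,\sM^{k})$ by projective normality, one identifies $\tilde X$ near $E$ with the total space of the line bundle $\sM^{-1}$ over $E$, so that $\tilde X$ is smooth, $E=\pi^{-1}(\{p\})_{\red}$ is a smooth --- hence simple normal crossings --- divisor with normal bundle $\sM^{-1}$, and $\pi$ is an isomorphism over $X\setminus\{p\}$. Next I would compute, for $i>0$, the skyscraper sheaf $R^i\pi_*\sO_{\tilde X}$ using the Leray spectral sequence for the affine bundle projection $f\colon\tilde X\to E$: one has $f_*\sO_{\tilde X}\cong\bigoplus_{k\geq 0}\sM^{k}$ and $R^{j}f_*\sO_{\tilde X}=0$ for $j>0$, so the stalk of $R^i\pi_*\sO_{\tilde X}$ at $p$ is $\bigoplus_{k\geq 0}H^i(E,\sM^{k})$. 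By hypothesis (1) of Construction~\ref{ex-kan-poincare} the summands with $k>0$ all vanish, leaving only $H^i(E,\sO_E)$; and under this identification the comparison map $R^i\pi_*\sO_{\tilde X}\to H^i(E,\sO_E)$ of Fact~\ref{fact-isolated-du-bois}, being induced by restriction of functions to the zero section $E$, is precisely the projection onto the degree-zero summand. Hence it is an isomorphism for every $i>0$, and Fact~\ref{fact-isolated-du-bois} shows that $p\in X$ is Du~Bois.

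The only genuinely delicate step is the local identification of $\operatorname{Bl}_p X$ near the vertex with the total space of $\sM^{-1}$, together with the ensuing description $\pi_*\sO_{\tilde X}\cong\bigoplus_{k\geq 0}\sM^{k}$ --- this is exactly where projective normality of the embedding $E\hookrightarrow\bP^N$ is used, and where one must check that shrinking to a neighbourhood of $p$ leaves the computation of $R^i\pi_*\sO_{\tilde X}$ unchanged; once this is in place, the remainder is a routine unwinding of the Leray spectral sequence.
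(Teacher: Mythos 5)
Your proposal is correct and follows essentially the same route as the paper: both reduce to the criterion of Fact~\ref{fact-isolated-du-bois} and deduce bijectivity of $R^i\pi_*\sO_{\tilde X}\to H^i(E,\sO_E)$ for $i>0$ from Condition~(1) of Construction~\ref{ex-kan-poincare} together with the cone structure, i.e.\ $\sI_E^k/\sI_E^{k+1}\cong\sM^{k}$. The only difference is cosmetic: the paper states the computation as $R^i\pi_*\sI_E=0$ via the formal function theorem, which is also the cleanest way to justify, in the analytic category, the graded (direct-sum) description of the stalk that your Leray-type argument for the bundle projection sketches.
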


\begin{proof}[Proof of Claim~\ref{claim-du-bois-optimality-db}]
Using Fact~\ref{fact-isolated-du-bois} it suffices to show that $R^i\pi_*\sO_{\tilde{X}}\to H^i(E,\sO_E)$ is bijective for $i>0$. This is equivalent to $R^i\pi_*\sI_E=0$ for $i>0$. The last assertion is a consequence of the formal function theorem, Condition~\ref{ex-kan-poincare}(1) and $\sI_E^k/\sI_E^{k+1}\cong\sM^{k}$.
\end{proof}

\begin{plainclaim}\label{claim-du-bois-wdiv-infty}
For any $n\geq 4$, we have $\dim_\bQ\text{WDiv}_\bQ(\sg{X}{p})/\hspace{-0.2em}\sim_\bQ=\infty$.
\end{plainclaim}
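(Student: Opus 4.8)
The plan is to reduce the claim to a computation of the local divisor class group of the cone, and then to observe that the Picard group of the abelian variety $E$ is far from finitely generated.

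First I would set up notation as in the proof of Lemma~\ref{lem-kan-weil-divisors}. The blowing-up $\pi\colon\tilde X\to X$ of the vertex identifies, in a neighbourhood of $E$, the space $\tilde X$ with the total space of the line bundle $\sM^{-1}$ over $E$, with $E$ the zero section; in particular $\sO_{\tilde X}(E)|_E\cong N_{E/\tilde X}\cong\sM^{-1}$. By Fact~\ref{fact-top-reso} there is a cofinal system of neighbourhoods $U$ of $p$ for which the inclusion $E\hookrightarrow\tilde U:=\pi^{-1}(U)$ is a homotopy equivalence. Since $\tilde U$ is smooth, $E$ is irreducible and $\{p\}$ has codimension $n\geq 2$ in $X$, the excision sequence for class groups gives $\text{Cl}(U)=\Pic(\tilde U)/\bZ\cdot[E]$; passing to the direct limit over this cofinal system and tensoring with $\bQ$ yields
\[\text{WDiv}_\bQ(\sg{X}{p})/\hspace{-0.2em}\sim_\bQ\;\cong\;\bigl(\varinjlim_{U}\Pic(\tilde U)\bigr)\otimes_\bZ\bQ\big/\bQ\cdot[E].\]

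Next I would identify $\varinjlim_{U}\Pic(\tilde U)$. Restriction to $E$ defines a morphism from the (five-term piece of the) exponential sequence of $\tilde U$ to that of $E$. In the direct limit the maps $\varinjlim_{U}H^i(\tilde U,\bZ)\to H^i(E,\bZ)$ are isomorphisms for $i=1,2$ by the chosen homotopy equivalences, while $\varinjlim_{U}H^i(\tilde U,\sO_{\tilde U})=(R^i\pi_*\sO_{\tilde X})_p\to H^i(E,\sO_E)$ is an isomorphism for every $i\geq 1$: indeed $R^i\pi_*\sI_E=0$ for $i\geq 1$ by the formal function theorem together with $\sI_E^k/\sI_E^{k+1}\cong\sM^k$ and Condition~\ref{ex-kan-poincare}(1), exactly as in the proof of Claim~\ref{claim-du-bois-optimality-db}, whence $R^i\pi_*\sO_{\tilde X}\cong R^i\pi_*\sO_E$ for $i\geq 1$. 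By the five lemma applied to these two five-term exact sequences, the restriction map $\varinjlim_{U}\Pic(\tilde U)\xrightarrow{\ \sim\ }\Pic(E)$ is an isomorphism, and it sends $[E]$ to $[\sO_{\tilde X}(E)|_E]=[\sM^{-1}]$. Hence
\[\text{WDiv}_\bQ(\sg{X}{p})/\hspace{-0.2em}\sim_\bQ\;\cong\;\Pic(E)\otimes_\bZ\bQ\big/\bQ\cdot[\sM].\]
Since $n\geq 4$, the abelian variety $E=E_1\times\cdots\times E_{n-1}$ has dimension $g:=n-1\geq 1$, so $\Pic^0(E)$ is a complex torus, isomorphic as an abstract abelian group to $(\bR/\bZ)^{2g}$. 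Tensoring the sequence $0\to\bZ^{2g}\to\bR^{2g}\to(\bR/\bZ)^{2g}\to 0$ with the flat $\bZ$-module $\bQ$ gives $\Pic^0(E)\otimes_\bZ\bQ\cong(\bR/\bQ)^{2g}$, which is an infinite-dimensional $\bQ$-vector space. As $\Pic^0(E)\otimes_\bZ\bQ$ is a $\bQ$-subspace of $\Pic(E)\otimes_\bZ\bQ$, the latter is infinite-dimensional, and the quotient by the single line $\bQ\cdot[\sM]$ remains infinite-dimensional. Therefore $\dim_\bQ\text{WDiv}_\bQ(\sg{X}{p})/\hspace{-0.2em}\sim_\bQ=\infty$.

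I expect the main obstacle to be the bookkeeping in the second step: one must check that after passing to the direct limit over the cofinal system of neighbourhoods from Fact~\ref{fact-top-reso}, the comparison of exponential sequences really does have isomorphisms in all four outer slots, so that the five lemma applies. This is where the failure of Assumption~\ref{ass-du-bois} for the cone forces the detour through the vanishing $R^i\pi_*\sI_E=0$ (rather than the stronger vanishing $R^i\pi_*\sO_{\tilde X}=0$ available in Lemma~\ref{lem-kan-weil-divisors}); the cohomology $H^i(E,\sO_E)$ does not vanish here, but it cancels on both sides of the comparison diagram.
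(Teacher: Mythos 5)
Your proof is correct and follows essentially the same route as the paper: compare the exponential sequences of $\tilde{U}$ and $E$ in the direct limit over the neighborhoods from Fact~\ref{fact-top-reso}, use the five lemma to obtain $\varinjlim_U\Pic(\tilde{U})\cong\Pic(E)$ (your direct vanishing $R^i\pi_*\sI_E=0$ is exactly the computation behind Claim~\ref{claim-du-bois-optimality-db}, which the paper cites together with Fact~\ref{fact-isolated-du-bois}), and conclude from the infinite $\bQ$-dimension of $\Pic^0(E)\otimes_\bZ\bQ$ after quotienting by the single class of $[E]$ via the excision/push-forward sequence. The only cosmetic difference is that you phrase the last step as an isomorphism $\text{WDiv}_\bQ(\sg{X}{p})/\hspace{-0.2em}\sim_\bQ\,\cong\,\bigl(\varinjlim_U\Pic(\tilde{U})\otimes\bQ\bigr)/\bQ\cdot[E]$, while the paper only records the corresponding right-exact sequence, which suffices.
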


\begin{proof}[Proof of Claim~\ref{claim-du-bois-wdiv-infty}]
As in the proof of Lemma~\ref{lem-kan-weil-divisors} there exist arbitrarily small neighborhoods $U\subset\tilde{X}$ of $p$ together with commutative diagrams
\[\begin{array}{l}\xymatrix{
H^1(\tilde{U},\bZ) \ar[d]^{\cong}\ar[r] & H^1(\tilde{U},\sO_{\tilde{U}}) \ar[d]^{\text{res}^1_U}\ar[r] & \text{Pic}(\tilde{U}) \ar[r]\ar[d] & H^2(\tilde{U},\bZ) \ar[d]^{\cong}\ar[r] & H^2(\tilde{U},\sO_{\tilde{U}}) \ar[d]^{\text{res}^2_U} \\
H^1(E,\bZ) \ar[r] & H^1(E,\sO_E) \ar[r] & \text{Pic(E)} \ar[r] & H^2(E,\bZ) \ar[r] & H^2(E,\sO_E).
}\end{array}\]
Claim~\ref{claim-du-bois-optimality-db} and Fact~\ref{fact-isolated-du-bois} together imply that $\varinjlim_U\text{res}^i_U:R^i\pi_*\sO_{\tilde{X}}\xrightarrow[]{\sim} H^i(E,\sO_E)$ is an isomorphism for $i>0$ so that $\varinjlim_U\text{Pic}(\tilde{U})\cong \text{Pic}(E)$ by the five lemma. The second row in the diagram then immediately shows that $\varinjlim_U\text{Pic}(\tilde{U})\otimes\bQ$ is of infinite dimension over $\bQ$. Now the claim follows from the push forward sequence
\[{\textstyle \bQ\xrightarrow[]{\cdot [E]}\varinjlim_U\text{Pic}(\tilde{U})\otimes\bQ\to\text{WDiv}_\bQ(\sg{X}{p})/\hspace{-0.2em}\sim_\bQ\to 0.}\]
for analytic divisors with coefficients in $\bQ$.
\end{proof}

\begin{plainclaim}\label{claim-du-bois-optimality-kan}
For any ample line bundle $\LB$ on $X$ we have 
\[\dim_\bC H^2(X,\Omega^{[1]}_X\otimes\LB^{-1}) \,\leq \, n\cdot(n-1).\]
\end{plainclaim}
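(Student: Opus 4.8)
The plan is to reduce the statement to a computation of the length of a skyscraper sheaf supported at the vertex $p$, and then to bound that length by the Hodge numbers of the abelian variety $E$.

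First I would invoke Lemma~\ref{lem-kan-local-interpretation}, applicable since $X$ is a normal projective variety of dimension $n\geq 4$ whose only singular point is $p$: it gives a natural isomorphism $H^2(X,\Omega^{[1]}_X\otimes\LB^{-1})\cong H^0\bigl(X,R^1\pi_*\Omega^1_{\tilde X}(\log E)\otimes\LB^{-1}\bigr)$, where $\pi:\tilde X\to X$ is the blowing-up of the vertex. Recall that $\tilde X\cong\bP_E(\sO_E\oplus\sM^{-1})$ is smooth, with bundle projection $q:\tilde X\to E$ and exceptional divisor $E$ (identified with the abelian variety), and that $\pi$ is an isomorphism away from $p$. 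Hence $R^1\pi_*\Omega^1_{\tilde X}(\log E)$ is a coherent sheaf supported at $\{p\}$, so it has finite length and tensoring with the locally free sheaf $\LB^{-1}$ does not change that length; it therefore suffices to show $\dim_\bC R^1\pi_*\Omega^1_{\tilde X}(\log E)\leq n(n-1)$.

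Next I would compute this dimension by the theorem on formal functions. As the sheaf is a skyscraper it equals its completion, so with $E^{(k)}\subset\tilde X$ denoting the subscheme defined by $\sI_E^{k}$ one has $R^1\pi_*\Omega^1_{\tilde X}(\log E)\cong\varprojlim_k H^1\bigl(E^{(k)},\Omega^1_{\tilde X}(\log E)|_{E^{(k)}}\bigr)$. The graded pieces of the tower are $\Omega^1_{\tilde X}(\log E)|_E\otimes(\sI_E/\sI_E^2)^{\otimes k}=\Omega^1_{\tilde X}(\log E)|_E\otimes\sM^{k}$, using that the conormal bundle of $E$ in $\tilde X$ is $\sM$ (the normal bundle of the exceptional divisor of a blown-up cone at the vertex is $\sO_E(-1)$). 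To understand $\Omega^1_{\tilde X}(\log E)|_E$ I would work in a neighbourhood of $E$, where $\tilde X$ is the total space of the line bundle $\sM^{-1}$: a short computation with the fibre coordinate $t$, whose logarithmic differential transforms by $d\log g_{ij}\in q^*\Omega^1_E$, gives there an exact sequence of locally free sheaves $0\to q^*\Omega^1_E\to\Omega^1_{\tilde X}(\log E)\to\sO_{\tilde X}\to0$, which restricts to $0\to\Omega^1_E\to\Omega^1_{\tilde X}(\log E)|_E\to\sO_E\to0$ on $E$. Since $E$ is a product of elliptic curves, $\Omega^1_E\cong\sO_E^{\oplus(n-1)}$; tensoring the latter sequence by $\sM^{k}$ and using the vanishing $H^1(E,\sM^{k})=0$ for $k\geq1$ built into Construction~\ref{ex-kan-poincare} gives $H^1\bigl(E,\Omega^1_{\tilde X}(\log E)|_E\otimes\sM^{k}\bigr)=0$ for every $k\geq1$. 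Plugging this into the long exact cohomology sequences attached to $0\to\Omega^1_{\tilde X}(\log E)|_E\otimes\sM^{k}\to\Omega^1_{\tilde X}(\log E)|_{E^{(k+1)}}\to\Omega^1_{\tilde X}(\log E)|_{E^{(k)}}\to0$ makes every restriction map in the tower injective, so the inverse limit embeds into its first term $H^1\bigl(E,\Omega^1_{\tilde X}(\log E)|_E\bigr)$.

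Finally, the sequence $0\to\Omega^1_E\to\Omega^1_{\tilde X}(\log E)|_E\to\sO_E\to0$ yields $\dim_\bC H^1\bigl(E,\Omega^1_{\tilde X}(\log E)|_E\bigr)\leq\dim_\bC H^1(E,\Omega^1_E)+\dim_\bC H^1(E,\sO_E)=(n-1)^2+(n-1)=n(n-1)$, since $h^{1,1}(E)=(n-1)^2$ and $h^{0,1}(E)=n-1$ for the abelian variety $E$ of dimension $n-1$. Chaining the three reductions gives the claim. I expect the only delicate point to be the local identification of $\Omega^1_{\tilde X}(\log E)$ near the exceptional divisor; everything following it is formal bookkeeping with long exact sequences.
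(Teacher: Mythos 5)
Your proposal is correct and follows essentially the same route as the paper: reduce via Lemma~\ref{lem-kan-local-interpretation} to the skyscraper $R^1\pi_*\Omega^1_{\tilde X}(\log E)$, apply the formal function theorem using the vanishing $H^1\bigl(E,\Omega^1_{\tilde X}(\log E)|_E\otimes\sM^k\bigr)=0$ for $k>0$ coming from Condition~(1) of Construction~\ref{ex-kan-poincare} and $\Omega^1_E\cong\sO_E^{\oplus(n-1)}$, and finally bound $H^1\bigl(E,\Omega^1_{\tilde X}(\log E)|_E\bigr)$ by $h^{1,1}(E)+h^{0,1}(E)=n(n-1)$ via the sequence $0\to\Omega^1_E\to\Omega^1_{\tilde X}(\log E)|_E\to\sO_E\to 0$. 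The only cosmetic differences are that you derive that restricted residue sequence by a local computation in the total space of $\sM^{-1}$ and content yourself with an injection of the inverse limit into its first term, which suffices (and is, if anything, slightly more careful than the isomorphism asserted in the paper).
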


\begin{proof}[Proof of Claim~\ref{claim-du-bois-optimality-kan}]
The residue sequence for $1$-forms with logarithmic poles along $E$ is
\[0\to \Omega^1_{E}\to\Omega^1_{\tilde{X}}(log\, E)\otimes\sO_E\to \sO_E\to 0. \]
Tensoring with $\sI_E^k/\sI_E^{k+1}\cong\sM^{k}$ for $k>0$ and $\Omega^1_E\cong \bigoplus_{i=1}^{n-1}\sO_E$ yield
\[ {\textstyle 0\to \bigoplus_{i=1}^{n-1}\sM^{k}\to\Omega^1_{\tilde{X}}(log\, E)\otimes\sI_E^k/\sI_E^{k+1}\to \sM^{k}\to 0.}\]
The higher cohomology groups on the right and left hand side vanish by Condition~\ref{ex-kan-poincare}(1) if $k>0$. This implies that
\[H^i(E,\Omega^1_{\tilde{X}}(log\, E)\otimes\sI_E^k/\sI_E^{k+1})=0\]
for $i>0$ and $k>0$. In particular, using the formal function theorem, we calculate
\[R^i\pi_*\Omega^1_{\tilde{X}}(log\, E) \cong \stackbin[\leftarrow]{}{\text{lim}} \,H^1\bigl(E,\Omega^1_{\tilde{X}}(log\, E)\otimes \sO_{\tilde{X}}/\sI_E^k\bigr) \cong  H^1\bigl(E,\Omega^1_{\tilde{X}}(log\, E)|_E\bigr).\]
Then the short sequence $0\to\Omega^1_E\to\Omega^1_{\tilde{X}}(\log E)|_E\to\sO_E\to 0$ implies that
\[\dim_\bC \Bigl(R^i\pi_*\Omega^1_{\tilde{X}}(log\, E)\Bigr)_p\leq h^{1,1}(E)+ h^{0,1}(E)=n\cdot(n-1).\]
The claim follows from Lemma~\ref{lem-kan-local-interpretation}.
\end{proof}

\begin{plainclaim}\label{claim-du-bois-optimality-poincare}
$\text{dim}_\bC P^3_\refl(\sg{X}{p})={n-1 \choose 3}.$
\end{plainclaim}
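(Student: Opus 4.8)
We may assume $n\ge 4$, the only range needed for Proposition~\ref{prop-kan-optimality}. The plan is to trade $P^3_\refl(\sg{X}{p})$ for the third cohomology of the complex $\pi_*\Omega^\bullet_{\tilde X}$ and then to compute the latter by exploiting the $\bC^*$-action on $\tilde X$. Since $p\in X$ is an isolated Du Bois singularity by Claim~\ref{claim-du-bois-optimality-db} and is normal (the embedding induced by $|\sM|$ is projectively normal), Lemma~\ref{lem-dubois-comparison}(\ref{it-fk-to-refl}) applies and gives a natural isomorphism
\[P^3_\refl(\sg{X}{p})\ \cong\ h^3\bigl((\pi_*\Omega^\bullet_{\tilde X})_p,\mathrm d\bigr),\]
so it remains to identify the right hand side.

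First I would reduce to the part of weight $0$. The torus $\bC^*$ acts on $\tilde X$ by scaling the fibres of the bundle projection $\rho\colon\tilde X\to E$; let $\xi\in\Gamma(\tilde X,T_{\tilde X})$ be the generating holomorphic vector field. The maps $\mathrm d$, $\iota_\xi$ and $\LieDer_\xi$ descend to the complex $\pi_*\Omega^\bullet_{\tilde X}$ of sheaves on $X$ and satisfy Cartan's formula $\LieDer_\xi=\mathrm d\iota_\xi+\iota_\xi\mathrm d$, with $\LieDer_\xi$ acting as multiplication by the $\bC^*$-weight. As the $\bC^*$-action on this coherent complex is locally finite, $(\pi_*\Omega^\bullet_{\tilde X})_p$ splits as the direct sum of its weight subcomplexes; a closed germ $\alpha$ of weight $k\ne 0$ equals $k^{-1}\mathrm d(\iota_\xi\alpha)$ with $\iota_\xi\alpha$ again of weight $k$, hence is exact. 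Therefore $h^3\bigl((\pi_*\Omega^\bullet_{\tilde X})_p\bigr)$ equals $h^3$ of its weight-$0$ subcomplex.

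Next I would compute that weight-$0$ subcomplex. A neighbourhood of $E$ in $\tilde X$ is $\bC^*$-equivariantly biholomorphic to a tubular neighbourhood of the zero section of $\mathrm{Tot}(\sM^{-1})$; the relation $\sI_E^m/\sI_E^{m+1}\cong\sM^m$ (already used in the proof of Claim~\ref{claim-du-bois-optimality-db}) means that the $\bC^*$-weight of a germ of section near $E$ coincides with its vanishing order along $E$, so in particular $\rho_*\sO_{\tilde X}=\bigoplus_{m\ge 0}\sM^m$ has no pieces of negative weight. Now feed the relative cotangent sequence of $\rho$ into this bookkeeping: it equips $\Omega^i_{\tilde X}$ with a two-step filtration whose graded pieces are $\rho^*\Omega^i_E$ (of weight $0$) and $\rho^*\Omega^{i-1}_E\otimes\Omega^1_{\tilde X/E}$, where $\Omega^1_{\tilde X/E}$ is a line bundle all of whose nonzero holomorphic sections near $E$ have weight $\ge 1$ (its local generator ``$\mathrm dt$'' carries weight $1$). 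The second graded piece thus contributes nothing in weight $0$, so the weight-$0$ part of the sections of $\Omega^i_{\tilde X}$ near $E$ reduces to the weight-$0$ part of the sections of $\rho^*\Omega^i_E$, which is $\rho^*H^0(E,\Omega^i_E)\cong H^0(E,\Omega^i_E)$, the induced differential being $\rho^*$ of the de Rham differential on $E$. Since $E=E_1\times\cdots\times E_{n-1}$ is an abelian variety, $\Omega^1_E$ is free of rank $n-1$ and every holomorphic form on $E$ is closed; hence the weight-$0$ subcomplex has vanishing differential and
\[P^3_\refl(\sg{X}{p})\ \cong\ H^0\!\bigl(E,\Omega^3_E\bigr)\ =\ {\textstyle\bigwedge^3}\,H^0(E,\Omega^1_E),\]
which has dimension $\binom{n-1}{3}$.

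The delicate point is the weight bookkeeping in the third paragraph — specifically, checking that the ``relative'' graded piece $\rho^*\Omega^{i-1}_E\otimes\Omega^1_{\tilde X/E}$ contributes nothing in weight $0$; this is exactly where the negativity of $\sM^{-1}$, i.e.\ the absence of holomorphic functions of negative weight near $E$, enters. The remaining ingredients — the identification $P^3_\refl(\sg{X}{p})\cong h^3(\pi_*\Omega^\bullet_{\tilde X})_p$ via Lemma~\ref{lem-dubois-comparison}, the Cartan-formula reduction to weight $0$, and the closedness of holomorphic forms on the abelian variety $E$ — are routine.
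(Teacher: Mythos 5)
Your argument is correct in substance and shares both endpoints with the paper's proof --- the identification $P^3_\refl(\sg{X}{p})\cong h^3\bigl((\pi_*\Omega^\bullet_{\tilde{X}})_p\bigr)$ via Lemma~\ref{lem-dubois-comparison}(\ref{it-fk-to-refl}) and the final answer $H^0(E,\Omega^3_E)$ --- but your middle step is genuinely different. The paper obtains $h^3\bigl((\pi_*\Omega^\bullet_{\tilde{X}})_p\bigr)\cong H^0(E,\Omega^3_E)$ by combining the vanishing $P^i_\h(\sg{X}{p})=0$ for $i>0$ (Theorem~\ref{thm-h-poincare-contraction}, applicable since the cone germ contracts to $p$, Example~\ref{ex-action-contraction}) with the exact sequence $0\to\Omega^i_\h|_{X,p}\to(\pi_*\Omega^i_{\tilde{X}})_p\to H^0(E,\Omega^i_E)\to 0$ supplied by Proposition~\ref{prop-h-isolated}, and then using that holomorphic forms on the abelian variety $E$ are closed. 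You bypass the $\h$-machinery entirely and compute the cohomology of $(\pi_*\Omega^\bullet_{\tilde{X}})_p$ upstairs, using the Euler vector field, Cartan's formula, the non-negativity of the weights along $E$, and the identification of the weight-zero subcomplex with $(H^0(E,\Omega^\bullet_E),0)$; your weight bookkeeping for the graded pieces $\rho^*\Omega^i_E$ and $\rho^*\Omega^{i-1}_E\otimes\Omega^1_{\tilde{X}/E}\cong\rho^*(\Omega^{i-1}_E\otimes\sM)$ is correct. What this buys is independence from Sections on $\h$-forms; the cost is that you must redo by hand the homotopy argument that the paper gets from Theorem~\ref{thm-h-poincare-contraction}.

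That is where the one repair is needed: in the analytic category the stalk $(\pi_*\Omega^\bullet_{\tilde{X}})_p$ is \emph{not} a direct sum of weight subspaces (already $\sO_{\bC,0}$ under the standard action is the ring of convergent power series, not $\bigoplus_k\bC\cdot z^k$), so ``locally finite, hence splits'' is not available. What is true, and suffices, is this: every germ along $E$ has a convergent expansion $\alpha=\sum_{k\ge 0}\alpha_k$ into weight pieces, $d$ preserves weights, and since all weights are $\ge 0$ the series $\sum_{k\ge 1}k^{-1}\iota_\xi\alpha_k$ converges on a smaller neighbourhood of $E$; equivalently, let $P_0$ be the $\bS^1$-average and apply the integral homotopy $h(\beta)=\int_0^1\mu_s^*(\iota_\xi\beta)\,\tfrac{ds}{s}$ to $\beta=\alpha-P_0\alpha$ (convergent precisely because no weight-zero or negative-weight piece remains), which gives $\alpha-P_0\alpha=dh(\alpha)+h(d\alpha)$. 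Since $P_0$ commutes with $d$, this yields both surjectivity and injectivity of $h^3(\text{weight-zero subcomplex})\to h^3\bigl((\pi_*\Omega^\bullet_{\tilde{X}})_p\bigr)$. This device is exactly the one underlying Proposition~\ref{prop-contractions-key}; with it inserted, your proof is complete.
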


\begin{proof}[Proof of Claim~\ref{claim-du-bois-optimality-poincare}]
The groups $P^i_\h(\sg{X}{p})$ vanish for $i>0$ by Theorem~\ref{thm-h-poincare-contraction}. Moreover, for $i>0$, because $\pi$ is the blowing-up of the vertex of a cone singularity, there exists a short exact sequence 
\[0\to \Omega^i_\h|_{X,p}\to\bigl(\pi_*\Omega^i_{\tilde{X}}\bigr)_p\to H^i(E,\Omega^i_E)\to 0\]
of stalks of sheaves at $p\in X$, see also Proposition~\ref{prop-h-isolated}. Now Lemma~\ref{lem-dubois-comparison}(\ref{it-fk-to-refl}) implies that
\[P^3_\refl(\sg{X}{p})\cong H^3(\bigl(\pi_*\Omega^\bullet_{\tilde{X}}\bigr)_p) \cong H^0(E,\Omega^3_E)\]
is of dimension ${n-1\choose 3}$, since $\Omega^1_E\cong\bigoplus_{i=1}^{n-1}\sO_E$.
\end{proof}

\begin{proof}[Proof of Proposition~\ref{prop-kan-optimality}]
The proposition follows from the preparatory claims proven above. We only need to observe that $n\cdot (n-1)<{n-1 \choose 3}$ for $n\geq 11$.
\end{proof}

\bibliography{bibliography/general}{}
\bibliographystyle{bibliography/skalpha}

\end{document}